\documentclass{amsart}
\usepackage[fulloldstylenums]{kpfonts}

\usepackage{fullpage}

\usepackage{amsmath}
\usepackage{amsxtra}
\usepackage{amsthm}

\usepackage{mathrsfs}
\usepackage{tikz-cd}
\usepackage{mathtools}
\usepackage{mathpartir}
\usepackage{microtype}

\usepackage[bbgreekl]{mathbbol}
\usepackage{stmaryrd}
\usepackage{enumitem} 
\usepackage{xcolor}
\definecolor{darkgreen}{rgb}{0,0.45,0}
\definecolor{darkred}{rgb}{0.75,0,0}
\definecolor{darkblue}{rgb}{0,0,0.6}
\usepackage[pdfborder=0,colorlinks,citecolor=darkgreen,linkcolor=darkgreen,urlcolor=darkblue]{hyperref}
\usepackage{cleveref}
\usepackage{comment}

\usepackage[style=alphabetic,giveninits,natbib=true,maxalphanames=7,maxbibnames=99,backend=biber,backref=true]{biblatex}
\DeclareFieldFormat{postnote}{#1}
\DeclareFieldFormat{multipostnote}{#1}

\usetikzlibrary{decorations.markings}
\usetikzlibrary{decorations.pathmorphing}
\tikzset{
  module/.style={
    postaction={decorate},
    decoration={
      markings,
      mark=at position #1 with {\arrow{|}}}},
  module/.default=0.5,
  we/.style=
  { postaction={%
      decorate,
      decoration={
        markings,
        mark=at position #1 with {%
          \node[transform shape, yshift=.2em]{%
            \resizebox{0.5em}{!}{$\sim$}};}}}},
  we/.default=0.5,
  we'/.style=
  { postaction={%
      decorate,
      decoration={
        markings,
        mark=at position #1 with {%
          \node[transform shape, yshift=-.2em, rotate=180]{%
            \resizebox{0.5em}{!}{$\sim$}};}}}},
  we'/.default=0.5,
  iso/.style=
  { postaction={%
      decorate,
      decoration={
        markings,
        mark=at position #1 with {%
          \node[transform shape, yshift=.2em]{%
            \resizebox{0.5em}{!}{$\simeq$}};}}}},
  iso/.default=0.5,
  iso'/.style=
  { postaction={
      decorate,
      decoration={
        markings,
        mark=at position #1 with {%
          \node[transform shape, yshift=-.2em, rotate=180]{%
            \resizebox{0.5em}{!}{$\simeq$}};}}}},
  iso'/.default=0.5,
}

\tikzset{
  proarrow/.style={->, module},
  proequal/.style={-, double, module},
  prodotted/.style={->,dotted, module},
  prodashed/.style={->,dashed, module},
  wearrow/.style={->, we},
  wedashed/.style={->, dashed, we},
  wedotted/.style={->, dotted, we},
  tfibarrow/.style={->>, we=0.45},
  tfibdotted/.style={->>,dotted, we=0.45},
  tfibdashed/.style={->>,dashed, we=0.45},
  tcofarrow/.style={>->, we},
  tcofdashed/.style={>->, dashed, we},
  uwearrow/.style={->, we'},
  uwedashed/.style={->, dashed, we'},
  uwedotted/.style={->, dotted, we'},
  utfibarrow/.style={->>, we'=0.45},
  utfibdotted/.style={->>,dotted, we'=0.45},
  utfibdashed/.style={->>,dashed, we'=0.45},
  utcofarrow/.style={>->, we'},
  isoarrow/.style={->, iso},
  isodashed/.style={->, dashed, iso},
  uisoarrow/.style={->, iso'},
  uisodashed/.style={->, dashed, iso'},
  isocell/.style={=>, iso},
  isocelldashed/.style={=>, dashed, iso},
  uisocell/.style={=>, iso'},
  uisocelldashed/.style={=>, dashed, iso'}
}

\tikzcdset{
  center/.style={
    start anchor=center,
    end anchor=center,
  },
  phantomcenter/.style={
    phantom,
    center,
  },
}

\tikzcdset{
  adj/.style = {phantom, start anchor=center, end anchor=center, "\scriptstyle\bot"{#1}},
}

\setlist{}
\setenumerate{leftmargin=*,labelindent=0\parindent}
\setitemize{leftmargin=\parindent}

\newlist{parts}{enumerate}{2}
\setlist[parts]{label={(\roman*)},ref={(\roman*)}}
\crefname{partsi}{part}{parts}

\newlist{conditions}{enumerate}{2}
\setlist[conditions]{label={(\roman*)},ref={(\roman*)}}
\crefname{conditionsi}{condition}{conditions}

\numberwithin{equation}{subsection}

\crefformat{section}{\S#2#1#3}
\crefmultiformat{section}{\S\S#2#1#3}{ and~#2#1#3}{, #2#1#3}{, and~#2#1#3}

\usepackage{thmtools}

\newcommand{\defthm}[3]{
  \declaretheorem[
    sibling=equation,
    title={#2},
    refname={{#2},{#3}},
  ]{#1}
}

\defthm{thm}{Theorem}{Theorems}
\defthm{lem}{Lemma}{Lemmas}
\defthm{prop}{Proposition}{Propositions}
\defthm{cor}{Corollary}{Corollaries}
\defthm{conj}{Conjecture}{Conjectures}

\theoremstyle{definition}
\defthm{defn}{Definition}{Definitions}
\defthm{ex}{Example}{Examples}
\defthm{cons}{Construction}{Constructions}
\defthm{nex}{Non-Example}{Non-Examples}
\defthm{ntn}{Notation}{Notations}

\theoremstyle{remark}
\defthm{rmk}{Remark}{Remarks}
\defthm{exc}{Exercise}{Exercises}
\defthm{dig}{Digression}{Digressions}
\defthm{war}{Warning}{Warnings}
\defthm{rec}{Recollection}{Recollections}
\defthm{apo}{Apology}{Apologies}
\defthm{cav}{Caveat}{Caveats}

\newcommand{\op}{\mathup{op}}
\newcommand{\co}{\mathup{co}}
\newcommand{\id}{\mathup{id}}
\newcommand{\ob}{\mathup{ob}}

\newcommand{\const}{\mathup{const}}
\newcommand{\res}{\mathup{res}}
\newcommand{\ev}{\mathup{ev}}

\newcommand{\last}{\mathup{last}}

\newcommand{\lan}{\mathup{lan}}

\newcommand{\obj}{\mathup{obj}}
\newcommand{\dom}{\mathup{dom}}
\newcommand{\cod}{\mathup{cod}}
\newcommand{\colim}{\mathup{colim}}
\newcommand{\hocolim}{\mathup{hocolim}}

\newcommand{\cat}[1]{\mathup{\mathsf{#1}}}

\newcommand{\alg}[1]{{#1}\text{-}\cat{Alg}}

\newcommand{\CC}{\mathfrak{C}}
\newcommand{\EE}{\mathfrak{E}}
\newcommand{\FF}{\mathfrak{F}}
\newcommand{\GG}{\mathfrak{G}}

\newcommand{\LL}{\mathfrak{L}}

\newcommand{\NN}{\mathbb{N}}

\newcommand{\TF}{\mathfrak{TF}}

\DeclareMathAlphabet{\mathbbe}{U}{bbold}{m}{n}

\newcommand{\cA}{\mathcal{A}}
\newcommand{\cB}{\mathcal{B}}
\newcommand{\cC}{\mathcal{C}}
\newcommand{\cD}{\mathcal{D}}
\newcommand{\cE}{\mathcal{E}}

\newcommand{\cK}{\mathcal{K}}

\newcommand{\cM}{\mathcal{M}}
\newcommand{\cN}{\mathcal{N}}

\newcommand{\cS}{\mathcal{S}}

\newcommand{\DDelta}{\mathbbe{\Delta}}

\newcommand{\leftclass}{\mathscr{L}}
\newcommand{\rightclass}{\mathscr{R}}

\newcommand{\leibEHom}[2]{\widehat{\EHom}({#1},{#2})}
\newcommand{\leibEHomC}[2]{\widehat{\EHom}({#1},{#2})}

\newcommand{\leibEHomNC}[2]{\widehat{\EHom_{N\cC}}({#1},{#2})}
\newcommand{\leibEHomND}[2]{\widehat{\EHom_{N\cD}}({#1},{#2})}

\newcommand{\Funpseudo}{\mathcal{F}\!\cat{un}_\pseudo}
\newcommand{\Funlax}{\mathcal{F}\!\cat{un}_\lax}
\newcommand{\Funoplax}{\mathcal{F}\!\cat{un}_\oplax}

\newcommand{\slice}[2]{{{#1}_{/#2}}}
\newcommand{\Ws}{W^{\mathup{s}}}

\newcommand{\coslice}[2]{{{}^{#2/}\!{#1}}}
\newcommand{\Wc}{W^{\mathup{c}}}

\newcommand{\cart}[1]{{#1}^{[1]}_{\mathup{cart}}}

\font\maljapanese=dmjhira at 2ex 
\def\yo{\textrm{\maljapanese\char"48}}

\newsavebox{\mybox}
\newcommand{\scaledreflect}[1]{%
  \ThisStyle{\ifmmode%
    \savebox{\mybox}{$\SavedStyle#1$}%
    \reflectbox{\usebox{\mybox}}%
  \else%
    \savebox{\mybox}{#1}%
    \reflectbox{\usebox{\mybox}}%
  \fi%
}}

\makeatletter
\def\makeslashed#1#2#3#4#5{#1{\mathpalette{\sla@{#2}{#3}{#4}}{#5}}}

\def\@mathlower#1#2#3{\setbox0=\hbox{$\m@th#2#3$}\lower#1\ht0\box0}
\def\mathlower#1#2{\mathpalette{\@mathlower{#1}}{#2}}
\makeatother

\newcommand{\fto}{\twoheadrightarrow}
\newcommand{\wto}{\xrightarrow{{\smash{\mathlower{0.8}{\sim}}}}}

\newcommand{\type}[1]{{\mathsf{#1}}}
\newcommand{\term}[1]{{\mathsf{#1}}}

\newcommand{\Id}{\mathsf{Id}}
\newcommand{\Eq}{\mathsf{Eq}}

\newcommand{\isContr}{\mathsf{isContr}}

\renewcommand{\hom}{\type{hom}}
\newcommand{\Fun}{\mathsf{F}\!\mathsf{un}}
\newcommand{\Arr}{\mathsf{A}\!\mathsf{rr}}

\newcommand{\Ucov}{{U_{\mathsf{cov}}}}
\newcommand{\Ucocart}{{U_{\mathsf{cocart}}}}
\newcommand{\Ufib}{{U_{\mathsf{fib}}}}
\newcommand{\EUcov}{{\widetilde{U}_{\mathsf{cov}}}}
\newcommand{\EUcocart}{{\widetilde{U}_{\mathsf{cocart}}}}
\newcommand{\EUfib}{{\widetilde{U}_{\mathsf{fib}}}}
\newcommand{\picov}{\pi_{\mathsf{cov}}}
\newcommand{\pifib}{\pi_{\mathsf{fib}}}
\newcommand{\Magma}{\mathsf{Magma}}

\DeclareMathOperator{\arrtofun}{\term{arr-fun}}
\DeclareMathOperator{\funtoarr}{\term{fun-arr}}
\DeclareMathOperator{\arrtofunwgt}{\term{arr-fun-wgt}}
\DeclareMathOperator{\funtoarrwgt}{\term{fun-arr-wgt}}

\newcommand{\mB}{\mathscr{B}}
\newcommand{\mC}{\mathscr{C}}
\newcommand{\mD}{\mathscr{D}}

\newcommand{\iE}{\mathcal{E}}
\newcommand{\mE}{\mathscr{E}}
\newcommand{\sE}{\textit{s}\iE}
\newcommand{\smE}{\cat{s}\mE}
\newcommand{\iS}{\mathcal{S}}

\newcommand{\sS}{\textit{s}\iS}

\newcommand{\Cat}{\mathcal{C}\cat{at}}
\newcommand{\Set}{\mathcal{S}\cat{et}}

\newcommand{\sSet}{\cat{s}\mathcal{S}\cat{et}}

\newcommand{\Map}{\mathup{Map}}
\newcommand{\Hom}{\mathup{Hom}}
\newcommand{\intHom}{\mathup{exp}}
\newcommand{\EHom}{{\mE\Hom}}
\newcommand{\pseudo}{\mathup{pseudo}}
\newcommand{\lax}{\mathup{lax}}
\newcommand{\oplax}{\mathup{oplax}}

\newcommand{\el}[1]{{\smallint\!{#1}}}

\newcommand{\complete}{\mathsf{compl.}}
\newcommand{\cocomplete}{\mathsf{cocompl.}}
\newcommand{\cont}{\mathsf{cont.}}
\newcommand{\cocont}{\mathsf{cocont.}}
\newcommand{\Psh}[2][]{\mathsf{Psh}_{#1}({#2})}

\newcommand{\Prof}{\mathsf{Psh}_{\mathsf{ladj}}}

\begin{document}

\title{Directed univalence for simplicial objects in an \texorpdfstring{$\infty$}{infinity}-topos}
\author{Evan Cavallo}
\author{Emily Riehl}
\author{Christian Sattler}
\date{\today}

\thanks{This project was first instantiated at the 2017 Mathematics Research Community in Homotopy Type Theory coordinated by the AMS and supported by the National Science Foundation under grant number DMS-1321794.
  Over the long gestation period of this project, the first author has been supported by the US Air Force Office of Scientific Research under award numbers FA9550-15-1-0053 and FA9550-19-1-0216 and by the Knut and Alice Wallenberg Foundation (KAW) under grant numbers 2020.0266 and 2019.0116.
  The second author has been supported by the National Science Foundation via the grants DMS-2204304 and DMS-2507077, the Simons Foundation (920415, Riehl), by the US Air Force Office of Scientific Research under award number FA9550-21-1-0009, by the US Army Research Office under MURI Grant W911NF-20-1-0082, and by the Johns Hopkins President's Frontier Award.
  The third author has been supposed by the Swedish Research Council under grant number 2019-03765 and US Air Force Office of Scientific Research under award number FA9550-24-1-0302.}

\email{evan.cavallo@gu.se}

\email{eriehl@jhu.edu}

\email{sattler@chalmers.se}

\begin{abstract}
  A fundamental component of homotopy type theory, a synthetic theory of $\infty$-groupoids, is Voevodsky's univalence axiom.
  Univalence characterizes the identity types in the universal fibration, a classifier for small type families: identity types in the universe are equivalent to types of equivalences.
  The directed univalence axiom plays a similar foundational role in simplicial type theory, a synthetic theory of $\infty$-categories.
  In its original form, which does not include universes or directed univalence, the simplicial type theory has semantics in categories of simplicial objects in an $\infty$-topos, with synthetic $\infty$-categories corresponding to internal $\infty$-categories.
  We verify that directed univalence holds in this semantic setting, constructing an equivalence between hom types in the universal left fibration and function types.
  In fact, we verify a higher version of this result, constructing an equivalence between homotopy coherent composites in the universal left fibration and composable sequences of functions between types.
  Using the technique of weighted limits, we reduce this theorem for simplicial objects in an arbitrary $\infty$-topos to calculations ``on the left'' with simplicial sets.
\end{abstract}

\maketitle

\setcounter{tocdepth}{2}
\tableofcontents

\section{Introduction}

\subsection{The univalence axiom in homotopy type theory}
In \cite{HS}, Hofmann and Streicher confirmed that identities between terms in Martin-L\"{o}f's intensional dependent type theory \cite{PML} can be interpreted as structure rather than property, constructing a model in which types are interpreted as groupoids and identities within a type as isomorphisms within a groupoid.
Hofmann and Streicher's discovery motivated a search for higher-dimensional homotopical models of type theory.
Awodey and Warren \cite{AW} and subsequently Van den Berg and Garner \cite{van-den-berg-garner:12} interpreted identity types using weak factorization systems, following an intuition that the identity type family is a path object in something like a Quillen model category.
In parallel, Voevodsky was developing a homotopical interpretation of intensional dependent type theory motivated by the idea that types should be thought of as homotopy types, now commonly referred to as $\infty$-\emph{groupoids} (following Grothendieck \cite{grothendieck2021pursuingstacks}) or \emph{anima} (following \v{C}esnavi\v{c}ius and Scholze \cite{CesnaviciusScholze}).
Voevodsky reified the structural interpretation of identity types with his \emph{univalence axiom} for a universe $U$ \cite{PelayoWarren}, which asserts that a canonically-defined map from identities $A =_U B$ between types in $U$ to equivalences $A \simeq B$ is an equivalence.

Voevodsky demonstrated the consistency of his axiom with a homotopical model of intensional dependent type theory in Quillen's model category of simplicial sets, interpreting type families as Kan fibrations.
This work was ultimately published as \cite{KapulkinLumsdaine2021} with both expository and technical contributions made by Kapulkin and Lumsdaine.
Following Voevodsky, they interpret a universe of small types by a universal Kan fibration $\pi \colon \EUfib \to \Ufib$ classifying small Kan fibrations.
The univalence axiom is verified by showing that a canonically defined map from the path object of $\Ufib$ to a universal family of equivalences between small types is a weak equivalence in Quillen's model structure.
In 2019, Shulman, building upon several years of developments, showed that type theory with univalent universes has semantics in any $\infty$-topos \cite{shulman}.
More precisely, he introduces the axiomatic language of a \emph{type-theoretic model topos} to describe structure on a model-categorical presentation of an $\infty$-topos sufficient to induce an interpretation of type theory with strict univalent universes, then shows that every $\infty$-topos admits such a presentation.

The univalence axiom has a number of useful consequences for \emph{homotopy type theory} or \emph{univalent foundations} --- Martin-L\"{o}f dependent type theory augmented with the univalence axiom.
One important consequence is a family of theorems that goes by the name of the \emph{structure identity principle}.
These theorems characterize identities in types of mathematical structures built on top of the universe.
For instance, the type of groups is the type of types that are 0-truncated (aka sets) and equipped with a binary operation that is associative, unital, and has inverses.
It follows from univalence that identity types in the type of groups are equivalent to types of group isomorphisms, exactly as would be desired observationally.

\subsection{The directed univalence axiom in simplicial type theory}

Hofmann and Streicher's groupoid model suggests a second direction of generalization: some form of ``directed'' type theory in which types are to be interpreted as categories of some sort and the identity type family is complemented by a second family of hom types.
Such theories have been proposed by Licata and Harper \cite{licata-harper:11}, Nuyts \cite{nuyts:15}, Riehl and Shulman \cite{RS}, North \cite{north:19}, Weaver and Licata \cite{weaver-licata}, Ahrens, North, and Van der Weide \cite{ahrens-north-van-der-weide:23}, and Neumann and Altenkirch \cite{neumann-altenkirch:25}.
Riehl and Shulman's directed type theory, now known as \emph{simplicial type theory}, aims to be a synthetic language for $\infty$-categories, extending homotopy type theory's synthetic language for $\infty$-groupoids.
In an $\infty$-category, composition is only well-defined up to a contractible space of choices.
When chosen composites are needed in a construction, one also needs higher dimensional data to encode homotopy coherence with respect to the choice.
This bookkeeping disappears when $\infty$-category theory is developed internally to homotopy type theory, where a type has a unique inhabitant just when it is contractible.

The intended semantics of the simplicial type theory is in the category of bisimplicial sets, or more precisely in the Reedy model structure on simplicial objects, defined relative to Quillen's model structure on simplicial sets.
This is a model of homotopy type theory \cite[6.4]{shulman-reedy} and interacts with Rezk's complete Segal space model of $\infty$-categories \cite{rezk-CSS}.
A closed type corresponds to a fibrant object in the model structure, that is, a Reedy fibrant simplicial space.
A type defines a pre-$\infty$-category or \emph{Segal type} if binary composition exists uniquely, a condition that captures the Segal spaces in the bisimplicial sets model.
A Segal type defines an $\infty$-category or \emph{Rezk type} if its identity types are equivalent to its types of isomorphisms, capturing Rezk's complete Segal spaces.

While not addressed in the original paper \cite{RS}, it is natural to wonder about a directed analogue of the univalence axiom, characterizing the hom types in a universe.
Nuyts \cite[3.8.22]{nuyts:15} proposes one such axiom, characterizing homs between types $A$ and $B$ in a universe as functions $A \to B$.
For the simplicial type theory, we must first decide: which universe?
The universe of all (small) types, interpreted by the universal Reedy fibration, is not directed univalent in this sense; its homs are more like spans than functions.\footnote{It is unclear to us exactly what structure these spans carry, and whether the homs in this universe can be characterized internally at all.}

Here, we consider the \emph{covariant universe} interpreted by the universal \emph{left fibration}.
A left fibration has \emph{discrete} fibers ($\infty$-groupoids, rather than $\infty$-categories) and depends covariantly functorially on the arrows in the base.\footnote{In \cite{RS}, such maps are called \emph{covariant fibrations}.}
As an example, the family of hom types with a fixed source object over a Segal type defines a left fibration over that Segal type.
Lurie famously proved a ``straightening--unstraightening'' theorem, expressed as a Quillen equivalence between model categories, demonstrating an equivalence between left fibrations with a fixed base $\Gamma$ and functors from $\Gamma$ into the $\infty$-category $\iS$ of spaces, presented by the simplicially enriched category of Kan complexes and all maps between them \cite[2.2.1.2]{lurie-topos}.
While Lurie does not construct a universal left fibration in the strict sense required to model a type-theoretic universe, he does show --- via the case $\Gamma = \Delta^1$ --- that left fibrations over $\Delta^1$ are equivalent to functions between spaces (that is, \emph{functors} between $\infty$-groupoids).

Around 2017--2018, we reported on work in progress towards a verification of the directed univalence axiom for the covariant universe in the bisimplicial sets model of the simplicial type theory; we circulated our notes among this to a few colleagues who have cited this ``private correspondence.''
One motivation for the present paper was a sense of obligation to make this private correspondence publicly available.
Further motivation was provided by developments by our colleagues in the interim that enable us to prove a much stronger theorem than we had originally.

The original \cite{RS} describes the motivating semantics of the simplicial type theory in simplicial spaces and sketches more general semantics in \emph{model categories with shapes} \cite[\S A.2]{RS}, which are shown to have a pseudo-stable coherent tope logic and pseudo-stable extension types.
The authors then claim that the coherence methods of Lumsdaine and Warren \cite{LumsdaineWarren:2015} can promote these to strictly-stable structures.
Weinberger \cite{Weinberger} confirmed this and, with the help of Shulman's models of homotopy type theory \cite{shulman}, extended the semantics of simplicial type theory to categories of simplicial objects in $\infty$-topoi.
These extended semantics pair nicely with further developments of synthetic $\infty$-category theory.
Martini and Wolf develop the theory of \emph{internal $\infty$-categories} defined relative to an arbitrary $\infty$-topos \cite{martini}.
These turn out to be simplicial objects satisfying the Segal and Rezk conditions of the simplicial type theory \cite{RS}.
Thus, the theorems of simplicial type theory are valid for internal $\infty$-categories in any $\infty$-topos.

\subsection{Our results}

Here we show that directed univalence holds for the universal left fibration not just in the bisimplicial sets model but in simplicial objects in any $\infty$-topos.

In \S\ref{sec:topoi} we review the notion of $\infty$-topos as developed by Lurie \cite{lurie-topos} and Rezk \cite{rezk}, with a focus on their model categorical presentations as axiomatized first by Rezk as \emph{model topoi} and later by Shulman as \emph{type-theoretic model topoi} to the end of describing the semantics of homotopy type theory. We refer to a type-theoretic model topos as \emph{fibration-extensive} if the fibrations (and trivial fibrations) are closed under coproduct and show that any $\infty$-topos is presented by a fibration-extensive type-theoretic model topos.

To state our directed univalence theorem, we must first construct in \S\ref{sec:universe} the universal left fibration classifying small left fibrations $\pi \colon \EUcov \to \Ucov$ as a morphism in the $\infty$-topos $\sE$ of simplicial objects in an $\infty$-topos $\iE$.
We take advantage of general machinery developed by Shulman for the purpose of constructing universal fibrations in $\infty$-topoi.
Shulman himself notes the applicability to constructing a universal left fibration in the setting of bisimplicial sets \cite[5.23]{shulman}.

The internal statement of Voevodsky's univalence axiom requires the construction of a universal family of equivalences; for directed univalence, we need an object of arrows.
In \S\ref{sec:universe}, we define an internal category whose base object is $\Ucov$ and whose object of arrows we denote by $\Fun_1$, which is the base of a universal family of functions between types classified by $\Ucov$.
The other objects in this internal category diagram define classifiers $\Fun_n$ for sequences of $n$ composable functions between objects classified by $\Ucov$.

Our main theorem, proven in \S\ref{sec:dua-types}, establishes an equivalence $\Ucov^{\Delta^1} \simeq \Fun_1$ identifying hom types in the universe with function types.
In fact, we prove a higher-dimensional directed univalence result, identifying homotopy coherent diagrams of $n$ composable arrows with sequences of $n$ composable functions.

{
\renewcommand{\thethm}{\ref{thm:dua-types}}
\begin{thm}[directed univalence]
  Let $\iE$ be an $\infty$-topos. The maps
  \[ \begin{tikzcd} \Fun_\bullet \arrow[r, "\funtoarr_\bullet", shift left=.25em] & \Ucov^{\Delta^\bullet} \arrow[l, "\arrtofun_\bullet", shift left=.25em] \end{tikzcd}\]
define a pointwise equivalence between the simplicial objects $\Fun_\bullet$ and $\Ucov^{\Delta^\bullet}$ in $\sE$ that is natural up to homotopy.
\end{thm}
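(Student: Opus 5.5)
We prove the statement by reduction to representables: both sides are objects of $\sE$, so it suffices to show that for every object $\Gamma$ of $\sE$ the induced maps on (derived) mapping spaces $\Map(\Gamma,\Fun_n) \rightleftarrows \Map(\Gamma,\Ucov^{\Delta^n})$ are inverse equivalences, naturally in $\Gamma$ up to homotopy. Using the universal properties of the universes built in \S\ref{sec:universe}, the left side classifies sequences $E_0 \to E_1 \to \cdots \to E_n$ of maps between small left fibrations over $\Gamma$. The right side, since $\Ucov^{\Delta^n}$ is the exponential and $\Gamma \times \Delta^n \to \Gamma$ is the relevant projection, classifies small left fibrations over $\Gamma \times \Delta^n$. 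The theorem thus becomes a parametrized straightening statement: left fibrations over $\Gamma \times \Delta^n$ correspond to $n$-composable sequences of maps of left fibrations over $\Gamma$, compatibly with the simplicial operators in $n$.

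The map $\funtoarr_n$ takes a sequence $E_0 \to \cdots \to E_n$ to its ``mapping-simplex'' over $\Gamma\times\Delta^n$, a weighted colimit of the $E_i$ weighted by $\Delta^{\{i,\dots,n\}}$. The map $\arrtofun_n$ restricts a left fibration $E \to \Gamma\times \Delta^n$ to the fibres over the vertices, $E_i = E|_{\Gamma\times\{i\}}$. The comparison maps $E_i \to E_{i+1}$ are obtained by choosing lifts along the left fibration, which is possible because restriction $E|_{\Gamma \times \Delta^{\{i,i+1\}}} \to E_i$ via the inclusion of $\{i\}$ is a trivial fibration. This choice is why naturality in the simplicial direction holds only up to homotopy. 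I would check this first by exhibiting homotopies $d_j \circ \arrtofun_n \simeq \arrtofun_{n-1}\circ d_j$ coming from uniqueness of these lifts.

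To prove that the two composites are equivalent to identities, I would follow the abstract: express everything as weighted limits in $\sE$, with weights given by simplicial sets. Concretely, for a Reedy-fibrant simplicial object the relevant objects ($\Ucov^{\Delta^n}$, and the pieces assembling $\Fun_n$ and left-fibration data) are $\{W\}$-weighted limits of a diagram in $\iE$ (or in a presenting fibration-extensive type-theoretic model topos from \S\ref{sec:topoi}). Maps between weighted limits are then induced by maps between weights. The key claims reduce to statements about simplicial sets: the inclusions $\Delta^{\{i\}}\hookrightarrow \Delta^{\{i,\dots,n\}}$ and the spine and mapping-simplex inclusions are left anodyne, or more precisely, they lie in the class whose Leibniz weighted limits with left fibrations are trivial fibrations. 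Once the needed maps of weights are shown to be (left-)anodyne in $\sSet$, it follows formally that they induce equivalences for simplicial objects in any $\infty$-topos. The descent and extensivity properties of the topos, which make fibres over $\Gamma\times\{i\}$ behave like fibres of a coproduct decomposition, handle the gluing.

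The main obstacle I expect is the composite $\funtoarr_n\circ\arrtofun_n \simeq \id$. This requires showing that a left fibration over $\Gamma\times\Delta^n$ is recovered from its vertex fibres and the transport maps, that is, a version of Lurie's straightening over $\Delta^n$ parametrized over an arbitrary $\Gamma$ and internal to $\iE$. I would first attack it by comparing both $E$ and the mapping-simplex to the pullback along the spine inclusion $\Lambda \subseteq \Delta^n$, using that left fibrations over $\Delta^n$ are determined by restriction to the spine (spine inclusions are left anodyne relative to covariant equivalences). I would then invoke descent in $\sE$ to conclude that the comparison map over $\Gamma\times\Delta^n$ is a fibrewise equivalence, which suffices since a map of left fibrations that is an equivalence on fibres over each vertex is an equivalence.
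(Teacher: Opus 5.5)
The gap is in the step you flag as the main obstacle, and it comes from how you set up the two maps. You take $\funtoarr_n$ to be the mapping simplex $\int^{i}\Delta^{\{i,\dots,n\}}\times E_i$, which is a weighted \emph{colimit}. You take $\arrtofun_n$ to be restriction to vertex fibres, with transports $E_i\to E_{i+1}$ chosen edge by edge. Nothing in your plan then produces a map over $\Gamma\times\Delta^n$ between $E$ and $\funtoarr_n\arrtofun_n(E)$. A map $\int^{i}\Delta^{\{i,\dots,n\}}\times E_i\to E$ needs, for each $i$, a section over all of $\Gamma\times\Delta^{\{i,\dots,n\}}$ that starts at the identity of $E_i$ and restricts on $\Delta^{\{j,\dots,n\}}$ to the section for $j$ precomposed with the transport $E_i\to E_j$. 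Edge-wise choices do not by themselves supply this. Your closing principle (a map of left fibrations that is an equivalence on vertex fibres is an equivalence, i.e.\ \cref{lem:left-fib-fiberwise-we} plus extensivity) is the right tool, but it needs such a map. Comparing $E$ and the mapping simplex separately with their restrictions to the spine does not provide one. It would also require that restriction along $\Gamma\times\mathrm{Sp}^n\hookrightarrow\Gamma\times\Delta^n$ be an equivalence on left fibrations for arbitrary non-Segal $\Gamma$, which is not available here. For the same reason, your proposed reduction to left anodyne maps of weights does not apply as stated: a colimit construction composed with a choice-dependent one is not induced by any map of weights.

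The paper avoids this by removing the choices at the outset.
\begin{itemize}
\item $\arrtofun^\Gamma_n(E)^k$ is the $\Wc$-weighted limit, namely the pushforward of $E|_{\Gamma\times\Delta^{\{k,\dots,n\}}}$ to $\Gamma$. Its transports are strict restriction maps.
\item $\funtoarr_n$ is the $\Ws$-weighted limit over initial segments. This is a Glue-type construction, not the mapping simplex, and it is the map in the statement. It is chosen so that its output is already a left fibration that $\Ucov$ can classify.
\end{itemize}
Both are $\mE$-relative right adjoints of functors $\arrtofunwgt$, $\funtoarrwgt$ on simplicial sets. So the round trips are conjugates of explicit transformations: $\kappa\colon\funtoarrwgt\circ\arrtofunwgt\to\id$, and, in your hard direction, a zigzag $\arrtofunwgt\circ\funtoarrwgt\to\cod_!\circ\dom^*\leftarrow\id$, since no direct transformation is available there. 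By \cref{lem:left-fib-fiberwise-we}, only components at representables and points matter.
\begin{itemize}
\item $\nu_c$ is the nerve of an initial-object inclusion, hence left anodyne.
\item $\kappa_{\cC(c,-)}$ is a non-monic retraction of a strong $0$-oriented homotopy equivalence between projectively cofibrant objects. It is handled by \cref{lem:ehom-equiv-cancellation,cor:ehom-equiv-she-left-fibration}, not by any left anodyne map, so your list of weight-level facts does not cover it.
\end{itemize}
Applied to the universal families, univalence of $\Ucov$ and \cref{cor:fun-univalence} turn these equivalences into the required homotopies. So no Yoneda reduction is needed, and hence no full representability of $\Fun_n$, which the paper proves only in one direction.

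Your route can be repaired in the same spirit. If $\arrtofun_n$ is defined by sections over final segments, your mapping simplex is its left adjoint inside $\smE$, and the counit is the missing comparison map. Extensivity shows the mapping simplex has component $A^{\sigma(0)}_m$ over each $m$-simplex $\sigma$, so unit and counit can be checked in degree $0$. You would still have to fibrantly replace the mapping simplex before classifying it, and then identify the resulting map with the $\funtoarr_n$ of the statement.
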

\addtocounter{thm}{-1}
}

The $n$-ary equivalences $\Fun_n \simeq \Ucov^{\Delta^n}$ are needed for applications, also discussed in \S\ref{sec:dua-types}.\footnote{
  More specifically, we need the theorem for $0 \le n \le 2$ (including homotopy naturality and coherence of naturality for such $n$) in order to prove that $\Ucov$ is Segal.
  Once we know that $\Ucov$ is Segal, the equivalences $\Fun_n \simeq \Ucov^{\Delta^n}$ for $n > 2$ could be derived using the equivalence between simplices and spines in $\Ucov$.
  However, because the case $n \le 2$ contains all the complexity of the general case, we instead prove the theorem directly for all $n$.
}
We use \cref{thm:dua-types} to demonstrate that $\Ucov$ is an internal $\infty$-category, satisfying the Segal and Rezk conditions, namely the internal $\infty$-category of internal $\infty$-groupoids and functions between them.
We leverage these results to construct other internal $\infty$-categories from this universe, whose hom-types we characterize as instances of what we call the \emph{structure homomorphism principle}.
We illustrate this general method with the example of the internal $\infty$-category of magmas.

Our proof of Theorem \ref{thm:dua-types} constructs an explicit equivalence as follows.
By the classifying properties of $\Fun_n$ and $\Ucov^{\Delta^n}$, the maps $\funtoarr_n$ and $\arrtofun_n$ express a correspondence between sequences of $n$ composable morphisms between left fibrations over a given base $\Gamma \in \sE$ and left fibrations over $\Delta^n \times \Gamma$.
Indeed, we construct them by exhibiting such a correspondence, in the form of point-set level functors
\[ \begin{tikzcd} \slice{\smE}{\Delta^n \times \Gamma} \arrow[r, bend left, "\arrtofun^\Gamma_n" above] & (\slice{\smE}{\Gamma})^{[n]} \arrow[l, bend left, "\funtoarr^\Gamma_n"] \end{tikzcd}\]
defined at the level of the type-theoretic model topos $\smE$ of simplicial objects in a type-theoretic model topos $\mE$ presenting an $\infty$-topos $\iE$.
Here $\Gamma \in \smE$ is an arbitrary object, allowing us to implement these constructions in any context; $\Delta^n \in \smE$ is the discrete embedding of the simplicial set $\Delta^n$, which parametrizes homotopy coherent diagrams of $n$ composable arrows; and $[n]$ is the ordinal category, which parametrizes diagrams of $n$ composable morphisms.

At this level, we abstract from $[n]$ and $\Delta^n$ to an arbitrary 1-category $\cC$ and its simplicial nerve $N\cC$.
The homotopical properties of these point-set level constructions are summarized in a point-set level directed univalence theorem, which we prove in \S\ref{sec:dua-model} (where a more precise statement of our result may be found).

{
\renewcommand{\thethm}{\ref{thm:dua-model}}
\begin{thm}[directed univalence]
Let $\mE$ be a fibration-extensive type-theoretic model topos. For $\Gamma \in \smE$ and a 1-category $\cC$ the functors
\[ \begin{tikzcd} \slice{\smE}{N\cC \times \Gamma} \arrow[r, bend left, "\arrtofun^\Gamma_\cC" above] & (\slice{\smE}{\Gamma})^\cC \arrow[l, bend left, "\funtoarr^\Gamma_\cC"] \end{tikzcd}\]
define right Quillen functors between the covariant model structure on $\slice{\smE}{N\cC \times \Gamma}$ and the projective covariant model structure on $(\slice{\smE}{\Gamma})^\cC$. Moreover, these functors are weak equivalence inverses on fibrant objects and the mappings $(\cC,\Gamma) \mapsto \arrtofun^\Gamma_\cC$ and $(\cC,\Gamma) \mapsto \funtoarr^\Gamma_\cC$ are respectively lax and pseudonatural in both variables.
\end{thm}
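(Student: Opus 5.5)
We define the two functors explicitly and check each claim separately. Let $E \to N\cC \times \Gamma$ be an object of $\slice{\smE}{N\cC\times\Gamma}$. We take $\arrtofun^\Gamma_\cC(E)$ to be the diagram $c \mapsto \EHom$-style restriction along the slices $N(\coslice{\cC}{c})$, or more simply the weighted limit
\[ c \mapsto \big(\{\text{sections of } E \text{ over } N(\coslice{\cC}{c}) \to N\cC\}\big) \to \Gamma. \]
This is a right adjoint, defined by a Kan-extension/weighted-limit formula with weight $c \mapsto N(\coslice{\cC}{c})$. Conversely, for a diagram $F \colon \cC \to \slice{\smE}{\Gamma}$, we take $\funtoarr^\Gamma_\cC(F)$ to be the ``unstraightening'' over $N\cC$. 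Concretely, an $n$-simplex over $\sigma = (c_0 \to \cdots \to c_n)$ is an element of $F(c_0)$, i.e. a fiber of $F(c_0)$ over $\Gamma$ pulled back along $\sigma$. This is the (strict) Grothendieck construction, and it is right adjoint to a weighted-colimit functor. Both constructions are built levelwise from limits, so they commute with the simplicial structure of $\smE$.

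\textbf{Quillen conditions.} Since both functors are right adjoints given by weighted limits, we reduce the right Quillen property to a Leibniz/pushout-product statement. The covariant model structure on $\slice{\smE}{N\cC\times\Gamma}$ has generating (trivial) cofibrations of the form (simplicial-set map)$\,\widehat{\times}\,$(cofibration in $\mE$), with left anodyne maps in the simplicial direction. The projective covariant structure has generators $\cC(c,-)\cdot(\text{those})$. Preservation of fibrations and trivial fibrations therefore follows from computing the left adjoints on these generators. We need to check two things:
\begin{itemize}
\item the weights $N(\coslice{\cC}{c}) \to N\cC$ are left fibrations, or more to the point the maps $\{c\}\to N(\coslice{\cC}{c})$ are left anodyne (initial object);
\item the Grothendieck construction sends representables to left-anodyne-compatible maps.
\end{itemize}
These are calculations ``on the left'' with simplicial sets, as promised in the abstract. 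Fibration-extensiveness of $\mE$ enters when we handle the coproducts over simplices that appear in these formulas.

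\textbf{Inverse equivalence.} On fibrant objects we exhibit comparison maps. For a left fibration $E$, there is a natural map $E \to \funtoarr\,\arrtofun\,E$. For a projectively fibrant $F$, we identify $\arrtofun\,\funtoarr\,F(c)$ with sections over $N(\coslice{\cC}{c})$ of the Grothendieck construction, and this receives a map from $F(c)$. To show both maps are weak equivalences we use two facts. Weak equivalences between left fibrations over a base are detected fiberwise over points $c$, by restriction along $\{c\}\to N\cC$. Moreover, restriction of a left fibration along the left anodyne map $\{c\}\to N(\coslice{\cC}{c})$ is a trivial fibration on section objects. This reduces both comparisons to the evident identity on fibers.

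\textbf{Naturality.} Finally, we derive naturality from functoriality of the weights in $\cC$ and of pullback in $\Gamma$. Pullback along $\Gamma'\to\Gamma$ commutes with both constructions up to canonical isomorphism, which gives pseudonaturality. For $\cC$, the Grothendieck construction commutes with restriction up to isomorphism (pseudo). The section construction only receives a comparison map, since restriction of sections along functors $\cC'\to\cC$ goes one way between slice weights, which gives the lax structure for $\arrtofun$.

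The main obstacle is the Quillen/inverse-equivalence step in the $\infty$-topos setting. There, fibrancy in $\smE$ mixes Reedy-type conditions from $\mE$ with the covariant condition, so the left-anodyne computations must be carried out uniformly as Leibniz products with $\mE$-cofibrations. My first attempt would be to prove a single Leibniz lemma: the Leibniz weighted-limit $\leibEHom{i}{p}$ is a trivial fibration whenever $i$ is left anodyne over $N\cC$ and $p$ is a left fibration. Every other check would then be derived from it.
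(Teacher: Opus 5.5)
Your proposal has two genuine gaps, both where the paper does its real work.

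First, the $\funtoarr$ you describe concretely is the strict Grothendieck construction, with $n$-simplices over $\sigma=(c_0\to\cdots\to c_n)$ given by $F(c_0)$. This is not the paper's functor, and it is not right Quillen. Over $\sigma\colon c_0\to c_1$, its level-$1$ relative matching map is the matching map of $F(c_0)$ followed by $1\times F(\sigma)\colon F(c_0)_0\times F(c_0)_0\to F(c_0)_0\times F(c_1)_0$. Take $\cC=[1]$, $\Gamma=1$, $F(0)=1$, and $F(\sigma)$ a point $x$ of an internal $\infty$-groupoid $F(1)$ such that $x\colon 1\to F(1)_0$ is not a fibration. Then this matching map is that point inclusion, so even Reedy fibrancy fails.

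The paper's $\funtoarr$ is instead the $\mE$-relative right adjoint of $\funtoarrwgt=(N\cC)_!\circ(\Ws\otimes-)$. Its $n$-simplices over $\sigma$ are maps out of $N(\slice{\cC}{-}\times_\cC[n])$. This is a mapping-path (Glue-type) construction whose $1$-simplices over $c_0\to c_1$ are $F(c_0)_0\times_{F(c_1)_0}F(c_1)_1$; it agrees with yours only for set-valued $F$. For this functor, the substance of the Quillen claim is that $\funtoarrwgt$ sends left anodynes to projective left anodynes. The paper proves this in two steps:
\begin{itemize}
\item it uses generation by initial segments plus right cancellation to reduce to $\iota_{\le m}\colon\Delta^m\to\Delta^{m+1}$ over some $\gamma$;
\item it exhibits $\Ws\times_{N\cC}\iota_{\le m}$ as a pushout of $\cC(c,-)\times\iota_{\le m}$, where $c=\gamma(m+1)$.
\end{itemize}
Your bullet about ``representables'' does not supply this. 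Your other bullet is beside the point for $\arrtofun$, which only needs $\Wc$ to be levelwise monic together with the Leibniz property of $\times$ on $\sSet$. Also, the covariant structure is a Bousfield localization, so you should check trivial fibrations and fibrant objects, as the paper does. Appealing to generating trivial cofibrations does not work here.

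Second, there is no natural map $E\to\funtoarr\arrtofun E$. Such a map would come from a natural transformation $\arrtofunwgt\circ\funtoarrwgt\to\id$. Its component at a vertex $c$ would be a map $N(\coslice{\cC}{c})\to\Delta^0$ over $N\cC$, which is impossible as soon as some arrow out of $c$ has a different codomain. In the other direction, evaluation at $\id_c$ exists on vertices, but already for $\cC=[1]$ it does not extend naturally to edges.

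The missing idea is the paper's zigzag
\[
\funtoarr\arrtofun E\xleftarrow{\mu^\dagger}\dom_*\cod^*E\xrightarrow{\nu^\dagger}E,
\]
conjugate to $\arrtofunwgt\funtoarrwgt\xrightarrow{\mu}\cod_!\dom^*\xleftarrow{\nu}\id$, where $\mu$ is induced by composition in $\cC$ and $\nu$ by identities. One must also show that $\dom_*\cod^*$ preserves left fibrations. This uses that $\dom\colon N\cC^{[1]}\to N\cC$ is the nerve of a Grothendieck fibration, hence a right transport fibration, and that pullback along such maps preserves left anodynes.

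With these in place, your pointwise strategy does finish the job: $\mu_c$ is invertible and $\nu_c$ is the left anodyne $\Delta^0\xrightarrow{\id_c}N(\coslice{\cC}{c})$. Your section--retraction argument for $F\to\arrtofun\funtoarr F$ is then a valid alternative to the paper's strong $0$-oriented homotopy equivalence argument, once the correct $\funtoarr$ is known to preserve left fibrations. Finally, pseudonaturality of the correct $\funtoarr$ in $\cC$ is not automatic as it is for the strict construction. It rests on the invertibility of $\lan_F N(\slice{\cC}{-})\to N(\slice{F}{-})$, which the paper checks in degree $0$ using that both sides are left objects.
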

\addtocounter{thm}{-1}
}

In summary, the maps $\arrtofun_n$ and $\funtoarr_n$ of \cref{thm:dua-types} are $\infty$-topos level avatars of the right Quillen functors $\arrtofun^\Gamma_\cC$ and $\funtoarr^\Gamma_\cC$, which we define for an arbitrary indexing context $\Gamma$ and indexing category $\cC$.
These functors are in turn constructed using weighted limits from functors defined on the category of simplicial sets:\footnote{The relative versions of our functors, defined over a context $\Gamma \in \smE$, can be understood as pullbacks of the global versions considered here.}
\[
  \begin{tikzcd}[column sep=large, row sep=tiny] \slice{\sSet}{N\cC} & \sSet^\cC \arrow[l, "{\arrtofunwgt}"'] \rlap{,} & \sSet^\cC & \slice{\sSet}{N\cC} \arrow[l, "\funtoarrwgt"'] \rlap{,} \\
  \slice{\smE}{N\cC} \arrow[r, "\arrtofun"'] & \smE^\cC \rlap{,} & \smE^\cC \arrow[r, "\funtoarr"'] & \slice{\smE}{N\cC} \rlap{.}
  \end{tikzcd}
\]
We think of the simplicial set level functors as the ``left adjoints'' to the type-theoretic model topos level functors, defined relative to $\mE$-valued hom functors that we also describe in \S\ref{sec:dua-model}:
\[
\begin{tikzcd}[column sep=4em]
  (\sSet^\cC)^\op \times \smE^\cC
  \ar[r, "{\EHom}"]
&
  \mE
  \rlap{,}
&
  (\slice{\sSet}{K})^\op \times \slice{\smE}{K} \arrow[r, "\EHom_{K}"]
&
  \mE
  \rlap{.}
\end{tikzcd}
\]
Accordingly, we refer to $\arrtofun$ and $\funtoarr$ as the $\mE$-\emph{relative right adjoints} of $\arrtofunwgt$ and $\funtoarrwgt$.

The homotopical properties of $\arrtofun$ and $\funtoarr$ that we use to prove \cref{thm:dua-model} are consequences of homotopical properties of $\arrtofunwgt$ and $\funtoarrwgt$, which we establish in \S\ref{sec:weights} where these functors are defined.
The main technical work of this paper is contained in this section, which includes a review of some classical results from categorical homotopy theory.
We define the functors $\arrtofunwgt$ and $\funtoarrwgt$ using weighted colimits relative to a very classical pair of weights:\footnote{The weights $\Ws$ and $\Wc$ are the opposites of the weights used by Bousfield and Kan to define the homotopy limit and homotopy colimit of a $\cC$-indexed diagram, respectively \cite[XI.2.2]{bousfield-kan}.}
\[
\begin{tikzcd}[column sep=large, row sep=small]
\arrtofunwgt \colon \sSet^\cC \arrow[r, "{(N\cC)^*}"] & (\slice{\sSet}{N\cC})^\cC \arrow[r, "\Wc\otimes_\cC-"] & \slice{\sSet}{N\cC} \rlap{,}
\\
\funtoarrwgt \colon \slice{\sSet}{N\cC}  \arrow[r, "\Ws\otimes -"] & (\slice{\sSet}{N\cC})^\cC \arrow[r, "(N\cC)_!"] & \sSet^\cC \rlap{.}
\end{tikzcd}\]
where
\[
\begin{tikzcd}[row sep=tiny]
\cC \arrow[r, "\Ws"] & \slice{\sSet}{N\cC} \rlap{,} & \cC^\op \arrow[r, "{\Wc}"] & \slice{\sSet}{N\cC} \rlap{,}
\\
x \arrow[r, maps to] & N(\slice{\cC}{x}) \rlap{,} & x \arrow[r, maps to ] & N(\coslice{\cC}{x}) \rlap{.}
\end{tikzcd}
\]
The $\mE$-valued hom functors are each instances of weighted limit bifunctors, the latter via the equivalence between slice categories and presheaf categories, which extends to the $\mE$-valued setting because type-theoretic model topoi are infinitarily extensive.
Thus, the functors $\arrtofun$ and $\funtoarr$ are the weighted limit $\mE$-relative right adjoints of the weighted colimit functors $\arrtofunwgt$ and $\funtoarrwgt$.
We summarize the fact that the technical work can all be done on the weights side with the slogan ``it's all in the weights.''
In particular, results about left fibrations, which are defined to be maps in $\smE$ that are fibrations at the model category level and internally orthogonal to the inclusion $\iota_0 \colon \Delta^0 \to \Delta^1$ at the $\infty$-category level, reduce to results about left anodyne maps of simplicial sets.\footnote{The left anodyne maps are the left class of a weak factorization system whose right class are left fibrations of simplicial sets. Note these left fibrations are not a special case of the left fibrations referenced elsewhere because $\Set$ is a 1-topos, not an $\infty$-topos.}

\subsection{Future work}

By dualizing, our main theorems also provide directed univalence results for universal right fibrations. In future work, we plan to apply similar techniques to prove a corresponding directed univalence result for \emph{cocartesian fibrations} and \emph{cartesian fibrations}, which describe families of $\infty$-categories (rather than $\infty$-groupoids) varying covariant or contravariantly functorially, respectively, over arrows in the base.
An alternative approach would follow Sattler and W\"arn \cite{sattler-warn:26} and derive directed univalence for cocartesian fibrations in a model-independent fashion from the directed univalence for left fibrations we establish here.

\subsection{Related work}

\subsubsection{Directed type theory}

To our knowledge, the first recorded proposal for a directed univalence axiom is in the master's thesis of Andreas Nuyts \cite[3.8.22]{nuyts:15}.
Nuyts states the axiom within his directed type theory, but does not consider questions of semantics.
We began investigating directed univalence for simplicial type theory at the 2017 Mathematics Research Communities workshop in Snowbird, Utah, within a group focused on directed type theory.
The second author shared our preliminary findings, an interpretation of directed univalence in bisimplicial sets, in a follow-up presentation at the Joint Mathematics Meetings in 2018.
At this time we described an equivalence $\Fun_1 \simeq \Ucov^{\Delta^1}$.
Our $\arrtofun$ was the same as we present here, but our $\funtoarr$ was based on a weighted colimit construction rather than a weighted limit.

Weaver and Licata \cite{weaver-licata,weaver:24} subsequently created a cubical directed type theory with directed univalence and a model in bicubical sets.
By using cubical sets in place of simplicial sets, they define the model in a constructive metatheory (cf.~\cite{CCHM,ABCHFL}).
Their model should present an $\infty$-topos $\iE^{\square^\op}$ of cubical objects where $\iE$ is the $\infty$-topos presented by the underlying model of homotopy type theory in cubical sets.
For the cube category they use, it is an open question whether $\iE$ is the $\infty$-topos of $\infty$-groupoids \cite{streicher-weinberger}, but their construction should adapt to other underlying models that do present $\infty$-groupoids \cite{accrs,cs}.
Weaver and Licata diverged from our proposed weighted colimit construction for $\funtoarr$ and instead adapted the limit-like ``Glue'' \cite{CCHM} and ``V'' \cite{angiuli-favonia-harper:18} types from cubical sets models of homotopy type theory.
As they note, the weighted colimit construction instead corresponds to Nuyts, Vezzosi, and Devriese's dual ``Weld'' types \cite{nuyts:17}.
In the present work, we follow Weaver and Licata and use a weighted limit construction; this avoids issues of fibrant replacement.

Gratzer, Weinberger, and Buchholtz \cite{gratzer-weinberger-buchholtz-univalence} present an extension of simplicial type theory where directed univalence is constructible from simpler primitives.
Their syntactic construction resembles Weaver and Licata's semantic one, as well the one we present here.
The theory has a semantics in cubical spaces; for the moment, it is not clear whether it generalizes to other $\infty$-topoi of cubical objects.

\subsubsection{Synthetic category theory}

Cisinski, Cnossen, Nguyen and Walde, in their book in progress on a synthetic approach to $\infty$-category theory, take a form of directed univalence as an axiom, namely that every cocartesian fibration is classified by some directed univalent cocartesian fibration \cite[Axiom O]{cisinski-cnossen-nguyen-walde}.
Although not presented as a formal type theory, their work shares many ideas with simplicial type theory.

\subsubsection{Straightening--unstraightening}

Lurie's straightening--unstraightening is a correspondence, natural in $\infty$-categories $\mathcal{C}$, between $\mathcal{C}$-indexed diagrams of $\infty$-categories and cocartesian fibrations over $\mathcal{C}$ \cite[3.2.0.1, 3.2.1.4]{lurie-topos}.
This correspondence restricts to one between $\mathcal{C}$-indexed diagrams of $\infty$-groupoids and left fibrations over $\mathcal{C}$ \cite[2.2.1.2]{lurie-topos}, the case of interest here.

Lurie first proves straightening--unstraightening in a model-categorical form.
In the case of left fibrations, it is roughly a Quillen equivalence
\[
  \begin{tikzcd} \slice{\sSet}{\mathrm{N}\mathcal{C}} \arrow[r, bend left] & \sSet^{\mathcal{C}} \arrow[l, bend left]
  \end{tikzcd}
\]
where $\mathcal{C}$ is a simplicial category, $\slice{\sSet}{\mathrm{N}\mathcal{C}}$ is the slice over the homotopy coherent nerve of $\mathcal{C}$ with the covariant model structure, and $\sSet^{\mathcal{C}}$ is the projective model structure on simplicial presheaves.

With our \cref{thm:dua-model}, we prove a form of this theorem for left fibrations into 1-categories $\mathcal{C}$ in an $\infty$-topos of simplicial objects.
The restriction to 1-categories avoids complications of the general case (cf.~\cite[\S3.2.5]{lurie-topos}).
In the case of the $\infty$-topos of simplicial spaces, our construction coincides with that of \textcite{heuts-moerdijk:14}; our $\arrtofunwgt \dashv \arrtofun$ and $\funtoarrwgt \dashv \funtoarr$ adjunctions correspond to their $h_! \dashv h^*$ and $r_! \dashv r^*$ respectively.
\textcite{heuts-moerdijk:16} later generalized their construction from 1-categories to simplicial categories, and \textcite{hebestreit-heuts-ruit:25} extended it from left to cocartesian fibrations, providing an alternative proof of Lurie's straightening--unstraightening.

In working relative to an arbitrary base $\infty$-topos, our work is related to that of Martini \cite{martini,martini-cocartesian}, who proves straightening--unstraightening internal to an $\infty$-topos.
One difference is that Martini takes ordinary straightening--unstraightening --- that is, internal to the $\infty$-topos of $\infty$-groupoids --- as an input; we start from scratch.

\subsubsection{Universal left/cocartesian fibrations}

Lurie uses straightening--unstraightening to build universal left and cocartesian fibrations of quasicategories.
More specifically, he constructs these over pre-existing bases: the homotopy coherent nerves of the simplicial categories of small Kan complexes and quasicategories respectively.
Cisinski and Nguyen develop a different approach to classifiers \cites[\S5.2]{Cisinski}{nguyen:22}{cisinski-nguyen}.
Building on work of Nichols-Barrer \cite[\S2.2]{nichols-barrer} and the Hofmann--Streicher universe construction \cite{hofmann-streicher:97} used in semantics of homotopy type theory \cite{KapulkinLumsdaine2021,shulman}, they define left and cocartesian fibrations of simplicial sets, $\pi \colon \EUcov \to \Ucov$ and $\pi \colon \EUcocart \to \Ucocart$, which are universal essentially by definition.
These can then be shown to be quasicategories \cites[5.2.10]{Cisinski}[3.8]{cisinski-nguyen}.
In this approach, the role of the machinery involved in Lurie's straightening--unstraightening is to analyze the structure of $\Ucov$ and $\Ucocart$.
In particular, the $n$-cells of $\Ucov$ (resp.\ $\Ucocart$) are by definition left (resp.\ cocartesian) fibrations into $\Delta^n$; straightening--unstraightening characterizes these as $[n]$-indexed diagrams of functors.
This is the connection between straightening--unstraightening and directed univalence in the form
\[
  \begin{tikzcd} \Fun_\bullet \arrow[r, "\funtoarr_\bullet", shift left=.25em] & \Ucov^{\Delta^\bullet} \arrow[l, "\arrtofun_\bullet", shift left=.25em] \end{tikzcd}
\]
of \cref{thm:dua-types}.

Relatedly, Nichols-Barrer constructs a universal left fibration \cite[\S2.2]{nichols-barrer} and conjectures it to be a quasicategory \cite[2.3.1]{nichols-barrer} equivalent to the homotopy coherent nerve of the simplicial category of spaces \cite[2.3.10]{nichols-barrer}.
Kazhdan and Varshavsky \cite[\S2.2.3]{KV} construct a universal left fibration in bisimplicial sets and prove that its base is a complete Segal space \cite[2.2.11]{KV}.\footnote{They make a (correctable) mistake: their base object is only a \emph{pseudo}functor from the simplex category (cf.~Hofmann \cite{hofmann:95}).}

Cisinski and Nguyen first prove that their classifiers are quasicategories, then prove a directed univalence result characterizing the $1$-cells in $\Ucov$/$\Ucocart$.
Here our approach differs: we prove a directed univalence result for $n$-cells (\cref{thm:dua-types}), then use this to prove the Segal and Rezk conditions for $\Ucov$ (\cref{cor:Ucov-category}).

\section{\texorpdfstring{$\infty$}{Infinity}-topoi, model topoi, and simplicial topoi}\label{sec:topoi}

Shulman proves that homotopy type theory can be interpreted into any $\infty$-topos \cite{shulman} in the sense of Rezk and Lurie \cite{lurie-topos}. His proof employs a particular model categorical presentation of an $\infty$-topos introduced by Rezk under the name \emph{model topos} \cite{rezk}, a notion which we review in \cref{ssec:model-topoi}.

In \cite{shulman}, Shulman introduces a related definition of a \emph{type-theoretic model topos}, which axiomatizes the structures used to produce categorical models of homotopy type theory. Part of this axiomatization involves a \emph{notion of fibered structure} satisfying certain properties, out of which Shulman constructs a univalent universe. He then shows that model topoi coincide with type theoretic model topoi, up to Quillen equivalence. We review this in \cref{ssec:ttmt} and \cref{ssec:fibered-structure}.

In \cref{ssec:stt}, we then review the semantic setting for the simplicial extension of homotopy type theory due to Riehl--Shulman \cite{RS} and introduce the class of \emph{left maps}. These semantics take place in an $\infty$-topos of simplicial objects in an $\infty$-topos. As this is a specialization of the prior setting, the results of Lurie, Rezk, and Shulman apply.

\subsection{\texorpdfstring{$\infty$}{infinity}-topoi and model topoi}\label{ssec:model-topoi}

In \cite{rezk}, inspired by previous work of Simpson \cite{simpson} and To\"{e}n and Vezzosi \cite{TV}, Rezk introduces the notion of a model topos: a model-categorical presentation of an $\infty$-topos. The basic example is Quillen's model structure on the category $\sSet$ of simplicial sets, which presents the $\infty$-topos $\iS$ of spaces. Rezk proves a Giraud-style theorem characterizing model topoi that we take as the definition: a \textbf{model topos} is a model category $\mE$ such that:
\begin{conditions}
  \item\label{itm:topos-presentation} $\mE$ admits a small presentation in the sense of Dugger \cite{dugger-universal,dugger-combinatorial}.
  \item\label{itm:topos-descent} $\mE$ has descent.
\end{conditions}
We now unpack these conditions.

A model category admits a \textbf{small presentation} just when it is Quillen equivalent to a model category of the form $\sSet^{\cD^\op}_S$, that is, to a \emph{localization} of the injective model structure on simplicial presheaves on a small simplicial category $\cD$ by a set of maps $S$.\footnote{Dugger's small presentations involve localizations of the \emph{projective} model structure rather than the \emph{injective} model structure. Since the projective and injective model structures are Quillen equivalent and the process of localization preserves this Quillen equivalence, up to Quillen equivalence it makes no difference which model structure is used. The cost of using the injective model structure rather than the projective one is that there is no longer a direct Quillen equivalence to the model category $\mE$ being presented but rather a zig-zag of Quillen equivalences. For our purpose here, which is to construct categorical models of homotopy type theory, this is immaterial, and we prefer the injective model structure because its cofibrations are exactly the monomorphisms.}  The category $\cD$ can be thought of as providing ``generators'' for $\mE$ while the set $S$ gives the ``relations.'' Here ``localization'' refers to \emph{left Bousfield localization}, a process which ``formally inverts'' the maps in $S$ by turning them into weak equivalences while restricting the class of fibrations; this process does not change the underlying category or the class of cofibrations.

The model categories $\sSet^{\cD^\op}_S$ are simplicial model categories, so we may unpack the second condition in that setting.

\begin{defn}[{\cite{rezk}}]\label{defn:descent}
  A simplicial model category $\mE$ satisfies \textbf{descent} if:
  \begin{conditions}
    \item Homotopy colimits are stable under homotopy pullback: for any $X = \hocolim_i X_i$ and $f \colon Y \to X$, the map $\hocolim_i (Y \tilde{\times}_X X_i) \to Y$ from the homotopy colimit of the homotopy pullbacks is a weak equivalence.
    \item For a natural transformation between a pair of diagrams whose naturality squares are homotopy pullbacks, the naturality squares of the homotopy colimit cones are also homotopy pullbacks.
  \end{conditions}
\end{defn}

The descent condition \ref{itm:topos-descent} puts a further restriction on the model categories that satisfy \ref{itm:topos-presentation}. Rezk proves that a localized model category of the form $\sSet^{\cD^\op}_S$ satisfies descent just when the left adjoint of the canonical Quillen adjunction
\[
  \begin{tikzcd}
    \sSet^{\cD^\op} \arrow[r, bend left=20, "=" above] \arrow[r, adj] & {\sSet^{\cD^\op}_S} \arrow[l, bend left=20, "=" below]
  \end{tikzcd}
\]
is \textbf{left exact}, meaning that it preserves homotopy pullbacks, and hence all finite homotopy limits.  Thus:

\begin{thm}[{\cite[6.9]{rezk}}] A model category $\mE$ is a \textbf{model topos} if and only if it is Quillen equivalent to a left exact localization of an injective model category of simplicial presheaves indexed by some small simplicial category.
\end{thm}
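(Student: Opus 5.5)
This is Rezk's characterization, so the plan is to assemble it from the two ingredients already laid out: Dugger's presentation theorem and Rezk's analysis of descent for localized simplicial presheaf categories. Since being a model topos is defined by conditions \ref{itm:topos-presentation} and \ref{itm:topos-descent}, I will show each direction by transporting these conditions along Quillen equivalences and reducing to the case $\mE = \sSet^{\cD^\op}_S$.

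\emph{Invariance.} First I check that both defining conditions are invariant under Quillen equivalence (and zig-zags of them). For a small presentation this holds by definition. For descent, I will phrase \cref{defn:descent} purely in terms of the underlying $\infty$-category: homotopy colimits, homotopy pullbacks and weak equivalences are all preserved and reflected by the derived functors of a Quillen equivalence. In the non-simplicial case one uses a simplicial model category Quillen equivalent to $\mE$, which exists by Dugger's theorem. This reduces both directions to model categories of the form $\sSet^{\cD^\op}_S$.

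\emph{The key equivalence.} The main step is to show that $\sSet^{\cD^\op}_S$ has descent if and only if the identity left adjoint $L\colon \sSet^{\cD^\op}\to\sSet^{\cD^\op}_S$ preserves homotopy pullbacks. I expect this to be the main obstacle.
\begin{itemize}
\item The unlocalized injective model structure has descent, because homotopy colimits and homotopy pullbacks are computed pointwise in $\sSet$ and $\sSet$ has descent. The latter I would verify via Quillen's theorem B / Mather cube type arguments, or by passing to Kan fibrations and using that pullback along a Kan fibration preserves weak equivalences and homotopy colimits.
\item ($\Leftarrow$) Suppose $L$ is left exact. Derived $L$ preserves homotopy colimits, being a left Quillen functor, and by hypothesis it preserves homotopy pullbacks. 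It is also essentially surjective on homotopy categories, with a fully faithful derived right adjoint. So descent diagrams in the localization can be lifted to the presheaf category: take the local replacements of the data, apply descent there, and apply $L$.
\item ($\Rightarrow$) Suppose the localization has descent. Here I would follow Rezk's route of showing that the class of $S$-local equivalences is stable under homotopy base change. Given a homotopy pullback square, write the base as a homotopy colimit of representables. Descent in both model structures then reduces the claim to base change along maps between representables, which are $S$-local objects up to the localization. Preservation of homotopy pullbacks then follows by comparing the pullback of local replacements with the local replacement of the pullback.
\end{itemize}
The delicate point is bookkeeping homotopy pullbacks versus strict ones, and verifying that descent (ii) is exactly what makes local equivalences pullback-stable. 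I would try first to work entirely with fibrant replacement in the injective structure, where strict pullbacks along fibrations compute homotopy pullbacks.

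\emph{Conclusion.} Combining the invariance step with Dugger's theorem, a model topos $\mE$ is Quillen equivalent to some $\sSet^{\cD^\op}_S$. That localization has descent, so by the key equivalence it is left exact. Conversely, a left exact localization has a small presentation by construction and has descent by the key equivalence, and invariance transfers both properties to anything Quillen equivalent to it. Since the statement is cited as \cite[6.9]{rezk}, in the paper itself I would simply invoke that reference, with the above as the outline of Rezk's argument.
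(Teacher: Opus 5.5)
Your invariance step and the implication ``left exact $\Rightarrow$ descent'' are fine, so the ``if'' direction goes through. The gap is the other half of your key equivalence, which you need for the ``only if'' direction. For a \emph{fixed} presentation $(\cD,S)$, descent for $\sSet^{\cD^\op}_S$ does not imply that the localization is left exact.

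Here is a counterexample. Let $\cD$ be the cospan category $b\to a\leftarrow c$, so that presheaves are spans $X(b)\leftarrow X(a)\to X(c)$ and $\cD(-,a)$ is terminal. Let $S$ consist of the two maps $\cD(-,b)\to\cD(-,a)$ and $\cD(-,c)\to\cD(-,a)$. The $S$-local objects are the spans of weak equivalences, and $\const\dashv\ev_a$ is a Quillen equivalence $\sSet\simeq\sSet^{\cD^\op}_S$. So this localization presents $\iS$ and has descent. But under this identification the derived localization functor is the homotopy pushout, which does not preserve products. Take $X=\cD(-,b)\sqcup\cD(-,c)=(\Delta^0\leftarrow\emptyset\to\Delta^0)$. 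Then $X\times X\cong X$, whose homotopy pushout is two points, while the product of the homotopy pushouts has four. Equivalently, $S$-local equivalences are not stable under base change: pulling back $\cD(-,b)\to\cD(-,a)$, which lies in $S$, along $\cD(-,c)\to\cD(-,a)$ gives $\emptyset\to\cD(-,c)$.

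This is exactly where your sketch of $(\Rightarrow)$ fails. After reducing to representables you still have to pull back maps of $S$ along maps between representables, and nothing forces the results to be $S$-local equivalences. Representables are not $S$-local, and descent in the localization says nothing about this. The sentence preceding the theorem in the text asserts the same fixed-presentation equivalence; it is only true in the weaker form below.

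The missing idea is that you must \emph{change the presentation}. Given $\mE$ with a small presentation and descent, work in a simplicial model such as $\sSet^{\cD^\op}_S$. Choose a small full simplicial subcategory $\cA$ of bifibrant objects with two properties: it is closed under finite homotopy limits up to weak equivalence, and the restricted Yoneda functor $\mE\to\sSet^{\cA^\op}$ is homotopically fully faithful. For example, take representatives of the $\kappa$-compact objects for $\kappa$ large. Its derived left adjoint $F$, the homotopy left Kan extension of the inclusion, exhibits $\mE$ as a left Bousfield localization of $\sSet^{\cA^\op}$ at a set of maps. Descent is what makes \emph{this} localization left exact, via the class of objects $Z$ over which $F$ preserves homotopy pullbacks.
\begin{itemize}
\item This class contains the representables $Z$. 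By universality of homotopy colimits on both sides you reduce to $X,Y$ representable over $Z$, where closure of $\cA$ under finite homotopy limits applies.
\item This class is closed under homotopy colimits. If $Z=\hocolim_i Z_i$, then $i\mapsto X\times^h_Z Z_i$ is cartesian over $i\mapsto Z_i$. Its image under $F$ is cartesian since each $Z_j$ lies in the class. So condition (ii) of descent in $\mE$ identifies $F(X\times^h_Z Z_i)$ with $FX\times^h_{FZ}FZ_i$, and universality of homotopy colimits finishes the argument.
\end{itemize}
This is the substance of Rezk's Giraud theorem (compare \cite[\S6.1.5]{lurie-topos}). It uses both halves of descent and the freedom to choose the site, neither of which appears in your outline.
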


Dugger proves that all combinatorial model categories have a presentation \cite{dugger-combinatorial}, so model topoi can be thought of as combinatorial simplicial model categories of simplicial sheaves, where the data $(\cD,S)$ together define the \textbf{model site}. Model topoi are simplicial model categories that present $\infty$-\textbf{topoi}, which Lurie defines to be accessible left exact reflective sub-$\infty$-categories of an $\infty$-category $\cS^{\cD^\op}$ of presheaves and then characterizes in many different ways \cite[\S6.1]{lurie-topos}.

\subsection{Type-theoretic model topoi}\label{ssec:ttmt}

Shulman introduces the following axiomatization to consolidate the hypotheses he uses to construct models of homotopy type theory with univalent universes.

\begin{defn}[{\cite[6.1]{shulman}}]\label{defn:ttmt} A \textbf{type-theoretic model topos} is a model category such that:
  \begin{conditions}
    \item The underlying category is a Grothendieck 1-topos.
    \item The model category is a right proper simplicial Cisinski model category: the cofibrations are the monomorphisms and the model structure is proper, simplicial, and combinatorial.
    \item The underlying category is simplicially locally cartesian closed, which amounts to the additional requirement that pullbacks preserve simplicial tensors.
    \item\label{itm:notion-of-fibered} There is a locally representable and relatively acyclic notion of fibered structure $\FF$ such that the fibrations are exactly the $\FF$-algebras.
  \end{conditions}
\end{defn}

The final axiom, which we'll say more about in \cref{ssec:fibered-structure}, ensures that $\mE$ has fibrant univalent universes of relatively $\kappa$-presentable fibrations for sufficiently large inaccessible cardinals $\kappa$.

\begin{thm}[{\cite[6.3]{shulman}}] For any type-theoretic model topos $\mE$, there is a regular cardinal $\lambda$ such that $\mE$ interprets Martin-L\"of type theory with as many universe types as there are inaccessible cardinals larger than $\lambda$, closed under the standard type formers  and satisfying the univalence axiom.
  \end{thm}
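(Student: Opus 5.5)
This theorem is cited from Shulman \cite[6.3]{shulman}, so the plan is to reconstruct his argument from the axioms of \cref{defn:ttmt}. I would first set up a category with families (or comprehension category) from $\mE$. Its types in context $\Gamma$ are fibrations over $\Gamma$. Right properness, simplicial local cartesian closure, and the fact that the fibrations are the algebras of a notion of fibered structure make the standard type formers available: $\Sigma$, $\Pi$, identity types via path objects, units, empty and sum types (using that the underlying category is a Grothendieck topos). Because the type formers are only pseudo-stable under substitution, I would then apply a coherence strictification, for instance the local universes construction of Lumsdaine--Warren. This strictification also needs the fibered structure to be locally representable, so that each class of fibrations has a representable classifier.

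The universes come from \ref{itm:notion-of-fibered}. Local representability of $\FF$ yields, for each regular cardinal $\kappa$, an object $U_\kappa$ classifying relatively $\kappa$-presentable morphisms equipped with $\FF$-structure, that is, fibrations. Its universal family $\widetilde{U}_\kappa \to U_\kappa$ is built by the Hofmann--Streicher method. I would take $\lambda$ large enough that the generating sets of the combinatorial model structure and the fibered structure are $\lambda$-presentable. Then, for each inaccessible $\kappa > \lambda$, relatively $\kappa$-presentable fibrations are closed under the type formers, because $\kappa$-smallness is preserved by the relevant finite limits and exponentials when $\kappa$ is inaccessible. This gives universe types closed under the standard type formers, with as many universes as there are such $\kappa$.

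Two properties remain: each $U_\kappa$ must be fibrant, and it must be univalent. Relative acyclicity of $\FF$ says that $\FF$-structures extend along trivial cofibrations. Combined with local representability, this shows $U_\kappa \to 1$ has the right lifting property against trivial cofibrations, so $U_\kappa$ is fibrant. For univalence, I would show that the evaluation from the object of equivalences, $\Eq(\widetilde U_\kappa)\to U_\kappa$, is a trivial fibration. This reduces to equivalence extension: given a monomorphism $A\hookrightarrow B$, a fibration over $A$, and an equivalence to it from the restriction of a fibration over $B$, the data extend over $B$. Equivalence extension would be proved by a Glue-type construction using $\Pi$ along monomorphisms and the fact that cofibrations are monomorphisms, together with relative acyclicity to transport the $\FF$-structure.

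I expect fibrancy of the universe and equivalence extension to be the main obstacle. Fibrancy is exactly where the carefully chosen abstract notion of fibered structure is needed: local representability and relative acyclicity must be combined so that a lift against a trivial cofibration glues compatibly.
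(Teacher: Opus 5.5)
The paper gives no proof of this statement; it only cites Shulman and recalls his ingredients: universes with the extension property of \cref{defn:fib-universe}, realignment, and \cref{thm:shulman-universe}. Your overall architecture matches his, but the fibrancy step has a genuine gap. Relative acyclicity does not say that $\FF$-structures extend along trivial cofibrations. It says that for an arbitrary cofibration $i\colon A \rightarrowtail B$ and a map $p$ over $B$ that is \emph{already} an $\FF$-algebra, a structure on $i^*p$ extends to one on $p$. To extend $h\colon A \to U_\kappa$ along a trivial cofibration $j\colon A \to B$, you first need a (relatively $\kappa$-presentable) fibration over $B$ whose pullback along $j$ is exactly the fibration $q\colon D \to A$ classified by $h$. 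Neither relative acyclicity nor local representability produces one; this fibration extension property is the real homotopical content. The standard route obtains it from equivalence extension:
\begin{itemize}
\item factor $jq$ as a trivial cofibration $D \to E$ followed by a fibration $E \to B$;
\item right properness makes $j^*E \to E$ a weak equivalence, so $D \to j^*E$ is a weak equivalence between fibrations over $A$;
\item equivalence extension then yields a fibration over $B$ whose restriction along $j$ is $D$.
\end{itemize}
Only then do relative acyclicity and the extension property of the universe (realignment) give a map $B \to U_\kappa$ extending $h$. So equivalence extension is needed for fibrancy, not only for univalence. Moreover, your Glue sketch handles it directly only when the given equivalence is a trivial fibration, since then its image under $\Pi_i$ is again a trivial fibration; the general case needs a further reduction.

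Two further points need repair. First, the universe construction. Hofmann--Streicher is available in presheaf toposes, but a type-theoretic model topos is only assumed to be a Grothendieck topos. More importantly, fibrancy and univalence both need more than a classifier in which every small $\FF$-algebra is \emph{some} pullback of $\pi$: they need strict extension of classifying maps along cofibrations. Shulman builds such universes by a small-object-argument construction, using that a locally representable $\FF$ is a stack for cell complexes. That construction is where local representability is used; the Lumsdaine--Warren strictification itself only needs a pullback-stable class of display maps. Second, sum and empty types do not come from the topos structure. In a type-theoretic model topos, $0$ and $1 \sqcup 1$ need not be fibrant. For example, in the \v{C}ech-local injective model structure on simplicial presheaves on the open sets of a space, both fail descent at the empty open set, which is covered by the empty family. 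Even closure of fibrations under coproducts is not among the axioms; the paper has to add fibration-extensivity as a separate hypothesis. If such types are included among the standard type formers, they have to be built by fibrant replacement, in the style of Lumsdaine--Shulman's semantics of higher inductive types.
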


Shulman then proves:

\begin{thm}[{\cite[6.6]{shulman}}]
  Every type-theoretic model topos is a model topos.
\end{thm}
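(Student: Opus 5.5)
Since this is Shulman's result, the plan is to reconstruct his argument: show that a type-theoretic model topos $\mE$ admits a small presentation and has descent, and then conclude by Rezk's characterization above.

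For the small presentation, I will use the axioms of \cref{defn:ttmt}. They say that $\mE$ is a combinatorial simplicial model category whose underlying category is a Grothendieck 1-topos. Dugger's theorem \cite{dugger-combinatorial} then gives a small presentation, so the real content lies in descent. Because $\mE$ is simplicial, combinatorial, and its cofibrations are the monomorphisms, homotopy colimits can be computed as ordinary colimits of cofibrant (that is, arbitrary) diagrams in the injective model structure on $\mE^I$. Right properness lets homotopy pullbacks along fibrations be computed as strict pullbacks.

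The first descent condition, stability of homotopy colimits under homotopy pullback, I will reduce to two inputs. First, colimits are universal in a Grothendieck 1-topos. Second, pullback along a fibration preserves weak equivalences between the relevant diagrams. I will argue this by writing a homotopy colimit as a coend of the diagram with a cofibrant weight, meaning a simplicial tensor. Pullback commutes with coproducts and coequalizers by 1-topos exactness, and with simplicial tensors by the simplicially locally cartesian closed axiom. It also preserves weak equivalences by right properness combined with the fact that fibrations are stable under pullback.

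The second descent condition, effectivity of cartesian transformations, is the main obstacle, and here I will use the fibered structure axiom \ref{itm:notion-of-fibered}. Local representability and relative acyclicity give, for large $\kappa$, a fibrant univalent universe $U$ classifying $\kappa$-small fibrations; by univalence, $U$ is a fibrant object representing the core of the $\infty$-category of $\kappa$-small fibrations. Given a cartesian natural transformation $Y_\bullet \to X_\bullet$ whose components are fibrations (after fibrant replacement in the injective structure on the arrow diagram), I will classify it by a homotopy-coherent cocone $X_\bullet \to U$. The strategy is to rigidify this cocone using fibrancy of $U$, which gives a map $\hocolim X_\bullet \to U$ extending the coherent cocone. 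I then pull back the universal fibration along this map to obtain a fibration over the homotopy colimit. Its restrictions to each $X_i$ are homotopy pullbacks recovering $Y_i$ by univalence, and the first descent condition identifies its total space with $\hocolim Y_\bullet$. Together these show that the naturality squares of the colimit cocone are homotopy pullbacks.

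The delicate points will be cardinality bookkeeping, which I will handle by choosing $\kappa$ above the sizes of all the $Y_i$, and strictifying the coherent cocone into $U$. For the latter I expect to use that $\mE^I$ with the injective structure is itself well-behaved and that maps into a fibrant object from a cofibrant homotopy colimit are computed by homotopy limits of mapping spaces.
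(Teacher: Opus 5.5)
Your overall plan is the right shape: a small presentation from Dugger's theorem, then Rezk's two descent conditions, with the second extracted from the univalent universes. The paper gives no proof of its own and only cites Shulman. As far as I recall, the cited argument works in the underlying $\infty$-category and uses Lurie's characterization of $\infty$-topoi by universal colimits and object classifiers \cite[6.1.6.8]{lurie-topos}, with the univalent universes supplying the classifiers.

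One correction to your first step. Homotopy colimits are \emph{not} ordinary colimits of injectively cofibrant (that is, arbitrary) diagrams, because $\colim$ is not left Quillen for the injective structure. For example, the pushout of $\Delta^0 \leftarrow \partial\Delta^2 \to \Delta^0$ in $\sSet$ is a point, whereas the homotopy pushout is a $2$-sphere. The Bousfield--Kan weighted colimit you then invoke is the correct formula, and with it your argument for the first descent condition works for a strict cocone. Universality of colimits and pullback-stability of tensors identify $Y \times_X \hocolim_i X_i$ with $\hocolim_i (Y \times_X X_i)$, and right properness compares this with $Y$.

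The genuine gap is in the second descent condition. Univalence applied to a single arrow $i \to j$ gives a homotopy between the classifying maps $X_i \to X_j \to U$ and $X_i \to U$. A coherent cocone, however, needs compatible higher homotopies for every chain $i_0 \to \cdots \to i_n$, and nothing in your sketch produces them. The fact that $\Map(\hocolim X_\bullet, U)$ is the homotopy limit of the $\Map(X_i,U)$ only translates an already-given coherent cocone into a map out of the homotopy colimit; for the Bousfield--Kan construction this is literally an isomorphism. So the rigidification step you flag is not where the difficulty lies.

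More seriously, suppose you had $h \colon \hocolim X_\bullet \to U$ and set $E = h^*\tilde{U}$. Knowing $E \times_{\hocolim X_\bullet} X_i \simeq Y_i$ separately for each $i$ still does not let you apply the first descent condition to get $E \simeq \hocolim Y_\bullet$. For that, these equivalences must assemble into a natural weak equivalence of $I$-indexed diagrams, equivalently a weak equivalence $\hocolim Y_\bullet \to E$ over $\hocolim X_\bullet$.

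The missing input is that the universe is an object classifier. That is, $\Map(-,U)$ must be naturally (in the $\infty$-categorical sense) equivalent to the core of the relatively $\kappa$-presentable fibrations over $-$. Representability then turns homotopy colimits into homotopy limits, which is exactly the second descent condition.

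If you want to stay model-categorical, build $h$ and the comparison map $\hocolim Y_\bullet \to h^*\tilde{U}$ together, by induction over the skeletal filtration of the Bousfield--Kan construction. Its cells are $\partial\Delta^n \otimes X_{i_0} \rightarrowtail \Delta^n \otimes X_{i_0}$ for nondegenerate chains. On each cell, first use the equivalence extension property (available from univalence and fibrancy of $U$). It extends the fibration already built on the boundary to one receiving a weak equivalence from $\Delta^n \otimes Y_{i_0}$, compatibly with the boundary. Then use realignment to extend $h$. A gluing-lemma argument then shows the comparison is a weak equivalence. As written, the step ``restrictions recover $Y_i$ by univalence, so the total space is $\hocolim Y_\bullet$'' does not follow.
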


For the converse, Shulman demonstrates:

\begin{thm}[{\cite[6.8, 8.29, 10.5, 11.1]{shulman}}]\label{thm:shulman-tmtt-presentation}
  Quillen's model structure on simplicial sets, its injective model structures on simplicial presheaves, and left exact localizations thereof all define type-theoretic model topoi.
  Consequently, every model topos is Quillen equivalent to a type-theoretic model topos, and thus every $\infty$-topos can be presented by a type-theoretic model topos.
\end{thm}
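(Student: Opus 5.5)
The statement is a combination of two results of Shulman together with Dugger's presentation theorem and Rezk's characterization of model topoi. I would therefore assemble it from those citations in three steps.

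\textbf{Step 1: simplicial sets.} Quillen's model structure on $\sSet$ is a type-theoretic model topos. The first three axioms of \cref{defn:ttmt} are classical: $\sSet$ is a Grothendieck topos, the Kan--Quillen model structure is proper, simplicial and combinatorial with monomorphisms as cofibrations, and simplicial tensors $K \times -$ are preserved by pullback. The remaining work is axiom \ref{itm:notion-of-fibered}. I would take $\FF$ to be the notion of fibered structure whose algebras are choices of lifts against the horn inclusions. Local representability follows because lifts against a set of finitely presentable maps are classified by a pullback-stable exponential construction. Relative acyclicity amounts to the statement that the map forgetting fibration structure is a trivial fibration. This is the content of the Kan-fibration universe arguments of Kapulkin--Lumsdaine, in the form Shulman extracts.

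\textbf{Step 2: injective simplicial presheaves and left exact localizations.} For $\sSet^{\cD^\op}$ with the injective structure, axioms (i)--(iii) transfer pointwise. Injective fibrations are not pointwise, so the key move is Shulman's construction of a notion of fibered structure for them. He presents injective fibrancy as algebra structure for a cobar/coreflection-style construction, and shows that this notion is locally representable and relatively acyclic. For a left exact localization at a set $S$, fibrations are the injective fibrations that are also local. Locality is expressed by an additional fibered structure given by internal orthogonality against $S$, which is locally representable. Left exactness, via Rezk's characterization, is exactly what makes the combined notion relatively acyclic and the localized model category right proper. I expect this step to be the main obstacle: ensuring that local fibrancy can be encoded as a relatively acyclic fibered structure. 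For that I would cite Shulman's detailed arguments in his \S\S8--11 rather than reprove them.

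\textbf{Step 3: consequences.} Let $\mE$ be any model topos. By Rezk's theorem [6.9], $\mE$ is Quillen equivalent, possibly via a zig-zag, to a left exact localization $\sSet^{\cD^\op}_S$ of an injective model structure. By Step 2, the latter is a type-theoretic model topos, which gives the second claim. Every $\infty$-topos is an accessible left exact localization of a presheaf $\infty$-category, and Dugger's presentation theorem realizes such a localization as a model topos. Combining this with the second claim gives the final clause.
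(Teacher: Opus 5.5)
Your proposal is correct and follows the paper's route. The paper gives no argument of its own beyond citing Shulman's results 6.8 (simplicial sets), 8.29 (injective model structures on enriched diagrams, applied over the base $\sSet$), 10.5 (left exact localizations) and 11.1 (presentation of all $\infty$-topoi), together with Rezk's characterization of model topoi exactly as in your Step~3. One correction to your gloss of Step~2: relative acyclicity of the extra locality structure does not come from left exactness but, as in \cref{lem:triv-fib-loc-rep}, from locality being a homotopy proposition; left exactness is what lets the localized fibrations be described fiberwise (so that they are exactly the algebras of such a notion) and what makes the localized model category right proper.
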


In summary, any $\infty$-topos $\iE$ may be presented by a simplicial model category $\mE \coloneqq \sSet^{\cD^\op}_S$ defined by left exact localization of the injective model structure, and such model categories are both model topoi and type-theoretic model topoi.
For our results, we will use one additional property satisfied by these model categories beyond those axiomatized in \cref{defn:ttmt}.

\begin{defn}
  A type-theoretic model topos is \textbf{fibration-extensive} when its fibrations are closed under coproduct.
\end{defn}

\begin{prop}\label{prop:fibration-extensive-trivial}
  In a fibration-extensive type-theoretic model topos, trivial fibrations are closed under coproduct.
\end{prop}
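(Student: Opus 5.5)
We need to show that a coproduct $\coprod_i p_i$ of trivial fibrations $p_i \colon X_i \to Y_i$ is again a trivial fibration. The fibration part is immediate from fibration-extensivity. It remains to show the coproduct is a weak equivalence, or equivalently that it has the right lifting property against all monomorphisms, since the cofibrations are exactly the monomorphisms.

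\emph{Lifting-property route.} Take a lifting problem of a monomorphism $i \colon A \to B$ against $\coprod_i p_i \colon \coprod_i X_i \to \coprod_i Y_i$. The underlying category is a Grothendieck topos, so coproducts are disjoint and stable under pullback. The bottom map $B \to \coprod_i Y_i$ therefore decomposes $B$ as $\coprod_i B_i$, where $B_i$ is the pullback of $Y_i$. Pulling back further decomposes $A$ as $\coprod_i A_i$ with $A_i = A \times_B B_i$, and each $A_i \to B_i$ is a monomorphism.

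The top map $A \to \coprod_i X_i$ lies over $\coprod_i Y_i$. Using disjointness of coproducts, it restricts to maps $A_i \to X_i$. Each resulting square then lifts because $p_i$ is a trivial fibration, and the lifts assemble into a lift $B = \coprod_i B_i \to \coprod_i X_i$.

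I expect the main point of care to be checking that $A_i$ maps into the summand $X_i$ rather than merely into $\coprod_j X_j$. This follows because $A_i \to \coprod_j X_j \to \coprod_j Y_j$ factors through $Y_i$. Since $\coprod_j X_j \times_{\coprod_j Y_j} Y_i \cong X_i$ by extensivity, the map $A_i \to \coprod_j X_j$ factors through $X_i$.

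\emph{Alternative via fibration-extensivity.} One could instead try to obtain the result from fibration-extensivity itself, using right properness and the fact that trivial fibrations are the fibrations with contractible fibers. The direct argument above is simpler and uses only extensivity of the underlying topos, so fibration-extensivity is needed only for the fibration half of the claim.
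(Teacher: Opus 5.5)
Your argument is correct, but it takes a different route from the paper.

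\textbf{The paper's route.} The paper argues homotopically. Every object is cofibrant, and trivial cofibrations (a left class) are closed under coproducts. So Ken Brown's lemma, applied to the coproduct functor, shows that coproducts of weak equivalences are weak equivalences. Fibration-extensivity then supplies the fibration half.

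\textbf{Your route.} You instead solve the lifting problem against an arbitrary monomorphism directly. You use only two facts: the underlying Grothendieck topos is extensive (universal, disjoint coproducts and a strict initial object, giving $\coprod_j X_j \times_{\coprod_j Y_j} Y_i \cong X_i$), and monomorphisms are stable under pullback. This is the same decomposition trick the paper uses for Kan fibrations of simplicial sets, applied to all monomorphisms at once.

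\textbf{Your framing understates what you proved.} Since the cofibrations are exactly the monomorphisms, having the right lifting property against all monomorphisms already \emph{is} being a trivial fibration. So your opening ``fibration part'' step and your closing remark are superfluous. Fibration-extensivity is not needed at all: your argument shows trivial fibrations are closed under coproducts in any type-theoretic model topos.

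\textbf{What each approach buys.} The paper's argument does not use extensivity of the underlying category. It works in any model category with all objects cofibrant and fibrations closed under coproducts, and it yields closure of weak equivalences under coproducts as a byproduct. You can recover that byproduct as well: take coproducts of (trivial cofibration, trivial fibration) factorizations.
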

\begin{proof}
  Since the cofibrations are the monomorphisms, all objects are cofibrant.
  Thus closure of trivial cofibrations under coproducts implies closure of weak equivalences under coproducts by Ken Brown's lemma, which combined with fibration-extensivity implies closure of trivial fibrations under coproduct.
\end{proof}

\begin{lem}\label{lem:sset-coproducts} Quillen's model structure on simplicial sets is fibration-extensive.
\end{lem}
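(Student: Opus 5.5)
The plan is to use the characterization of Kan fibrations by the right lifting property against horn inclusions $\Lambda^n_k \hookrightarrow \Delta^n$, together with the fact that the source and target of each horn inclusion are connected. Let $\{p_i \colon X_i \to Y_i\}_{i \in I}$ be a family of Kan fibrations, and write $p = \coprod_i p_i \colon \coprod_i X_i \to \coprod_i Y_i$. We must show $p$ has the right lifting property against every $\Lambda^n_k \hookrightarrow \Delta^n$, where $n \ge 1$ and $0 \le k \le n$.

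The first step is the observation that a map from a connected simplicial set $K$ into a coproduct $\coprod_i Y_i$ factors uniquely through exactly one summand $Y_i$. This holds because coproducts in $\sSet$ are computed levelwise, so the simplices of a coproduct are partitioned by the summands and face and degeneracy maps respect this partition. Consequently, the summands containing the simplices of $K$ must agree along every face relation, and connectedness forces a single summand. Both $\Delta^n$ and $\Lambda^n_k$ (for $n \ge 1$) are connected; for the horn, every face in it contains the vertex $k$.

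Given a lifting problem with top map $u \colon \Lambda^n_k \to \coprod_i X_i$ and bottom map $v \colon \Delta^n \to \coprod_i Y_i$, the map $v$ factors through a unique $Y_{i_0}$. Since coproducts in $\sSet$ are disjoint and stable under pullback (the topos is extensive), $p^{-1}(Y_{i_0}) = X_{i_0}$. Hence $u$, whose composite with $p$ lands in $Y_{i_0}$, factors through $X_{i_0}$. This reduces the problem to a lifting problem against $p_{i_0}$, which is solvable because $p_{i_0}$ is a Kan fibration. Composing the resulting lift with the coproduct inclusion solves the original problem.

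There is no serious obstacle. The only point needing care is the factorization of $u$ through $X_{i_0}$, which is exactly extensivity, and the connectedness of horns. The case $n = 0$ does not arise, since horns have $n \ge 1$. Alternatively, one can reach the same conclusion via the pullback argument just given: an element of $\coprod_i X_i$ lies over $Y_{i_0}$ exactly when it lies in $X_{i_0}$.
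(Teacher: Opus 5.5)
Your proof is correct and follows the same route as the paper: the bottom map out of $\Delta^n$ lands in a single summand, which the paper gets from representability of $\Delta^n$ and disjointness of coproducts in $\Set$ and you get from connectedness, and extensivity then pulls the whole lifting problem back to a single component fibration. The connectedness of the horn $\Lambda^n_k$ that you invoke is not needed, since the top map already factors through $X_{i_0}$ by the pullback argument.
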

\begin{proof}
Since the codomains of the generating trivial cofibrations are representable and coproducts in $\Set$ are disjoint, any lifting problem against a coproduct of fibrations factors through one of the component inclusions and the pullback picks out a single component fibration. This solves the original lifting problem
\[ \begin{tikzcd}[/tikz/baseline=(B.base)] \Lambda^n_k \arrow[d, utcofarrow] \arrow[rr, bend left] \arrow[r, dotted] & Y^j \arrow[d, two heads, dotted, "f^j"'] \arrow[r, hook, dotted] \arrow[dr, phantom, "\lrcorner" very near start] & \coprod_i Y^i \arrow[d, two heads, "f^i"] \\ \Delta^n \arrow[ur, dashed] \arrow[r, dotted] \arrow[rr, bend right, ""{name=B}] & X^j \arrow[r, hook, dotted]& \coprod_i X^i \rlap{.}
\end{tikzcd} \qedhere
\]
\end{proof}

\begin{lem}\label{lem:injective-coproducts} For any small simplicially enriched category $\cD$, the injective model structure on simplicial presheaves $\sSet^{\cD^\op}$ is fibration-extensive.
\end{lem}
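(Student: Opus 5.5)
We would reduce the claim to a statement about trivial cofibrations, using that fibrations in the injective model structure are exactly the maps with the right lifting property against injective trivial cofibrations, i.e.\ monomorphisms that are pointwise weak equivalences. The difficulty compared with \cref{lem:sset-coproducts} is that there is no convenient set of generators with representable codomains. So instead of splitting an arbitrary lifting problem along the components, we use extensivity of the presheaf topos $\sSet^{\cD^\op}$: coproducts are disjoint and stable under pullback, computed objectwise.

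Let $f^i \colon Y^i \to X^i$ be injective fibrations and consider a lifting problem of a trivial cofibration $j \colon A \to B$ against $\coprod_i f^i$, with bottom map $b \colon B \to \coprod_i X^i$. First pull back the coproduct decomposition of $\coprod_i X^i$ along $b$. This gives $B \cong \coprod_i B^i$ with $B^i \coloneqq b^{-1}(X^i)$, computed objectwise in $\sSet$ and hence again a simplicial presheaf. Pulling back further along $j$ gives $A \cong \coprod_i A^i$ with $A^i = j^{-1}(B^i)$. The top map $A \to \coprod_i Y^i$ restricts on $A^i$ to a map landing in $Y^i$, because its composite to $\coprod_i X^i$ lands in $X^i$ and the coproduct of the $Y^i$ is the pullback of the coproduct of the $X^i$ along $\coprod_i f^i$ (disjointness and universality of coproducts in a topos). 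We therefore get component lifting problems of $j^i \colon A^i \to B^i$ against $f^i$. Solutions of these assemble into a solution of the original problem by the universal property of $\coprod_i B^i$.

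The step we expect to carry the weight is showing that each $j^i$ is again an injective trivial cofibration. Being a monomorphism is clear, since $j^i$ is a pullback of a monomorphism. For the weak equivalence, we work objectwise at each $d \in \cD$: $j_d = \coprod_i j^i_d$ is a weak equivalence of simplicial sets and a coproduct of the maps $j^i_d$. We then use that a coproduct decomposition of a weak equivalence of simplicial sets along a common decomposition of the codomain forces each summand to be a weak equivalence. The reason is that weak equivalences in $\sSet$ are detected on $\pi_0$ and on all homotopy groups at all basepoints, and these invariants split over coproducts. Concretely, $\pi_0(A_d) = \coprod_i \pi_0(A^i_d)$ maps compatibly with the decomposition, so a bijection restricts to bijections on each summand, and the homotopy groups at a basepoint in $A^i_d$ only see the component containing it. Hence each $j^i$ is a pointwise weak equivalence and a monomorphism, so $f^i$ lifts against it, completing the argument.
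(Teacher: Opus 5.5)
Your argument is correct, and it takes a different route from the paper's. The paper never sets up a lifting problem. It first gets fibration-extensivity of the \emph{projective} structure pointwise from \cref{lem:sset-coproducts}. It then invokes Shulman's characterization \cite[8.22]{shulman}: injective fibrations are the projective fibrations whose pullback-corner map into the naturality square of the cobar unit $\nu$ admits a retraction. Using the pullback square comparing $\coprod_i (X^i)^A$ with $(\coprod_i X^i)^A$, the pointwise formula for enriched right Kan extensions, and descent, it shows that the corner map for $\coprod_i f^i$ is the coproduct of the corner maps for the $f^i$, so the retractions can be summed.

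You instead work straight from the definition of injective fibrations as maps with the right lifting property against monic pointwise weak equivalences. You split an arbitrary lifting problem along the decomposition of the codomain using extensivity. This reduces everything to two facts: monomorphisms are pullback-stable, and a coproduct of maps of simplicial sets, indexed compatibly with a decomposition of the codomain, is a weak equivalence only if each summand is. Your $\pi_0$ and homotopy-group argument for the second fact is fine, including the case of empty summands.

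What your route buys: it is more elementary and self-contained, with no cobar construction and no descent argument. It is also visibly more general. It applies verbatim to any model structure on an extensive category whose cofibrations are the monomorphisms and whose weak equivalences split along coproduct decompositions. In particular, the same argument would handle the localized case of \cref{lem:localization-coproducts} once one knows that $S$-local equivalences split summand-wise.

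What the paper's route buys: it stays inside Shulman's explicit, retraction-based description of injective fibrations, assembling the witnessing structure summand by summand. That fits the notion-of-fibered-structure viewpoint used elsewhere in the paper.
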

\begin{proof}
  Since coproducts in $\sSet^{\cD^\op}$ are defined pointwise, it follows directly from \cref{lem:sset-coproducts} that the \emph{projective} model structure is fibration-extensive.
  In \cite[8.22]{shulman}, Shulman characterizes the \emph{injective} fibrations as projective fibrations $f \colon Y \to X$ for which the pullback corner map in the naturality square of the unit of the cobar monad has a retraction
\begin{equation}\label{eq:injective-fib-char} \begin{tikzcd} Y \arrow[dd, two heads, "f"'] \arrow[dr, dashed] \arrow[rr, "\nu_Y"]& [-.25cm] & C(R,UR,UY) \arrow[dd, "{C(R,UR,Uf)}"] \\ [-.5cm] & \bullet \arrow[dl, dashed] \arrow[ur, dashed] \arrow[ul, bend left, dotted] \arrow[dr, phantom, "\lrcorner" very near start] \\ X \arrow[rr, "\nu_X"'] & & C(R,UR,UX) \rlap{.}
\end{tikzcd}\end{equation}

For any simplicial set $A$, while the functor $(-)^A$ does not preserve coproducts, the canonical comparison map is a pullback
\[ \begin{tikzcd} \coprod_i (X^i)^A \arrow[d, ] \arrow[r] \arrow[dr, phantom, "\lrcorner" very near start] & (\coprod_i X^i)^A \arrow[d] \\ \coprod_i \Delta^0  \arrow[r] & \Delta^0 \end{tikzcd}\] and hence for any family of maps, the natural square from $\coprod_i (f^i)^A$ to $(\coprod_if^i)^A$ is a pullback. It follows, by the formula for pointwise enriched right Kan extensions and descent, that the natural square
\[ \begin{tikzcd} \coprod_i C(R,UR,UY^i) \arrow[r] \arrow[d] \arrow[dr, phantom, "\lrcorner" very near start] & C(R,UR,U(\coprod_iY^i)) \arrow[d] \\ \coprod_i C(R,UR,UX^i) \arrow[r] & C(R,UR,U(\coprod_iX^i))\end{tikzcd}\] is a pullback. Since the category of simplicial sets is extensive, it follows that when $f \coloneqq \coprod_i f_i$ is a coproduct, the pullback corner map in \eqref{eq:injective-fib-char} is a coproduct of the corresponding maps for $f_i$, so it has a retraction defined by the coproduct of their retractions. Thus, injective fibrations in  $\sSet^{\cD^\op}$ are also closed under coproducts.
\end{proof}

\begin{lem}\label{lem:localization-coproducts}  For any small simplicially enriched category $\cD$, any left exact localization of the injective model structure on simplicial presheaves $\sSet^{\cD^\op}$ is fibration-extensive.
\end{lem}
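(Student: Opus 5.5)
We want to show that in a left exact left Bousfield localization $\sSet^{\cD^\op}_S$ of the injective model structure, fibrations are closed under coproduct. The plan is to characterize fibrations in the localization as injective fibrations that are, in addition, suitably ``local'' relative to their base, and then check that both conditions pass to coproducts. By \cref{lem:injective-coproducts}, a coproduct $f = \coprod_i f^i \colon \coprod_i Y^i \to \coprod_i X^i$ of fibrations of the localization is at least an injective fibration. So it remains to verify the locality condition.

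For this I will use the standard characterization of fibrations in a left exact (equivalently, by Rezk, one with descent, hence right proper) left Bousfield localization. An injective fibration $f \colon Y \to X$ is a local fibration if and only if it is \emph{homotopy cartesian} over a fibrant replacement of its base. Concretely, choose a localized-fibrant replacement $X \to \hat X$, i.e.\ a trivial cofibration in the localized structure. Factor the composite $Y \to \hat X$ as a local trivial cofibration followed by a local fibration $\hat Y \to \hat X$. Then $f$ is a local fibration iff the square from $f$ to $\hat f$ is an (injective) homotopy pullback. One direction holds because pullbacks of local fibrations are local fibrations and, by right properness, homotopy pullbacks of them are local fibrations up to the injective-fibration hypothesis. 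The converse is the usual argument for left exact localizations: local equivalences are stable under pullback along fibrations. I will take this characterization as known for left exact localizations; for example, it follows from \cite{rezk} together with \cite[\S11]{shulman}, where Shulman describes the fibrations in left exact localizations as exactly those satisfying such a condition.

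Now apply this to the coproduct. Pick replacements $X^i \to \hat X^i$ and $\hat Y^i \to \hat X^i$ for each $i$, with each square $Y^i \to \hat Y^i$ over $X^i \to \hat X^i$ a homotopy pullback. Take coproducts. First, $\coprod_i X^i \to \coprod_i \hat X^i$ is a local trivial cofibration, since trivial cofibrations are closed under coproducts. Second, $\coprod_i \hat Y^i \to \coprod_i \hat X^i$ is an injective fibration by \cref{lem:injective-coproducts}. Third, the coproduct of homotopy pullback squares is again a homotopy pullback square. This holds because coproducts are homotopy coproducts (all objects are cofibrant) and are stable under pullback by descent, or directly because the underlying injective structure is pointwise in $\sSet$, where it is clear.

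\textbf{Main obstacle.} The weakest point is that $\coprod_i \hat X^i$ need not be locally fibrant, so the characterization is not immediately applicable in the form stated. To handle this, I would instead use the formulation ``$f$ is a local fibration iff it is an injective fibration and for every local equivalence $A \to B$ and every map $B \to X$ the pulled-back map is a local equivalence'', or equivalently test $f$ by pulling back along maps from representables $\cD(-,d)\otimes \Delta^n$ (or generating local trivial cofibrations) into $\coprod_i X^i$. These codomains are connected, so any such map factors through a single summand $X^j$. This reduces the lifting against generating local trivial cofibrations to lifting against $f^j$, exactly as in the proof of \cref{lem:sset-coproducts}.

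The step I expect to be hardest is justifying the reduction to connected codomains. Generating local trivial cofibrations of a localization need not have connected codomains in general. If this fails, I would fall back on the homotopy-pullback characterization after fibrantly replacing $\coprod_i \hat X^i$ once more and using descent to compare fibers summandwise.
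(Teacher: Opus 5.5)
Your reduction to the locality condition via \cref{lem:injective-coproducts} is the same as the paper's. The locality of $f \coloneqq \coprod_i f^i$, however, is never established, so there is a genuine gap.

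The homotopy-pullback characterization can only certify $f$ by comparing it with a map already known to be a local fibration, such as an injective fibration between local objects. The map $\coprod_i \hat f^i$ is neither. Coproducts of local objects are generally not local, and asserting that $\coprod_i \hat f^i$ is a local fibration is just an instance of the lemma, so that route is circular.

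The direction of the characterization you actually need is that a homotopy pullback of a local fibration is a local fibration. That direction does not use left exactness at all, whereas the statement genuinely depends on it. For instance, take the non-left-exact localization of $\sSet$ at $\Delta^0 \sqcup \Delta^0 \to \Delta^0$, whose local equivalences are the maps with domain and codomain both empty or both nonempty. There $\id_{\Delta^0}$ and $\emptyset \to \Delta^0$ are local fibrations. Their coproduct $\Delta^0 \hookrightarrow \Delta^0 \sqcup \Delta^0$ is a local trivial cofibration, yet it does not lift against itself. This is why the ``connected codomains'' fallback cannot work as stated. Your last fallback (``descent to compare fibers summandwise'') omits exactly the ingredient that makes it go through.

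The missing idea is that left exactness makes local equivalences stable under (homotopy) pullback. Combined with extensivity, this lets you test locality over $\coprod_i X^i$ summand by summand. The paper argues $\infty$-categorically. $S$-locality is right orthogonality, and descent gives an equivalence $\slice{\iE}{\coprod_I 1} \simeq \prod_I \iE$ whose inverse is pullback along the summands. Pullback stability of $S$ then shows this equivalence respects the orthogonality condition.

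In your model-categorical language the fix is short:
\begin{enumerate}
\item Factor $f$ as a local trivial cofibration $w \colon \coprod_i Y^i \to W$ followed by a local fibration $g \colon W \to \coprod_i X^i$.
\item By extensivity, $w$ is the coproduct of its restrictions $w_j \colon Y^j \to W_j \coloneqq W \times_{\coprod_i X^i} X^j$.
\item Each $w_j$ is a homotopy pullback of $w$ along a summand inclusion, hence a local equivalence by left exactness.
\item Each $w_j$ is also a map over $X^j$ between the local fibrations $f^j$ and $W_j \to X^j$, hence an injective weak equivalence.
\item So $w$ is an injective trivial cofibration. Lifting the injective fibration $f$ against $w$, in the square with sides $\id$, $w$, $f$, $g$, exhibits $f$ as a retract of $g$, hence a local fibration.
\end{enumerate}
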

\begin{proof}
  The fibrations in the localized model structure $\sSet^{\cD^\op}_S$ are injective fibrations that are $S$-local.
  Given \cref{lem:injective-coproducts}, it remains only to verify that a coproduct of such maps remains $S$-loca.
  This can be established at the $\infty$-categorical level, where $S$-locality becomes an orthogonality condition.

Let $\iE$ be an $\infty$-topos and $I$ a set. Our task is to show that the $I$-indexed coproduct functor $\coprod_I \colon \prod_I \iE \to \iE$ preserves right orthogonality against the maps in $S$. We may lift the orthogonal factorization system defined by $S$ from $\iE$ to the slice $\slice{\iE}{I}$ and factor the coproduct functor through the projection from the slice to define a functor $\coprod_I \colon \prod_I \iE \to \slice{\iE}{I}$, which is an equivalence, by descent. Our task is again to show that this functor preserves right orthogonality against the maps in $S$. For this it suffices to show that its (left adjoint) equivalence inverse $\slice{\iE}{I} \to \prod_I \iE$ preserves the class $S$. The components of this functor are defined by pullback, so this follows from the pullback stability of $S$ at the $\infty$-categorical level.
\end{proof}

In conclusion, we can slightly enhance the final claim of \cref{thm:shulman-tmtt-presentation}:

\begin{cor}\label{cor:model-topos-coproducts}
  Every $\infty$-topos can be presented by a fibration-extensive type-theoretic model topos.
\end{cor}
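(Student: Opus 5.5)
The corollary follows by combining \cref{thm:shulman-tmtt-presentation} with \cref{lem:localization-coproducts}. Let $\iE$ be an $\infty$-topos. By Lurie's definition, $\iE$ is an accessible left exact reflective sub-$\infty$-category of some presheaf $\infty$-category $\cS^{\cD^\op}$. By Dugger's and Rezk's results recalled in \cref{ssec:model-topoi}, we may take $\cD$ to be a small simplicial category and present $\iE$ by a left exact left Bousfield localization $\sSet^{\cD^\op}_S$ of the \emph{injective} model structure on simplicial presheaves at a set of maps $S$. The footnote in \cref{ssec:model-topoi} notes that using the injective rather than the projective model structure costs nothing up to Quillen equivalence.

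Next, \cref{thm:shulman-tmtt-presentation} (Shulman's 6.8, 8.29, 10.5, 11.1) asserts that exactly such left exact localizations of injective simplicial presheaf model structures are type-theoretic model topoi. So $\sSet^{\cD^\op}_S$ is a type-theoretic model topos presenting $\iE$. This includes the degenerate case $S=\emptyset$, which gives $\sSet^{\cD^\op}$ itself and is covered by \cref{lem:injective-coproducts}.

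Finally, \cref{lem:localization-coproducts} shows that this particular model structure is fibration-extensive, since its fibrations are closed under coproduct. Hence $\iE$ is presented by a fibration-extensive type-theoretic model topos.

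The main point requiring care is that the presentation provided by \cref{thm:shulman-tmtt-presentation} really is of the specific form covered by \cref{lem:localization-coproducts}: a left exact localization of an injective model structure on a small simplicially enriched indexing category. The argument should not pass through an arbitrary Quillen equivalent type-theoretic model topos, because fibration-extensivity is a point-set property that need not be invariant under Quillen equivalence. I would therefore make explicit that the model $\sSet^{\cD^\op}_S$ produced in the first step is itself the type-theoretic model topos witnessing the statement. Beyond that, the proof is a direct assembly of earlier results.
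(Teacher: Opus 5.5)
Your proposal is correct and follows the same route as the paper: it takes the specific presentation from \cref{thm:shulman-tmtt-presentation}, a left exact localization of an injective model structure on some $\sSet^{\cD^\op}$, and applies \cref{lem:localization-coproducts} to that model. You are also right to insist on keeping that particular model rather than an arbitrary Quillen-equivalent one, since fibration-extensivity is a point-set property; this is exactly why the paper's proof is phrased in terms of that specific localization.
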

\begin{proof}
  By \cref{thm:shulman-tmtt-presentation}, every $\infty$-topos can be presented by a type-theoretic model topos in the form of a left exact localization of the injective model structure on some $\sSet^{\cD^\op}$.
  By \cref{lem:localization-coproducts}, such type-theoretic model topoi are fibration-extensive.
\end{proof}

\subsection{Notions of fibered structure}\label{ssec:fibered-structure}

We now discuss the final ingredient \ref{itm:notion-of-fibered} in Shulman's notion of type-theoretic model topos, though we present our \cref{defn:fibered-structure,defn:locally-representable} in a different (but equivalent) form from \cite[\S3]{shulman}. Throughout we fix a 1-category $\mE$ with pullbacks and write $\cart{\mE}$ for the category of arrows and pullback squares.\footnote{More generally, in the case where $\mE$ does not admit all pullbacks, we may interpret $\cart{\mE}$ as the category of arrows admitting base change along arbitrary maps and pullback squares.} The forgetful functor $\cod \colon \cart{\mE} \to \mE$ is a Grothendieck fibration with groupoidal fibers --- a \textbf{right fibration} in $\infty$-categorical terminology --- classifying the groupoid-valued pseudofunctor that sends $X \in \mE$ to the groupoid of arrows with codomain $X$ and acts contravariantly by pullback.

\begin{defn}[{\cite[3.1]{shulman}}]\label{defn:fibered-structure}
   A \textbf{notion of fibered structure} on $\mE$ is a right fibration over $\cart{\mE}$ that has small fibers.
\end{defn}

To align our notation with the literature we write $\FF$ for a notion of fibered structure on $\mE$ and denote the corresponding right fibration by $U_\FF \colon \alg{\FF} \to \cart{\mE}$. Following Shulman, we refer to an object in the fiber over a morphism $f$ in $\mE$ as an $\FF$-\textbf{algebra}, which is thought of as equipping $f$ with a chosen $\FF$-structure. The morphisms in the total category $\alg{\FF}$ are $\FF$-\textbf{morphisms}, pullback squares between $\FF$-algebras in which the $\FF$-structure on the pullback is induced from the $\FF$-structure on the arrow being pulled back.

\begin{defn}[{\cite[3.10]{shulman}}]\label{defn:locally-representable}
  A notion of fibered structure is \textbf{locally representable} just when the right fibration admits a right adjoint:
  \[ \begin{tikzcd} \alg{\FF} \arrow[r, bend left, "U_{\FF}"] \arrow[r, adj] & \cart{\mE} \arrow[l, bend left, dashed, "R_{\FF}"] \rlap{.} \end{tikzcd}\]
\end{defn}

Unpacking \cref{defn:locally-representable}, for a locally representable notion of fibered structure $\FF$, the counit equips each $f \colon Y \to X$ in $\mE$ with the universal $\FF$-algebra obtained as a pullback of $f$:
\[ \begin{tikzcd} P \arrow[d, "U_\FF R_\FF f"'] \arrow[r, "\epsilon'_f"] \arrow[dr, phantom, "\lrcorner" very near start] & Y \arrow[d, "f"] \\ \FF(f) \arrow[r, "\epsilon_f"'] & X \rlap{.} \end{tikzcd}\] For any $g \colon Z \to X$ and $\FF$-algebra structure on $g^*f$, there is a classifying lift of $g$ through $\epsilon_f$ defining an $\FF$-morphism that induces the specified $\FF$-algebra structure on $g^*f$ from the $\FF$-algebra structure on $\epsilon_f^*f$ specified by $R_\FF f$. In particular, $\FF$-algebra structures on $f$ are classified by sections of $\epsilon_f$.

Shulman gives a long list of examples in \cite[\S 3]{shulman}, but we will only need a few.

\begin{ex}[{\cite[3.4,3.13]{shulman}}] There is a notion of fibered structure $U_{\EE_\bullet} \colon \alg{\EE_\bullet} \to \cart{\mE}$ for which an $\EE_\bullet$-structure on a morphism is a section. The category $\alg{\EE_\bullet}$ is the category of split epimorphisms and pullback squares that commute with the specified sections. This notion of fibered structure is locally representable with right adjoint $R_\FF f$ defined by sending $f$ to one leg of its kernel pair, with the canonical diagonal section. Indeed, this is the ``universal'' locally representable notion of fibered structure in a sense, since sections of $\EE_\bullet$-structures on $f \colon Y \to X$ correspond to sections of $f$ itself.
\end{ex}

\begin{ex}[{\cite[3.2, 3.16]{shulman}}] If $\alg{\FF} \hookrightarrow \cart{\mE}$ is a fully faithful inclusion, then $\alg{\FF}$ is just a pullback-stable class of morphisms in $\mE$. In such a case, we say that $\alg{\FF}$ is a \textbf{full notion of fibered structure}, the key property being that any pullback square between $\FF$-algebras is uniquely an $\FF$-morphism. When $\mE$ is a presheaf category, a full notion of fibered structure is locally representable if and only if a map is in the pullback-stable class just when all of its pullbacks to representables are.
\end{ex}

In particular, if $\mE$ is a model category, the fibrations define a notion of fibered structure in this way, but this notion of fibered structure is not always locally representable.

\begin{ex}[{\cite[4.18]{shulman}}] When $\mE$ is locally presentable and locally cartesian closed, there is a regular cardinal $\lambda$ such that for any regular cardinal $\kappa$ with $\kappa \triangleright \lambda$, the relatively $\kappa$-presentable morphisms define a locally representable full notion of fibered structure $\EE^\kappa$.\footnote{An object is $\kappa$-presentable if its covariant representable preserves $\kappa$-filtered colimits. A morphism is relatively $\kappa$-presentable if its pullback over any $\kappa$-presentable object is $\kappa$-presentable. Here $\kappa \triangleright\lambda$ means that $\kappa > \lambda$ is such that any $\lambda$-presentable category is $\kappa$-presentable.}
\end{ex}

Locally representable notions of fibered structure can be constructed from other locally representable notions of fibered structure using various techniques described in \cite[\S 3]{shulman}. Many of these can be understood to derive from the following categorical result:

\begin{lem}\label{lem:loc-rep-pb} Consider a pullback of a Grothendieck fibration $P$ along a functor $F$:
  \[ \begin{tikzcd} \mE' \arrow[d, "P'"'] \arrow[r, "G"] \arrow[dr, phantom, "\lrcorner" very near start] & \mE \arrow[d, "P"] \\ \mB' \arrow[r, "F"'] & \mB \rlap{.} \end{tikzcd} \]
  \begin{parts}
\item\label{lem:loc-rep-pb:term} If $F$ admits a right adjoint $R$ and $\mE$ has a terminal object then so does $\mE'$.
\item\label{lem:loc-rep-pb:ra} If $F$ admits an indexed right adjoint \[ \begin{tikzcd} \slice{\mB'}{b'} \arrow[r, bend left, "F"] \arrow[r, adj] & \slice{\mB}{Fb'} \arrow[l, bend left, "R_{b'}" pos=.45]  \end{tikzcd} \] for all $b' \in \mB'$  and $P$ admits a right adjoint then so does $P'$.
  \end{parts}
\end{lem}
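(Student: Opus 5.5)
Both parts come down to the explicit description of $\mE'$ as the pullback of categories: its objects are pairs $(b',e)$ with $b' \in \mB'$, $e \in \mE$ and $Fb' = Pe$, and its morphisms are pairs of morphisms agreeing in $\mB$. I will construct the required terminal object, respectively right adjoint, directly, and check the universal property componentwise.

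For \ref{lem:loc-rep-pb:term}, let $1$ be terminal in $\mE$ and put $t \coloneqq R(P1) \in \mB'$, with counit $\epsilon \colon FRP1 \to P1$. Since $P$ is a Grothendieck fibration, I choose a cartesian lift $\bar\epsilon \colon \epsilon^*1 \to 1$ of $\epsilon$, and set $T \coloneqq (t, \epsilon^*1)$. To see $T$ is terminal, take any $(b',e)$. A map $(b',e) \to T$ consists of $u \colon b' \to t$ and $v \colon e \to \epsilon^*1$ with $Pv = Fu$. Composing with $\bar\epsilon$, the cartesian property of $\bar\epsilon$ identifies such $v$ with maps $e \to 1$ lying over $\epsilon \circ Fu$. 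There is exactly one map $e \to 1$, so its image $P(e \to 1) \colon Fb' \to P1$ is determined. By the adjunction, $u$ is uniquely determined as the transpose of that image, and then $v$ is uniquely determined by cartesianness. Existence follows by running the same steps forwards.

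For \ref{lem:loc-rep-pb:ra}, write $Q$ for the right adjoint of $P$. For $b \in \mB$ there is a counit $\eta_b \colon PQb \to b$. The argument above, carried out in slices, produces a right adjoint to $P'$. Given $b' \in \mB'$, consider $PQFb' \xrightarrow{\eta} Fb'$, an object of $\slice{\mB}{Fb'}$. Applying the indexed right adjoint gives $R_{b'}(\eta) \colon c' \to b'$ with counit $\epsilon \colon Fc' \to PQFb'$ over $Fb'$. Take a cartesian lift $\epsilon^*QFb'$ of $\epsilon$, and set $Q'b' \coloneqq (c', \epsilon^*QFb')$, with the counit $P'Q'b' = c' \to b'$.

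The universal property is then checked as in part \ref{lem:loc-rep-pb:term}. A map $f' \colon b'' \to b'$ with $b'' = P'(d,e)$ must correspond to a unique map $(d,e) \to Q'b'$ over it. The corresponding map $e \to QFb'$ is the transpose of $Ff'$ under $P \dashv Q$. Its image under $P$ then gives a map in $\slice{\mB}{Fb'}$ from $Fd \xrightarrow{Ff'} Fb'$ to $\eta$. By the indexed adjunction this corresponds to a unique $d \to c'$ over $b'$, and cartesianness supplies the unique lift into $\epsilon^*QFb'$. Naturality is then formal.

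The main bookkeeping obstacle is matching the two compatibility conditions. One has to check that the maps in $\slice{\mB}{Fb'}$ produced from $P \dashv Q$ correspond exactly to the commuting triangles required by the indexed adjunction $F \dashv R_{b'}$. That is where the slice-wise right adjoint is needed, rather than a global one.
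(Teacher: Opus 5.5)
Your proof is correct and follows the paper's: part (i) is the same construction, taking the cartesian lift of the counit $FRP1 \to P1$ and pairing it with $RP1$. For part (ii) you build by hand exactly the object the paper obtains by applying (i) to the induced pullback of comma categories $\slice{P'}{b'} \to \slice{P}{Fb'}$ over $\slice{\mB'}{b'} \to \slice{\mB}{Fb'}$; you verify its universal property directly, where the paper instead notes that this square satisfies the hypotheses of part (i).
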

\begin{proof}
  For \ref{lem:loc-rep-pb:term}, let $t \in \mE$ be terminal and consider the counit component $\epsilon \colon FRPt \to Pt \in \mB$. If $e \in \mE$ denotes the domain of the cartesian lift then the pair $(RPt, e)$ induces an object in $\mE'$ which can be seen to be terminal.

  For \ref{lem:loc-rep-pb:ra}, our task is to construct a terminal object in the comma category $\slice{P'}{b'}$ for each $b' \in \mB'$; since $P$ has a right adjoint we have a terminal object $\epsilon_{Fb'} \in \slice{P}{Fb'}$. We have an induced pullback of comma categories:
\[ \begin{tikzcd} \slice{P'}{b'} \arrow[d] \arrow[r] \arrow[dr, phantom, "\lrcorner" very near start] & \slice{P}{Fb'} \arrow[d] \\ \slice{\mB'}{b'} \arrow[r, "F"'] & \slice{\mB}{Fb'} \rlap{,} \end{tikzcd} \] in which the right-hand vertical functor is again a cartesian fibration and the bottom functor has a right adjoint, satisfying the hypotheses of \ref{lem:loc-rep-pb:term}.
\end{proof}

\begin{ex}[{\cite[3.3,3.11]{shulman}}]\label{ex:intersection-fibered-structure}  If $\alg{\FF_1} \to \cart{\mE}$ and $\alg{\FF_2} \to \cart{\mE}$ are notions of fibered structure, then so is the product over $\alg{\FF_1 \times_{\mE} \FF_2} \to \cart{\mE}$ over $\cart{\mE}$ defined by the pullback
  \[\begin{tikzcd} \alg{\FF_1 \times_{\mE} \FF_2}\arrow[r] \arrow[d] \arrow[dr, phantom, "\lrcorner" very near start] & \alg{\FF_2} \arrow[d] \\ \alg{\FF_1} \arrow[r] & \cart{\mE} \end{tikzcd}\]  where an $\FF_1 \times_{\mE} \FF_2$-structure on a morphism is just a pair given by an $\FF_1$-structure and an $\FF_2$-structure. Moreover, when both notions of fibered structure are locally representable, so is the pullback by \cref{lem:loc-rep-pb} and composition of right adjoints.
\end{ex}

\begin{ex}[{\cite[2.1.16]{accrs}}]\label{ex:leibniz-pullback-application-loc-rep-fibered-structure} Suppose $\mE$ is locally cartesian closed and has a locally representable notion of fibered structure $\FF$. Then if $\mD$ has pullbacks, $\alpha$ is a natural transformation between pullback-preserving functors
\[ \begin{tikzcd} \mD \arrow[r, bend left, "L"] \arrow[r, phantom, "\Downarrow\alpha"] \arrow[r, bend right, "K"'] & \mE  \end{tikzcd}  \]
and the domain $L$ has an indexed right adjoint,
 then $\mD$ has a locally representable notion of fibered structure $\GG$ whose category of algebras is defined by pullback along the Leibniz pullback application\footnote{For a morphism $f \in \cD$, the \textbf{Leibniz pullback application} $\alpha \hat{\circ} f$ is the map to the pullback in the naturality square for $\alpha$ indexed by $f$.} of $\alpha$:
\[ \begin{tikzcd}
\alg{\GG} \arrow[d] \arrow[r] \arrow[dr, phantom, "\lrcorner" very near start] & \alg{\FF} \arrow[d] \\ \cart{\cD} \arrow[r, "\alpha\hat{\circ}-"'] & \cart{\mE} \rlap{.}
\end{tikzcd} \]
The proof is again by \cref{lem:loc-rep-pb} after observing that the functor $\alpha \hat{\circ} - \colon \cart{\mD} \to \cart{\mE}$ has an indexed right adjoint.
\end{ex}

A suitable notion of fibered structure on a suitable model category will have a ``classifying'' universe in the sense of the following definition.

\begin{defn}\label{defn:fib-universe} Let $\mE$ be a model category and $\FF$ a notion of fibered structure on $\mE$. A \textbf{universe} for $\FF$ is a cofibrant object $U$, together with an $\FF$-algebra $\pi \colon \tilde{U} \fto U$ satisfying the following condition: given any cofibration $i \colon A \rightarrowtail B$ in $\mE$,  $\FF$-algebra $p$,  and pair of solid squares, as displayed below, that are both $\FF$-morphisms, there exist dotted arrows defining a third square that is a pullback and an $\FF$-morphism and makes the diagram commute:
  \[
  \begin{tikzcd}
    D \arrow[dd, two heads, "q"'] \arrow[rr] \arrow[drr, phantom, "\lrcorner" very near start] \arrow[dddr, phantom, "\lrcorner" very near start]\arrow[dr] & & \tilde{U} \arrow[dd, two heads, "\pi"] \\ & E  \arrow[dr, phantom, "\lrcorner" very near start]\arrow[ur, dashed] & ~ \\ A \arrow[dr, tail, "i"'] \arrow[rr, "h" near end] & & U \rlap{.} \\ & B \arrow[ur, dashed, "k"'] \arrow[from=uu, two heads, crossing over, "p"' near start]
  \end{tikzcd}
  \]
\end{defn}

\begin{thm} In a Grothendieck 1-topos with a combinatorial model structure in which all cofibrations are monomorphisms, any small-groupoid-valued\footnote{The condition of being small-groupoid-valued means that for each $X \in \mE$, the isomorphism class of $\FF$-algebras with codomain $X$ is essentially small.} locally representable notion of fibered structure has a universe.
\end{thm}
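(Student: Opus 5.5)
The plan follows Shulman's Hofmann--Streicher-style construction (cf.\ \cite[\S5]{shulman}). Fix a set $\cK$ of representatives for the isomorphism classes of pairs $(f,s)$, where $f \colon Y \to X$ has codomain $X$ in a small dense generating set $\cS$ of presentable objects of $\mE$ and $s$ is an $\FF$-structure on $f$. Small-groupoid-valuedness makes $\cK$ a set, once we also use that $\FF$-algebras over a general $X$ are determined by their restrictions to objects of $\cS$. That last fact comes from local representability: a structure on $f$ is a section of $\epsilon_f \colon \FF(f) \to X$, and sections are detected on generalized elements.

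I will then define a presheaf-style object $U_0$ whose ``$X$-points'' are $\FF$-algebras over $X$ up to chosen representative, and take its total space $\tilde U_0$. Concretely, $U_0 \coloneqq \coprod_{(f,s)\in\cK} X_f$ is too naive, since it does not glue; instead I use local representability to form the generic algebra. First take $q \colon \coprod_{\cK} Y_f \to \coprod_{\cK} X_f$, which carries a tautological family. Then apply $R_\FF$ together with a Grothendieck-topos descent/effectivity argument to pass from the coproduct to a quotient on which every $\FF$-algebra over a generator appears as a pullback. This yields $\pi_0 \colon \tilde U_0 \to U_0$ with an $\FF$-structure, classifying every $\FF$-algebra locally, so that the lifting property of \cref{defn:fib-universe} holds with $B$ representable and $i$ arbitrary.

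The main step is upgrading ``classifies up to existence'' to the extension property along arbitrary cofibrations $i \colon A \rightarrowtail B$, with the prescribed map $h \colon A \to U$ strictly extended. For this I would replace $U_0$ by the object $U$ representing the functor sending $X$ to the set of $\FF$-algebras over $X$ that are \emph{relatively small} and equipped with a strict choice of pullback from representables, i.e.\ a Hofmann--Streicher universe $U(X) = \{\text{functors } (\el X)^\op \to \text{structured small objects}\}$. Since cofibrations are monomorphisms, extending $h$ along $i$ amounts to choosing, for each element of $B$ not in $A$, the algebra obtained by restricting $p$. Extensivity and descent in the 1-topos, combined with the right-fibration property (pullback squares of algebras compose uniquely), make these choices compatible and produce the dashed pullback square as an $\FF$-morphism. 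Cofibrancy of $U$ is automatic when cofibrations are monomorphisms if the initial object is cofibrant; I would check this via the combinatorial model structure, since $\emptyset \to U$ is monic.

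I expect the obstacle to be verifying that the $\FF$-structure on the glued algebra over $B$ is well defined and agrees with the given one on $p$. This is where the locally representable right adjoint $R_\FF$ is essential: structures are sections of $\epsilon_f$, and sections glue along the colimit presentation of $B$ by representables. I would model this step on Shulman's proof \cite[5.9]{shulman}.
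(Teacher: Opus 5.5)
There is a genuine gap: the object your main step depends on does not exist at the stated generality. The $U$ you describe is a Hofmann--Streicher universe, whose value at $X$ is the set of strictly chosen, relatively small $\FF$-algebras over $X$ (functors out of the category of elements of $X$). That construction makes sense when $\mE$ is a presheaf topos, and there your extension argument is essentially the standard realignment argument. The statement, however, is about an arbitrary Grothendieck 1-topos. For a sheaf topos this assignment is a stack rather than a sheaf: descent data along a covering sieve determine the glued family only up to isomorphism, not on the nose. So the functor does not send colimits in $\mE$ to limits and is not represented by any object of $\mE$.

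Your compatibility argument for the extension $k \colon B \to U$ fails at exactly this point. The pointwise choices over elements of $B$ would have to assemble into a map of sheaves, and they need not. Your preliminary step has the same defect in another form. Passing from $\coprod_{\cK} X_f$ to a ``quotient'' would mean quotienting by the groupoid of isomorphisms between algebras. A family with nontrivial automorphisms does not descend to a 1-categorical quotient, so effective descent in a 1-topos cannot produce a classifying object. Separately, the hypothesis says cofibrations are monomorphisms, not that monomorphisms are cofibrations. So $\emptyset \to U$ being monic does not make $U$ cofibrant.

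The paper's proof avoids representability altogether by an adaptation of Quillen's small object argument carried out inside $\mE$. The construction runs as follows.
\begin{enumerate}
\item Start from $\emptyset$. At each stage, take representatives of all extension problems whose cofibration is a generating cofibration $A_j \rightarrowtail B_j$; these form a set because $\FF$ is small-groupoid-valued.
\item Attach cells by pushing out $\coprod_j A_j \to \coprod_j B_j$ along the classifying maps.
\item The current algebra and the given algebras over the $B_j$ glue to an algebra on the new stage because $\FF$ is a \emph{stack for cell complexes}. Coproducts, pushouts of monomorphisms, and transfinite composites of monomorphisms are van Kampen colimits in a Grothendieck topos. By local representability, $\FF$-structures are sections of the pullback-stable maps $\epsilon_f$, so they glue along these colimits.
\item Presentability of the $A_j$ lets the transfinite construction stop.
\item The same stack property carries the extension property from generating cofibrations to all cofibrations.
\item $U$ is cofibrant because it is a cell complex.
\end{enumerate}
The idea you are missing is this gluing of structured algebras along specific colimits in $\mE$. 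It replaces the presheaf-level representability that the Hofmann--Streicher construction relies on.
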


The proof is by an adaptation of Quillen's small object argument that uses the hypotheses on the notion of fibered structure $\FF$ to ensure that it defines a \textbf{stack for cell complexes}, meaning the groupoid-valued pseudofunctor classified by $\alg{\FF} \to \cart{\mE}$ preserves coproducts, pushouts of cofibrations, and transfinite composites of cofibrations in the weak bicategorical sense.

It remains to connect a notion of fibered structure on a model category $\mE$ to the fibrations in that model category.

\begin{defn} A notion of fibered structure $\FF$ is \textbf{relatively acyclic} if, for any pullback
  \begin{equation}\label{eq:relative-acyclicity}
    \begin{tikzcd}
      D \arrow[r, "e"] \arrow[d, "q"'] \arrow[dr, phantom, "\lrcorner" very near start] & E \arrow[d, "p"] \\ A \arrow[r, tail, "i"'] &  B
    \end{tikzcd}
  \end{equation} with $q,p$ both $\FF$-algebras and $i$ a cofibration, there exists a new $\FF$-algebra structure on $p$ making the square an $\FF$-morphism.
\end{defn}

\begin{rmk}
  When $\FF$ is relatively acyclic, the $\FF$-algebras satisfy the following property sometimes known as \textbf{realignment}: given any cofibration $i \colon A \rightarrowtail B$ in $\mE$,  $\FF$-algebra $p$,  and pair of pullback squares, as displayed below, the map $q$ inherts an $\FF$-algebra structure from $\pi$ making the back square an $\FF$-morphism, and relative acyclicity can be used to equip $p$ with a new $\FF$-algebra structure making the front left square into an $\FF$-morphism as well. Now since $\pi$ is a universe, there exist arrows defining a third square that is a pullback and an $\FF$-morphism relative to the new $\FF$-algebra structure on $p$ and makes the diagram commute:
\[
\begin{tikzcd}
  D \arrow[dd, two heads, "q"'] \arrow[rr] \arrow[drr, phantom, "\lrcorner" very near start] \arrow[dddr, phantom, "\lrcorner" very near start]\arrow[dr] & & \tilde{U} \arrow[dd, two heads, "\pi"] \\ & E  \arrow[dr, phantom, "\lrcorner" very near start]\arrow[ur, dashed] & ~ \\ A \arrow[dr, tail, "i"'] \arrow[rr, "h" near end] & & U \rlap{.} \\ & B \arrow[ur, dashed, "k"'] \arrow[from=uu, two heads, crossing over, "p"' near start]
\end{tikzcd}
\]
  In particular, supposing $\mE$ has a strict initial object $0$, taking $A = 0$ shows that every $\FF$-algebra $p$ admits an $\FF$-morphism to $\pi$.
\end{rmk}

\begin{defn} A notion of fibered structure $\FF$ is \textbf{homotopy invariant} if every $\FF$-algebra is a fibration, and given any commutative square
  \[ \begin{tikzcd} Y' \arrow[d, "f'"', two heads] \arrow[r, wearrow] & Y \arrow[d, "f", two heads] \\ X' \arrow[r, wearrow] & X
  \end{tikzcd}\] whose vertical maps are fibrations and horizontal maps are weak equivalences, $f$ admits an $\FF$-structure if and only if $f'$ does.
\end{defn}

Note that homotopy invariance is automatically satisfied when the $\FF$-algebras are precisely the model-categorical fibrations.

\begin{thm}[{\cite[5.22]{shulman}}]\label{thm:shulman-universe} For any locally representable, relatively acyclic, and homotopy invariant notion of fibered structure $\FF$ on a right proper simplicial Cisinski model category, there is a regular cardinal $\lambda$ such that for any regular cardinal $\kappa \triangleright \lambda$ there exists a morphism $\pi \colon \tilde{U} \to U$ defining a fibrant, univalent universe for $\FF^\kappa \coloneqq \FF \times_{\mE} \EE^\kappa$.
In particular, $\pi$ is a $\kappa$-presentable $\FF$-algebra and every relatively $\kappa$-presentable $\FF$-algebra is a pullback of $\pi$.
\end{thm}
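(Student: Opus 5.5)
The approach is Shulman's, and the work falls into three parts: build a universe for $\FF$ alone, shrink it to the $\kappa$-small fibrations, then replace it fibrantly and check univalence.

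First, by \cref{ex:intersection-fibered-structure}, $\FF^\kappa = \FF \times_{\mE} \EE^\kappa$ is again a locally representable notion of fibered structure. Here $\lambda$ is chosen large enough that $\EE^\kappa$ is locally representable for $\kappa \triangleright \lambda$, and that relevant objects (generators, the representing objects $\FF(f)$ for small $f$) are $\lambda$-presentable. The notion $\FF^\kappa$ is small-groupoid-valued, since isomorphism classes of relatively $\kappa$-presentable maps over a fixed $X$ form a set.

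In the ambient Grothendieck topos with its combinatorial model structure, the preceding theorem produces a universe $\pi_0 \colon \tilde U_0 \to U_0$ for $\FF^\kappa$. Its construction is the small-object-style argument showing that $\FF^\kappa$ is a stack for cell complexes. I would follow that construction with cardinality bookkeeping, so that $\pi_0$ is itself relatively $\kappa$-presentable.

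Relative acyclicity then gives realignment, as in the remark above. Taking $A = 0$ shows that every relatively $\kappa$-presentable $\FF$-algebra is a pullback of $\pi_0$. Here one uses that $\FF^\kappa$ inherits relative acyclicity, because $\EE^\kappa$ is full and so imposes no extra structure.

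Next, I would make $U$ fibrant without losing the universal property. Factor $U_0 \to 1$ as a trivial cofibration $U_0 \rightarrowtail U$ followed by a fibration, and extend $\pi_0$ to $\pi \colon \tilde U \to U$. The extension uses the universal property of $\pi_0$ in the form of extension along cofibrations, combined with homotopy invariance and right properness, to transport structure along the weak equivalence. Alternatively, I would run Shulman's argument directly: the universe property of \cref{defn:fib-universe} is exactly a right lifting property of $U \to 1$ against cofibrations, relative to the given data. Combined with the fact that $\FF$-algebras are fibrations (homotopy invariance), this shows that $U \to 1$ has the right lifting property against trivial cofibrations $A \rightarrowtail B$. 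Given $h \colon A \to U$, the pullback $h^*\pi$ over $A$ extends to an $\FF$-algebra over $B$ by fibrant replacement along the trivial cofibration in a right proper model category. Homotopy invariance gives this extension an $\FF$-structure, and relative acyclicity gives one compatible with the one on $A$. The universe property then provides the lift $k$.

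Finally, univalence. Form the object of equivalences $\mathrm{Eq}(\tilde U) \to U \times U$ using the simplicial locally cartesian closed structure. The goal is to show that $\mathrm{Eq}(\tilde U) \to U$, projecting onto one factor, is a trivial fibration; this is equivalent to univalence. This amounts to lifting against boundary inclusions $\partial\Delta^n \times X \rightarrowtail \Delta^n \times X$, or more generally against all cofibrations $A \rightarrowtail B$. A lifting problem supplies a $\kappa$-small $\FF$-algebra over $B$, together with an $\FF$-algebra over $A$ and an equivalence to the restriction. Gluing along the mapping cylinder of the equivalence, which is a Glue-type construction, produces an algebra over $B$ restricting correctly on $A$. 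Relative acyclicity together with homotopy invariance endows it with $\FF$-structure, and the universe property classifies it, giving the lift.

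The main obstacle is this last gluing step. One must show that the glued object is a relatively $\kappa$-presentable fibration over $B$ whose restriction to $A$ is strictly the given one. This uses descent in the topos together with the equivalence extension property for fibrations in a right proper simplicial Cisinski model category. I would first prove that extension property separately, following Shulman's "equivalence extension" lemma, and then feed it into the classifying map.
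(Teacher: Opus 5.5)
Your overall architecture is the one in Shulman's argument, which the paper cites without giving a proof of its own. That architecture is: a universe for $\FF^\kappa$ from local representability and smallness, realignment from relative acyclicity, then fibrancy and univalence. Your univalence step is fine in outline. The fibrancy step, however, fails as written.

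\textbf{The gap in the fibrancy step.} The universe property of \cref{defn:fib-universe} gives a lift $k$ with $ki = h$ only if you supply an $\FF$-algebra over $B$ whose pullback along $i$ is \emph{strictly} $h^*\pi$, via an $\FF$-morphism. Factoring $h^*\tilde U \to A \rightarrowtail B$ as a trivial cofibration followed by a fibration $q \colon E' \twoheadrightarrow B$ does not give this. Right properness only shows that the comparison map $h^*\tilde U \to i^*E'$ over $A$ is a weak equivalence, not an isomorphism. Relative acyclicity can change $\FF$-structures but not the underlying map.

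Your option (a) has the same problem. Extending $\pi_0$ along $U_0 \rightarrowtail U$ is itself an instance of this extension problem. Even granting it, the extended $\pi$ has no evident universe property for partial maps $A \to U$ that do not land in $U_0$.

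\textbf{The missing step.} What is needed is the equivalence extension property, which you only bring in for univalence. The repaired argument runs as follows.
\begin{enumerate}
\item Apply the equivalence extension property to the weak equivalence $h^*\tilde U \to i^*E'$. This gives $q' \colon E'' \twoheadrightarrow B$ with $i^*q' = h^*\pi$ on the nose and $q' \simeq q$.
\item Apply homotopy invariance to the square from $h^*\pi$ to $q$, whose horizontal maps are trivial cofibrations, so $q$ admits an $\FF$-structure. Apply it again to $q' \simeq q$, so $q'$ admits one too.
\item Use relative acyclicity to align that structure with the given one on $h^*\pi$.
\item Only now does the universe property produce $k$.
\end{enumerate}
So the fibration extension property is derived from the equivalence extension property. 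That lemma is the key input for both fibrancy and univalence, and must be established before the fibrancy step, not after it. Note also that the extension used there is the pushforward (Glue) construction $Y \times_{\Pi_i i^*Y} \Pi_i X$, not a mapping cylinder.

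\textbf{Size control.} You also need size control at the fibrancy step, which you do not address. For an arbitrary $B$, the factorization $E' \to B$ need not be relatively $\kappa$-presentable. Fibrancy of $U$ only requires lifting against a generating set of trivial cofibrations, so you may restrict to ones with $\kappa$-presentable codomains. There, for $\lambda$ chosen large enough and $\kappa \triangleright \lambda$, both the factorization and the glue construction remain $\kappa$-presentable.
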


By a construction of Voevodsky, for any map $\tilde{U} \to U$ in a type-theoretic model topos, there is an object of equivalences factoring the diagonal on $U$, \[ \begin{tikzcd} U \arrow[r] & \Eq(\tilde{U}) \arrow[r] & U \times U \rlap{,} \end{tikzcd}\] and this construction is stable under pullback \cite[\S4]{shulman-reedy}.
In this context, a map $\tilde{U} \to U$ is \textbf{univalent} when the map $U \to \Eq(\tilde{U})$ is a weak equivalence, or equivalently when the induced map $\Id(U) \to \Eq(\tilde{U})$ over $U \times U$ from the path object of $U$ is a weak equivalence.

We observe the following consequence of Shulman's axiomatization, the existence of a similar notion of fibered structure for the trivial fibrations.
In our setting of a type-theoretic model topos, this can also be constructed directly; see \cite[\S 2.2]{accrs}.

\begin{lem}\label{lem:triv-fib-loc-rep}
  Any type-theoretic model topos $\mE$ admits a locally representable, relatively acyclic, and homotopy invariant notion of fibered structure $\TF$ such that the class of maps admitting $\TF$-algebra structures is the class of trivial fibrations.
\end{lem}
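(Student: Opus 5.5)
Since $\mE$ is a type-theoretic model topos, \cref{defn:ttmt}\ref{itm:notion-of-fibered} supplies a locally representable, relatively acyclic notion of fibered structure $\FF$ whose algebras are exactly the fibrations. A map is a trivial fibration exactly when it is a fibration and its diagonal $\delta_f \colon Y \to Y \times_X Y$ is also a fibration that is a weak equivalence. A simpler test is also available: for a fibration $f$, $f$ is trivial iff $f$ has a section $s$ together with a fiberwise homotopy $sf \sim \id_Y$ over $X$. So I would define $\TF$ as an intersection, in the sense of \cref{ex:intersection-fibered-structure}, of $\FF$ with structures built by \cref{ex:leibniz-pullback-application-loc-rep-fibered-structure}. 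Concretely, $\TF$-structure on $f$ consists of three pieces of data. The first is an $\FF$-structure on $f$. The second is an $\EE_\bullet$-structure on $f$, namely a section. The third is a contracting fiberwise homotopy, encoded as an $\EE_\bullet$-structure on the Leibniz pullback application of the transformation $(-)^{\Delta^1} \Rightarrow (-)^{\partial\Delta^1}$, taken in the slice, to $f$. Equivalently, this third piece is a section of the Leibniz exponential $\widehat{\exp}(\partial\Delta^1\hookrightarrow\Delta^1, f)$ after pulling back along $(sf,\id)$.

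To make this uniform, I would instead use a cleaner single formula: a $\TF$-structure is an $\FF$-structure on $f$ plus an $\EE_\bullet$-structure (a section) on the Leibniz cotensor $\widehat{\exp}(i,f)$ for the single cofibration $i \colon \partial\Delta^1 \times Y \cup \Delta^{\{1\}}\times Y \hookrightarrow \Delta^1\times Y$ over $X$. This is awkward because it depends on $Y$, so the three-piece version is preferable. Each functor involved ($f \mapsto$ its domain-side exponentials in the slice, and pullback) preserves pullbacks and has an indexed right adjoint, because $\mE$ is simplicially locally cartesian closed. Hence \cref{lem:loc-rep-pb}, \cref{ex:intersection-fibered-structure} and \cref{ex:leibniz-pullback-application-loc-rep-fibered-structure} give local representability.

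Homotopy invariance follows from two facts: the maps admitting $\TF$-structure are precisely the trivial fibrations, and trivial fibrations are detected among fibrations by being weak equivalences, which is invariant under weak equivalence of squares by 2-out-of-3.

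Relative acyclicity is the main obstacle. We are given a pullback square \eqref{eq:relative-acyclicity} along a cofibration $i$, with $\TF$-structures on $p$ and $q$, and we must re-choose the structure on $p$ so that it restricts to the given one on $q$. For the $\FF$-component this is exactly relative acyclicity of $\FF$. For the section and homotopy components, we must extend a section and homotopy given over $A$ to ones over $B$. Since $p$ is a trivial fibration and $i$ is a monomorphism, hence a cofibration, the section extends by lifting $i$ against $p$. The homotopy extends by lifting the cofibration $i$ against the Leibniz cotensor of $\partial\Delta^1\hookrightarrow\Delta^1$ with $p$, relative to the chosen section. That Leibniz cotensor is a trivial fibration by the simplicial (SM7) axiom, since $p$ is a trivial fibration and the Leibniz cotensor of a cofibration with a fibration is a fibration, trivial if either input is trivial. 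Checking that these extensions are compatible with the pullback, so that the square becomes a $\TF$-morphism, is the delicate bookkeeping step.
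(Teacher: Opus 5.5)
Your route is different from the paper's, and it can be made to work, but only for one of the two structures you treat as equivalent.

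\textbf{The version that works.} This is the $s$-independent one you describe first. A $\TF$-structure on $f\colon Y\to X$ consists of:
\begin{itemize}
\item an $\FF$-structure on $f$;
\item a section $s$ of $f$;
\item a section of $P_f\colon Y^{\Delta^1}_X\to Y\times_X Y$, the Leibniz cotensor of $\partial\Delta^1\hookrightarrow\Delta^1$ with $f$ in $\slice{\mE}{X}$.
\end{itemize}
These three pieces characterize trivial fibrations.
\begin{itemize}
\item Restricting the section of $P_f$ along $(sf,\id)$ gives a fiberwise homotopy $sf\sim\id_Y$. So such an $f$ is a fiberwise homotopy equivalence, hence a trivial fibration.
\item Conversely, if $f$ is a trivial fibration, then $P_f$ is one by SM7 in the slice. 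Both $f$ and $P_f$ then have sections, because every object is cofibrant.
\end{itemize}

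\textbf{Local representability.} $P_{h^*f}$ is the pullback of $P_f$ along $h^*Y\times_Z h^*Y\to Y\times_X Y$. So structures on $h^*f$ correspond to maps $Z\to\Pi_{Y\times_X Y\to X}P_f$ over $X$. The indexed right adjoint you need is this $\Pi$, using pullback-stability of slice cotensors. It is not one for $(-)^{\Delta^1}$, which generally does not exist.

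\textbf{Relative acyclicity.} This is three independent steps:
\begin{itemize}
\item relative acyclicity of $\FF$;
\item a lift of $i$ against $p$;
\item a lift of the monomorphism $D\times_A D\hookrightarrow E\times_B E$ (a pullback of $i$, not $i$ itself) against the trivial fibration $P_p$.
\end{itemize}
Since the pieces do not interact, there is no delicate bookkeeping.

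\textbf{The error.} Your sentence ``equivalently, a section of $\widehat{\exp}(\partial\Delta^1\hookrightarrow\Delta^1,f)$ after pulling back along $(sf,\id)$'' is false as a statement about structures. That datum is only a homotopy $sf\sim\id_Y$, which is strictly less than a section of $P_f$. It also depends on $s$, so it is not a notion of fibered structure on $f$ that can be intersected with the section via \cref{ex:intersection-fibered-structure}. Your acyclicity step, phrased ``relative to the chosen section,'' uses this dependent version. Its local representability would need a separate dependent-sum construction, which you do not supply.

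Separately, your opening diagonal criterion is wrong: the diagonal of a trivial fibration is a weak equivalence but generally not a fibration. You never use it, so it does no harm.

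\textbf{Comparison with the paper.} The paper handles exactly this dependency by packaging it into one map. $\isContr_B\colon\slice{\mE}{B}\to\slice{\mE}{B}$, the interpretation of the type-theoretic $\isContr$, commutes with pullback and is right Quillen. Sections of $\pi_p\colon\isContr_B(p)\to B$ are precisely pairs of a section $s$ of $p$ and a homotopy $sp\sim 1_E$ over $B$. So:
\begin{itemize}
\item $\CC$ is $\EE_\bullet$ pulled back along $p\mapsto\pi_p$;
\item $\TF=\FF\times_{\mE}\CC$ is locally representable by \cref{ex:intersection-fibered-structure};
\item relative acyclicity is a single lift of $i$ against the trivial fibration $\pi_p$, plus relative acyclicity of $\FF$.
\end{itemize}
Your $s$-independent decomposition (``inhabited and a proposition'' rather than ``contractible'') gives an argument using only SM7 and local cartesian closure, with no appeal to the semantics of $\isContr$. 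The paper's packaging gets the dependent structure and its pullback-stability in one step.
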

\begin{proof}
  For any $B \in \mE$ in a type-theoretic model topos, there is a right Quillen functor $\isContr_B \colon \mE_{/B} \to \mE_{/B}$ between the slice model categories defined by interpreting the homotopy type theoretic construction of $\isContr$ in context $B$ \cite[4.1]{shulman-reedy}. Given a fibration $p \colon E \twoheadrightarrow B$, a section to $ \pi_p \colon \isContr_B(p) \twoheadrightarrow B$ provides the data of a section $s$ to $p$  together with a homotopy in the model category $\mE_{/B}$ from $sp$ to $1_E$. Thus, a fibration $p$ is a trivial fibration if and only if $ \pi_p \colon \isContr_B(p) \twoheadrightarrow B$ admits a section.

By \cite[3.4, 3.13]{shulman}, there is a locally representable notion of fibered structure $\EE_\bullet$ such that $\EE_\bullet$-algebras are maps admitting sections. Since the endofunctors $\isContr_B$ commute with pullbacks up to coherent isomorphism, there is a pulled back locally representable notion of fibered structure $\CC$ on $\mE$ such that a $\CC$-algebra structure on $p$ is an $\EE_\bullet$-algebra structure on $\pi_p$ \cite[3.6]{shulman}. Finally, we define $\TF \coloneqq \FF \times_{\mE} \CC$, using the notion of fibered structure $\FF$ from axiom (iv) of \cref{defn:ttmt}.

As a pullback of locally representable notions of fibered structure, $\TF$ is locally representable by \cref{ex:intersection-fibered-structure}. The proof of \cite[5.16]{shulman} demonstrates that $\TF$ is relatively acyclic. A pullback square between $\TF$-algebras
\[
  \begin{tikzcd}
    D \arrow[r, "e"] \arrow[d, "q"'] \arrow[dr, phantom, "\lrcorner" very near start] & E \arrow[d, "p"] \\ A \arrow[r, tail, "i"'] &  B
  \end{tikzcd}
\]
induces a commutative square below-left:
\[ \begin{tikzcd} \isContr_Aq \arrow[d, "\pi_q"] \arrow[r, "{(e, i)}"] & \isContr_Bp \arrow[d, "\pi_p"']& A \arrow[d, "i"', tail] \arrow[r, "s_q"] & \isContr_Aq \arrow[r, "{(e,i)}"] & \isContr_Bp  \arrow[d, utfibarrow, "\pi_p"] \\ \arrow[u, bend left, dashed, "s_q"] A \arrow[r, "i"', tail]  & B \rlap{,} \arrow[u, bend right, dashed, "s_p"'] & B \arrow[rr, equals] \arrow[urr, dashed, "s_p'"] & & B \rlap{,} \end{tikzcd}
\]
in which the vertical morphisms admit not necessarily compatible sections. But since $p$ is a trivial fibration, and $\isContr_B$ is right Quillen, $\pi_p$ is a trivial fibration, so we may lift against the cofibration $i$ to define a new compatible choice of section. This new choice of section equips $p$ with a new $\CC$-algebra structure that makes the pullback square a $\CC$-morphism and since $\FF$ is relatively acyclic, $p$ admits a new $\FF$-algebra structure making the square an $\FF$-morphism as well. Together, these choices give $p$ a new $\TF$-algebra structure making the square a $\TF$-morphism.
\end{proof}

\subsection{Simplicial \texorpdfstring{$\infty$}{infinity}-topoi and left maps}\label{ssec:stt}

By \cite{RS,shulman, Weinberger}, the simplicial type theory has semantics in type-theoretic model topoi of the form $\smE \coloneqq \mE^{\DDelta^\op}$, where $\mE$ is a type-theoretic model topos.\footnote{Recent work of Rasekh extends these semantics to filter quotients of simplicial spaces \cite{rasekh2025simplicialhomotopytypetheory}, a semantic setting which is not considered here.} Diagrams in the simplicial type theory may be indexed by an abstract notion of ``shapes,'' which include the simplices and their subcomplexes, modelled by objects in $\smE$ that are not necessarily fibrant. Below, we construct a canonical cosimplicial object $\Delta^\bullet \colon \DDelta \to \smE$ defined as an instance of a more general \emph{discrete embedding} of simplicial sets into $\smE$. We introduce these objects and describe a few other features of this setting that will be needed below.

Any $\infty$-topos $\iE$ comes with a unique geometric morphism to the $\infty$-topos $\iS$ of spaces, this given by the \textbf{global sections} functor $\Gamma \colon \iE \to \iS$ and its left exact left adjoint $\Delta \colon \iS \to \iE$:
\[ \begin{tikzcd} \iE \arrow[r, bend right, "\Gamma"'] \arrow[r, adj] & \iS \arrow[l, bend right, "\Delta"'] \rlap{.} \end{tikzcd}\]
At the level of a type-theoretic model topos $\mE$ presenting $\iE$, the global sections functor is modeled by the enriched hom functor $\mE(1,-) \colon \mE \to \sSet$, and the left adjoint by the simplicial copower $- \otimes 1 \colon \sSet \to \mE$, and we have further adjunctions:
\[ \begin{tikzcd}[sep=large] \mE \arrow[r, bend right, "{\mE(1,-)}"{below}] \arrow[r, adj] & \sSet \arrow[l, bend right, "- \otimes 1"{above}] \arrow[r, bend right, "\lim=\ev_0"{below}] \arrow[r, adj] & \Set \arrow[l, bend right, "\const"{above}] \rlap{.} \end{tikzcd}\]
The composite of the constant functors defines the \textbf{discrete embedding} of sets into $\mE$. The objects in its image have the form of set-indexed copowers of the terminal object $\sqcup_I \ast$.

\begin{rmk}\label{rmk:discrete-map} The discrete embedding functor is not generally faithful (when $\cD$ is empty) or full (when $\cD$ has multiple components, maps between constant diagrams need not be constant). We use the term \textbf{discrete map} to refer to maps between discrete objects in $\iE$ in the image of the discrete embedding.
\end{rmk}

The discrete embedding and its adjoint induce pointwise-defined adjunctions between simplicial objects:
\[
  \begin{tikzcd}[sep=large] \smE \ar[r,phantom,"="] &[-3em] \mE^{\DDelta^\op} \arrow[r, bend right, "{\mE(1,-)^{\DDelta^\op}}"{below}] \arrow[r, adj] & \sSet^{\DDelta^\op} \arrow[l, bend right, "(- \otimes 1)^{\DDelta^\op}"{above}] \arrow[r, bend right, "\ev_0^{\DDelta^\op}"{below}] \arrow[r, adj] & \Set^{\DDelta^\op} \arrow[l, bend right, "\const^{\DDelta^\op}"{above}] \ar[r,phantom,"="] &[-3em] \sSet \rlap{.} \end{tikzcd}\]
We again refer to the composite of the constant functors as the \textbf{discrete embedding} of simplicial sets into $\smE$ and generally denote the discrete embedding silently when the context is clear, writing $K \in \smE$ for the discrete embedding of a simplicial set $K$. Due to \cref{rmk:discrete-map}, we must be clear to specify whether a map between discrete objects is a \textbf{discrete map}, meaning a map in the image of the discrete embedding, or a map in $\smE$. Here ``discreteness'' refers to the fact that for a discrete object $K \in \smE$ and $[m] \in \DDelta$ the component $K_m \in \mE$ decomposes as a set-indexed coproduct of the terminal object $K_m \cong \coprod_{K_m} \ast \in \mE$.

\begin{defn}
The discrete embedding defines a canonical cosimplicial object $\Delta^\bullet \colon \DDelta \to \smE$. Our conventions allow us to write $\Delta^n \in \smE$ for the image of $[n] \in \DDelta$. By the Yoneda lemma, the object $\Delta^n_m \in \mE$ decomposes as a coproduct $\Delta^n_m \cong \coprod_{\DDelta([m],[n])}\ast$.
\end{defn}

We now introduce the class of \emph{left maps} at the level of an $\infty$-category $\sE$ of simplicial objects in an $\infty$-category $\iE$. The parallel class of \emph{left fibrations} at the level of the type-theoretic model topos will be studied in \S\ref{ssec:left-fibration}.

\begin{defn}\label{defn:left-map} A map $f \colon Y \to X$ in $\sE$ is a \textbf{left map} if for all $m \geq 1$ the naturality square induced by $0 \colon [0] \to [m]$ in $\DDelta$  is a pullback in $\iE$:
  \begin{equation}\label{eq:left-map-square} \begin{tikzcd} Y_m \arrow[dr, phantom, "\lrcorner" very near start] \arrow[r, "\ev_0"] \arrow[d, "f_m"'] & Y_0 \arrow[d, "f_0"] \\ X_m \arrow[r, "\ev_0"'] & X_0 \rlap{.}
    \end{tikzcd}\end{equation}
  Dually, $f$ is a \textbf{right map} if the naturality square induced by $m \colon [0] \to [m]$ is a pullback in $\iE$ for $m \geq 1$.
  We say that an object $(Y,f) \in \slice{\sE}{X}$ in a slice is a \textbf{left object} if its underlying map $f \colon Y \to X$ is a left map.
\end{defn}

Left maps enjoy various closure properties.

\begin{lem}\label{lem:left-map-repleteness} Any map that is equivalent to a left map is a left map.
\end{lem}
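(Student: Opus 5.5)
We need to show that a map $f' \colon Y' \to X'$ equivalent (in the arrow $\infty$-category of $\sE$) to a left map $f \colon Y \to X$ is itself a left map. We work entirely at the $\infty$-categorical level: an equivalence $f' \simeq f$ in $\mathrm{Fun}(\Delta^1, \sE)$ consists of equivalences $\alpha \colon Y' \xrightarrow{\sim} Y$ and $\beta \colon X' \xrightarrow{\sim} X$ in $\sE$, together with a homotopy $f\alpha \simeq \beta f'$. Equivalences in $\sE = \mathrm{Fun}(\DDelta^\op, \iE)$ are levelwise equivalences, so each $\alpha_m$ and $\beta_m$ is an equivalence in $\iE$.

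Fix $m \geq 1$ and the map $0 \colon [0] \to [m]$. Evaluation at $[m]$ and $[0]$, together with the restriction along $0$, is a functor from arrows in $\sE$ to commutative squares in $\iE$, that is, to $\mathrm{Fun}(\Delta^1 \times \Delta^1, \iE)$. Applying it to $f'$ and to $f$ sends the equivalence of arrows to an equivalence of squares: the naturality square \eqref{eq:left-map-square} for $f'$ is equivalent, as an object of $\mathrm{Fun}(\Delta^1 \times \Delta^1, \iE)$, to the one for $f$. Being a pullback square is invariant under equivalence in the diagram $\infty$-category. This holds because pullbacks are defined by a universal property, namely that the square is a terminal cone over its restriction to the cospan, and terminality is invariant under equivalence of cones. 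Since the square for $f$ is a pullback, so is the square for $f'$, and this holds for every $m \geq 1$. Hence $f'$ is a left map.

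The only point needing care is to interpret ``equivalent'' as an equivalence in the arrow $\infty$-category, with its coherent homotopy, rather than as a strictly commuting diagram. The naturality squares are functorial in the arrow, so no further coherence is needed. The same argument applies verbatim to right maps, using $m \colon [0] \to [m]$ in place of $0 \colon [0] \to [m]$.
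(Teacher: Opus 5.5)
Your proposal is correct and takes the same approach as the paper, whose proof is the single line ``the notion of pullback square in an $\infty$-category is equivalence-invariant.'' You spell that line out: an equivalence of arrows in $\sE$ induces, for each $m \geq 1$, an equivalence of the naturality squares \eqref{eq:left-map-square} in $\iE$, and pullback squares are equivalence-invariant.
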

\begin{proof}  The notion of pullback square in an $\infty$-category is equivalence-invariant.
\end{proof}

\begin{lem}\label{lem:left-map-pullback}
  If $\iE$ has pullbacks, then left maps are stable under pullbacks in $\sE$.
  \end{lem}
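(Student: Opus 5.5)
The argument is levelwise and reduces to pasting pullback squares. Since $\sE = \mathrm{Fun}(\DDelta^\op, \iE)$, pullbacks in $\sE$ exist whenever $\iE$ has pullbacks, and they are computed pointwise. Concretely, suppose $f \colon Y \to X$ is a left map and $g \colon X' \to X$ is any map in $\sE$. Write $Y' \coloneqq X' \times_X Y$ and $f' \colon Y' \to X'$ for the pulled-back map. Then for every $[m] \in \DDelta$ the square with vertices $Y'_m, Y_m, X'_m, X_m$ is a pullback in $\iE$.

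Fix $m \geq 1$. We must show that the square \eqref{eq:left-map-square} for $f'$, with vertices $Y'_m, Y'_0, X'_m, X'_0$, is a pullback. I would use the commutative cube formed by the naturality squares of $\ev_0$ for $f'$ and for $f$, connected by the pullback squares at levels $m$ and $0$. Paste the square for $f'$ horizontally with the level-$0$ pullback square; this gives the rectangle
\[ Y'_m \to Y'_0 \to Y_0 \quad\text{over}\quad X'_m \to X'_0 \to X_0 . \]
By naturality of $\ev_0$, this rectangle equals the composite of two other squares: the level-$m$ pullback square (from $Y'_m \to Y_m$ over $X'_m \to X_m$), followed by the left-map square for $f$ (from $Y_m \to Y_0$ over $X_m \to X_0$). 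Both of these are pullbacks, so by the pasting lemma in the $\infty$-category $\iE$ the composite rectangle is a pullback.

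Now the rectangle is a pullback and its right-hand square (the level-$0$ pullback) is a pullback. The converse direction of the pasting lemma then shows that the left-hand square, which is the square for $f'$, is a pullback. Hence $f'$ is a left map.

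The only care required is in the $\infty$-categorical setting. Commutativity of the cube must be taken as coherent data, and it is: the cube is the image under the functor $\ev$ of a diagram $\Delta^1 \times \Delta^1 \times \Delta^1 \to \sE$ built from the pullback square and the morphism $0 \colon [0] \to [m]$. With that in hand, Lurie's pasting lemma for pullbacks \cite{lurie-topos} applies directly. I do not expect any real obstacle.
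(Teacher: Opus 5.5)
Your proposal is correct and is essentially the paper's proof: the paper also uses that each evaluation functor $\ev_n \colon \sE \to \iE$ preserves pullbacks, then concludes by pullback pasting and cancellation. You spell out the cube and the two decompositions of the rectangle explicitly, whereas the paper leaves these details implicit.
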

  \begin{proof}
    For each $n \geq 0$, the evaluation functor $\ev_n \colon \sE \to \iE$ preserves pullbacks. The result then follows by pullback composition and cancellation.
  \end{proof}

\begin{lem}\label{lem:left-map-comp-cancel} For any composable pair of maps $f$ and $g$, if $g$ is a left map, then $f$ is a left map if and only if $gf$ is a left map.
\end{lem}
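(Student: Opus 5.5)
The statement is a levelwise consequence of the pasting lemma for pullbacks in the $\infty$-category $\iE$. Let $f \colon Z \to Y$ and $g \colon Y \to X$ be maps in $\sE$, and fix $m \geq 1$. The vertex $0 \colon [0] \to [m]$ gives, by naturality of $\ev_0$, a commutative diagram in $\iE$ of two stacked squares:
\[ \begin{tikzcd} Z_m \arrow[r, "\ev_0"] \arrow[d, "f_m"'] & Z_0 \arrow[d, "f_0"] \\ Y_m \arrow[r, "\ev_0"] \arrow[d, "g_m"'] & Y_0 \arrow[d, "g_0"] \\ X_m \arrow[r, "\ev_0"'] & X_0 \rlap{.} \end{tikzcd} \]
The top square is the square \eqref{eq:left-map-square} for $f$, the bottom square is the one for $g$, and the outer rectangle is the one for $gf$, since $(gf)_m = g_m f_m$ and $(gf)_0 = g_0 f_0$.

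By hypothesis $g$ is a left map, so the bottom square is a pullback for every $m \geq 1$. The pasting lemma for pullbacks in an $\infty$-category then says that the top square is a pullback if and only if the outer rectangle is, with the composition direction giving ``if'' and the cancellation direction giving ``only if''. Since this holds for each $m \geq 1$, $f$ is a left map exactly when $gf$ is.

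This is the same composition and cancellation step already used in the proof of \cref{lem:left-map-pullback}. The only point needing care is to invoke the $\infty$-categorical pasting lemma (for instance \cite[4.4.2.1]{lurie-topos}) rather than its $1$-categorical version, and to note that commutativity of the pasted diagram comes from functoriality of the simplicial objects and naturality of $f$ and $g$, which holds coherently in $\sE$. I do not expect any step to be a real obstacle.
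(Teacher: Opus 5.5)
Your proof is correct and is the paper's argument: levelwise, the square for $f$ stacks on the square for $g$ to give the square for $gf$, and pasting plus one-sided cancellation of pullbacks in the $\infty$-category $\iE$ gives both directions. One small slip is that your labels are swapped: ``if'' ($gf$ left $\Rightarrow$ $f$ left) is the cancellation direction, and ``only if'' ($f$ left $\Rightarrow$ $gf$ left) is the composition direction.
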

\begin{proof} Pullback squares in an $\infty$-category satisfy pasting and one-sided cancellation.
\end{proof}

\begin{lem}\label{lem:left-fib-fiberwise-we} A map $h$ between left maps over a common base is an equivalence in $\smE$ if and only if its component $h_{0}$ is an equivalence in $\mE$.
\end{lem}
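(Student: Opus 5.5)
The plan is to reduce the statement to levelwise data and then use the pullback squares of \cref{defn:left-map}. Equivalences in $\sE$ are detected levelwise, since $\sE$ is a functor $\infty$-category $\iE^{\DDelta^\op}$ (or, at the model level, weak equivalences in the Reedy or injective structure on $\smE$ are the levelwise ones). So $h$ is an equivalence exactly when each component $h_m \colon Y_m \to Y'_m$ is an equivalence in $\iE$. The forward direction is then immediate: if $h$ is an equivalence, so is $h_0$.

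For the converse, write $f \colon Y \to X$ and $f' \colon Y' \to X$ for the two left maps, with $f' h = f$. Fix $m \geq 1$ and consider the cube obtained by mapping the naturality square \eqref{eq:left-map-square} for $f$ to the corresponding square for $f'$ via $h$. On the base corner $X$ the components of $h$ are the identities on $X_m$ and $X_0$. Since both squares are pullbacks in $\iE$, we get
\[ Y_m \simeq X_m \times_{X_0} Y_0 \quad\text{and}\quad Y'_m \simeq X_m \times_{X_0} Y'_0 . \]
Under these identifications, naturality of $\ev_0$ with respect to $h$ shows that $h_m$ is the map induced on pullbacks by $\id_{X_m}$, $\id_{X_0}$ and $h_0$. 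Pullback is functorial and preserves equivalences of cospans, so $h_m$ is an equivalence whenever $h_0$ is. Together with the levelwise detection of equivalences this proves the claim.

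The main point of care is the mismatch in the statement, which says ``equivalence in $\smE$'' and ``component in $\mE$'', while left maps are defined at the $\infty$-categorical level in $\sE$. I would read both as statements in the underlying $\infty$-categories $\sE$ and $\iE$. If a model-level reading is intended, the same argument should go through provided $f$ and $f'$ are fibrations: levelwise the squares are then homotopy pullbacks, and in a right proper model category a map of homotopy pullback cospans that is a weak equivalence on each corner induces a weak equivalence. I expect no other obstacle.
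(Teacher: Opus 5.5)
Your proposal is correct and is essentially the paper's argument. The paper pastes the two pullback squares \eqref{eq:left-map-square} to see that $h_m$ is a pullback of $h_0$, which is the same as your identification of $h_m$ with the map induced on $X_m \times_{X_0} (-)$ by $h_0$; your side remark that at the model level left fibrations make these squares homotopy pullbacks also matches how the lemma is later applied.
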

\begin{proof} Using the pullback squares \eqref{eq:left-map-square}, the component at $h_{m}$ is a pullback of the equivalence $h_0$.
\end{proof}

We will on occasion employ the notion of left map in the special case when the $\infty$-category $\sE$ is actually the $1$-category $\sSet$.

\begin{ex}\label{ex:opfibration-strict-left-map}
  A functor $F \colon \cC \to \cD$ of small 1-categories is a discrete opfibration if and only if the nerve $NF \colon N\cC \to N\cD$ is a left map in $\sSet$.
\end{ex}

Now we assume that $\iE$ is an $\infty$-topos.
Recall that the discrete embedding $\iS \to \sE$ defines a canonical cosimplicial object $\Delta^\bullet \colon \DDelta \to \sE$. In $\sE$, $\Delta^0$ is the terminal object, while we refer to $\Delta^1$ as the \textbf{walking arrow}, and write $\iota_0, \iota_1 \colon \Delta^0 \to \Delta^1$ for its source and target respectively.

\begin{lem}[{\cite[2.1.3]{KV}, \cite[4.1.3]{martini}, \cite[A.27]{RS}}]\label{lem:covariant-fib} A map $f \colon Y \to X$ in $\sE$ is a left map if and only if it is internally orthogonal to $\iota_0 \colon \Delta^0 \to \Delta^1$, meaning that
  \begin{equation}\label{eq:internally-orthogonal} \begin{tikzcd} Y^{\Delta^1} \arrow[d, "\ev_0"'] \arrow[dr, phantom, "\lrcorner" very near start]\arrow[r, "f^{\Delta^1}"] & X^{\Delta^1} \arrow[d, "\ev_0"] \\ Y \arrow[r, "f"'] & X \end{tikzcd}\end{equation} is a pullback in $\sE$.
\end{lem}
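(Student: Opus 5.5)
The plan is to evaluate the square \eqref{eq:internally-orthogonal} levelwise and compare it with the squares \eqref{eq:left-map-square}. Limits in $\sE = \iE^{\DDelta^\op}$ are computed pointwise, so \eqref{eq:internally-orthogonal} is a pullback in $\sE$ if and only if, for each $n \geq 0$, its component at $[n]$ is a pullback in $\iE$. So the first step is to identify $(Y^{\Delta^1})_n$ concretely.

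\textbf{Step 1: the cotensor in levelwise terms.} For a discrete object $K$, the exponential $(Y^K)_n$ is the end $\int_{[m]} \prod_{(\Delta^n\times K)_m} Y_m$. This is the limit of $Y$ over the category of elements of $\Delta^n \times K$, that is, over $(\DDelta_{/\Delta^n\times K})^\op$. For $K = \Delta^1$ the relevant simplicial set is the nerve $\Delta^n \times \Delta^1 = N([n]\times[1])$. Using its standard decomposition into the $n+1$ nondegenerate $(n+1)$-simplices glued along $n$-simplices, I would write
\[
(Y^{\Delta^1})_n \simeq Y_{n+1}\times_{Y_n} Y_{n+1} \times_{Y_n} \cdots \times_{Y_n} Y_{n+1}.
\]
In this description the map $\ev_0$ projects to the face $[n]\times\{0\}$. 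This identification would be justified by cofinality of the simplices in the category of elements, which holds as an $\infty$-categorical limit because the end is taken in the $\infty$-topos itself.

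\textbf{Step 2: left map implies orthogonal.} Suppose each square \eqref{eq:left-map-square} is a pullback. It is convenient to first note that each square along any initial-vertex-preserving map $[k]\to[m]$ is then also a pullback, by pasting and cancellation over $Y_0 \to X_0$. Each iterated fiber product in Step 1 is built from faces that share the initial vertex $(0,0)$, together with maps preserving it. So the comparison square for $(Y^{\Delta^1})_n \to Y_n$ over $(X^{\Delta^1})_n \to X_n$ is a limit of pullback squares, and hence a pullback, since limits commute with limits. A cleaner packaging is to show that a left map $Y \to X$ is right Kan extended from $X \times_{X_0} Y_0$, so that $Y^{K} \simeq X^K \times_{X_0^{K_0}} (Y_0)^{\dots}$ for any connected $K$ with chosen initial vertex. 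I would try this packaging first.

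\textbf{Step 3: orthogonal implies left map.} Evaluate \eqref{eq:internally-orthogonal} at $n=0$. Here $(Y^{\Delta^1})_0 = Y_1$, so we recover the square \eqref{eq:left-map-square} for $m=1$. For larger $m$, internal orthogonality to $\iota_0$ gives orthogonality to $\Delta^0 \to \Delta^m$ via vertex $0$. This is because $\{0\}\to\Delta^m$ is a retract of $\Delta^m\times(\{0\}\to\Delta^1)$, via the map $(i,t)\mapsto ti$, and the class of internally right-orthogonal maps is closed under such pushout-product and retract closure. Evaluating at level $0$ then yields \eqref{eq:left-map-square} for all $m$.

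The main obstacle I expect is Step 1/2: making the levelwise formula for the cotensor by a discrete simplicial set rigorous at the $\infty$-level. This includes both the cofinality argument and the compatibility of the pullback squares across the pieces. It is also where the cited references do the real work.
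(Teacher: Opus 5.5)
Your proposal is correct and is essentially the paper's proof. For left map $\Rightarrow$ orthogonal, the paper also checks the square levelwise and treats $n=0$ as the $m=1$ case of \eqref{eq:left-map-square}. For $n>0$ it first pastes with the pullback square $Y_n\to Y_0$ over $X_n\to X_0$, so that the outer rectangle is the level-$0$ part of the square $Y^{\Delta^1\times\Delta^n}\to Y$ over $X^{\Delta^1\times\Delta^n}\to X$. That square is a limit of the squares $Y_m\to Y_0$ over $X_m\to X_0$, taken over the decomposition of $\Delta^1\times\Delta^n$ into $(n+1)$-simplices glued along interior $n$-faces, all containing the initial vertex. This colimit decomposition is carried into $\sE$ by the discrete embedding, which is all the rigour your Step~1 needs, with no cofinality argument. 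Your direct decomposition of the square over $Y_n\to X_n$ also works once you intersect each piece with $\Delta^n\times\{0\}$.

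For the converse, the paper exponentiates by $\Delta^n$ and exhibits $\iota_0\colon\Delta^0\to\Delta^{n+1}$ as a retract of $\iota_0\colon\Delta^n\to\Delta^1\times\Delta^n$ via $\gamma$ and $\rho(i,j)=i(j+1)$, then evaluates at level $0$. Your retraction $(i,t)\mapsto ti$ is a minor variant of this; its section is $0\mapsto(0,0)$, $i\mapsto(i,1)$. The optional ``right Kan extension'' packaging is not needed.
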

\begin{proof}
We adapt the argument of \cite[2.1.3]{KV} to this setting.

Assuming $f$ is internally orthogonal to $\iota_0$, then exponentiation, as a right adjoint, preserves this pullback, giving rise to pullback squares of the form
\begin{equation}\label{eq:exponentiated-internal-orthogonal} \begin{tikzcd} Y^{\Delta^1 \times \Delta^n} \arrow[d, "\ev_0"'] \arrow[dr, phantom, "\lrcorner" very near start]\arrow[r, "f^{\Delta^1 \times \Delta^n}"] & X^{\Delta^1 \times \Delta^n} \arrow[d, "\ev_0"] \\ Y^{\Delta^n} \arrow[r, "f^{\Delta^n}"'] & X^{\Delta^n} \end{tikzcd}\end{equation}
for $n \geq 0$. There is a retract diagram of discrete simplicial spaces
\[ \begin{tikzcd} \Delta^0 \arrow[d, "\iota_0"'] \arrow[r] & \Delta^n \arrow[d, "\iota_0"] \arrow[r] & \Delta^0 \arrow[d, "\iota_0"] \\ \Delta^{n+1} \arrow[r, "\gamma"'] & \Delta^1 \times \Delta^n \arrow[r, "\rho"'] & \Delta^{n+1} \rlap{,} \end{tikzcd} \qquad \gamma(i) \coloneqq \begin{cases} (0,0) & i = 0 \\ (1, i-1) & i > 0 \end{cases} \qquad \rho(i,j) \coloneqq i(j+1) \rlap{.} \]
Thus, the square below-left
\[ \begin{tikzcd} Y^{\Delta^{n+1}} \arrow[d, "\ev_0"'] \arrow[dr, phantom, "\lrcorner" very near start]\arrow[r, "f^{\Delta^{n+1}}"] & X^{\Delta^{n+1}} \arrow[d, "\ev_0"]  & & Y_{n+1} \arrow[d, "\ev_0"'] \arrow[r, "f_{n+1}"] \arrow[dr, phantom, "\lrcorner" very near start] & X_{n+1} \arrow[d, "\ev_0"]\\ Y \arrow[r, "f"'] & X \rlap{,} & & Y_0 \arrow[r, "f_0"'] & X_0\end{tikzcd}
\]
as a retract of \eqref{eq:exponentiated-internal-orthogonal} is also a pullback. This pullback is preserved by the functor $\ev_0 \colon \sE \to \iE$ which yields the pullback square above-right, as the diagram of ``zero spaces'' in $\mE$ associated to the above-left diagram in $\smE$. Thus $f$ is a left map.

Conversely, suppose $f$ is a left map. Since pullbacks in $\sE$ are created by the evaluation functors $\ev_n \colon \sE \to \iE$ for $n \geq 0$, it suffices to show that the image of the square \eqref{eq:internally-orthogonal} under each of these is a pullback in $\iE$. In the case $n=0$ this is the pullback of \eqref{eq:left-map-square} with $m=1$, so it remains to consider the case $n >0$. By pullback composition and cancellation, it suffices to show that the outer rectangle
\[
  \begin{tikzcd} (Y^{\Delta^1})_n \arrow[d, "\ev_0"'] \arrow[r, "{(f^{\Delta^1})_n}"] & (X^{\Delta^1})_n \arrow[d, "\ev_0"] \\ Y_n \arrow[r, "f_n"] \arrow[d, "\ev_0"'] \arrow[dr, phantom, "\lrcorner" very near start] & X_n \arrow[d, "\ev_0"] \\ Y_0 \arrow[r, "f_0"'] & X_0 \end{tikzcd}\] defines a pullback in $\iE$. Note that this outer rectangle is the diagram on zero-spaces for the square of simplicial objects
  \begin{equation}\label{eq:outer-simplicial-level-square} \begin{tikzcd} Y^{\Delta^1 \times \Delta^n} \arrow[d, "\ev_0"'] \arrow[r, "f^{\Delta^1 \times \Delta^n}"] & X^{\Delta^1 \times \Delta^n} \arrow[d, "\ev_0"] \\ Y \arrow[r, "f"'] & X \rlap{.} \end{tikzcd} \end{equation}
  The discrete simplicial space $\Delta^1 \times \Delta^n$ is a colimit of a diagram that glues together $n+1$ copies of the simplicial space $\Delta^{n+1}$ along $n$ interior boundary faces $\Delta^n$, and this colimit is preserved by the left adjoint $\Delta \colon \sS \to \sE$. Thus the square \eqref{eq:outer-simplicial-level-square} is a colimit of squares of the form
  \[ \begin{tikzcd} Y^{\Delta^m} \arrow[d, "\ev_0"'] \arrow[r, "f^{\Delta^m}"] & X^{\Delta^m} \arrow[d, "\ev_0"] \\ Y \arrow[r, "f"'] & X \rlap{.} \end{tikzcd} \]
    for $m = n$ or $m=n+1$. At the level of zero-spaces, each of these squares are pullbacks since $f$ is a left map, and thus, by descent, the diagram on zero-spaces of \eqref{eq:outer-simplicial-level-square} is too.
\end{proof}

For later use, we note a perhaps unexpected example.

\begin{ex}\label{ex:higher-cod-left-map}
  For $k \leq n \in \NN$, the inclusion $i_{\mathup{last}} \colon \Delta^k \to \Delta^n$ that is full on the final $k+1$ vertices is a left map in $\sS$ and in $\sE$.  In $\sS$, the square \eqref{eq:left-map-square} specializes to the diagram of discrete spaces
  \[ \begin{tikzcd} \DDelta([m],[k]) \arrow[d, "i_{\mathup{last}}"'] \arrow[dr, phantom, "\lrcorner" very near start] \arrow[r, "\ev_0"] & \DDelta([0],[k]) \arrow[d, "i_{\mathup{last}}"] \\ \DDelta([m],[n]) \arrow[r, "\ev_0"'] & \DDelta([0],[n]) \rlap{,} \end{tikzcd}\]
  which is a pullback square expressing the orthogonality of the initial functor $\iota_0 \colon \Delta^0 \to \Delta^m$ and the discrete opfibration $i_{\mathup{last}} \colon \Delta^k \to \Delta^n$ of 1-categories. This pullback is preserved by the discrete embedding $\Delta \colon \iS \to \iE$.
\end{ex}

\section{It's all in the weights}\label{sec:weights}

In this section, we establish the core technical ingredients that will be used to prove our main theorems in a much simpler setting: the category of simplicial sets. We think of simplicial sets as ``weights'' that can be deployed in a simplicial $\infty$-topos $\smE$ to construct a \emph{weighted limit}, an object in $\mE$.

In \cref{ssec:leibniz}, we briefly recall the general setting of weighted co/limits, which we use to illustrate even more general interactions between two-variable adjunctions and weak factorization systems. In \cref{ssec:weights}, we introduce the two central weights, which have the form
\[ \begin{tikzcd} {[n]} \arrow[r, "\Ws"] & \slice{\sSet}{\Delta^n} \rlap{,} & {[n]}^\op \arrow[r, "\Wc"] & \slice{\sSet}{\Delta^n} \end{tikzcd} \]
and establish their properties.
The functors $\arrtofun_n$ and $\funtoarr_n$ are defined by limits weighted by $\Wc$ and $\Ws$ respectively composed with suitable reindexing functors.
In this section, we reason on the left about the reindexed weighted colimits
\[  \begin{tikzcd}[column sep=large, row sep=small] \arrtofunwgt \colon \sSet^{[n]} \arrow[r, "{(\Delta^n)^*}"] & (\slice{\sSet}{\Delta^n})^{[n]} \arrow[r, "\Wc\otimes_{[n]}-"] & \slice{\sSet}{\Delta^n} \rlap{,} \\  \funtoarrwgt \colon \slice{\sSet}{\Delta^n}  \arrow[r, "\Ws\otimes -"] & (\slice{\sSet}{\Delta^n})^{[n]} \arrow[r, "(\Delta^n)_!"]   & \sSet^{[n]}  \rlap{.} \end{tikzcd}\]
The results in this section are fundamentally 1-categorical and are developed more generally for a generic 1-category $\cC$ in place of the ordinal category $[n]$.

In order to analyze the interaction between $\arrtofun_n$ and $\funtoarr_n$ and the left fibrations in $\smE$, we study the interaction between $\arrtofunwgt_n$ and $\funtoarrwgt_n$ and a particular class of maps in the category of simplicial sets: the \emph{left anodyne} maps. We define left anodyne maps and establish their basic properties, which are more or less classical, in \S\ref{ssec:left-anodyne}.
In \cref{ssec:homotopical}, we prove homotopical properties of $\arrtofunwgt_n$ and $\funtoarrwgt_n$ in these terms.

In \cref{ssec:naturality}, we consider naturality in the indexing category with respect to arbitrary functors $F \colon \cC \to \cD$ between 1-categories.
We show that $\arrtofunwgt$ is lax natural and $\funtoarr$ is pseudonatural.
These results will be used to show that $\funtoarr_n$ defines a component of a natural transformation between simplicial objects on the $\infty$-topos level. We will also apply them in the case of the projection $! \colon \cC \to {[0]}$ to generalize this constructions to slices of model topoi over a fixed context $\Gamma$.

\subsection{Leibniz adjunctions and weak factorization systems}\label{ssec:leibniz}

We briefly recall the general setting of weighted co/limits.

\begin{rec}\label{rec:weighted-setting}
  Suppose $\cK$, $\cM$, and $\cN$ are bicomplete 1-categories equipped with a two-variable adjunction defined by functors
  \[ \begin{tikzcd} \cK \times \cM \arrow[r, "\otimes"] & \cN \rlap{,} & \cK^\op \times \cN \arrow[r, "{\{-,-}\}"] & \cM \rlap{,} & \cM^\op \times \cN \arrow[r, "\Map"] & \cK \rlap{.} \end{tikzcd}\]
  Often $\cM=\cN$ (or even $\cK=\cM=\cN$) and these functors are given by some sort of tensor, cotensor, and enrichment, respectively. This explains our notation.

  Then for any small 1-categories $\cA$, $\cB$, and $\cC$, the functors
  \[ \begin{tikzcd}[column sep=2em] \cK^{\cB^\op \times \cA} \times \cM^{\cC^\op \times \cB} \arrow[r, "\otimes_\cB"] & \cN^{\cC^\op \times \cA} \rlap{,} & [-2em] (\cK^{\cB^\op \times \cA})^\op \times \cN^{\cC^\op \times \cA} \arrow[r, "{\{-}\}^\cA"] & \cM^{\cC^\op \times \cB} \rlap{,} & [-2em] (\cM^{\cC^\op \times \cB})^\op \times \cN^{\cC^\op \times \cA} \arrow[r, "\Map^\cC"] & \cK^{\cB^\op \times \cA} \end{tikzcd}\] defined by an coend over $\cB$ in the first case, an end over $\cA$ in the second, and an end over $\cC^\op$ in the third again define a two-variable adjunction. In particular, when $\cC={[0]}$ and $W \in \cK^{\cB^\op \times \cA}$ is a fixed ``weight'', the \textbf{weighted colimit} functor $W \otimes_\cB - \colon \cM^\cB \to \cN^\cA$ and \textbf{weighted limit} functor $\{W,-\}^\cA \colon \cN^\cA \to \cM^\cB$ define an adjunction between $\cB$-indexed diagrams and $\cA$-indexed diagrams.
\end{rec}

Continuing in the same setting, suppose $\cK$, $\cM$, and $\cN$ each are equipped with a weak factorization system such that the two-variable adjunction $(\otimes, \{-,-\}, \Map)$ is a \textbf{Leibniz two-variable adjunction}, meaning that the pushout Leibniz bifunctor $-\hat{\otimes}- \colon \cK^{[1]} \times \cM^{[1]} \to \cN^{[1]}$ preserves left classes or equivalently that the pullback Leibniz bifunctors $\widehat{\{-,-\}}$ or $\widehat{\Map}$ for either right adjoint preserves right classes \cite[C.2.11]{RV}. Note the Leibniz bifunctors again define a two-variable adjunction \cite[4.12]{RV-Reedy}.

From a weak factorization system $(\leftclass,\rightclass)$ on a category $\cK$, we may define \textbf{projective} and \textbf{injective} weak factorization systems on the diagram category $\cK^\cA$, which exist under certain conditions. (In particular, when $(\leftclass,\rightclass)$ is cofibrantly generated, $\cK$ is locally presentable, and $\cA$ is small, the projective weak factorization system exists.) The right class of the projective weak factorization system and the left class of the injective weak factorization system are those maps that are pointwise in $\rightclass$ or $\leftclass$, respectively. We quickly recall two special cases of a general result, which holds whenever the named weak factorization systems exist.

\begin{lem}[{\cite{gambino}}]\label{lem:leibniz-weighted}\leavevmode
\begin{parts}
  \item\label{lem:leibniz-weighted:tensor-to-E} The two-variable adjunction $(\otimes_\cB, \{-,-\}, \Map)$ is Leibniz for either
  \begin{parts}
    \item the projective weak factorization system on $\cK^{\cB^\op}$, the injective weak factorization system on $\cM^\cB$, and the given weak factorization system on $\cN$;
    \item the injective weak factorization system on $\cK^{\cB^\op}$, the projective weak factorization system on $\cM^\cB$, and the given weak factorization system on $\cN$.
\end{parts}
\item\label{lem:leibniz-weighted:tensor-with-E} The two-variable adjunction $(\otimes, \{-,-\}^\cA, \Map)$ is Leibniz for either
\begin{parts}
  \item\label{lem:leibniz-weighted:tensor-with-E:projective} the projective weak factorization system on $\cK^{\cA}$, the given weak factorization system on $\cM$, and the projective weak factorization system on $\cN^\cA$;
  \item\label{lem:leibniz-weighted:tensor-with-E:injective} the injective weak factorization system on $\cK^{\cA}$, the given weak factorization system on $\cM$, and the injective weak factorization system on $\cN^\cA$.
\end{parts}
\end{parts}
\end{lem}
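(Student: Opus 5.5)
We reduce each case to the defining Leibniz property of the given two-variable adjunction $(\otimes,\{-,-\},\Map)$, using that the Leibniz constructions commute with (co)ends and that the projective and injective weak factorization systems are characterized by lifting. By \cite[C.2.11]{RV}, being Leibniz can be tested on any one of the three bifunctors. In each case we choose the bifunctor whose pointwise description makes the relevant pointwise class visible.

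For \ref{lem:leibniz-weighted:tensor-with-E}\ref{lem:leibniz-weighted:tensor-with-E:injective} we use the pushout Leibniz tensor. The functor $\otimes \colon \cK^\cA \times \cM \to \cN^\cA$ is computed pointwise, $(W\otimes M)(a) = W(a)\otimes M$, and pushouts in $\cN^\cA$ are pointwise. Hence $(w\hat\otimes m)(a) = w(a)\hat\otimes m$. The injective left classes on $\cK^\cA$ and $\cN^\cA$ are exactly the pointwise left maps, so the Leibniz property of the original adjunction gives the claim directly.

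For \ref{lem:leibniz-weighted:tensor-with-E}\ref{lem:leibniz-weighted:tensor-with-E:projective} we instead use $\widehat{\Map}$. Here $\Map(M,N)$, for $M\in\cM$ and $N\in\cN^\cA$, is pointwise $a\mapsto \Map(M,N(a))$. So for $m$ in the left class of $\cM$ and $n$ projectively right (that is, pointwise in $\rightclass$), the map $\widehat{\Map}(m,n)$ is pointwise $\widehat{\Map}(m,n(a))$, hence lies in the right class of $\cK$ at each $a$, i.e.\ is projectively right.

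Part \ref{lem:leibniz-weighted:tensor-to-E} is handled by the same method. For (ii) we use the adjoint $\widehat{\{-,-\}}\colon (\cK^{\cB^\op})^\op\times\cN \to \cM^\cB$, given by $\{W,N\}(b)=\{W(b),N\}$. For $w$ injectively left (pointwise left) and $n\in\rightclass$, this is pointwise $\widehat{\{w(b),n\}}$, which is right in $\cM$; this is exactly the projective right class. For (i) we use $\widehat{\Map}\colon (\cM^\cB)^\op\times\cN\to\cK^{\cB^\op}$, given by $\Map(M,N)(b)=\Map(M(b),N)$. For $m$ injectively left and $n$ right, this is pointwise right, hence projectively right in $\cK^{\cB^\op}$.

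In every case the only input is that the relevant right adjoint is computed pointwise. This holds because the coend over $\cB$ is adjoint to a functor built objectwise, which can be seen from the end formulas in \cref{rec:weighted-setting} with $\cA$ or $\cC$ trivial. We also need that the Leibniz pullback commutes with evaluation, since limits in diagram categories are pointwise.

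The one point needing care is the bookkeeping of which adjoint to use. For each bifunctor, the two inputs must be drawn from classes that are pointwise-defined, namely the injective left class or the projective right class; the third, non-pointwise class then appears only as the output, where it is characterized by lifting. Once the correct adjoint is chosen as above, each case is immediate, and the general version in \cite{gambino} follows by the same argument with parameters.
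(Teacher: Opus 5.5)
Your proposal is correct and follows essentially the same approach as the paper's proof. In each case you transpose to whichever of the three Leibniz bifunctors has its two inputs in the pointwise-defined classes (injective left or projective right). That bifunctor is then computed pointwise, so the original Leibniz property of $(\otimes,\{-,-\},\Map)$ gives the result; your assignment of $\widehat{\Map}$, $\widehat{\{-,-\}}$, $\widehat{\Map}$ and $\hat\otimes$ to the four cases is the right one.
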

\begin{proof}
By transposing, all of these statements are equivalent to one which asserts that one of the given bifunctors carries pointwise maps in the appropriate class to pointwise maps, which is true by the assumption on the original two-variable adjunction $(\otimes, \{-,-\}, \Map)$.
\end{proof}

Frequently we encounter Leibniz two-variable adjunctions in a setting where each of the three categories $\cK$, $\cM$, and $\cN$ each have a pair of nested weak factorization systems, e.g., in the presence of model structures on all three categories.
In that setting, one refers to the two-variable adjunction as a \textbf{Quillen two-variable adjunction} when it is Leibniz with respect to seven of the eight possible choices of weak factorization system, the exception being when the weak factorization systems on $\cK$ and $\cM$ are chosen to have the largest left class, while the weak factorization system on $\cN$ is chosen to have the smallest left class.
We extend that terminology to any nested pair of weak pair of weak factorization systems whether or not we are considering a model structure.

Given such a Quillen two-variable adjunction, \cref{lem:leibniz-weighted} then shows that the induced weighted (co)limit two-variable adjunction is Quillen for the corresponding projective and/or injective lifts of the nested pairs of weak factorization systems.

\subsection{Left anodyne maps}\label{ssec:left-anodyne}

The set of left horn inclusions of simplicial sets generates a weak factorization system whose left class is commonly referred to as the class of left anodyne maps. We will see that these maps can be thought of as avatars, on the weights side, for the left fibrations defined in a simplicial $\infty$-topos. In the setting of simplicial sets, the right class of the weak factorization system whose left class is the left anodyne maps is commonly referred to as the class of \textbf{left fibrations}. While closely related, this notion of left fibrations is not a special case of the notion we will introduce in \S\ref{ssec:left-fibration} here because $\Set$ is a 1-topos and not an $\infty$-topos.

\begin{defn}[{\cite{joyal-quasi}}]
The \textbf{left anodyne} morphisms are those maps of simplicial sets defined as retracts of transfinite composites of pushouts of coproducts of the left horn inclusions $\Lambda^n_k \hookrightarrow \Delta^n$ for $n \geq 1$ and $0 \leq k < n$.
\end{defn}

For later use, we recall a few facts about left anodyne maps of simplicial sets and related classes of maps.

\begin{lem}[{\cite[2.1.2.7]{lurie-topos}}]\label{lem:left-anodyne-quillen}
The cartesian self-enrichment of $\sSet$ is Quillen for the (monomorphism, trivial fibration) and (left anodyne, left fibration) weak factorization systems.
\end{lem}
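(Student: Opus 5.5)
The statement says that the pushout-product $-\hat{\times}-$ on $\sSet$ carries left classes to left classes in seven of the eight combinations of the nested pair (monomorphisms $\supseteq$ left anodyne). First I would reduce this to a single substantive assertion. Monomorphisms are closed under pushout-products, by a direct check using that $\sSet$ is a presheaf topos. Left anodyne maps are monomorphisms. So every combination whose target class is the monomorphisms is immediate.

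The combinations with target the left anodyne maps, other than the excluded one, all follow from
\[ (\text{mono}) \hat{\times} (\text{left anodyne}) \subseteq \text{left anodyne} \]
together with the symmetry of $\times$. The case of two left anodyne inputs is included because left anodyne maps are monomorphisms. Since $-\hat{\times}-$ is cocontinuous in each variable as a functor of arrows, the class of maps $j$ with $(\partial\Delta^m\hookrightarrow\Delta^m)\hat{\times}j$ left anodyne for all $m$ is saturated. It therefore suffices to treat $j$ ranging over the generating left horns. By the same argument in the other variable, it suffices to treat the boundary inclusions, which generate the monomorphisms.

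Rather than analysing $(\partial\Delta^m\hookrightarrow\Delta^m)\hat{\times}(\Lambda^n_k\hookrightarrow\Delta^n)$ directly, I would follow Joyal's strategy through the auxiliary class $\mathcal{A}$. This is the saturation of the maps $(\Lambda^1_0\hookrightarrow\Delta^1)\hat{\times}(\partial\Delta^m\hookrightarrow\Delta^m)$, that is, $\{0\}\times\Delta^m\cup\Delta^1\times\partial\Delta^m\hookrightarrow\Delta^1\times\Delta^m$. The plan has two steps.

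In step (a) I show $\mathcal{A}\subseteq$ left anodyne. The prism $\Delta^1\times\Delta^m$ has $m+1$ nondegenerate top simplices $\sigma_j$, $j=0,\dots,m$, each jumping from the $0$-sheet to the $1$-sheet after vertex $j$. I attach them in the order $j=m, m-1,\dots,0$, starting from $\{0\}\times\Delta^m\cup\Delta^1\times\partial\Delta^m$. At each stage the new simplex $\sigma_j$ meets what is already present in a horn $\Lambda^{m+1}_{j}$, and $j<m+1$, so each attachment is a pushout of a left horn. Checking exactly which faces of $\sigma_j$ are already present is the combinatorial heart of the argument and the step I expect to be the main obstacle. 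The index conventions must make the missing face an inner or $0$-th one, never the last.

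In step (b) I show that each left horn $\Lambda^n_k\hookrightarrow\Delta^n$ with $k<n$ is a retract of $(\Lambda^1_0\hookrightarrow\Delta^1)\hat{\times}(\Lambda^n_k\hookrightarrow\Delta^n)$. For the section I use $\Delta^n\to\Delta^1\times\Delta^n$, $i\mapsto(1,i)$. For the retraction I use the map $r\colon\Delta^1\times\Delta^n\to\Delta^n$ with $r(1,i)=i$, $r(0,i)=i$ for $i\le k$ and $r(0,i)=k$ for $i>k$. This map is order-preserving, and one checks that it sends $\{0\}\times\Delta^n\cup\Delta^1\times\Lambda^n_k$ into $\Lambda^n_k$; the condition $k<n$ is used exactly here. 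Since $\mathcal{A}$ is closed under $\hat{\times}$ with monomorphisms, by associativity of $\hat{\times}$ and the mono case above, the right-hand map lies in $\mathcal{A}$. Hence left anodyne $\subseteq\mathcal{A}$, and with (a) the two classes agree.

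Finally, for a mono $i$ and a left anodyne $j$, left anodyne $=\mathcal{A}$ lets me assume $j=(\Lambda^1_0\hookrightarrow\Delta^1)\hat{\times}i'$ with $i'$ a mono. Then
\[ i\hat{\times}j\cong(\Lambda^1_0\hookrightarrow\Delta^1)\hat{\times}(i\hat{\times}i'), \]
and $i\hat{\times}i'$ is a mono, so $i\hat{\times}j$ lies in $\mathcal{A}$, which is the class of left anodyne maps. By transposition via \cite[C.2.11]{RV}, this gives the corresponding statements for the exponential in terms of left fibrations and trivial fibrations, completing the Quillen two-variable adjunction claim.
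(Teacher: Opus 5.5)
Your proposal is correct and takes essentially the same route as the source: the paper gives no proof of its own and simply cites \cite[2.1.2.7]{lurie-topos}, which is proved exactly this way. That argument identifies the left anodyne maps with the saturation of $(\Lambda^1_0\hookrightarrow\Delta^1)\hat{\times}(\partial\Delta^m\hookrightarrow\Delta^m)$ using the prism filtration and the horn retraction, and then concludes by associativity of $\hat{\times}$. Your horn indices in step (a) check out: when you attach $\sigma_j$ in decreasing order of $j$, the face $d_{j+1}\sigma_j$ is either $\{0\}\times\Delta^m$ (for $j=m$) or shared with the already-attached $\sigma_{j+1}$, and every other face except $d_j\sigma_j$ lies in $\Delta^1\times\partial\Delta^m$, so each attachment is along the left horn $\Lambda^{m+1}_j$.
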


\begin{lem}\label{lem:left-anodyne-cancel}
Consider a bicomplete simplicially enriched category $\cE$ with a nested pair of weak factorization systems $(\leftclass_0, \rightclass_0)$ and $(\leftclass_1, \rightclass_1)$ where $\leftclass_0 \subseteq \leftclass_1$ and whose enrichment is Quillen with respect to the (monomorphism, trivial fibration) and (left anodyne, left fibration) weak factorization systems on $\sSet$.
Then $\leftclass_0$-maps right cancel among $\leftclass_1$-maps in $\cE$.
\end{lem}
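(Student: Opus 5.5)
The plan is to reduce the cancellation to a statement about simplicial sets by applying the enrichment, and then use a classical fact about left fibrations. I read the statement as follows: given composable maps $i \colon A \to B$ and $j \colon B \to C$ in $\cE$ with $i, j \in \leftclass_1$ and $i, ji \in \leftclass_0$, we must show $j \in \leftclass_0$. Since $\leftclass_0$ is the left class of a weak factorization system, it suffices to show that $j$ has the left lifting property against every $p \colon X \to Y$ in $\rightclass_0$. By adjunction, such lifting problems correspond to vertices of the codomain of the Leibniz hom
\[ \widehat{\Map}(j,p) \colon \Map(C,X) \to \Map(C,Y) \times_{\Map(B,Y)} \Map(B,X), \]
and solutions correspond to vertices of its domain lying over them. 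So it is enough to show that $\widehat{\Map}(j,p)$ is a trivial fibration of simplicial sets, because trivial fibrations are surjective on vertices.

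First I would unpack the Quillen hypothesis into two instances of the Leibniz property.
\begin{itemize}
\item For $\ell \in \leftclass_0$ and $p \in \rightclass_0$, the map $\widehat{\Map}(\ell,p)$ is a trivial fibration. This is the pairing of $\leftclass_0$ with $\rightclass_0$ against monomorphisms.
\item For $\ell \in \leftclass_1$ and $p \in \rightclass_0$, the map $\widehat{\Map}(\ell,p)$ is a left fibration. Transposed, this says that a left anodyne map tensored with an $\leftclass_1$-map lands in $\leftclass_0$, which is one of the seven permitted combinations.
\end{itemize}
Hence $\widehat{\Map}(i,p)$ and $\widehat{\Map}(ji,p)$ are trivial fibrations, while $\widehat{\Map}(j,p)$ is a left fibration.

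Next comes the standard factorization of Leibniz homs along a composite. The map $\widehat{\Map}(ji,p)$ factors as $\widehat{\Map}(j,p)$ followed by the map
\[ \Map(C,Y)\times_{\Map(B,Y)}\Map(B,X) \to \Map(C,Y)\times_{\Map(A,Y)}\Map(A,X). \]
This second map is a pullback of $\widehat{\Map}(i,p)$ along $\Map(C,Y) \to \Map(B,Y)$, so it is a trivial fibration. We now have a composite $g f$ in which $g$ and $gf$ are trivial fibrations and $f = \widehat{\Map}(j,p)$ is a left fibration. By two-of-three for weak homotopy equivalences in the Quillen model structure, $f$ is a weak equivalence.

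The main step is to conclude that a left fibration of simplicial sets which is a weak homotopy equivalence is a trivial fibration. This is classical: the fibers of a left fibration are Kan complexes, and the fibers of a left fibration vary up to homotopy equivalence along paths in the base, so a weak equivalence has contractible fibers; compare \cite[2.1.3]{lurie-topos} and Joyal's work. I would cite this rather than reprove it. An alternative that avoids it is to argue directly with the (left anodyne, left fibration) lifting against $\partial\Delta^n \to \Delta^n$, but citing the classical fact is cleaner. Once $f$ is a trivial fibration, surjectivity on vertices solves every lifting problem of $j$ against $\rightclass_0$, so $j \in \leftclass_0$.
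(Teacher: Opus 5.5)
Your reduction is sound up to the point where $f=\widehat{\Map}(j,p)$ is a left fibration and $g$ and $gf$ are trivial fibrations. The ``classical fact'' you then invoke is false: a left fibration of simplicial sets that is a weak homotopy equivalence need not be a trivial fibration.

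The inclusion $\{1\}\hookrightarrow\Delta^1$ is a counterexample.
\begin{itemize}
\item It is a left fibration. For $n\geq 2$ a left horn $\Lambda^n_k$ contains every vertex. For $n=1$ the edge of $\Delta^1$ in the lifting problem starts at $1$. In either case the simplex of $\Delta^1$ is constant at $1$, so it lifts.
\item Both sides are weakly contractible, so it is a weak homotopy equivalence.
\item Its fibre over $0$ is empty, so it is not a trivial fibration.
\end{itemize}
Your heuristic breaks because a left fibration only transports fibres covariantly along edges, and transport along a non-invertible edge need not be an equivalence. Over a Kan complex base, where left fibrations are Kan fibrations, your reasoning would work. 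But the codomain of $\widehat{\Map}(j,p)$ has no reason to be a Kan complex. What is true (cf.\ \cite[\S2.1.3]{lurie-topos}) is that a left fibration with contractible fibres is a trivial fibration, and two-out-of-three does not give you that. Passing from ``$g$ and $gf$ are trivial fibrations'' to ``$f$ is a weak equivalence'' discards exactly the information you need.

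The repair is short and stays within your approach, since you only need $f$ to be surjective on vertices.
\begin{itemize}
\item Given a vertex $e$ of the codomain of $f$, choose a vertex $x$ of $\Map(C,X)$ with $gf(x)=g(e)$, using that $gf$ is a trivial fibration.
\item Lift the degenerate edge at $g(e)$ against the trivial fibration $g$ along $\partial\Delta^1\hookrightarrow\Delta^1$, with endpoints $f(x)$ and $e$. This gives an edge $f(x)\to e$.
\item Lift that edge against the left fibration $f$ along $\Lambda^1_0\hookrightarrow\Delta^1$, starting at $x$. Its target is a vertex of $\Map(C,X)$ over $e$, which solves the original lifting problem.
\end{itemize}

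With this fix your proof is a valid alternative to the paper's. The paper never transposes to $\sSet$. It works in the coslice under $A$, where $B$ and $C$ are $\leftclass_0$-objects. There it observes that $j$ followed by the split monomorphism $C\to\Delta^1\otimes C$ (induced by $\{1\}\hookrightarrow\Delta^1$) also factors in two steps:
\begin{itemize}
\item a mapping-cylinder inclusion, which lies in $\leftclass_0$;
\item the Leibniz tensor of $\{0\}\hookrightarrow\Delta^1$ with $j$.
\end{itemize}
Hence $j$ is a retract of an $\leftclass_0$-map. Both arguments use the same two Leibniz conditions. Yours additionally needs a small lifting argument in simplicial sets; the paper's needs none.
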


\begin{proof}
Working in a coslice, we show that every $\leftclass_1$-map $j \colon A \to B$ between $\leftclass_0$-objects is in $\leftclass_0$.
The inherited simplicial enrichment of the coslice is Quillen for the inherited nested pair of weak factorization systems since the forgetful functor to $\cE$ preserves cotensors.
In the square
\[
\begin{tikzcd}
  A
  \ar[r, "j"]
  \ar[d]
&
  B
  \ar[d, "1 \otimes B"]
\\
  B \sqcup_A \Delta^1 \otimes A
  \ar[r]
&
  \Delta^1 \otimes B \rlap{,}
\end{tikzcd}
\]
the left vertical map is the first factor of the ``mapping cylinder'' factorization of $j$; as it writes as a composite of base changes of $\emptyset \to B$ and the Leibniz tensor with $\partial \Delta^1 \hookrightarrow \Delta^1$ of $\emptyset \to A$, it is in $\leftclass_0$.
The bottom map is the Leibniz tensor with $\{0\} \hookrightarrow \Delta^1$ of $j$ and hence in $\leftclass_0$.
As $\{1\} \hookrightarrow \Delta^1$ is a split monomorphism, so is the right vertical map.
This makes $j$ a retract of a map in $\leftclass_1$.
\end{proof}

\begin{lem}\label{lem:left-anodyne-initial-vertex} The left anodyne maps of simplicial sets are generated under retract, transfinite composition, pushout, coproduct, and right cancellation among monomorphisms by either
  \begin{parts}
  \item\label{lem:left-anodyne-initial-vertex:point} the initial point inclusions $\iota_0 \colon \Delta^0 \to \Delta^m$ for all $m \geq 1$, or
  \item\label{lem:left-anodyne-initial-vertex:segment} the initial segment inclusions $\iota_{\leq m-1} \colon \Delta^{m-1} \to \Delta^m$ for all $m \geq 1$.
  \end{parts}
\end{lem}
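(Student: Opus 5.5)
The plan is to compare two saturated classes. Let $\mathcal{A}_1$ be the class generated by the initial point inclusions $\iota_0 \colon \Delta^0 \to \Delta^m$, and $\mathcal{A}_2$ the class generated by the initial segment inclusions $\iota_{\leq m-1} \colon \Delta^{m-1} \to \Delta^m$. In both cases the closure is under retract, transfinite composition, pushout, coproduct, and right cancellation among monomorphisms. Write $\mathcal{L}$ for the left anodyne maps. I will show $\mathcal{L} \subseteq \mathcal{A}_i \subseteq \mathcal{L}$ for $i=1,2$.

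For the inclusions $\mathcal{A}_i \subseteq \mathcal{L}$, it suffices to check two things: that $\mathcal{L}$ contains the generators, and that $\mathcal{L}$ is closed under right cancellation among monomorphisms. Closure under retract, pushout, coproduct and transfinite composition is automatic, since $\mathcal{L}$ is a left class.
\begin{itemize}
\item Right cancellation is exactly \cref{lem:left-anodyne-cancel} applied to $\cE = \sSet$, with $\leftclass_0$ the left anodyne maps and $\leftclass_1$ the monomorphisms. Its hypothesis holds by \cref{lem:left-anodyne-quillen}.
\item For the generators, $\iota_0 \colon \Delta^0 \to \Delta^m$ is left anodyne by the classical argument. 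Filter $\Delta^m$ by the subcomplexes consisting of the simplices containing the vertex $0$, and attach them in order of increasing dimension. Each attachment is a pushout of a horn $\Lambda^k_0$, so the composite is a composite of left horn fillings.
\item Similarly, $\iota_{\leq m-1}$ is left anodyne, because $\Delta^m$ is the join $\Delta^{m-1} \star \Delta^0$. The inclusion $\Delta^{m-1} \to \Delta^{m-1}\star\Delta^0$ is obtained by attaching the simplices containing the last vertex, via horns $\Lambda^k_j$ with $j<k$. Alternatively, I would use the Leibniz join criterion, as in Joyal: a join with a monomorphism on the right of an anodyne map.
\end{itemize}

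The main direction is $\mathcal{L} \subseteq \mathcal{A}_i$, that is, every left horn $\Lambda^n_k \hookrightarrow \Delta^n$ with $0 \le k < n$ lies in $\mathcal{A}_i$. I would use right cancellation. The idea is to find a map $u$ into $\Lambda^n_k$ such that both $u$ and the composite into $\Delta^n$ are in $\mathcal{A}_i$. Since horns are monomorphisms, cancellation then gives the horn.

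For (i), take $u = \iota_0 \colon \Delta^0 \to \Lambda^n_k$, the inclusion of vertex $0$. The composite $\Delta^0 \to \Delta^n$ is a generator. It remains to show that $\Delta^0 \to \Lambda^n_k$ lies in $\mathcal{A}_1$, and I will do this by induction on $n$.
\begin{itemize}
\item If $k>0$, the horn $\Lambda^n_k$ is a union of faces $d_j\Delta^n$ with $j\neq k$. These faces contain vertex $0$ exactly when $j\ne 0$. I would build the horn from the vertex $0$ by successively adding faces. Each face $\Delta^{n-1}$ containing $0$ is attached along its intersection with the previously added faces, which is a union of faces of $\Delta^{n-1}$, i.e.\ a horn-like subcomplex containing $0$.
\item The induction hypothesis has to be strengthened to cover unions of faces containing the initial vertex. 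The precise statement I would aim for is: for any nonempty union $S$ of codimension-one faces of $\Delta^n$ that all contain vertex $0$, together possibly with $d_0$, such that not all faces are included, the inclusion $\Delta^0 \to S$ is in $\mathcal{A}_1$.
\item Then $\Delta^0 \to S$ factors through a smaller such union. Cancellation and pushout along the attaching map are handled by the induction.
\item The case $k=0$ works the same way, since $d_0$ is omitted.
\end{itemize}

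For (ii) I would reduce to (i). It suffices to show that each $\iota_0 \colon \Delta^0 \to \Delta^m$ lies in $\mathcal{A}_2$, and this holds because it factors as the composite $\Delta^0 \to \Delta^1 \to \cdots \to \Delta^m$ of initial segment inclusions. Then $\mathcal{A}_1 \subseteq \mathcal{A}_2$, since $\mathcal{A}_2$ is closed under the same operations.

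The expected main obstacle is the combinatorial induction showing that the vertex inclusion into a horn, and more generally into unions of faces through vertex $0$, lies in $\mathcal{A}_1$. The care needed is in ordering the faces so that each attaching intersection is again of the inductive form and is nonempty, so that the vertex $0$ lies in it.
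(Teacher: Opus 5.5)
Your proof of $\mathcal{A}_i\subseteq\mathcal{L}$ is the same as the paper's. Your reduction of (ii) to (i) is correct and tidier than the paper's. You factor $\iota_0\colon\Delta^0\to\Delta^m$ as a composite of initial segment inclusions, so that $\mathcal{L}\subseteq\mathcal{A}_1\subseteq\mathcal{A}_2\subseteq\mathcal{L}$; the paper instead runs a parallel cancellation through $\iota_{\le n-1}\colon\Delta^{n-1}\to\Lambda^n_k$. The shape of (i) also matches the paper: put $\Delta^0\to\Lambda^n_k$ in $\mathcal{A}_1$, then cancel against the generator $\Delta^0\to\Delta^n$.

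The gap is that your strengthened induction hypothesis is false. For $n\ge 2$ take $S=d_0\cup d_1\cup\dots\cup d_{n-1}=\Lambda^n_n$. This is a nonempty, proper union of faces through vertex $0$, together with $d_0$, so your claim puts $\Delta^0\to\Lambda^n_n$ in $\mathcal{A}_1$. Your own cancellation step would then put the right horn $\Lambda^n_n\hookrightarrow\Delta^n$ in $\mathcal{A}_1\subseteq\mathcal{L}$, which is false. For $n=2$, consider the nerve of the discrete opfibration over $[2]$ given by $F(0)=F(2)=\ast$, $F(1)=\{a,b\}$, $F(0\to 1)(\ast)=a$. A section over $\Lambda^2_2$ choosing $b$ over vertex $1$ does not extend to $\Delta^2$. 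Your sketch never uses $k<n$, so it cannot distinguish left horns from right horns.

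What is missing is how to attach $d_0$, the only face not through vertex $0$; this is exactly where $k<n$ is needed.
\begin{itemize}
\item For $0<k<n$, the other faces form the cone $\bigcup_{j\ne 0,k}d_j=\{0\}\star\Lambda^{\{1,\dots,n\}}_k$. The map $\Delta^0\to\{0\}\star X$ is a composite of pushouts of horns $\Lambda^{p+1}_0\hookrightarrow\Delta^{p+1}$ (attach $\{0\}\star\sigma$ along $\{0\}\star\partial\sigma$) with $p+1\le n-1$, so this part is fine.
\item For $k=0$ the cone is already everything: $\Lambda^n_0=\{0\}\star\partial\Delta^{\{1,\dots,n\}}$.
\item For $0<k<n$, the face $d_0$ must be attached last, along $d_0\cap\bigl(\{0\}\star\Lambda^{\{1,\dots,n\}}_k\bigr)=\Lambda^{\{1,\dots,n\}}_k\cong\Lambda^{n-1}_{k-1}$. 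This is a \emph{left} horn of dimension $n-1$ precisely because $k<n$.
\end{itemize}
So the induction should carry the statement that all left horns of dimension $<n$ lie in $\mathcal{A}_1$, not a claim about arbitrary unions of faces with $d_0$. This is the content of the paper's assertion that $\Delta^0\to\Lambda^n_k$ is a composite of pushouts of lower-dimensional left horns. With that repair, your vertex-plus-cancellation argument goes through.
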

\begin{proof}
As the left class of a weak factorization system, the left anodyne maps are closed under retract, transfinite composition, pushout, and coproduct.
By \cref{lem:left-anodyne-cancel}, they are also closed under right cancellation among monomorphisms.
Since the initial endpoint or initial segment inclusions are left anodyne, the class of maps described in the statement is contained within the left anodyne maps.

To see that the left anodyne maps are generated under these closure operations by the maps in \ref{lem:left-anodyne-initial-vertex:point} or \ref{lem:left-anodyne-initial-vertex:segment}, we argue by induction in $n$, that the left horn inclusions $\Lambda^n_k \hookrightarrow \Delta^n$ for $n \geq 1$ and $0 \leq k < n$ belong to the class of maps described in the statement. In the base case $n=1$, the left horn $\Lambda^1_0 \hookrightarrow \Delta^1$ coincides with the initial point inclusion $\iota_0 \colon \Delta^0 \to \Delta^1$ and the initial segment inclusion $\iota_{\leq 0} \colon \Delta^0 \to \Delta^1$.

As the domain of a generic left horn inclusion $\Lambda^n_k \hookrightarrow \Delta^n$ includes the final face of $\Delta^n$, we have a commutative diagram:
\[ \begin{tikzcd} \Delta^0 \arrow[dr, "\iota_0"'] \arrow[r, "\iota_0"] & \Lambda^n_k  \arrow[d, hook] & \Delta^{n-1} \arrow[dl, "\iota_{\leq n-1}"] \arrow[l, "\iota_{\leq n-1}"'] \\ & \Delta^n \rlap{.} \end{tikzcd} \]
The top horizontal maps factor as composites of pushouts of left horn inclusions with codomain $\Delta^m$ for $m < n$. In the inductive step, we may assume that these left horn inclusions belong to our class of maps, so $\iota_0 \colon \Delta^0 \to \Lambda^n_k$ is there as well. Now the left horn inclusion $\Lambda^n_k \hookrightarrow \Delta^n$ belongs as well, as a monomorphism obtained by right cancellation with the appropriate generating morphism.
\end{proof}

The class of left anodyne maps lifts to define a class of projective left anodyne maps in the functor category $\sSet^\cC$ indexed by any small category $\cC$ and also a class of left anodyne maps in the slice $\slice{\sSet}{K}$ over a fixed simplicial set $K$, defined to be those maps that forget to left anodyne maps of simplicial sets. We consider analogously-defined classes of monomorphisms in $\slice{\sSet}{K}$ and projective monomorphisms in $\sSet^\cC$.
We recall some facts from \cite{GambinoSattler}, instantiated to the cases of $\sSet$ and $\sSet^{\cC}$.

\begin{defn}
Given maps $f_0, f_1 \colon A \to B$ in $\sSet$ or $\sSet^{\cC}$, an (elementary, directed) \textbf{homotopy} $u \colon f_0 \sim f_1$ is a map $\Delta^1 \times A \to B$ restricting to $f_i$ along $\{i\} \hookrightarrow \Delta^1$ for $i = 0, 1$.
A map $f \colon A \to B$ is a \textbf{strong 0-oriented homotopy equivalence} if we have $g \colon B \to A$ together with $u \colon g f \sim \id_A$ and $v \colon f g \sim id_B$ such that $f u = v f$.
\end{defn}

\begin{ex}\label{initial-section-is-she}
Consider a fully faithful left adjoint $i \colon A \to B$ between small categories with right adjoint $r$, unit $\eta \colon \id \to r i$, and counit $\epsilon \colon i r \to \id$. Recall that $i$ being fully faithful means that $\eta$ is invertible. The functor $i$ is then said to be a \textbf{left adjoint right inverse} or \textbf{initial section} of the functor $r$. When $i$ is an initial section, $N i \colon N A \to N B$ is a strong 0-oriented homotopy equivalence in $\sSet$: the counit and invertible unit define natural transformations $\eta^{-1} \colon r i \to \id$ and $\epsilon \colon i r \to \id$ satisfying $i \eta^{-1}$ = $\epsilon i$.

For the same reasons, the nerve of an initial section $i \colon A \to B$ in the 2-category $\Cat^\cC$ is a strong 0-oriented homotopy equivalence in $\sSet^{\cC}$.
\end{ex}

\begin{ex}\label{initial-segment-is-she}
Initial segments $\iota_{\leq k} \colon \Delta^k \to \Delta^n$ are strong 0-oriented homotopy equivalences in $\sSet$.
This is a special case of \cref{initial-section-is-she} (with initial section $\iota_{\leq} \colon [k] \to [n]$).
\end{ex}

By \cite[4.3]{GambinoSattler}:

\begin{lem}\label{she-is-retract}
Strong 0-oriented homotopy equivalences are retracts of Leibniz products with $\{0\} \hookrightarrow \Delta^1$ of themselves. \qed
\end{lem}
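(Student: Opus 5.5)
We need to show that a strong 0-oriented homotopy equivalence $f \colon A \to B$, with data $g$, $u \colon gf \sim \id_A$, $v \colon fg \sim \id_B$ and $fu = vf$, is a retract of the Leibniz product $(\{0\} \hookrightarrow \Delta^1) \hat{\times} f$. That Leibniz product is the map
\[ j \colon B \sqcup_{A} (\Delta^1 \times A) \to \Delta^1 \times B, \]
where $A$ is included into $\Delta^1 \times A$ at $\{0\}$. We exhibit $f$ as a retract of $j$ in the arrow category. The argument uses only cartesian products with $\Delta^1$ and pushouts, so it works verbatim in $\sSet$ and in $\sSet^\cC$, where products and pushouts are pointwise.

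\emph{Section.} Take the square from $f$ to $j$ given on the codomain by $s_B \colon B \to \Delta^1 \times B$, $b \mapsto (1,b)$. On the domain take $s_A \colon A \to B \sqcup_A (\Delta^1 \times A)$, the inclusion of $A$ at $\{1\}$ into the $\Delta^1 \times A$ summand. The square commutes: $j$ restricted to $\Delta^1 \times A$ is $\Delta^1 \times f$, so $j s_A (a) = (1, fa) = s_B f(a)$.

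\emph{Retraction.} On the codomain take $r_B = v \colon \Delta^1 \times B \to B$. Then $r_B s_B = v|_{\{1\}} = \id_B$.

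On the domain define $r_A \colon B \sqcup_A (\Delta^1 \times A) \to A$ by gluing two maps:
\begin{itemize}
\item on $B$, the map $g$;
\item on $\Delta^1 \times A$, the homotopy $u$.
\end{itemize}
These agree on the glued copy of $A$ because $u|_{\{0\}} = gf$, which is $g$ composed with the map $A \to B$ (namely $f$) used in the pushout. Then $r_A s_A = u|_{\{1\}} = \id_A$.

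\emph{Commutativity of the retraction square.} We must check $f r_A = r_B j$ on each piece of the pushout.
\begin{itemize}
\item On $B$: the map $j$ sends $b \mapsto (0,b)$, so $r_B j = v|_{\{0\}} = fg = f r_A$.
\item On $\Delta^1 \times A$: we have $r_B j = v \circ (\Delta^1 \times f) = vf$, while $f r_A = fu$. These agree precisely by the coherence condition $fu = vf$.
\end{itemize}

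This coherence check is the only nontrivial point, and it is exactly where the \emph{strong} hypothesis is needed. Hence $f$ is a retract of $(\{0\} \hookrightarrow \Delta^1) \hat{\times} f$.
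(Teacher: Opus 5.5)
Your proof is correct. You place the section at the endpoint $\{1\}$, glue the domain retraction from $g$ and $u$ (compatible on the glued copy of $A$ because $u|_{\{0\}} = gf$), and take $v$ as the codomain retraction; the coherence $fu = vf$ is used exactly to make the retraction square commute on the $\Delta^1 \times A$ piece. The paper gives no argument of its own here and simply cites \cite[4.3]{GambinoSattler}, and your explicit retract diagram is, as far as I recall, the argument given there.
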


\begin{cor}\label{she-is-left-anodyne}
  Monomorphisms in $\sSet$ that are strong 0-oriented homotopy equivalences are left anodyne, and projective monomorphisms in $\sSet^{\cC}$ that are strong 0-oriented homotopy equivalences are projective left anodyne.
\end{cor}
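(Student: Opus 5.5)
We derive the corollary directly from \cref{she-is-retract}, treating the two cases $\sSet$ and $\sSet^{\cC}$ in parallel. Let $f \colon A \to B$ be a monomorphism (respectively a projective monomorphism in $\sSet^{\cC}$) that is a strong 0-oriented homotopy equivalence. By \cref{she-is-retract}, $f$ is a retract of the Leibniz product $(\{0\} \hookrightarrow \Delta^1) \hat{\times} f$. Left anodyne maps, and projective left anodyne maps, form the left class of a weak factorization system and so are closed under retracts. It therefore suffices to show that this Leibniz product is left anodyne (respectively projective left anodyne).

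In $\sSet$, the inclusion $\{0\} \hookrightarrow \Delta^1$ is the left horn $\Lambda^1_0 \hookrightarrow \Delta^1$, hence left anodyne, and $f$ is a monomorphism. By \cref{lem:left-anodyne-quillen}, the cartesian product on $\sSet$ is a Quillen two-variable adjunction for the nested pair (monomorphism, trivial fibration) and (left anodyne, left fibration). In particular, the Leibniz product of a left anodyne map with a monomorphism is left anodyne. This settles the first claim.

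For $\sSet^{\cC}$, the product $\Delta^1 \times A$ is the tensor of the simplicial set $\Delta^1$ with the diagram $A$. We use \cref{lem:leibniz-weighted}\ref{lem:leibniz-weighted:tensor-with-E:projective}. Its hypotheses are that $\cK = \cN = \sSet$ carry the given cartesian two-variable adjunction, with $\cM = \sSet$ and index $\cA = \cC$. The roles must be arranged so that the simplicial set $\Delta^1$ sits in the unindexed variable and the diagram sits in the projectively indexed variable; by symmetry of the cartesian product we may swap the variables. With this arrangement, the Leibniz product of the left anodyne map $\{0\} \hookrightarrow \Delta^1$ with a projective monomorphism is projective left anodyne, because the pointwise statement is exactly the Quillen property of \cref{lem:left-anodyne-quillen}.

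The main point to check is this role bookkeeping for the projective case. We need to confirm that the projective weak factorization systems generated by monomorphisms and by left anodyne maps exist, which holds since they are cofibrantly generated on a locally presentable category. We also need the Leibniz statement to hold for the mixed pair (left anodyne in $\sSet$, projective monomorphism in $\sSet^{\cC}$) landing in projective left anodyne. If the bookkeeping resists, we can instead argue directly with generators. Projective monomorphisms are generated by the maps $\cC(c,-) \times (\partial\Delta^n \hookrightarrow \Delta^n)$. The Leibniz product with $\{0\} \hookrightarrow \Delta^1$ commutes with the colimits used to build cell complexes and with retracts. So it suffices to treat generators, where the result is $\cC(c,-) \times \bigl((\{0\}\hookrightarrow\Delta^1)\hat{\times}(\partial\Delta^n\hookrightarrow\Delta^n)\bigr)$. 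This is a representable times a left anodyne map, which is projective left anodyne by definition.
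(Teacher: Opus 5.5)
Your proposal is correct and matches the paper's proof: the paper likewise combines \cref{she-is-retract} with retract-closure of (projective) left anodynes, and gets that Leibniz product with $\{0\} \hookrightarrow \Delta^1$ sends (projective) monomorphisms to (projective) left anodynes from \cref{lem:left-anodyne-quillen} together with \cref{lem:leibniz-weighted}, exactly as in your main argument. Your fallback argument via the generators $\cC(c,-) \times (\partial\Delta^n \hookrightarrow \Delta^n)$ is also valid, but it is not needed.
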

\begin{proof}
Pushout product with $\{0\} \hookrightarrow \Delta^1$ sends (projective) monomorphisms to (projective) left anodynes by \cref{lem:left-anodyne-quillen} and \cref{lem:leibniz-weighted}.\ref{lem:leibniz-weighted:tensor-with-E}.\ref{lem:leibniz-weighted:tensor-with-E:projective}.
The statements thus follow by \cref{she-is-retract} and closure of (projective) left anodynes under retract.
\end{proof}

\begin{defn}
Call a map of simplicial sets a \textbf{right transport fibration} if it right lifts against Leibniz products with $\{1\} \hookrightarrow \Delta^1$ of maps $\emptyset \to A$, that is, the map $\{1\} \times A \hookrightarrow \Delta^1 \times A$ for a simplicial set $A$.
\end{defn}

\begin{ex}
Joyal's right fibrations of simplicial sets, those maps that lift against right horn inclusions $\Lambda^n_k \to \Delta^n$ for $n \geq 0$ and $0 < k \leq n$, are right transport fibrations.
This is because right fibrations are alternatively generated by Leibniz products with $\{1\} \hookrightarrow \Delta^1$ of general cofibrations, in particular those with initial domain \cite[2.1.2.6]{lurie-topos}.
\end{ex}

\begin{ex}\label{grothendieck-fibration-is-weak-fibration}
Consider a Grothendieck fibration $p \colon E \to B$ of small categories.
Then $N p \colon N E \to N B$ is a right transport fibration.
To see this, recall that the left adjoint to the nerve preserves finite products.
The required lifting thus transposes to lifting $D \times \{0\} \hookrightarrow D \times [1]$ against $p$ for small categories $D$.
\end{ex}

For completeness, we note that the previous two examples have the following joint generalization.
We will not need this in our work here.

\begin{ex}
Every cartesian fibration of quasicategories is a right transport fibration.
This is can easily be seen by working with marked simplicial sets \cite[\S3.1]{lurie-topos}, as follows.
Given a map $p \colon Y \to X$, we mark all edges of $X$ and the cartesian edges of $Y$.
If $p$ is a cartesian fibration, it right lifts against Leibniz products of the right endpoint inclusion of the marked edge with cofibrations \cite[3.1.1.10, 3.1.2.3]{lurie-topos}.
Taking $\emptyset \to A$ with the minimal marking as the cofibration, this transposes to right lifting of $p$ against the Leibniz product of $\{1\} \hookrightarrow \Delta^1$ and $\emptyset \to A$ in simplicial sets.
\end{ex}

\begin{lem}\label{frobenius-she}
Pullback along right transport fibrations preserves strong 0-oriented homotopy equivalences.
\end{lem}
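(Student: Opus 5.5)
Let $p \colon E \to B$ be a right transport fibration and $f \colon A \to B$ a strong 0-oriented homotopy equivalence. Choose data $g \colon B \to A$, $u \colon \Delta^1 \times A \to A$ with $u \colon gf \sim \id_A$, and $v \colon \Delta^1 \times B \to B$ with $v \colon fg \sim \id_B$, subject to $fu = v(\Delta^1 \times f)$. Write $f' \colon A' \coloneqq A \times_B E \to E$ for the pullback along $p$, with projection $p' \colon A' \to A$. The plan is to build new data $(g', u', v')$ for $f'$ by lifting the homotopy $v$ through $p$ with the transport fibration property. The $\{1\}$-endpoint convention is chosen exactly so this works: $v$ ends at the identity, so lifting is possible.

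First I construct $v'$. Consider the lifting problem of $\{1\} \times E \hookrightarrow \Delta^1 \times E$ against $p$. The top map is $\id_E$ on $\{1\} \times E$, and the bottom map is $v \circ (\Delta^1 \times p) \colon \Delta^1 \times E \to B$. This square commutes because $v$ restricts to $\id_B$ at $1$. A lift $\tilde v \colon \Delta^1 \times E \to E$ restricts at $0$ to a map $e_0 \colon E \to E$ over $fgp$. Define $g' \colon E \to A'$ as the pair $(gp, e_0)$. Then $f' g' = e_0$, so $\tilde v$ is a homotopy $f'g' \sim \id_E$; set $v' \coloneqq \tilde v$.

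Next I construct $u'$ so that it satisfies the coherence $f' u' = v'(\Delta^1 \times f')$. The coherence essentially forces the definition: the $E$-component of $u'$ must be $\tilde v \circ (\Delta^1 \times f')$. The $A$-component must be $u \circ (\Delta^1 \times p')$. These two components are compatible over $B$ because
\[ p\tilde v(\Delta^1 \times f') = v(\Delta^1 \times pf') = v(\Delta^1 \times f p') = f u (\Delta^1 \times p'), \]
using the original coherence $fu = v(\Delta^1\times f)$. So they define $u' \colon \Delta^1 \times A' \to A'$.

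It remains to check the endpoints of $u'$. At $1$, both components are identities, so $u'$ ends at $\id_{A'}$. At $0$, the components are $gfp' = gpf'$ and $e_0 f'$, which is precisely $g' f'$. So $u' \colon g'f' \sim \id_{A'}$, and $f'u' = v'(\Delta^1 \times f')$ holds by construction.

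The only delicate point is that the lift $\tilde v$ is not required to restrict to anything particular on $\Delta^1 \times A'$. So the step I expect to be the main obstacle is checking that the coherence equation does not secretly need such a relative lift. By the computation above it does not, because $u'$ is defined directly from $\tilde v$ rather than by a second lift.
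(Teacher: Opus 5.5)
Your construction is correct and is essentially the argument the paper invokes by citing the case $k=0$ of Gambino--Sattler's Lemma 4.7, which, as the paper notes, needs only a single lift against $\{1\}\times E\hookrightarrow\Delta^1\times E$. Your one lift of $v\circ(\Delta^1\times p)$ produces $g'$ and $v'$, and the coherence $fu=v(\Delta^1\times f)$ then lets you define $u'$ through the universal property of the pullback, with all the endpoint and compatibility checks going through as you wrote them.
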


\begin{proof}
This is the case $k = 0$ of \cite[4.7]{GambinoSattler}, noting that the only lifting of the fibration required in the proof is against the Leibniz product with $\{1\} \hookrightarrow \Delta^1$ of the map $\emptyset \to X$.
\end{proof}

\begin{cor}\label{frobenius-left-anodyne}
Pullback along right transport fibrations preserves left anodynes.
\end{cor}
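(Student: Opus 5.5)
The plan is to reduce to the generators of the left anodyne maps supplied by \cref{lem:left-anodyne-initial-vertex} and use \cref{frobenius-she} to handle each generator. Fix a right transport fibration $p \colon E \to B$ and let $\cA$ be the class of monomorphisms $j \colon X \to Y$ over $B$ whose pullback $p^*j \colon E \times_B X \to E \times_B Y$ is left anodyne. Since $\sSet$ is a topos, pullback $p^* \colon \slice{\sSet}{B} \to \slice{\sSet}{E}$ is a left adjoint (to dependent product). It therefore preserves colimits, so it preserves pushouts, coproducts and transfinite composites; it also preserves retracts and monomorphisms. As a consequence, $\cA$ is closed under retract, transfinite composition, pushout and coproduct.

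Next I would check closure under right cancellation among monomorphisms. Suppose $j \colon X \to Y$ and $k \colon Y \to Z$ are monomorphisms over $B$ with $j$ and $kj$ in $\cA$. Then $p^*k$ is a monomorphism with $p^*j$ and $p^*k \circ p^*j = p^*(kj)$ left anodyne. So $p^*k$ is left anodyne by the right cancellation property of left anodynes among monomorphisms, that is, \cref{lem:left-anodyne-cancel} applied to $\sSet$ with the Quillen structure of \cref{lem:left-anodyne-quillen}.

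It remains to show that the generators lie in $\cA$. By \cref{lem:left-anodyne-initial-vertex}.\ref{lem:left-anodyne-initial-vertex:segment} and the closure properties above, every left anodyne map over $B$ will then lie in $\cA$. One caveat: the closure operations in \cref{lem:left-anodyne-initial-vertex} take place in $\sSet$, not over $B$. I would handle this by noting that the inductive argument in its proof can be run in the slice, with generators $\iota_{\leq m-1} \colon \Delta^{m-1} \to \Delta^m$ equipped with arbitrary maps $\Delta^m \to B$. Equivalently, one can observe that a pushout, coproduct, transfinite composite or retract of maps into $B$ can be formed over $B$, and that the structure maps to $B$ of all objects involved come along. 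Here the cancellation step needs only the codomain's map to $B$.

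So consider a generator $\iota_{\leq m-1} \colon \Delta^{m-1} \to \Delta^m$ over some $b \colon \Delta^m \to B$. By \cref{initial-segment-is-she} it is a strong 0-oriented homotopy equivalence. The pullback of $p$ along $b$ is again a right transport fibration $b^*p \colon E \times_B \Delta^m \to \Delta^m$, since right transport fibrations are defined by a right lifting property and are therefore stable under pullback. Now \cref{frobenius-she} shows that $p^*\iota_{\leq m-1}$ is a strong 0-oriented homotopy equivalence. It is also a monomorphism, being a pullback of one, so \cref{she-is-left-anodyne} makes it left anodyne.

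The main obstacle I expect is the bookkeeping in the previous paragraph. One must make sure the generation statement of \cref{lem:left-anodyne-initial-vertex} really holds relative to a base $B$, in particular the right cancellation step. That step requires the intermediate map $\Delta^0 \to \Lambda^n_k$, or $\Delta^{n-1} \to \Lambda^n_k$, to lie over $B$, which is automatic since it is a restriction of the map $\Delta^n \to B$. The rest is formal.
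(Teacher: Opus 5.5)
Your proposal is correct and follows the same route as the paper. Both reduce via \cref{lem:left-anodyne-initial-vertex} to initial segments, then observe that a pulled-back initial segment is a monomorphism and, by \cref{frobenius-she}, a strong 0-oriented homotopy equivalence, hence left anodyne by \cref{she-is-left-anodyne}. You also spell out two points the paper leaves implicit: that the class of monomorphisms whose pullbacks (for every map of the codomain to $B$) are left anodyne is closed under the generating operations, including right cancellation via \cref{lem:left-anodyne-cancel}, and that $b^*p$ is again a right transport fibration.
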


\begin{proof}
It suffices by \cref{lem:left-anodyne-initial-vertex} to show that pullback along right transport fibrations sends initial segments to left anodynes.
Initial segments are cofibrations that are strong 0-oriented homotopy equivalences by \cref{initial-segment-is-she}, which in turn are left anodyne by \cref{she-is-left-anodyne}.
Hence, the claim follows from stability of cofibrations under pullback and \cref{frobenius-she}.
\end{proof}

\subsection{The weights}\label{ssec:weights}

We now introduce a pair of weights of interest.

\begin{defn}
Let $\cC$ be any small 1-category. Its slices and coslices, with the composition functors, assemble into diagrams
\[ \begin{tikzcd}[row sep=tiny] \cC \arrow[r, "\slice{\cC}{-}"] & \slice{\Cat}{\cC} \rlap{,} & \cC^\op \arrow[r, "{\coslice{\cC}{-}}"] & \slice{\Cat}{\cC} \rlap{,} \\ x \arrow[r, maps to] & \slice{\cC}{x} \rlap{,} & x \arrow[r, maps to ] & \coslice{\cC}{x} \rlap{,} \end{tikzcd}\]
which compose with the nerve embedding to define weights valued in $\slice{\sSet}{N\cC}$.
We denote these weights using superscripts for ``slice'' and ``coslice''
\[ \begin{tikzcd}[row sep=tiny] \cC \arrow[r, "\Ws_\cC"] & \slice{\sSet}{N\cC} \rlap{,} & \cC^\op \arrow[r, "{\Wc_\cC}"] & \slice{\sSet}{N\cC} \rlap{,} \\ x \arrow[r, maps to] & N(\slice{\cC}{x}) \rlap{,} & x \arrow[r, maps to ] & N(\coslice{\cC}{x}) \end{tikzcd}\]
and omit the subscripts whenever the ambient category $\cC$ is clear from context.
\end{defn}

The weights $\Ws$ and $\Wc$ are the opposites of the weights used by Bousfield and Kan to define the homotopy limit and homotopy colimit of a $\cC$-indexed diagram, respectively \cite[XI.2.2]{bousfield-kan}.\footnote{As explained in \cite[18.1.11]{hirschhorn}, the opposites appear as a result of different conventions used in the definition of the nerve of a category. These opposites are immaterial to the definition of homotopy co/limits: both the weights $N(\slice{\cC}{-})$ and its opposite $N(\slice{\cC}{-})^\op$ define projective cofibrant replacements of the terminal weight for $\cC$-indexed diagrams (and similarly for the case of the coslices),  so result in weakly equivalent constructions. In our directed setting, the opposites do matter and should be omitted.}

\begin{ex}
  For $\cC = [n]$, the weight $\Ws_{[n]}$ sends $k \in [n]$ to the initial segment inclusion $\iota_{\le k} \colon \Delta^k \to \Delta^n$ in $\slice{\sSet}{\Delta^n}$.
  Similarly, the weight $\Wc_{[n]}$ sends $k \in [n]$ to the final segment inclusion $\iota_{\ge k} \colon \Delta^{n-k} \to \Delta^n$.
\end{ex}

\begin{defn}\label{defn:fun-arr-weights}
Using the weights $\Ws$ and $\Wc$, we define a pair of functors:
\[  \begin{tikzcd}[column sep=large, row sep=small] \arrtofunwgt \colon \sSet^\cC \arrow[r, "{(N\cC)^*}"] & (\slice{\sSet}{N\cC})^\cC \arrow[r, "\Wc\otimes_\cC-"] & \slice{\sSet}{N\cC} \rlap{,} \\  \funtoarrwgt \colon \slice{\sSet}{N\cC}  \arrow[r, "\Ws\otimes -"] & (\slice{\sSet}{N\cC})^\cC \arrow[r, "(N\cC)_!"]   & \sSet^\cC  \rlap{.} \end{tikzcd}\]
\end{defn}

By \cref{rec:weighted-setting}, both functors are left adjoints:
\begin{equation}\label{eq:fun-arr-weights-adj} \begin{tikzcd}[sep=3em] \sSet^\cC \arrow[r, "{(N\cC)^*}"] \arrow[r, adj={yshift=-8pt}] & \arrow[l, bend left, dashed, "{(N\cC)_*}" below](\slice{\sSet}{N\cC})^\cC \arrow[r, "\Wc\otimes_\cC-"] \arrow[r, adj={yshift=-8pt}] & \slice{\sSet}{N\cC}  \arrow[l, bend left, dashed, "{\{\Wc,-\}}" below] \rlap{,} &[-20pt]  \slice{\sSet}{N\cC}  \arrow[r, "\Ws\otimes -"] \arrow[r, adj={yshift=-8pt}] & (\slice{\sSet}{N\cC})^\cC \arrow[r, "(N\cC)_!"] \arrow[l, bend left, dashed, "{\{\Ws,-\}^{\cC}}" below] \arrow[r, adj={yshift=-8pt}] & \sSet^\cC \arrow[l, bend left, dashed, "{(N\cC)^*}" below] \rlap{.} \end{tikzcd}\end{equation}
The right adjoints are avatars of the $\arrtofun$ and $\funtoarr$ constructions, respectively, introduced in \S\ref{ssec:dua-statement}.

We now analyze the round-trip composites of these functors, $\funtoarrwgt \circ \arrtofunwgt \colon \sSet^\cC \to \sSet^\cC$ and $\arrtofunwgt \circ \funtoarrwgt \colon \sSet^{N\cC} \to \sSet^{N\cC}$.
Neither is isomorphic to an identity functor, but each is related to an identity functor by a zigzag of natural transformations.
In each case, we can give simple descriptions of the components of these transformations at discrete representables.
For the purposes of deriving a weak equivalence once we pass to weighted limits in \cref{sec:dua-model}, it will suffice to understand these components.

\begin{rmk}
  Recall that composition defines a functor
  \[
    \begin{tikzcd}
      \coslice{\cC}{j} \times_{\cC} \slice{\cC}{i}
      \ar[r, "\circ"]
      &
      \cC(j, i)
    \end{tikzcd}
  \]
  natural in $j \in \cC^\op$ and $i \in \cC$ and that its target $\cC(j,i)$ is a set.
\end{rmk}

\begin{lem}\label{lem:arr-to-fun-to-arr-point}
  We have a natural transformation
  \[
    \begin{tikzcd}
    &
      \slice{\sSet}{N \cC}
      \ar[dr, bend left=20, "\funtoarrwgt"]
    &
    \\
      \sSet^\cC
      \ar[ur, bend left=20, "\arrtofunwgt"]
      \ar[rr, bend left=26, phantomcenter, "\Downarrow \mathrlap{\kappa}"]
      \ar[rr, "\id"']
    & { } &
      \sSet^\cC
    \end{tikzcd}
  \]
  whose component at a representable $\cC(c,-) \in \Set^\cC \subset \sSet^\cC$ is isomorphic to the natural transformation
  \begin{equation}\label{eq:arr-to-fun-to-arr-component}
    \begin{tikzcd} N(\coslice{\cC}{c} \times_\cC \slice{\cC}{-}) \arrow[r, "\circ"] & \cC(c,-) \rlap{.} \end{tikzcd}
  \end{equation}
\end{lem}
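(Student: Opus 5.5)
I will construct $\kappa$ by first computing $\funtoarrwgt \circ \arrtofunwgt$ explicitly. Both functors are cocontinuous, since each is a composite of left adjoints by \eqref{eq:fun-arr-weights-adj}. Moreover $\sSet^\cC$ is generated under colimits by the objects $\Delta^m \times \cC(c,-)$. So the plan is to compute the composite on these generators, write down the transformation to the identity there, and extend it by density.

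\textbf{Step 1: compute the composite on a representable.} Let $F = \Delta^m\times\cC(c,-)$. The reindexing $(N\cC)^*F$ sends $i$ to $N\cC\times F(i)\to N\cC$. The coend $\Wc\otimes_\cC(N\cC)^*F$ is then $\int^{i} N(\coslice{\cC}{i})\times F(i)$, lying over $N\cC$ via the projections $\coslice{\cC}{i}\to\cC$. By the co-Yoneda lemma this is $\Delta^m\times N(\coslice{\cC}{c})$ over $N\cC$. Next, $\Ws\otimes-$ sends this object to the diagram
\[ j\mapsto N(\slice{\cC}{j})\times_{N\cC}(\Delta^m\times N(\coslice{\cC}{c})), \]
since the tensor in $\slice{\sSet}{N\cC}$ with an object over $N\cC$ is the fiber product. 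Finally $(N\cC)_!$ forgets the map to $N\cC$. The nerve preserves pullbacks, so the composite at $j$ is $\Delta^m\times N(\coslice{\cC}{c}\times_\cC\slice{\cC}{j})$, naturally in $c$, $j$ and $m$.

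\textbf{Step 2: define the components and extend.} On generators, take $\kappa$ to be $\Delta^m$ times the composition map $N(\coslice{\cC}{c}\times_\cC\slice{\cC}{j})\to\cC(c,j)$. Here we use that $\cC(c,j)$ is discrete and that composition is constant on connected components (the preceding remark), so it really is a map of simplicial sets. This is natural in maps of generators, i.e. in $\DDelta\times\cC^\op$, because composition is natural in $c\in\cC^\op$ and $j\in\cC$. We then extend to all of $\sSet^\cC$ by left Kan extension along the dense inclusion of generators; the extension is well defined because both $\funtoarrwgt\circ\arrtofunwgt$ and $\id$ are cocontinuous. For $m=0$ the component is exactly \eqref{eq:arr-to-fun-to-arr-component}.

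An alternative to the density argument is to define $\kappa$ globally through the coend: $\funtoarrwgt\arrtofunwgt F(j)\cong\int^i N(\coslice{\cC}{i}\times_\cC\slice{\cC}{j})\times F(i)$, which maps to $\int^i\cC(i,j)\times F(i)\cong F(j)$ via composition. This gives the same transformation and makes naturality in $F$ automatic.

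\textbf{Main obstacle.} The only delicate point is the bookkeeping in Step 1. One has to verify that the coend and pullback formulas commute, which they do because $\sSet$ is locally cartesian closed, so $-\times_{N\cC}X$ preserves colimits. One also has to verify that the resulting isomorphisms are natural in $c$, so that the components assemble into the stated transformation.
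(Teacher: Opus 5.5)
Your argument is correct, but your primary route differs from the paper's. You compute $\funtoarrwgt\circ\arrtofunwgt$ only on the representables $\Delta^m\times\cC(c,-)$ of $\sSet^\cC\simeq\Psh{\DDelta\times\cC^\op}$. You define $\kappa$ there as $\Delta^m$ times composition, and then extend it, using that a natural transformation between cocontinuous functors out of a presheaf category is determined by its restriction along the Yoneda embedding.

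The paper works globally, which is exactly your ``alternative.'' It identifies $\funtoarrwgt\circ\arrtofunwgt$ with the weighted colimit $N(\coslice{\cC}{-}\times_\cC\slice{\cC}{-})\otimes_\cC-$, using $(\Ws\otimes\Wc)_{j,i}\cong N(\coslice{\cC}{j}\times_\cC\slice{\cC}{i})$ and the projection formula $S_!(A\times_S S^*B)\cong S_!A\times B$. It then takes $\kappa$ to be ${\circ}\otimes_\cC-$ followed by $\cC(-,-)\otimes_\cC-\cong\id$, and only at the end applies coYoneda at $\cC(c,-)$.

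The two are formally equivalent: a transformation between such left adjoints is the same as a map of the corresponding bimodules, and your computation on representables is precisely a computation of that bimodule. What differs is where the work sits.
\begin{itemize}
\item Your route avoids identifying the composite globally. In exchange, you must check by hand that the coYoneda isomorphisms are natural in $[m]\in\DDelta$ and $c\in\cC^\op$, and $\kappa$ stays implicit away from representables.
\item The paper's route makes naturality automatic. It also exhibits $\kappa$ directly as a morphism of weights, which is the form in which its $\mE$-relative conjugate $\kappa^\dagger$ is used later.
\end{itemize}
A small correction to your ``main obstacle'': on the density route you never need coends to commute with $-\times_{N\cC}X$. Cocontinuity of the composite already follows from the adjunctions, and you compute the coend before taking the fiber product. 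That commutation is only needed for the global coend formula.
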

\begin{proof}
  Unfolding the definitions of $\arrtofunwgt$ followed by $\funtoarrwgt$, we construct the natural transformation from their composite by the following steps:
  \[
    \begin{tikzcd}[column sep={between origins,6em}]
  &
    \slice{\sSet}{N \cC}
    \ar[dr, "\Ws \otimes_\cC -"]
    \ar[d, phantomcenter, "\cong"]
  &
  \\
    (\slice{\sSet}{N \cC})^\cC
    \ar[ur, "\Wc \otimes_\cC -"]
    \ar[rr, "N (\coslice{\cC}{-} \times_\cC \slice{\cC}{-}) \otimes_\cC -"]
    \ar[drr, phantomcenter, "\cong"]
  & { } &
    (\slice{\sSet}{N \cC})^\cC
    \ar[d, "(N \cC)_!"]
  \\
    \sSet^\cC
    \ar[u, "(N \cC)^*"]
    \ar[rr, "N (\coslice{\cC}{-} \times_\cC \slice{\cC}{-}) \otimes_\cC -"]
    \ar[rr, phantomcenter, bend right=22, "\Downarrow \mathrlap{{\circ} \otimes_\cC -}"]
    \ar[rr, bend right=40, "{\id \cong \cC(-, -) \otimes_\cC -}"']
  &&
    \sSet^\cC \rlap{.}
  \end{tikzcd}
  \]
  The top triangle commutes up to the isomorphism
  \[
  (\Ws \otimes \Wc)_{j, i} \cong N \coslice{\cC}{j} \times_{N \cC} N \slice{\cC}{i} \cong N(\coslice{\cC}{j} \times_\cC \slice{\cC}{i})
  \]
  in $\slice{\sSet}{N \cC}$ natural in $j \in \cC^\op$ and $i \in \cC$.
  The bottom cell commutes up to an isomorphism of the form
  \[
  S_! (A \times_S S^* B) \cong S_! A \times B
  \]
  for $S \in \sSet$ natural in $A \in \slice{\sSet}{S}$ and $B \in \sSet$.

  We now analyze the natural transformation $({{\circ} \otimes_\cC -}) \colon ({N(\coslice{\cC}{-} \times_\cC \slice{\cC}{-}) \otimes_\cC -}) \to ({\cC(-, -) \otimes_\cC -})$.
  By the coYoneda lemma, its domain and codomain applied to $\cC(c,-)$ simplify as displayed by the vertical isomorphisms
  \[
    \begin{tikzcd}[column sep=huge]
      \int^j N(\coslice{\cC}{j} \times_\cC \slice{\cC}{-}) \times \cC(c,j) \arrow[r,"{\circ \otimes_\cC \cC(c,-)}"] \arrow[d, "\cong"'] & \int^j \cC(j,-) \times \cC(c,j) \arrow[d,"\cong"] \\
      N(\coslice{\cC}{c} \times_\cC \slice{\cC}{-}) \arrow[r,"\circ"'] & \cC(c,-) \rlap{.}
    \end{tikzcd}
  \]
  The bottom horizontal map may be described as follows.
  For $i \in \cC$, the category $\coslice{\cC}{c} \times_\cC \slice{\cC}{i}$ decomposes as a disjoint union indexed by the set of morphisms from $c$ to $i$ in $\cC$, and the composition map sends each component of this disjoint union to its indexing element.
\end{proof}

\begin{lem}\label{lem:fun-to-arr-to-fun-point}
  We have a zigzag of natural transformations
  \[
    \begin{tikzcd}
    &
      \sSet^\cC
      \ar[dr, bend left=20, "\arrtofunwgt"]
    &
    \\
      \slice{\sSet}{N\cC}
      \ar[ur, bend left=20, "\funtoarrwgt"]
      \ar[r, "\dom^*"]
      \ar[rr, bend right=50, "\id"', ""{name=id}]
      \ar[rr, bend right=28, phantomcenter, "\Uparrow \mathrlap{\nu}"]
      \ar[rr, bend left=28, phantomcenter, "\Downarrow \mathrlap{\mu}"]
    & \slice{\sSet}{N\cC^{[1]}} \ar[r, "\cod_!"] &
      \slice{\sSet}{N\cC} \rlap{.}
    \end{tikzcd}
  \]
  The component of this zigzag at a point $c \colon \Delta^0 \to N\cC$ is isomorphic to the zigzag
  \[
    \begin{tikzcd}[column sep={between origins,5em}]
      N\coslice{\cC}{c} \arrow[dr, "\cod"'] \arrow[r, "\cong"]
      & N\coslice{\cC}{c} \arrow[d, "\cod" description] & \Delta^0 \arrow[dl, "c"] \arrow[l, "\id_c"'] \\
      & N\cC
    \end{tikzcd}
  \]
  in $\slice{\sSet}{N\cC}$.
\end{lem}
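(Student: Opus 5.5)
The plan is to write down the two transformations explicitly at the level of simplicial sets over $N\cC$, and then evaluate them at a point $c \colon \Delta^0 \to N\cC$ using the coYoneda lemma, as in the proof of \cref{lem:arr-to-fun-to-arr-point}. Reading the diagram, $\mu \colon \arrtofunwgt \circ \funtoarrwgt \Rightarrow \cod_!\dom^*$ and $\nu \colon \id \Rightarrow \cod_!\dom^*$. Here $\dom^*$ and $\cod_!$ are pullback along, and postcomposition with, the nerves of $\dom, \cod \colon \cC^{[1]} \to \cC$; note $N(\cC^{[1]}) \cong (N\cC)^{\Delta^1}$.

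\emph{Construction of $\nu$.} Let $s \colon N\cC \to N(\cC^{[1]})$ be the nerve of the identity-arrow functor, so that $\dom\circ s = \cod \circ s = \id$. For $p \colon A \to N\cC$, set
\[ \nu_A \coloneqq (\id_A, s p) \colon A \to A \times_{N\cC} N(\cC^{[1]}) = \dom^* A . \]
Since $\cod \circ s\circ p = p$, this is a map over $N\cC$ into $\cod_!\dom^*A$, and it is evidently natural in $A$.

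\emph{Construction of $\mu$.}
\begin{itemize}
\item Unfolding \cref{defn:fun-arr-weights}, $\funtoarrwgt(A)$ is the $\cC$-indexed simplicial set $i \mapsto N(\slice{\cC}{i}) \times_{N\cC} A$. This uses that tensors in $\slice{\sSet}{N\cC}$ are fibre products over $N\cC$.
\item Reindexing by $(N\cC)^*$ and then applying $\Wc\otimes_\cC-$ gives
\[ \arrtofunwgt\funtoarrwgt(A) \cong \textstyle\int^{i} N(\coslice{\cC}{i}) \times \bigl(N(\slice{\cC}{i}) \times_{N\cC} A\bigr), \]
lying over $N\cC$ via $\cod$ on the $N(\coslice{\cC}{i})$ factor. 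This uses $\Wc(i) \times_{N\cC} (X \times N\cC) \cong \Wc(i) \times X$.
\item The composition functors $\coslice{\cC}{i} \times \slice{\cC}{i} \to \cC^{[1]}$, sending $(i\to y, x \to i)$ to $x \to y$, are dinatural in $i$. Their $\dom$ is the domain projection of the $\slice{\cC}{i}$ factor, and their $\cod$ is the codomain projection of the $\coslice{\cC}{i}$ factor.
\item Taking nerves and pairing with the projection to $A$ therefore yields cocone legs into $N(\cC^{[1]}) \times_{\dom, N\cC} A$ over $\cod$. Dinaturality is exactly what makes these legs descend to the coend, and this defines $\mu_A$, natural in $A$.
\end{itemize}

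\emph{Evaluation at a point $c$.}
\begin{itemize}
\item For $\nu$: $\dom^*(c) = N(\cC^{[1]}) \times_{N\cC} \Delta^0 \cong N(\coslice{\cC}{c})$, since the nerve commutes with this pullback of categories. So $\cod_!\dom^*(c)$ is $\cod \colon N\coslice{\cC}{c} \to N\cC$, and $\nu_c$ picks out $s(c) = \id_c$.
\item For $\mu$: $\funtoarrwgt(c)$ is $i \mapsto N(\slice{\cC}{i}) \times_{N\cC}\Delta^0 \cong \cC(c,i)$, the discrete representable. By coYoneda, $\arrtofunwgt\funtoarrwgt(c) \cong \int^i N(\coslice{\cC}{i}) \times \cC(c,i) \cong N(\coslice{\cC}{c})$ over $N\cC$ via $\cod$.
\item Under this identification, $\mu_c$ is induced by composing $c \to i$ with $i \to y$. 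On the coYoneda representative $i = c$, $c \to c$ the identity, this is the identity of $N\coslice{\cC}{c}$. Hence $\mu_c$ is an isomorphism, and we recover the displayed zigzag.
\end{itemize}

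The main obstacle I expect is bookkeeping in the construction of $\mu$. One must verify that the composition maps are dinatural and compatible with the slice structures, so that they genuinely descend to the coend over $N\cC$. One must also check that the coYoneda isomorphism commutes with $\mu_c$ on all simplices, not just on the identity representative. I would handle the latter by checking on the generating cocone legs indexed by $(i, c\to i)$, where both sides send $(i \to y, c \to i)$ to the composite $c \to y$.
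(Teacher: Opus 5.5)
Your proposal is correct and follows essentially the same route as the paper. The paper also identifies $\arrtofunwgt\circ\funtoarrwgt$ with the coend $\int^{i} N(\coslice{\cC}{i}\times\slice{\cC}{i})$ applied to $A$, which it packages as base change and pushforward over $N(\cC\times\cC)$ rather than elementwise; it defines $\mu$ by the composition map into $N\cC^{[1]}$ and $\nu$ as the counit of $\id_!\dashv\id^*$ for the identity-arrow map, which is your $(\id_A, sp)$, and then evaluates at a point using discreteness of the fibres of $\dom\colon\slice{\cC}{i}\to\cC$ together with coYoneda, with your leg-by-leg check $(i\to y,\, c\to i)\mapsto (c\to y)$ being exactly what shows $\mu_c$ is the coYoneda isomorphism.
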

\begin{proof}
  The composite of $\funtoarrwgt$ followed by $\arrtofunwgt$ simplifies as follows:
  \begin{equation}\label{eq:fun-to-arr-to-fun-map}
    \begin{tikzcd}
      &&
      \sSet^\cC \arrow[dd, phantom, "\cong"]
      \ar[dr, "(N \cC)^*"]
      \\&
      (\slice{\sSet}{N \cC})^\cC
      \ar[ur, "(N \cC)_!"]
      \ar[dr, "\pi_2^*"]  \arrow[dd, phantom, "\cong"]
      &&
      (\slice{\sSet}{N \cC})^\cC
      \ar[dr, "\Wc \otimes_\cC -"]  \arrow[dd, phantom, "\cong"]
      \\
      \slice{\sSet}{N \cC}
      \ar[ur, "\Ws \otimes -"]
      \ar[dr, "\pi_2^*"']
      &&
      (\slice{\sSet}{N(\cC \times \cC)})^C
      \ar[ur, "(\pi_1)_!"]
      \ar[dr, "\pi_1^* \Wc \otimes_\cC -"]  \arrow[d, phantom, "\cong"]
      &&
      \slice{\sSet}{N \cC} \rlap{.}
      \\&
      \slice{\sSet}{N(\cC \times \cC)}
      \ar[ur, "\pi_2^* \Ws \otimes -"]
      \ar[rr, "\int^{i \in \cC}  N(\coslice{\cC}{i} \times \slice{\cC}{i}) \otimes -"']
      &~&
      \slice{\sSet}{N(\cC \times \cC)}
      \ar[ur, "(\pi_1)_!"']
    \end{tikzcd}
  \end{equation}
  The most interesting of these occupies the bottom triangle, which commutes up to the isomorphism computed in the slice over $N(\cC \times \cC)$:
  \[
    \pi_1^* \Wc \otimes_\cC \pi_2^* \Ws
    \cong
    \int^{i \in \cC} (N \coslice{\cC}{i} \times N \cC) \times_{N \cC \times N \cC} (N \cC \times N \slice{\cC}{i})
    \cong
    \int^{i \in \cC}  N(\coslice{\cC}{i} \times \slice{\cC}{i})
  \]
  where the last object lives over $N(\cC \times \cC)$ via the pairing of the codomain projection from the coslice and the domain projection from the slice.

  Thus, the composite $\arrtofunwgt\circ \funtoarrwgt$ is isomorphic to the top composite below. This compares with the identity as follows:
  \begin{equation}\label{eq:fun-to-arr-to-fun-zigzag}
      \begin{tikzcd}[column sep=small]
      &
      \slice{\sSet}{N(\cC \times \cC)}
      \ar[rrrr, bend left=5, "\int^{i \in \cC}  N(\coslice{\cC}{i} \times \slice{\cC}{i}) \otimes -"]
      \ar[rrrr, phantom, "\Downarrow \mathrlap{{\circ} \otimes -}"]
      \ar[rrrr, bend right=5, "N \cC^{[1]} \otimes -"']
      \ar[dr, "{(\cod, \dom)^*}" description]
      && & &
      \slice{\sSet}{N(\cC \times \cC)}
      \ar[dr, "(\pi_1)_!"]
      \\
      \slice{\sSet}{N \cC}
      \ar[ur, "\pi_2^*"]
      \ar[rr, "\dom^*"']
      \ar[drrr, bend right=10, equals]
      &  \arrow[u, phantom, "\cong"]  & \slice{\sSet}{N \cC^{[1]}}
      \ar[rr, equals]
      \ar[dr, "\id^*"']
      & \ar[d, phantom, "\Uparrow \mathrlap{\epsilon}"] &
      \slice{\sSet}{N \cC^{[1]}}
      \ar[ur, "{(\cod, \dom)_!}" description]
      \ar[rr, "\cod_!"']
      & &
      \slice{\sSet}{N \cC} \rlap{.}
      \\& \arrow[ur, phantom, "\cong" near end] & &
      \slice{\sSet}{N \cC}
      \ar[ur, "\id_!"']
      \ar[urrr, bend right=10, equals]
    \end{tikzcd}
  \end{equation}
  For the top cell, note that composition in $\cC$ defines the following map of simplicial sets:
  \[
    \begin{tikzcd}
      \int^{i \in \cC} N(\coslice{\cC}{i} \times \slice{\cC}{i})
      \ar[dr]
      \ar[rr, dashed, "\circ"]
      &&
      N \cC^{[1]}
      \ar[dl, "{(\cod, \dom)}"]
      \\&
      N(\cC \times \cC) \rlap{.}
    \end{tikzcd}
  \]
  Now we describe the component of \eqref{eq:fun-to-arr-to-fun-zigzag} at a point $c \colon \Delta^0 \to N\cC$.
  At a generalized element $a \colon A \to N\cC$, the component is obtained from the maps
  \[ \begin{tikzcd}[column sep={between origins,8em}]
      \int^{i \in \cC} N(\coslice{\cC}{i} \times \slice{\cC}{i}) \arrow[dr, "{(\cod,\dom)}"'] \arrow[r, "\circ"]
      & N\cC^{[1]} \arrow[d, "{(\cod,\dom)}" description] & N\cC \arrow[dl, "{(\cod,\dom)}"] \arrow[l, "\id"'] \\ & N(\cC\times\cC) \end{tikzcd}\]
  by pulling back along $N(a \times \cC) \colon N(A \times \cC) \to N(\cC \times \cC)$ and then regarding the result as a map over $N\cC$ via the codomain projection. When $a$ is a point $c$, the result has the form
  \[
    \begin{tikzcd}[column sep={between origins,8em}]
      \int^{i \in \cC}  N(\coslice{\cC}{i}) \times \cC(c,i) \arrow[dr, "\cod"'] \arrow[r,  "\circ"] & N(\coslice{\cC}{c})\arrow[d, "{\cod}" description] & \Delta^0 \arrow[l, "\id_c"'] \arrow[dl, "c"] \\ & N\cC
    \end{tikzcd}
  \]
  because the fiber of $\dom \colon N(\slice{\cC}{i}) \to N\cC$ over a point $c$ is just the set $\cC(c,i)$ of arrows from $c$ to $i$ in $\cC$.\footnote{This would be more complicated in the case of a generalized element.} By the coYoneda lemma, the domain coend reduces to $N(\coslice{\cC}{c})$.
\end{proof}

\subsection{Homotopical properties}\label{ssec:homotopical}
We now turn to the homotopical behavior of these functors. To start, recall:

\begin{lem}[{\cite[14.8.8]{hirschhorn}}]\label{lem:weights-projective-cofibrant}
The functors $\Ws$ and $\Wc$ are projective monomorphic.
\end{lem}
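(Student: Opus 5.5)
The plan is to show that $\Ws$ is a cell complex, in fact a retract-free one, built from projective generating monomorphisms. These generators are the maps $\partial\Delta^n \times \cC(c,-) \hookrightarrow \Delta^n \times \cC(c,-)$, and this yields projective monomorphicity of $\emptyset \to \Ws$. The case of $\Wc$ is then the dual argument applied to $\cC^\op$. Since the projective monomorphisms in $(\slice{\sSet}{N\cC})^\cC$ are detected after forgetting to $\sSet^\cC$, I may ignore the structure maps to $N\cC$ throughout. Indeed, a lifting problem in the slice against pointwise trivial fibrations in the slice is the same as one in $\sSet^\cC$ for the corresponding maps over $N\cC$. Equivalently, the colimit presentation is computed in $\sSet^\cC$ and can be equipped compatibly with maps to $N\cC$.

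The key step is a levelwise freeness computation. For fixed $n$, the functor $x \mapsto N(\slice{\cC}{x})_n$ sends $x$ to the set of chains $c_0 \to \cdots \to c_n \to x$. It is therefore isomorphic to $\coprod_{\sigma} \cC(c_n(\sigma),-)$, where $\sigma$ ranges over the $n$-simplices $c_0 \to \cdots \to c_n$ of $N\cC$. Under this identification, the face and degeneracy operators act only on the chain $\sigma$, except for the last face $d_n$, which composes $c_{n-1}\to c_n \to x$. Crucially, the degeneracy maps $s_i$ insert identities within the chain $\sigma$ itself. Hence an $n$-simplex of $\Ws$ at $x$ is degenerate precisely when $\sigma$ is degenerate in $N\cC$. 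So the degenerate part is the sub-coproduct indexed by degenerate $\sigma$, and its complement is the free $\cC$-set $\coprod_{\sigma \text{ nondeg}} \cC(c_n(\sigma),-)$.

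I then run the usual skeletal filtration $\mathrm{sk}_{n-1}\Ws \hookrightarrow \mathrm{sk}_n \Ws$. The standard Eilenberg–Zilber argument applies pointwise in $x$, and it is natural in $x$ because the nondegenerate simplices form a subfunctor-complement. It exhibits each stage as a pushout of $\coprod_{\sigma} \partial\Delta^n \times \cC(c_n(\sigma),-) \hookrightarrow \coprod_\sigma \Delta^n \times \cC(c_n(\sigma),-)$ over nondegenerate $\sigma$. These are projective monomorphisms, since $\partial\Delta^n \hookrightarrow \Delta^n$ is a monomorphism. The colimit of the filtration is $\Ws$, so $\emptyset \to \Ws$ is a transfinite composite of pushouts of coproducts of generators. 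For $\Wc$, the $n$-simplices at $x$ are chains $x \to c_0 \to \cdots \to c_n$, and they decompose as $\coprod_\sigma \cC(-,c_0(\sigma))$ as a functor of $x \in \cC^\op$. The same argument applies verbatim; here the first face is the one that composes.

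The main obstacle I anticipate is the bookkeeping in the Eilenberg–Zilber step. Specifically, I must check that the attaching map lands correctly in the $(n-1)$-skeleton, in particular through the non-free face $d_n$, and that the pushout square is genuinely a pushout of $\cC$-diagrams. My plan here is to verify it levelwise in $x$, where it is the classical statement for the simplicial set $N(\slice{\cC}{x})$, and then to observe that all the maps involved are natural in $x$. Alternatively, one can simply cite \cite[14.8.8]{hirschhorn} for the underlying diagrams.
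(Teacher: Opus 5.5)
Your proposal is correct and is essentially the paper's argument. The key computation is the same: the nondegenerate $n$-simplices of $\Ws$ form the free $\cC$-set $\coprod_{\sigma \text{ nondeg}} \cC(c_n(\sigma),-)$, closed under the $\cC$-action. The paper phrases this as the latching maps of $\Ws \in (\Set^\cC)^{\DDelta^\op}$ being projective inclusions (complement the left Kan extension along $\obj(\cC) \to \cC$ of the chains ending at $(c,\id_c)$), whereas you unpack that same Reedy criterion into the explicit skeletal cell decomposition; your handling of the slice over $N\cC$ and of $\Wc$ by duality also matches the paper.
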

\begin{proof}
The two cases are dual to each other, so we only consider $\Ws \in \sSet^\cC$, forgetting the projection to $N\cC$, which is immaterial to the statement.
Since projective Reedy surjections in $\sSet^\cC$ agree with Reedy projective surjections in $(\Set^\cC)^{\DDelta^\op}$, projective Reedy inclusions in $\sSet^\cC$ agree with Reedy projective inclusions in $(\Set^\cC)^{\DDelta^\op}$ (as left classes of weak factorization systems whose right classes coincide).
We may thus equivalently show that the map $\emptyset \to \Ws \in (\Set^\cC)^{\DDelta^\op}$ is a Reedy projective inclusion, i.e., that the latching maps of $\Ws$ at each $[n] \in \DDelta$ are projective inclusions in $\Set^\cC$. The latching map at stage $[n] \in \DDelta$ has complement $A \in \Set^\cC$ sending $c \in \cC$ to the set of chains $f \colon [n] \to \slice{\cC}{c}$ whose step maps are non-identities.
It suffices to show that $\emptyset \to A$ is a projective inclusion.
This holds since $A$ is the left Kan extension along $\obj(\cC) \to \cC$ of the family that at $c \in \obj(\cC)$ is the subset of those chains $f$ with last vertex $(c, \id_c)$.
\end{proof}

\begin{lem}\label{lem:arr-to-fun-on-weights}
The functor
\[
\begin{tikzcd}[column sep=large]
  \arrtofunwgt\colon \sSet^\cC
  \ar[r, "{(N\cC)^*}"]
&
  (\slice{\sSet}{N\cC})^\cC
  \ar[r, "{\Wc \otimes_\cC -}"]
&
  \slice{\sSet}{N\cC}
\end{tikzcd}
\]
\begin{parts}
\item \label{lem:arr-to-fun-on-weights:cofibrations} sends projective monomorphisms to monomorphisms,
\item \label{lem:arr-to-fun-on-weights:left-anodynes} sends projective left anodynes to left anodynes.
\end{parts}
\end{lem}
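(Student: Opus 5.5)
Both parts reduce to checking generators, since $\arrtofunwgt$ is a left adjoint by \eqref{eq:fun-arr-weights-adj}. It therefore preserves colimits, retracts and transfinite composites, and both target classes (monomorphisms and left anodynes in $\slice{\sSet}{N\cC}$, which are created by the forgetful functor to $\sSet$) are closed under these operations. The projective monomorphisms in $\sSet^\cC$ are generated by the maps $\cC(c,-)\times(\partial\Delta^m \hookrightarrow \Delta^m)$. The projective left anodynes are generated by $\cC(c,-)\times(\Lambda^m_k\hookrightarrow\Delta^m)$ with $0\le k<m$. So it suffices to compute $\arrtofunwgt$ on maps of the form $\cC(c,-)\times j$ for a map $j\colon A\to B$ of simplicial sets.

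For this computation I would use the coYoneda lemma, as in the proof of \cref{lem:arr-to-fun-to-arr-point}. For any simplicial set $A$ we have $(N\cC)^*(\cC(c,-)\times A)\cong \cC(c,-)\times (N\cC\times A\to N\cC)$. Hence
\[ \arrtofunwgt(\cC(c,-)\times A) \cong \Wc(c)\times A = N(\coslice{\cC}{c})\times A, \]
viewed over $N\cC$ via the codomain projection composed with the first projection. This identification is natural in $A$, so $\arrtofunwgt(\cC(c,-)\times j)\cong N(\coslice{\cC}{c})\times j$.

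Part \ref{lem:arr-to-fun-on-weights:cofibrations} is then immediate: the product of a simplicial set with a monomorphism is a monomorphism.

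For part \ref{lem:arr-to-fun-on-weights:left-anodynes} we must show that $N(\coslice{\cC}{c})\times j$ is left anodyne whenever $j$ is. I would write it as the Leibniz product of $\emptyset\to N(\coslice{\cC}{c})$ with $j$. \Cref{lem:left-anodyne-quillen} then shows that it is left anodyne, since the Leibniz product of a monomorphism with a left anodyne map is left anodyne.

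The step needing care is the generator description of the projective classes, namely that projective left anodynes are generated by representables times left horn inclusions. I take this as the standard description of the projective weak factorization system, in the same sense in which \cref{lem:leibniz-weighted} uses it. The identification $(N\cC)^*(\cC(c,-)\times A)$ must also be checked to be natural in $c$, so that the coend computation is legitimate. No further obstacle is expected.
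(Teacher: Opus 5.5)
Your proof is correct and is essentially the paper's argument. The paper cites \cref{lem:leibniz-weighted} for the pointwise-monomorphic weight $\Wc$ (for part (ii), after first forgetting along $(N\cC)_!$) together with \cref{lem:left-anodyne-quillen}; you instead unfold that lemma on the projective generators $\cC(c,-)\times j$, where coYoneda gives $N(\coslice{\cC}{c})\times j$, and your worry about naturality in $c$ is unnecessary because the computation is done for one fixed generator at a time.
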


\begin{proof}
For \ref{lem:arr-to-fun-on-weights:cofibrations}, the reindexing functor preserves projective monomorphisms, so we need only show that $\Wc \otimes_\cC -$ sends projective monomorphisms to monomorphisms. By \cref{lem:leibniz-weighted}.\ref{lem:leibniz-weighted:tensor-to-E} this follows from the fact that Leibniz products of monomorphisms are monomorphisms in $\slice{\sSet}{N\cC}$ and $\Wc \in (\slice{\sSet}{N\cC})^{\cC^\op}$ is injective (levelwise) monomorphic.

For \ref{lem:arr-to-fun-on-weights:left-anodynes}, recall that the forgetful functor $(N \cC)_!$ creates left anodynes, so it suffices to prove this result after postcomposing with the projection out of the slice.
After that postcomposition, the functor under consideration is isomorphic to  weighted colimit with $N(\coslice{\cC}{-}) \in \sSet^{\cC^\op}$, which is injective monomorphic.
The result now follows from \cref{lem:leibniz-weighted}.\ref{lem:leibniz-weighted:tensor-to-E} using \cref{lem:left-anodyne-quillen}.
\end{proof}

\begin{lem}\label{lem:fun-to-arr-on-weights}
The functor
\[
\begin{tikzcd}[column sep=large]
 \funtoarrwgt \colon \slice{\sSet}{N\cC}
  \ar[r, "{\Ws \otimes -}"]
&
  (\slice{\sSet}{N\cC})^\cC
  \ar[r, "{(N\cC)_!}"]
&
  \sSet^\cC
\end{tikzcd}
\]
\begin{parts}
\item \label{lem:fun-to-arr-on-weights:cofibrations} sends monomorphisms to projective monomorphisms,
\item \label{lem:fun-to-arr-on-weights:left-anodynes} sends left anodynes to projective left anodynes.
\end{parts}
\end{lem}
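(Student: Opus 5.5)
We mirror the proof of \cref{lem:arr-to-fun-on-weights}, now using the other half of \cref{lem:leibniz-weighted}. The functor $\funtoarrwgt$ is the composite of $\Ws \otimes - \colon \slice{\sSet}{N\cC} \to (\slice{\sSet}{N\cC})^\cC$ with $(N\cC)_! \colon (\slice{\sSet}{N\cC})^\cC \to \sSet^\cC$. Two observations handle the second factor. Monomorphisms and left anodynes in $\slice{\sSet}{N\cC}$ are by definition created by the forgetful functor to $\sSet$, and $(N\cC)_!$ is that forgetful functor applied pointwise. Hence $(N\cC)_!$ sends projective monomorphisms (resp.\ projective left anodynes) of $(\slice{\sSet}{N\cC})^\cC$ to projective monomorphisms (resp.\ projective left anodynes) of $\sSet^\cC$. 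To check this, note that the projective weak factorization systems on both sides are generated by maps of the form $\cC(c,-)\cdot f$, and $(N\cC)_!$ is a left adjoint that preserves these generators. It therefore suffices to show that $\Ws \otimes -$ sends monomorphisms to projective monomorphisms and left anodynes to projective left anodynes in $(\slice{\sSet}{N\cC})^\cC$.

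For this we use \cref{lem:leibniz-weighted}.\ref{lem:leibniz-weighted:tensor-with-E}.\ref{lem:leibniz-weighted:tensor-with-E:projective}. The two-variable adjunction is the one on $\slice{\sSet}{N\cC}$ given by fibered product over $N\cC$. Here $\cK = \cM = \cN = \slice{\sSet}{N\cC}$ and the indexing category is $\cA = \cC$. We must first verify that this base adjunction is Quillen for the nested pair (monomorphisms, left anodynes), in the sense that Leibniz fibered products of a monomorphism with a left anodyne are left anodyne. After forgetting to $\sSet$, a fibered product over $N\cC$ is a pullback of a cartesian product along the diagonal $N\cC \to N\cC \times N\cC$. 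The Leibniz fibered product is therefore the pullback of the Leibniz cartesian product along a monomorphism, or more precisely is computed as the restriction of $\hat\times$ to a subobject. This is exactly the step I expect to be the main obstacle, because pullback does not preserve left anodynes in general.

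I would avoid the general statement and only use the special form needed. The first argument is $\emptyset \to \Ws$, which is projective monomorphic by \cref{lem:weights-projective-cofibrant}. So we only need that $\Ws(x) \times_{N\cC} - = N(\slice{\cC}{x}) \times_{N\cC} -$ preserves monomorphisms, which is clear, and left anodynes, for each fixed $x$. Since $\Ws \otimes -$ is a colimit-preserving functor in its second variable, the projective-generation argument of \cref{lem:leibniz-weighted} reduces the claim to exactly this pointwise statement.

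The pointwise statement is pullback along $\dom \colon N(\slice{\cC}{x}) \to N\cC$. This map is the nerve of the discrete fibration $\slice{\cC}{x} \to \cC$, which is a Grothendieck fibration. By \cref{grothendieck-fibration-is-weak-fibration} its nerve is a right transport fibration, so \cref{frobenius-left-anodyne} shows that pullback along it preserves left anodynes. Pullback also preserves monomorphisms. Feeding these two facts through \cref{lem:leibniz-weighted} with the projective cofibrancy of $\Ws$ (\cref{lem:weights-projective-cofibrant}) yields both parts of the statement.
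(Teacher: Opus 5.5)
Your part (i) and your reduction past $(N\cC)_!$ are fine; for (i), \cref{lem:leibniz-weighted} applies directly because Leibniz fibered products of monomorphisms are monomorphisms. The gap is in part (ii), in your third paragraph, where you reduce to the pointwise statement.

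The transposition argument behind \cref{lem:leibniz-weighted} uses only that $\emptyset\to\Ws$ is a projective monomorphism. In exchange, it needs the Leibniz fibered product $i\hat{\times}_{N\cC}j$ to be left anodyne for every monomorphism $i$ over $N\cC$, at least for the generating cells $\partial\Delta^n\hookrightarrow\Delta^n$ of $\Ws$. That is exactly the statement you correctly flagged as false.

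Your Frobenius argument via \cref{grothendieck-fibration-is-weak-fibration,frobenius-left-anodyne} does correctly show that each component $\Ws(x)\times_{N\cC}j$ is left anodyne. But this only says that $\Ws\otimes j$ is \emph{pointwise} left anodyne, i.e.\ lies in the left class of the injective weak factorization system. In general that is weaker than being projective left anodyne, and nothing you cite bridges the two.

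The cell-by-cell route really does fail. Take $\cC=[1]$ and $j=\iota_0\colon\Delta^0\to\Delta^1$ over $N[1]=\Delta^1$. The cells $\cC(1,-)\cdot(\emptyset\to\{1\})$ and $\cC(1,-)\cdot(\partial\Delta^1\hookrightarrow\Delta^1)$ of $\Ws$ contribute, tensored with $\cC(1,-)$, the maps $\emptyset\to\Delta^0$ and $\partial\Delta^1\hookrightarrow\Delta^1$. Neither is left anodyne, even though the total map $\Ws\otimes j$ is projective left anodyne. So a global argument is needed.

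The paper supplies this by a direct computation.
\begin{itemize}
\item Using part (i), \cref{lem:left-anodyne-cancel} and \cref{lem:left-anodyne-initial-vertex}.\ref{lem:left-anodyne-initial-vertex:segment}, it suffices to treat an initial segment $\iota_{\le m}\colon\Delta^m\to\Delta^{m+1}$ lying over some $\gamma$.
\item Set $c=\gamma(m+1)$. The unique lift $\gamma'\colon[m+1]\to\slice{\cC}{c}$ through $(c,\id_c)$ exhibits $\Ws\times_{N\cC}\iota_{\le m}$ as a single pushout of $\cC(c,-)\times\iota_{\le m}$, which is projective left anodyne.
\item The pushout can be checked on objects, because everything involved is the nerve of a discrete fibration.
\end{itemize}

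Alternatively, you could keep your route by adding the missing bridge: a projective monomorphism $f$ that is pointwise left anodyne is projective left anodyne. One way to prove it:
\begin{itemize}
\item Factor $f=p\,i$ with $i$ projective left anodyne (hence pointwise left anodyne) and $p$ a pointwise left fibration.
\item At each $x$, lifting $f_x$ against $p_x$ gives a section $s$ with $s f_x=i_x$.
\item Lifting against $(\partial\Delta^1\hookrightarrow\Delta^1)\hat{\times}i_x$, which is left anodyne by \cref{lem:left-anodyne-quillen}, gives a homotopy $s p_x\sim\id$ over the base.
\item A further lift against $(\{0\}\hookrightarrow\Delta^1)\hat{\times}(\partial\Delta^n\hookrightarrow\Delta^n)$ shows that $p_x$ is a trivial fibration.
\item Hence $f$ lifts against $p$ and is a retract of $i$.
\end{itemize}
With that lemma added, part (i) plus your pointwise Frobenius argument would give a valid and shorter alternative to the paper's computation. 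Without it, part (ii) is not proved.
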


\begin{proof}
For \ref{lem:fun-to-arr-on-weights:cofibrations}, note that $\Ws$ is projective monomorphic by \cref{lem:weights-projective-cofibrant}.
Therefore, since Leibniz products of monomorphisms are monomorphisms in $\slice{\sSet}{N\cC}$, by Lemma \ref{lem:leibniz-weighted}.\ref{lem:leibniz-weighted:tensor-with-E}, weighted colimit with $\Ws$ sends monomorphisms to projective monomorphims.

For \ref{lem:fun-to-arr-on-weights:left-anodynes}, it suffices to demonstrate that the factor $\Ws \otimes -$ sends left anodynes to projective left anodynes.
By \cref{lem:left-anodyne-initial-vertex}.\ref{lem:left-anodyne-initial-vertex:segment} and \cref{lem:left-anodyne-cancel}, it suffices to consider its action on an initial segment $\iota_{\leq m} \colon \Delta^m \to \Delta^{m+1}$ living in the slice via some $\gamma \colon \Delta^{m+1} \to N \cC$.
Write $c = \gamma(m+1)$ for the last object of $\gamma \colon [m+1] \to \cC$.
Note that we have a unique lift
\[
\begin{tikzcd}
  \{m+1\}
  \ar[r, "{(c, \id)}"]
  \ar[d, hook]
&
  \slice{\cC}{c}
  \ar[d]
\\
  {[m+1]}
  \ar[r, "\gamma"]
  \ar[ur, dotted, "\gamma'" description]
&
  \cC \rlap{.}
\end{tikzcd}
\]
This induces the following section:
\[
\begin{tikzcd}
  {[m]}
  \ar[r,dotted] \ar[d, "\iota_{\leq m}"'] \arrow[dr, phantom, "\lrcorner" very near start]
&
  \bullet
  \ar[r] \ar[d, "\slice{\cC}{c} \times_\cC \iota_{\leq m}"'] \arrow[dr, phantom, "\lrcorner" very near start]
&
  {[m]}
  \ar[d, "\iota_{\leq m}"]
\\
  {[m+1]}
  \ar[r,dotted] \ar[dr, "\gamma'"']
&
  \bullet
  \ar[r] \ar[d] \arrow[dr, phantom, "\lrcorner" very near start]
&
  {[m+1]}
  \ar[d, "\gamma"]
\\&
  \slice{\cC}{c}
  \ar[r]
&
  \cC \rlap{.}
\end{tikzcd}
\]
Under Yoneda, this corresponds to a map to $\slice{\cC}{d} \times_{\cC} \iota_{\leq m}$ natural in $d \in \cC$:
\[
\begin{tikzcd}
  \cC(c, d) \times [m]
  \ar[r, dotted] \ar[d] \arrow[dr, phantom, "\lrcorner" very near start]
&
  \bullet
  \ar[r] \ar[d, "\slice{\cC}{d} \times_\cC \iota_{\leq m}"'] \arrow[dr, phantom, "\lrcorner" very near start]
&
  {[m]}
  \ar[d, "\iota_{\leq m}"]
\\
  \cC(c, d) \times [m+1]
  \ar[r, dotted]
  \ar[d, "{\cC(c, d) \times \gamma'}"']
&
  \bullet \arrow[dr, phantom, "\lrcorner" very near start]
  \ar[r]
  \ar[d]
&
  {[m+1]}
  \ar[d, "\gamma"]
\\
  \cC(c, d) \times \slice{\cC}{c}
  \ar[r]
&
  \slice{\cC}{d}
  \ar[r]
&
  \cC
\end{tikzcd}
\]
where the horizontal maps compose to projections.
The nerve sends the bulleted natural transformation to $\Ws \times_{N \cC} \iota_{\leq m}$; our goal is to show that it is projective left anodyne.
For this, it suffices to check that the nerve sends the left vertical natural transformation to a projective left anodyne map and the top left square to a pushout.

For the first claim, recall that the natural transformation $\emptyset \to \cC(c, -)$ is a projective injection.
The nerve sends $\iota_{\leq m}$ to a left anodyne map by \cref{lem:left-anodyne-initial-vertex}.
Hence, by \cref{lem:leibniz-weighted}.\ref{lem:leibniz-weighted:tensor-with-E}.\ref{lem:leibniz-weighted:tensor-with-E:projective}, their Leibniz tensor is projective left anodyne.

For the second claim, we check that the square in question at $d \in \cC$ forms a pushout under the nerve.
The square forms a square of discrete fibrations with target $[m+1]$ since the category $\cC(c, d)$ is discretely fibrant and the functors $\slice{\cC}{d} \to \cC$ and $\iota_{\leq m}$ are discrete fibrations, and discrete fibrations are closed under base change and composition.
By the dual of \cref{ex:opfibration-strict-left-map}, its nerve is a square of strict right maps.
Thus, by the dual of \cref{lem:left-fib-fiberwise-we} and the universality of pushouts in sets, the square in question is a pushout in degree $n$ already if it is a pushout in degree $0$.
We are thus reduced to checking that the original square in the above diagram is a pushout on objects.

Since the square is a pullback, it suffices to check for its actions on objects that base change along the complement of the right vertical inclusion makes the bottom function invertible.
The functor $\{m+1\} \hookrightarrow [m+1]$ complements $\iota_{\leq m}$ on objects, so it suffices to check that base change along it makes the bottom functor invertible.
This is the top left map in the below diagram:
\[
\begin{tikzcd}[/tikz/baseline=(\tikzcdmatrixname-\the\pgfmatrixcurrentrow-\the\pgfmatrixcurrentcolumn.base)]
  \cC(c, d)
  \ar[r, dotted, "\simeq"]
  \ar[d, "{\cC(c, d) \times (c, \id)}"']
&
  \bullet
  \ar[r]
  \ar[d]
  \ar[dr, phantom, "\lrcorner" very near start]
&
  1
  \ar[d, "c"]
\\
  \cC(c, d) \times \slice{\cC}{c}
  \ar[r]
&
  \slice{\cC}{d}
  \ar[r]
&
  \cC \rlap{.}
\end{tikzcd}
\qedhere
\]
\end{proof}

Analogously:

\begin{lem}\label{lem:arr-intermediate-on-weights}
The functor
\[
\begin{tikzcd}
  \slice{\sSet}{N\cC}
  \ar[r, "{\dom^*}"]
&
  \slice{\sSet}{N\cC^{[1]}}
  \ar[r, "\cod_!"]
&
  \slice{\sSet}{N\cC}
\end{tikzcd}
\]
\begin{parts}
\item \label{lem:arr-intermediate-on-weights:cofibrations} preserves monomorphisms,
\item \label{lem:arr-intermediate-on-weights:left-anodynes} preserves left anodynes.
\end{parts}
\end{lem}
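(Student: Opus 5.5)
The functor in question sends $a \colon A \to N\cC$ to $N\cC^{[1]} \times_{N\cC} A \to N\cC$, where the pullback is along $\dom$ and the projection is via $\cod$. I will mirror the proof of \cref{lem:fun-to-arr-on-weights}.

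Part \ref{lem:arr-intermediate-on-weights:cofibrations} is immediate. Pullback $\dom^*$ preserves monomorphisms in any topos, and $\cod_!$ forgets to the underlying map, which is still a monomorphism.

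For part \ref{lem:arr-intermediate-on-weights:left-anodynes}, I first note that $\cod_!$ creates left anodynes in the sense that left anodynes in slices are defined on underlying maps. So it suffices to show that $\dom^* \colon \slice{\sSet}{N\cC} \to \slice{\sSet}{N\cC^{[1]}}$ sends left anodynes to maps whose underlying maps are left anodyne. The composite functor preserves colimits, since $\dom^*$ is a left adjoint in a topos and $\cod_!$ does too. It also preserves monomorphisms by part \ref{lem:arr-intermediate-on-weights:cofibrations}. Hence it preserves retracts, transfinite composites, pushouts and coproducts of left anodynes. For right cancellation among monomorphisms, I use the following: if $g$ and $gf$ are sent to left anodynes and $f$ to a monomorphism, then \cref{lem:left-anodyne-cancel} applied in $\sSet$ shows that $f$ is sent to a left anodyne. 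By \cref{lem:left-anodyne-initial-vertex}.\ref{lem:left-anodyne-initial-vertex:segment}, it therefore suffices to treat an initial segment $\iota_{\le m} \colon \Delta^m \to \Delta^{m+1}$ lying over some $\gamma \colon [m+1] \to \cC$.

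The key observation is that $\dom \colon N\cC^{[1]} \to N\cC$ is the nerve of a Grothendieck fibration, namely $\dom \colon \cC^{[1]} \to \cC$, with cartesian lifts given by precomposition. By \cref{grothendieck-fibration-is-weak-fibration}, $N\dom$ is therefore a right transport fibration. By \cref{frobenius-left-anodyne}, pullback along it preserves left anodynes, so $\dom^*(\iota_{\le m})$ is left anodyne as a map of simplicial sets. Since left anodynes in the slice are detected on underlying maps, this finishes the proof. In fact this argument avoids the reduction to generators altogether, because \cref{frobenius-left-anodyne} applies to arbitrary left anodynes directly.

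The only step I expect to need care is verifying that $\dom \colon \cC^{[1]} \to \cC$ is a Grothendieck fibration. Given $f \colon x \to y$ and $h \colon y \to z$, the lift is the square $(f, \id_z) \colon hf \to h$, and one has to check that it is cartesian: a map into $h$ whose domain component factors through $f$ factors uniquely. This follows directly by a diagram chase. If this check failed, the fallback would be the explicit pushout analysis used in \cref{lem:fun-to-arr-on-weights}.
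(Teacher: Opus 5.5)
Your proof is correct and is essentially the paper's: part (i) is immediate, and for part (ii) the paper likewise observes that $\dom \colon \cC^{[1]} \to \cC$ is a Grothendieck fibration, so its nerve is a right transport fibration by \cref{grothendieck-fibration-is-weak-fibration}, and hence $\dom^*$ preserves left anodynes by \cref{frobenius-left-anodyne}. As you noticed yourself, the preliminary reduction to initial segments is unnecessary, since \cref{frobenius-left-anodyne} already applies to arbitrary left anodynes (its own proof performs that reduction).
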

\begin{proof}
\Cref{lem:arr-intermediate-on-weights:cofibrations} is obvious.
For \ref{lem:arr-intermediate-on-weights:left-anodynes}, note that $\dom \colon \cC^{[1]} \to \cC$ is a Grothendieck fibration, so $\dom \colon N \cC^{[1]} \to N \cC$ is a right transport fibration by \cref{grothendieck-fibration-is-weak-fibration}.
Therefore, $\dom^*$ preserves left anodynes by \cref{frobenius-left-anodyne}.
\end{proof}

\begin{lem}\label{lem:arr-to-fun-to-arr-point-equiv}
  The component of the natural transformation $\kappa \colon \funtoarrwgt \circ \arrtofunwgt \to \id$ of \cref{lem:arr-to-fun-to-arr-point} at a representable $\cC(c, -) \in \Set^\cC \subset \sSet^\cC$ is the retraction of a strong 0-oriented homotopy equivalence whose domain and codomain are projectively cofibrant objects.
\end{lem}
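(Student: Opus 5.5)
By \cref{lem:arr-to-fun-to-arr-point}, the component of $\kappa$ at $\cC(c,-)$ is, up to isomorphism, the composition map $\circ \colon N(\coslice{\cC}{c} \times_\cC \slice{\cC}{-}) \to \cC(c,-)$ in $\sSet^\cC$. My plan is to write down an explicit section $s$ of this map and show that $s$ is a strong 0-oriented homotopy equivalence whose retraction is $\circ$. To find $s$, I will use the decomposition already noted in that proof. For each $i$, the category $\coslice{\cC}{c} \times_\cC \slice{\cC}{i}$ has objects the factorizations $c \xrightarrow{u} x \xrightarrow{v} i$. It splits as a disjoint union over $f \in \cC(c,i)$ of the categories $\mathrm{Fact}(f)$ of factorizations of $f$. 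Define $s_i(f) \coloneqq (c \xrightarrow{\id_c} c \xrightarrow{f} i)$. This is natural in $i$, because postcomposition with $g \colon i \to i'$ sends $(\id_c, f)$ to $(\id_c, gf)$. It also satisfies $\circ \circ s = \id$, so $\circ$ is a retraction of $s$.

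Next I will check that $s$ is an initial section of $\circ$ in $\Cat^\cC$, regarding $\cC(c,-)$ as a functor into discrete categories. The object $(\id_c, f)$ is initial in $\mathrm{Fact}(f)$. Indeed, a morphism $(\id_c,f) \to (u,v)$ is a map $h \colon c \to x$ with $h \circ \id_c = u$ and $v h = f$, which forces $h = u$. So within each component we have an adjunction $s \dashv \circ$ with invertible unit, namely the identity, and counit $s\circ \Rightarrow \id$ given by these unique maps $u$. These counit components are natural in $i$, since postcomposition with $g$ does not change $u$. Hence the adjunction lives in $\Cat^\cC$. By \cref{initial-section-is-she}, applying the nerve then shows that $Ns = s$ is a strong 0-oriented homotopy equivalence in $\sSet^\cC$ with retraction $\circ$. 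Here I use that the nerve of the discrete category $\cC(c,-)$ is $\cC(c,-)$.

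For projective cofibrancy: the codomain $\cC(c,-)$ is representable, hence projectively cofibrant. For the domain, I will use the identification $N(\coslice{\cC}{c} \times_\cC \slice{\cC}{-}) \cong \Wc(c) \times_{N\cC} \Ws(-)$ from the proof of \cref{lem:arr-to-fun-to-arr-point}, equivalently $\Ws \otimes_{\slice{\sSet}{N\cC}} \Wc(c)$. The weight $\Ws$ is projective monomorphic by \cref{lem:weights-projective-cofibrant}, and $\emptyset \to \Wc(c)$ is a monomorphism in $\slice{\sSet}{N\cC}$. Fiber product over $N\cC$ is a Leibniz two-variable adjunction for monomorphisms, so by \cref{lem:leibniz-weighted}.\ref{lem:leibniz-weighted:tensor-with-E}.\ref{lem:leibniz-weighted:tensor-with-E:projective} the tensor is projective monomorphic. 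This means the domain is projectively cofibrant.

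The main point to get right is that the counit is natural in $i$, so that the initial-section structure really lives in $\Cat^\cC$ rather than pointwise only. I also need to make sure the isomorphism from \cref{lem:arr-to-fun-to-arr-point} identifies $\kappa$ with $\circ$ exactly, not merely up to some twist. I expect both to be straightforward from the explicit formulas above.
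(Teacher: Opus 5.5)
Your argument is correct. The first half matches the paper's proof. You exhibit $f \mapsto (\id_c, f)$ as an initial section of composition in $\Cat^\cC$, with counit given by the unique maps $u$ (natural in $i$), and then invoke \cref{initial-section-is-she}. Your worry about identifying $\kappa$ with $\circ$ exactly is unnecessary. Being a retraction of a strong 0-oriented homotopy equivalence and having projectively cofibrant ends are both invariant under isomorphism in the arrow category.

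You diverge from the paper on the projective cofibrancy of the domain. The paper re-runs the latching-object argument of \cref{lem:weights-projective-cofibrant} directly. It identifies the complement of each latching map as the left Kan extension along $\obj(\cC) \to \cC$ of the chains whose final object has identity $\slice{\cC}{x}$-component. You instead reduce to the already-proved cofibrancy of $\Ws$ via \cref{lem:leibniz-weighted}. In effect you recognize the domain as $(N\cC)_!(\Ws \times_{N\cC} \Wc(c)) \cong \funtoarrwgt(\Wc(c))$. It is then just an instance of \cref{lem:fun-to-arr-on-weights}.\ref{lem:fun-to-arr-on-weights:cofibrations} applied to $\emptyset \to \Wc(c)$.

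Your route is shorter and avoids a second combinatorial computation. One step you leave implicit is that forgetting along $(N\cC)_!$ carries projective monomorphisms in $(\slice{\sSet}{N\cC})^\cC$ to projective monomorphisms in $\sSet^\cC$. This holds because it is cocontinuous and sends generators to generators; the paper uses the same fact implicitly. The paper's route is more self-contained and makes the free structure of the nondegenerate simplices explicit.
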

\begin{proof}
  The nerve functorially sends initial sections to strong 0-oriented homotopy equivalences (\cref{initial-section-is-she}).
  To show that $\kappa_{\cC(c,-)}$ is the retraction of a strong 0-oriented homotopy equivalence, it thus suffices to show that the composition functor
  \[
    \begin{tikzcd} \coslice{\cC}{c} \times_\cC \slice{\cC}{i} \arrow[r, "\circ"] & \cC(c,i) \end{tikzcd}
  \]
  has an initial section naturally in $i \in \cC$.
  The initial section sends $f \colon c \to i$ to the degenerate factorization $(\id, f)$.

  The codomain of $\kappa_{\cC(c,-)} \colon N(\coslice{\cC}{c} \times_\cC \slice{\cC}{-}) \to \cC(c,-)$, as a representable, is projective cofibrant. For the domain, we reprise the argument given in the proof of \cref{lem:weights-projective-cofibrant}. As there, it suffices to show that the latching map of $\emptyset \to N(\coslice{\cC}{c} \times_\cC \slice{\cC}{-})$ at each $[n] \in \DDelta$ is a projective inclusion in $\Set^\cC$. The latching map at $[n]$ has complement $A \in \Set^\cC$ sending $x \in \cC$ to the set of chains $f \colon [n] \to \coslice{\cC}{c} \times_\cC \slice{\cC}{x}$ whose step maps are all non-identities. It suffices to show that $\emptyset \to A$ is a projective inclusion. This holds since $A$ is the left Kan extension along $\obj(\cC) \to \cC$ of the family that at $x \in \obj(\cC)$ is the subset of those chains $f$ where the final object in $\coslice{\cC}{c} \times_\cC \slice{\cC}{x}$  has the identity for its $\slice{\cC}{x}$ component.
\end{proof}

\begin{lem}\label{lem:fun-to-arr-to-fun-point-equiv}
  The component of the zig-zag $\arrtofunwgt \circ \funtoarrwgt \overset{\mu}{\to} \cod_! \circ \dom^* \overset{\nu}{\leftarrow} \id$ of \cref{lem:fun-to-arr-to-fun-point} at a point $c \colon \Delta^0 \to N \cC$ has a left anodyne underlying map in $\sSet$.
\end{lem}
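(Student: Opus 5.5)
By \cref{lem:fun-to-arr-to-fun-point}, the component of the zigzag at a point $c \colon \Delta^0 \to N\cC$ is, up to isomorphism, the pair of maps
\[ N\coslice{\cC}{c} \xrightarrow{\cong} N\coslice{\cC}{c} \xleftarrow{\id_c} \Delta^0 \]
over $N\cC$. So we must show two things about the underlying maps in $\sSet$: the first is left anodyne, and the second, the inclusion $\{\id_c\} \colon \Delta^0 \to N\coslice{\cC}{c}$ of the object $(c,\id_c)$, is left anodyne.

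The first map is an isomorphism, hence trivially left anodyne, since isomorphisms lie in the left class of any weak factorization system.

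For the second map, the object $(c,\id_c)$ is initial in $\coslice{\cC}{c}$. So the functor $[0] \to \coslice{\cC}{c}$ picking it out is fully faithful and left adjoint to the unique functor $\coslice{\cC}{c} \to [0]$. The unit is invertible because $[0]$ is terminal, and the counit at $f \colon c \to x$ is the unique map $(c,\id_c) \to (x,f)$, namely $f$ itself. Thus this inclusion is an initial section in the sense of \cref{initial-section-is-she}. Its nerve $\Delta^0 \to N\coslice{\cC}{c}$ is therefore a strong 0-oriented homotopy equivalence. It is also a monomorphism, being injective on vertices and with $\Delta^0$ having only degenerate higher simplices. \Cref{she-is-left-anodyne} then shows that it is left anodyne.

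There is no real obstacle here. The only point needing care is to verify that the isomorphism identifications supplied by \cref{lem:fun-to-arr-to-fun-point} are compatible with the maps themselves, so that being left anodyne transfers along them. This holds because the class of left anodyne maps is closed under composition with isomorphisms, being the left class of a weak factorization system.
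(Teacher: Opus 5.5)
Your proposal is correct and follows the paper's own proof: $\mu_c$ is invertible, and $\nu_c$ is the nerve of the initial section $\id_c \colon [0] \to \coslice{\cC}{c}$, hence a monomorphic strong 0-oriented homotopy equivalence (\cref{initial-section-is-she}) and therefore left anodyne by \cref{she-is-left-anodyne}. Your explicit checks of the adjunction and of the monomorphism hypothesis only spell out details the paper leaves implicit.
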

\begin{proof}
  The first component $\mu_c$ is invertible.
  The second component $\nu_c$ is the image under the nerve of the functor $\id_c \colon [0] \to \coslice{\cC}{c}$ selecting the initial object.
  This functor is an initial section, so its image under the nerve is a strong 0-oriented homotopy equivalence (\cref{initial-section-is-she}) and thus left anodyne by \cref{she-is-left-anodyne}.
\end{proof}

\subsection{Naturality}\label{ssec:naturality}

Now we address naturality in the category variable $\cC$. The mappings that send a category $\cC$ to the categories $\slice{\sSet}{N \cC}$, $\sSet^\cC$, and $(\slice{\sSet}{N \cC})^\cC$ extend to endopseudofunctors on $\Cat$.
The functorial actions on a morphism $F \colon \cC \to \cD$ are given by left Kan extension along $F$ and postcomposition along $N F \colon N \cC \to N \cD$.
By exchange of left Kan extensions, the mapping that sends $\cC$ to the functor $(N \cC)_! \colon (\slice{\sSet}{N \cC})^\cC \to \sSet^\cC$ extends to a pseudofunctor from $\Cat$ to the 2-category $\Funpseudo({[1]},\Cat)$ of functors, pseudonatural transformations, and modifications.
By taking mates, it follows that the mapping that sends $\cC$ to the functor $(N \cC)^* \colon \sSet^\cC \to (\slice{\sSet}{N \cC})^\cC$ extends to a pseudofunctor from $\Cat$ to the 2-category $\Funoplax({[1]},\Cat)$ of functors, oplax natural transformations, and modifications.

In this section, we will show that each mapping that sends a category $\cC$ to a functor considered in the previous section --- in particular, including the mappings $\cC \mapsto \arrtofunwgt_\cC$ and $\cC \mapsto\funtoarrwgt_\cC$ --- extends to a pseudofunctor from $\Cat$ to $\Funoplax({[1]},\Cat)$. In certain cases, noted explicitly below, these mapping lands the wide subcategory $\Funpseudo({[1]},\Cat)$, with pseudocommuting squares rather than oplax commuting squares.

\begin{lem}\label{lem:lan-preserves-strictly-left}
  For $F \colon \cC \to \cD$, the functor $\lan_F \colon (\slice{\sSet}{X})^{\cC} \to (\slice{\sSet}{X})^{\cD}$ preserves pointwise left objects.
\end{lem}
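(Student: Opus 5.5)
Here a pointwise left object of $(\slice{\sSet}{X})^{\cC}$ is a diagram $A$ such that each $A(c) \to X$ is a left map in $\sSet$ in the sense of \cref{defn:left-map}. I will work in $\sSet$, where left maps are characterized by strict pullback squares of sets.

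The left Kan extension is computed pointwise in the slice. Colimits in $\slice{\sSet}{X}$ are created by the forgetful functor to $\sSet$, so for $d \in \cD$ we have
\[ (\lan_F A)(d) \cong \colim_{(c, Fc \to d) \in \slice{F}{d}} A(c), \]
with the map to $X$ induced by the maps $A(c) \to X$. The statement therefore reduces to the following claim: \emph{in $\sSet$, a colimit in $\slice{\sSet}{X}$ of any small diagram of left objects is again a left object.}

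To prove the claim, fix $m \ge 1$ and a small diagram $i \mapsto (Y^i \to X)$ of left objects with colimit $Y$. Left maps are pullback squares between levels of simplicial sets, and colimits in $\sSet$ are computed levelwise in $\Set$. So I must show that
\[ \begin{tikzcd} Y_m \arrow[r, "\ev_0"] \arrow[d] & Y_0 \arrow[d] \\ X_m \arrow[r, "\ev_0"'] & X_0 \end{tikzcd} \]
is a pullback, where $Y_m = \colim_i Y^i_m$ and $Y_0 = \colim_i Y^i_0$. For each $i$, the hypothesis gives $Y^i_m \cong X_m \times_{X_0} Y^i_0$. Colimits in $\Set$ are universal, since pullback along $X_m \to X_0$ is a left adjoint $\slice{\Set}{X_0} \to \slice{\Set}{X_m}$. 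Hence
\[ X_m \times_{X_0} \colim_i Y^i_0 \cong \colim_i (X_m \times_{X_0} Y^i_0) \cong \colim_i Y^i_m = Y_m, \]
naturally, and this isomorphism is the canonical comparison map. So the square is a pullback, and $Y \to X$ is a left map.

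The only point needing care is that the isomorphisms above are compatible with the structure maps. Both the $\ev_0$ maps and the maps to $X$ are natural in $i$, so the comparison $Y_m \to X_m \times_{X_0} Y_0$ is the colimit of the componentwise isomorphisms and is therefore an isomorphism. No other obstacle is expected. The argument applies verbatim to right objects and to left Kan extension along any functor, since it uses only the universality of colimits in $\Set$ and the levelwise computation of colimits in $\slice{\sSet}{X}$.
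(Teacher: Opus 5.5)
Your proof is correct and follows the paper's argument. You compute $(\lan_F A)(d)$ levelwise as a colimit over the comma category $\slice{F}{d}$, then use that base change along $\ev_0 \colon X_m \to X_0$ preserves colimits in $\Set$ (local cartesian closure), so the description of $A(c)_m$ as a pullback of $A(c)_0$ passes to the colimit; isolating the general claim that colimits of left objects in $\slice{\sSet}{X}$ are left objects only repackages that same step.
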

\begin{proof}
  For $Y(-) \in (\slice{\sSet}{X})^\cC$ to be a pointwise left object means that for each $m \geq 0$ and $c \in \cC$, $Y(c)_m$ is the base change of $Y(c)_0$ along $\ev_0 \colon X_m \to X_0$.
  The value at $d \in \cD$ of $\lan_F Y(-)_0$ is the colimit $\colim_{(c,f) \in \slice{F}{d}} Y(c)_0$.
  By local cartesian closure, this colimit is preserved by base change along $\ev_0 \colon X_m \to X_0$, pulling back to the colimit which calculates the value at $d \in \cD$ of $\lan_F Y(-)_m$.
  Thus $\lan_F Y(-) \in (\slice{\sSet}{X})^\cD$ is a left object.
\end{proof}

\begin{lem}\label{lem:colimit-Wslice-pseudonatural}
The mapping that sends a category $\cC$ to the functor $\Ws_\cC \otimes - \colon \slice{\sSet}{N\cC} \to (\slice{\sSet}{N\cC})^\cC$ extends to a pseudofunctor from $\Cat$ to the 2-category $\Funpseudo({[1]}, \Cat)$.
\end{lem}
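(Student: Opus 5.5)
Given $F \colon \cC \to \cD$, the 1-cells of $\Funpseudo([1],\Cat)$ are the pushforward $(NF)_! \colon \slice{\sSet}{N\cC} \to \slice{\sSet}{N\cD}$ on the source and $\lan_F \colon (\slice{\sSet}{N\cC})^\cC \to (\slice{\sSet}{N\cD})^\cD$ on the target. The latter is left Kan extension along $F$ combined with postcomposition by $(NF)_!$; these commute, so the order does not matter. So I need to construct, for each $X \to N\cC$, an isomorphism
\[ \lan_F\big((NF)_!(\Ws_\cC \otimes X)\big) \cong \Ws_\cD \otimes (NF)_! X \]
in $(\slice{\sSet}{N\cD})^\cD$. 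It should be natural in $X$, and these isomorphisms should be coherent with composition and identities of functors.

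First, unwind the tensor. $\Ws_\cC \otimes X$ is the fiber product over $N\cC$, so at $c \in \cC$ it is $N(\slice{\cC}{c}) \times_{N\cC} X$, lying over $N\cC$ through $X$. After $(NF)_!$ it lies over $N\cD$. Left Kan extension along $F$ evaluated at $d$ is the coend $\int^{c} \cD(Fc,d) \times N(\slice{\cC}{c}) \times_{N\cC} X$. Colimits in $\sSet$ are universal, as in \cref{lem:lan-preserves-strictly-left}, so this coend is $\big(\int^c \cD(Fc,d) \times N(\slice{\cC}{c})\big) \times_{N\cC} X$. The target at $d$ is $N(\slice{\cD}{d}) \times_{N\cD} X = \big(N\cC \times_{N\cD} N(\slice{\cD}{d})\big) \times_{N\cC} X$. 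It therefore suffices to produce a natural isomorphism in $\slice{\sSet}{N\cC}$, natural in $d$:
\[ \int^{c \in \cC} \cD(Fc,d) \times N(\slice{\cC}{c}) \;\cong\; N\big(\cC \times_\cD \slice{\cD}{d}\big) = N(\slice{F}{d}). \]
The comparison map sends $(g \colon Fc \to d,\ \text{a chain over } c)$ to the chain with each arrow $x \to c$ replaced by $Fx \to Fc \to d$.

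This identification is the key step and, I expect, the main obstacle. I would check it levelwise in $\sSet$, in simplicial degree $n$. An $n$-simplex of the right side is a chain $x_0 \to \dots \to x_n$ in $\cC$ together with $h \colon Fx_n \to d$. On the left, the degree-$n$ part is the coend over $c$ of $\cD(Fc,d) \times \{\text{chains } x_0\to\dots\to x_n \text{ with } u\colon x_n \to c\}$. The second factor is $\coprod_{\text{chains}} \cC(x_n, c)$, so by co-Yoneda the coend collapses to $\coprod_{\text{chains}} \cD(Fx_n, d)$. This matches the right side, compatibly with faces, degeneracies and the projections to $N\cC$. The point is that the nerve of $\slice{\cC}{c}$ is, degreewise, a coproduct of representables $\cC(x_n,-)$. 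This is exactly the \enquote{last vertex} structure already used in the proof of \cref{lem:weights-projective-cofibrant}.

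Finally, coherence. For composable $F, G$ the isomorphisms above are built from co-Yoneda and from universality of pullback. Both are canonical, and they compose compatibly with the pseudofunctoriality isomorphisms $\lan_{GF} \cong \lan_G \lan_F$ and $(N(GF))_! = (NG)_!(NF)_!$. Likewise, for a natural transformation $\alpha \colon F \Rightarrow F'$, the induced maps on both sides correspond under the identification, since everything is given by whiskering chains with $\alpha$. Checking this amounts to comparing the formulas on $n$-simplices, which I would note as routine. These checks give the required pseudofunctor $\Cat \to \Funpseudo([1],\Cat)$.
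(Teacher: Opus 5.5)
Your proof is correct and follows the paper's route. The comparison $\lan_F N(\slice{\cC}{-}) \to N(\slice{F}{-})$ that you write down (apply $F$, then postcompose with $g$) is exactly the paper's $\alpha^F$, obtained by transposing $F \colon \slice{\cC}{-} \to \slice{F}{F-}$, and in both arguments invertibility comes from co-Yoneda applied to coproducts of representables. The only difference is bookkeeping: you run the co-Yoneda computation in every simplicial degree using $N(\slice{\cC}{c})_n \cong \coprod_{\sigma \in (N\cC)_n} \cC(\sigma_n, c)$, whereas the paper checks degree zero and then propagates via \cref{lem:lan-preserves-strictly-left} and \cref{lem:left-fib-fiberwise-we}; your last-vertex decomposition is precisely the property being used there, and since $\slice{\cC}{c} \to \cC$ is a discrete fibration, it is really the dual, right-map version of those lemmas that applies.
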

\begin{proof}
Given a functor $F \colon \cC \to \cD$, we need to define a natural transformation as below-left:
\[
\begin{tikzcd}[column sep=large]
  \slice{\sSet}{N \cC}
  \ar[r, "(N F)_!"]
  \ar[d, "{\Ws_\cC \otimes -}"']
&
  \slice{\sSet}{N \cD}
  \ar[d, "{\Ws_\cD \otimes -}"]
  \ar[dl, phantom, "{\Rightarrow}"]
  \ar[dr, phantom, "="]
&
\slice{\sSet}{N \cC}
\arrow[r, equals]
\ar[d, "{\Ws_\cC \otimes -}"']
&
\slice{\sSet}{N \cC}
  \ar[dl, phantomcenter, "\mathrlap{\alpha^F \otimes -}"] \arrow[dl, phantomcenter, "\Rightarrow" pos=.6]
\ar[r, "{(N F)_!}"]
\ar[d, "{(NF)^* \Ws_\cD \otimes -}"]
&
\slice{\sSet}{N \cD}
\ar[d, "{\Ws_\cD \otimes -}"]
\\
  (\slice{\sSet}{N \cC})^\cC
  \ar[r, "(N F)_! \circ \lan_F"']
&
  (\slice{\sSet}{N \cD})^\cD
&
(\slice{\sSet}{N \cC})^\cC
\ar[r, "\lan_F"']
&
(\slice{\sSet}{N \cC})^\cD
\arrow[r, "(N F)_!"'] \arrow[ur, phantom, "\quad\cong"]
&
(\slice{\sSet}{N \cD})^\cD \rlap{.}
\end{tikzcd}
\]
We do so as shown above-right by constructing a natural transformation $\alpha^F \otimes -$ and pasting with a canonical natural isomorphism.

Left Kan extension along $F$ is given by the weighted colimit $\cD(F -, -) \otimes_\cC -$.
In the square to be filled by $\alpha^F \otimes -$ above, the left-then-bottom composite is thus weighted colimit with
\[
\cD(F -, -) \otimes_\cC N(\slice{\cC}{c}) \cong \lan_F N \slice{\cC}{-} \rlap{.}
\]
The top-then-right composite is weighted colimit with
\[
(N F)^* \Ws_\cD \cong N (\slice{F}{-}) \rlap{.}
\]
Thus, it remains to define a map in $(\slice{\sSet}{N\cC})^\cD$ between these weights:
\begin{equation}\label{colimit-Wslice-pseudonatural:map}
\begin{tikzcd}
  \lan_F N (\slice{\cC}{-})
  \ar[r, "\alpha^F"]
&
  N (\slice{F}{-}) \rlap{.}
\end{tikzcd}
\end{equation}
By adjoint transposition, this is equally given by a map in $(\slice{\sSet}{N \cC})^\cC$ from $N( \slice{\cC}{-})$ to $N(\slice{F}{F-})$.
We choose the image under the nerve of the natural map $F \colon \slice{\cC}{-} \to \slice{F}{F-}$ given by application of $F$.
This description is clearly pseudofunctorial in $F$.

We now argue that~\eqref{colimit-Wslice-pseudonatural:map} is invertible.
For arbitrary objects $c \in \cC$ and $d \in \cD$, the vertical maps in the below diagram are discrete opfibrations:
\[
\begin{tikzcd}
  \slice{\cC}{c}
  \ar[d]
&
  \slice{F}{d}
  \ar[r] \ar[d] \ar[dr, phantom, "\lrcorner" very near start]
&
  \slice{\cD}{d}
  \ar[d]
\\
  \cC \rlap{,}
&
  \cC
  \ar[r, "F"']
&
  \cD \rlap{.}
\end{tikzcd}
\]
Hence, they become left maps under the nerve (\cref{ex:opfibration-strict-left-map}).
By \cref{lem:lan-preserves-strictly-left}, the source and target of~\eqref{colimit-Wslice-pseudonatural:map} are then left objects over $N\cC$.
To check that~\eqref{colimit-Wslice-pseudonatural:map} is invertible, it hence suffices by \cref{lem:left-fib-fiberwise-we} to do so for its action in degree zero:
\[
\begin{tikzcd}[row sep=1.25em]
  \lan_F (N \slice{\cC}{-})_0 \ar[d, "\cong"']
  \ar[r, "\alpha^F"]
&
  N (\slice{F}{-})_0 \ar[d, "\cong"]  \\  \lan_F( \coprod\limits_{c \in \ob\cC}\cC(c,-)) \ar[r, "\alpha^F"'] & \coprod\limits_{c \in \ob\cC}\cD(Fc,-)
\end{tikzcd} \quad \in \Set^\cC \rlap{.}
\]
We conclude by observing that $(N \slice{\cC}{-})_0$ and $N (\slice{F}{-})_0$ are coproducts of representables and that the map $\alpha^F$ is the isomorphism witnessing that left Kan extension preserves coproducts and representables.
\end{proof}

\begin{cor}\label{cor:fun-to-arr-weights-pseudonatural}
The mapping that sends a category $\cC$ to the functor $\funtoarrwgt \colon \slice{\sSet}{N\cC} \to \sSet^\cC$ extends to a pseudofunctor from $\Cat$ to the 2-category $\Funpseudo({[1]},\Cat)$.
\end{cor}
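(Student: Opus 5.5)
The plan is to write $\funtoarrwgt_\cC$ as the composite $(N\cC)_! \circ (\Ws_\cC \otimes -)$ and to obtain its pseudonaturality by pasting the pseudonaturality of the two factors. Pseudonaturality is understood with respect to the actions $(NF)_!$ on $\slice{\sSet}{N\cC}$ and $\lan_F$ on $\sSet^\cC$ for $F \colon \cC \to \cD$. Since $\Funpseudo({[1]},\Cat)$ admits composition of pseudocommuting squares, the pointwise composite of two pseudofunctors $\Cat \to \Funpseudo({[1]},\Cat)$ that agree on the intermediate object is again such a pseudofunctor. The intermediate object here is the assignment $\cC \mapsto (\slice{\sSet}{N\cC})^\cC$ with action $(NF)_! \circ \lan_F$.

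For the first factor, \cref{lem:colimit-Wslice-pseudonatural} already gives a pseudofunctor $\cC \mapsto (\Ws_\cC \otimes -)$ into $\Funpseudo({[1]},\Cat)$. Its pseudonaturality squares are filled by the invertible transformation $\alpha^F \otimes -$ pasted with the canonical isomorphism $(NF)_! \circ \lan_F \cong \lan_F \circ (NF)_!$.

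For the second factor, I use the observation at the start of \cref{ssec:naturality}: by exchange of left Kan extensions, $\cC \mapsto (N\cC)_!$ extends to a pseudofunctor into $\Funpseudo({[1]},\Cat)$. Concretely, the square
\[
\begin{tikzcd}
(\slice{\sSet}{N\cC})^\cC \arrow[r, "(NF)_!\circ\lan_F"] \arrow[d, "(N\cC)_!"'] & (\slice{\sSet}{N\cD})^\cD \arrow[d, "(N\cD)_!"] \\
\sSet^\cC \arrow[r, "\lan_F"'] & \sSet^\cD
\end{tikzcd}
\]
commutes up to a canonical isomorphism. The reason is that both composites are left adjoints whose right adjoints, namely $(N\cC)^* \circ F^*$ and $F^* \circ (NF)^* \circ (N\cD)^*$, agree up to the evident isomorphism $(N\cC)^* \cong (NF)^*(N\cD)^*$. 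Equivalently, both composites compute the left Kan extension/postcomposition of the underlying diagram.

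Pasting these two invertible 2-cells gives the pseudonaturality square for $\funtoarrwgt$. The coherence with composition and identities of functors follows from the corresponding coherences of the factors.

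The only point requiring care is that the two factors use the same pseudofunctor structure on the intermediate assignment $\cC \mapsto (\slice{\sSet}{N\cC})^\cC$, so that the pasting is well defined and coherent. I expect this to be routine bookkeeping, since both use the canonical comparison isomorphisms for composites of left Kan extensions.
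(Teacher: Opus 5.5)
Your proposal is correct and is essentially the paper's proof: you paste the square of \cref{lem:colimit-Wslice-pseudonatural} with the square for $(N\cC)_!$, and you justify the latter exactly as the paper does, via the isomorphism of right adjoints $(N\cC)^*\circ\res_F \cong \res_F\circ(NF)^*\circ(N\cD)^*$. One small slip, which does not affect the argument since you only cite that lemma: the canonical isomorphism pasted with $\alpha^F\otimes -$ there is the Frobenius-type isomorphism $(\Ws_\cD\otimes -)\circ(NF)_! \cong (NF)_!\circ((NF)^*\Ws_\cD\otimes -)$, not a commutation of $(NF)_!$ past $\lan_F$.
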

\begin{proof}
Using the pseudonatural transformation defined in \cref{lem:colimit-Wslice-pseudonatural}, we have
  \[
\begin{tikzcd}
  \slice{\sSet}{N \cC}  \ar[r, "(N F)_!"]
\arrow[d, "\funtoarrwgt_\cC"']
  \ar[dr, phantomcenter, "{\cong}"]
  \ar[dr, draw=none, center, "\funtoarrwgt_F" below] & \slice{\sSet}{N\cD} \arrow[d, "\funtoarrwgt_\cD"]\\
\sSet^\cC \arrow[r, "\lan_F"'] & \sSet^\cD
\end{tikzcd}
\quad
\coloneqq
\quad
  \begin{tikzcd}[column sep=large]
  \slice{\sSet}{N \cC}
  \ar[r, "(N F)_!"]
  \ar[d, "{\Ws_\cC \otimes -}"']
&
  \slice{\sSet}{N \cD}
  \ar[d, "{\Ws_\cD \otimes -}"]
  \ar[dl, phantom, "{\cong}"]
\\
  (\slice{\sSet}{N \cC})^\cC
  \ar[r, "(N F)_! \circ \lan_F"']
  \arrow[d, "(N\cC)_!"']
&
  (\slice{\sSet}{N \cD})^\cD  \arrow[d, "(N\cD)_!"]\\
\sSet^\cC \arrow[r, "\lan_F"']
&
\sSet^\cD
\end{tikzcd}
\]
with the bottom square commuting up to natural isomorphism, as is most easily seen on the right adjoints $(N\cC)^* \circ \res_F \cong \res_F \circ (NF)^* \circ (N\cD)^*$.
\end{proof}

\begin{lem}\label{lem:colimit-Wcoslice-oplaxnatural}
The mapping that sends a category $\cC$ to the functor $\Wc_\cC \otimes_\cC - \colon \slice{\sSet}{N\cC} \to \sSet^\cC$ extends to a pseudofunctor from $\Cat$ to the 2-category $\Funoplax({[1]},\Cat)$.
\end{lem}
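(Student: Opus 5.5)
The plan is to follow the proof of \cref{lem:colimit-Wslice-pseudonatural}: first reduce the required $2$-cell to a comparison map between weights, and then read off pseudofunctoriality from the functoriality of that comparison in $F$. The main difference is that the comparison will not be invertible, which is why the statement only asks for $\Funoplax({[1]},\Cat)$.

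For $F \colon \cC \to \cD$, the top horizontal functor of the square is $(NF)_! \colon \slice{\sSet}{N\cC} \to \slice{\sSet}{N\cD}$ and the bottom one is $\lan_F \colon \sSet^\cC \to \sSet^\cD$. I must produce a natural transformation
\[
\lan_F \circ (\Wc_\cC \otimes_\cC -) \Rightarrow (\Wc_\cD \otimes_\cD -) \circ (NF)_!
\]
oriented as an oplax square. Both composites are weighted colimits. Since $\lan_F \cong \cD(F-,-) \otimes_\cC -$, the weighted colimits commute with one another, and $(NF)_!$ followed by tensoring with $\Wc_\cD$ equals tensoring with the pullback $(NF)^*\Wc_\cD$. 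Hence the two sides are tensoring with the weights
\[
\cD(F-,-) \otimes_\cC N(\coslice{\cC}{-}) \quad\text{and}\quad (NF)^*\Wc_\cD \cong N(\coslice{F}{-}),
\]
where $\coslice{F}{d} = \coslice{\cD}{d} \times_\cD \cC$. So it suffices to give one natural map $\beta^F$ between these weights and then set the $2$-cell to be $\beta^F \otimes -$ pasted with the canonical isomorphisms.

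For the direction forced by the variance of $\Wc$, I would define $\beta^F$ by adjoint transposition. It should correspond to the nerve of the functor
\[
\coslice{\cC}{c} \to \coslice{F}{Fc}, \qquad (c \to x) \mapsto (Fc \to Fx,\ x),
\]
given by applying $F$, natural in $c$. The identities needed for a pseudofunctor $\Cat \to \Funoplax({[1]},\Cat)$ then follow from strict functoriality of this construction:
\begin{itemize}
  \item $\beta^{\id}$ is the identity;
  \item $\beta^{GF}$ is $\beta^G$ restricted along $F$ composed with $\beta^F$, up to the coherence isomorphisms of $\lan$ and $(N-)_!$.
\end{itemize}
For a natural transformation $\theta \colon F \Rightarrow F'$, whiskering by $\theta$ should give a modification, again induced on the weights.

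The main obstacle is orientation and coherence. Unlike the slice case, $\coslice{\cC}{c} \to \cC$ is a discrete fibration rather than an opfibration, so \cref{lem:lan-preserves-strictly-left} and \cref{lem:left-fib-fiberwise-we} do not force $\beta^F$ to be invertible. Indeed it is not, already in degree $0$. I therefore first have to check carefully which way the transposed comparison goes, so that the square is oplax rather than lax. I would settle this by computing the degree-$0$ components, which are coproducts of representables, to fix the direction. After that, the remaining verification is routine bookkeeping: checking that the pasting with the isomorphisms $\lan_{GF} \cong \lan_G \lan_F$ is coherent.
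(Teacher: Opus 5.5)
You end up with the same comparison map as the paper, $\beta^F = N(F\colon \coslice{\cC}{-}\to\coslice{F}{F-})$, and pseudofunctoriality does follow from its strict functoriality in $F$. However, the reduction that is supposed to lead to it is set up wrongly.

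The functor $\Wc_\cC\otimes_\cC-$ goes $(\slice{\sSet}{N\cC})^\cC\to\slice{\sSet}{N\cC}$ (see \cref{defn:fun-arr-weights}). The typing in the statement is a slip, since $\Wc_\cC$ is $\cC^{\op}$-indexed. So the square has $(NF)_!\circ\lan_F$ along the top and $(NF)_!$ along the bottom, and $\lan_F$ acts on the input diagram, not on the weight. Your square has these edges swapped. Your weight $\cD(F-,-)\otimes_\cC N(\coslice{\cC}{-})$ is also ill-typed, because both factors are contravariant in the coend variable.

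The correct reduction involves no transposition. Using $\lan_F\cong\cD(F-,-)\otimes_\cC-$ and reassociating coends, the top-then-right composite is $(NF)_!$ of the weighted colimit over $\cC$ with $(NF)^*N(\coslice{\cD}{-})\otimes_\cD\cD(F-,-)\cong N(\coslice{F}{F-})$. The left-then-bottom composite is $(NF)_!$ of the weighted colimit with $N(\coslice{\cC}{-})$. Both weights are $\cC^{\op}$-indexed, and the $2$-cell is $(NF)_!(\beta^F\otimes_\cC-)$ pasted with these isomorphisms. Its orientation, from left-then-bottom to top-then-right, is simply the direction in which $F$ acts, so no degree-$0$ computation is needed to fix it.

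Your explanation of non-invertibility is also incorrect. The projection $\coslice{\cC}{c}\to\cC$ is the category of elements of $\cC(c,-)$, hence a discrete \emph{op}fibration. For $\cC=[n]$ these projections are the final segment inclusions, which are left maps by \cref{ex:higher-cod-left-map}.

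Consequently, if you repair your weight to $\lan_{F^{\op}}N(\coslice{\cC}{-})$, its comparison with $N(\coslice{F}{-})$ (the transpose of $\beta^F$) \emph{is} an isomorphism. This follows by exactly the argument of \cref{lem:colimit-Wslice-pseudonatural}: both sides are left objects over $N\cC$, and over a vertex $x$ both have vertex set $\cD(d,Fx)$. Your planned degree-$0$ check would therefore find an isomorphism, and the comparison you set up would be invertible.

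What actually fails to be invertible is the untransposed $\beta^F$. On vertices it sends $c\to x$ to $Fc\to Fx$, so it is invertible only when $F$ is fully faithful. Equivalently, after identifying $(NF)^*\Wc_\cD\cong\lan_{F^{\op}}\Wc_\cC$, the $2$-cell becomes $(NF)_!$ applied to $\Wc_\cC\otimes_\cC-$ of the unit $X\to\res_F\lan_F X$ on the diagram variable. That unit is the real source of oplaxness.
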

\begin{proof}
Given a functor $F \colon \cC \to \cD$, we need to define a natural transformation as below-left:
\[
\begin{tikzcd}[column sep=large]
  (\slice{\sSet}{N \cC})^\cC
  \ar[d, "{\Wc_\cC \otimes_\cC -}"']
  \ar[r, "(N F)_! \circ \lan_F"]
&
  (\slice{\sSet}{N \cD})^\cD
  \ar[d, "{\Wc_\cD \otimes_\cD -}"]
  \ar[dl, phantomcenter, "{\Rightarrow}"]
  \ar[dr, phantomcenter, "="]
&
  (\slice{\sSet}{N \cC})^\cC
  \ar[d, "{\Wc_\cC \otimes_\cC -}"']
  \ar[r, "\lan_F"]
&
  (\slice{\sSet}{N \cC})^\cD
  \ar[d, "{(N F)^* \Wc_\cD \otimes_\cC -}"]
  \ar[r, "(N F)_!"]
  \ar[dl, phantomcenter, "\mathrlap{\beta^F \otimes_{\cC} -}"] \arrow[dl, phantomcenter, "\Rightarrow" pos=.6]
&
  (\slice{\sSet}{N \cD})^\cD
  \ar[d, "{\Wc_\cD \otimes_\cD -}"]
\\
  \slice{\sSet}{N \cC}
  \ar[r, "(N F)_!"']
&
  \slice{\sSet}{N \cD}
&
  \slice{\sSet}{N \cC}
  \ar[r, equals]
&
  \slice{\sSet}{N \cC}
  \ar[r, "(N F)_!"'] \arrow[ur, phantom, "\quad\quad\cong"]
&
  \slice{\sSet}{N \cD} \rlap{.}
\end{tikzcd}
\]
We do so as shown above-right by constructing a natural transformation $\beta^F \otimes_{\cC} -$ and pasting with a canonical natural isomorphism.

As in the proof of \cref{lem:colimit-Wslice-pseudonatural}, left Kan extension along $F$ is given by the weighted colimit $\cD(F -, -) \otimes_\cC -$.
Therefore, the top-then-right composite of the square to be filled by $\beta^F \otimes_{\cC} -$ is weighted colimit with
\[
(N F)^* N(\coslice{\cD}{-}) \otimes_\cD \cD(F -, -)  \cong (N F)^* N(\coslice{\cD}{F -}) \cong N(\coslice{F}{F -}) \rlap{.}
\]
Thus, it remains to define a map in $(\slice{\sSet}{N\cC})^\cC$ between these weights:
\[
  \begin{tikzcd}
  N (\coslice{\cC}{-})
  \ar[r, "\beta^F"]
&
  N(\coslice{F}{F -}) \rlap{.}
\end{tikzcd}
\]
This arises under the nerve from the natural map $F \colon \coslice{\cC}{-} \to \coslice{F}{F -}$ given by application of $F$.
This description is clearly pseudofunctorial in $F$.
\end{proof}

\begin{cor}\label{cor:arr-to-fun-weights-oplaxnatural}
The mapping that sends a category $\cC$ to the functor $\arrtofunwgt \colon \sSet^\cC \to \slice{\sSet}{N\cC}$ extends to a pseudofunctor from $\Cat$ to the 2-category $\Funoplax({[1]},\Cat)$. Moreover:
\begin{parts}
\item\label{cor:arr-to-fun-weights-oplaxnatural:representable} For a functor $F \colon \cC \to \cD$, the natural transformation
\[
\begin{tikzcd}[column sep=large]
  \sSet^\cC
  \ar[d, "\arrtofunwgt_\cC"']
  \ar[r, "\lan_F"]
  \ar[dr, phantomcenter, "{\Rightarrow}"]
  \ar[dr, draw=none, center, "\arrtofunwgt_F" below]
&
  (\slice{\sSet}{N \cD})^\cD
  \ar[d, "\arrtofunwgt_\cD"]
\\
  \slice{\sSet}{N \cC}
  \ar[r, "(N F)_!"']
&
  \slice{\sSet}{N \cD}
\end{tikzcd}
\]
restricts on tensor with $C(c, -)$ to the natural transformation
\begin{equation}
  \label{eq:arr-to-fun-weighted-oplaxnatural-representables}
\begin{tikzcd}[column sep=large]
  \sSet
  \ar[r, bend left=20, "N(\coslice{\cC}{c}) \otimes -" above]
  \ar[r, phantomcenter, "\Downarrow"]
  \ar[r, bend right=20, "N(\coslice{\cD}{F c}) \otimes -" below]
&
\slice{\sSet}{N \cD}
\end{tikzcd}
\end{equation}
given by functoriality of the
tensor $- \otimes- \colon \slice{\sSet}{N\cD} \to \slice{\sSet}{\cD}$ with the nerve of the functor $F \colon \coslice{\cC}{c} \to \coslice{\cD}{F c}$ over $\cD$.
\item\label{cor:arr-to-fun-weights-oplaxnatural:point} The component of \eqref{eq:arr-to-fun-weighted-oplaxnatural-representables} on $\Delta^0 \in \sSet$ is in the closure under right cancellation of the class of maps whose underlying map is a strong 0-oriented homotopy equivalence.
\end{parts}
\end{cor}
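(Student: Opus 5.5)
The plan is to mirror the proof of \cref{cor:fun-to-arr-weights-pseudonatural}: build the oplax square for $\arrtofunwgt$ by pasting two pieces. The first is the square for the reindexing $(N\cC)^*$; by the mate argument recalled at the start of \cref{ssec:naturality}, it is oplax, with 2-cell the mate of the invertible exchange isomorphism for $(N\cC)_!$. The second is the oplax square for $\Wc \otimes_\cC -$ from \cref{lem:colimit-Wcoslice-oplaxnatural}. Pasting oplax squares is pseudofunctorial, since both pieces are pseudofunctors $\Cat \to \Funoplax({[1]},\Cat)$ and composition of squares in $\Funoplax({[1]},\Cat)$ respects this. Hence $\cC \mapsto \arrtofunwgt_\cC$ extends to such a pseudofunctor.

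For part \ref{cor:arr-to-fun-weights-oplaxnatural:representable}, I evaluate the pasted 2-cell on $\cC(c,-) \otimes K$ for $K \in \sSet$. Both composites preserve the tensor with $K$, so it suffices to treat $\cC(c,-)$.
\begin{itemize}
\item Along the left-then-bottom path, $(N\cC)^*\cC(c,-)$ is $\cC(c,-) \times N\cC$. By coYoneda, $\Wc_\cC \otimes_\cC$ of this gives $N(\coslice{\cC}{c})$ over $N\cC$, and applying $(NF)_!$ places it over $N\cD$.
\item Along the top-then-right path, $\lan_F \cC(c,-) \cong \cD(Fc,-)$, and $\arrtofunwgt_\cD$ of this is $N(\coslice{\cD}{Fc})$ by coYoneda again.
\end{itemize}
I then trace the 2-cell through these coYoneda isomorphisms. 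The reindexing mate is an isomorphism on such objects, and $\beta^F$ is the nerve of $F \colon \coslice{\cC}{-} \to \coslice{F}{F-}$. Composing with the map $\coslice{F}{Fc} \to \coslice{\cD}{Fc}$ that arises when computing $\otimes_\cD \cD(F-,-)$ yields exactly $N$ of $F \colon \coslice{\cC}{c} \to \coslice{\cD}{Fc}$. This is a routine but fiddly identification, and it is the bookkeeping step I expect to take the most care.

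For part \ref{cor:arr-to-fun-weights-oplaxnatural:point}, the underlying map is $N(F) \colon N(\coslice{\cC}{c}) \to N(\coslice{\cD}{Fc})$, a functor preserving initial objects. I consider the two maps from the point:
\begin{itemize}
\item $\Delta^0 \to N(\coslice{\cC}{c})$, selecting $\id_c$;
\item $\Delta^0 \to N(\coslice{\cD}{Fc})$, selecting $\id_{Fc}$.
\end{itemize}
Each is the nerve of the inclusion of an initial object. Such an inclusion is an initial section, since its right adjoint is the unique functor to $[0]$ and the unit is invertible. By \cref{initial-section-is-she}, both maps are therefore strong 0-oriented homotopy equivalences. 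The second map factors as the first followed by $N(F)$, because $F(\id_c) = \id_{Fc}$. So $N(F)$ is obtained by right cancellation from two maps in the class.

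The main obstacle is the identification in part \ref{cor:arr-to-fun-weights-oplaxnatural:representable}, namely checking that the composite of the mate with $\beta^F$ is literally $N(F)$ on coslices. Part \ref{cor:arr-to-fun-weights-oplaxnatural:point} is then immediate.
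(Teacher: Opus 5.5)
Your strategy is the paper's. You paste the mate square for $(N\cC)^*$ with the square of \cref{lem:colimit-Wcoslice-oplaxnatural} and unfold on representables by coYoneda. For \ref{cor:arr-to-fun-weights-oplaxnatural:point} you use the triangle of initial-object inclusions (nerves of initial sections) together with right cancellation. Your pseudofunctoriality argument and your part \ref{cor:arr-to-fun-weights-oplaxnatural:point} are fine as written.

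The problem is the bookkeeping step in \ref{cor:arr-to-fun-weights-oplaxnatural:representable}, which you yourself flagged. The reindexing mate is \emph{not} invertible on $(N\cC)^*\cC(c,-)$; a mate of an isomorphism need not be one. Its component there goes from $(NF)_!\lan_F(N\cC)^*\cC(c,-)$ to $(N\cD)^*\lan_F\cC(c,-)$ in $(\slice{\sSet}{N\cD})^\cD$, and it is the map $\cD(Fc,-)\times NF \colon \cD(Fc,-)\times N\cC \to \cD(Fc,-)\times N\cD$.

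Conversely, the identification $(NF)^*N(\coslice{\cD}{-})\otimes_\cD\cD(F-,-)\cong N(\coslice{F}{F-})$ in \cref{lem:colimit-Wcoslice-oplaxnatural} is an isomorphism. It therefore cannot be the source of the non-invertible map $\coslice{F}{Fc}\to\coslice{\cD}{Fc}$ that you attribute to it. Taken literally, your trace contradicts the endpoints you correctly computed. If every cell other than $\beta^F$ were invertible, the pasted component would have codomain isomorphic to $N(\coslice{F}{Fc})$, whereas it must be $N(\coslice{\cD}{Fc})$. For $F\colon[n]\to[0]$ these are $\Delta^n$ and $\Delta^0$ (compare \cref{ex:constant-unit-map}).

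The correct assignment, as in the paper, is as follows. The lower cell contributes $\beta^F$, the nerve of $F\colon\coslice{\cC}{c}\to\coslice{F}{Fc}$. The upper (mate) cell, after applying $\Wc_\cD\otimes_\cD-$ and coYoneda, becomes the projection $N(\coslice{F}{Fc})\cong N(\coslice{\cD}{Fc})\times_{N\cD}N\cC\to N(\coslice{\cD}{Fc})$. Their composite is the nerve of $F\colon\coslice{\cC}{c}\to\coslice{\cD}{Fc}$, so with this correction your argument goes through.
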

\begin{proof}
  Using the oplax natural transformation defined in \cref{lem:colimit-Wcoslice-oplaxnatural}, we have:
\[
\begin{tikzcd}[column sep=large]
 \sSet^\cC \arrow[dd, "\arrtofunwgt_\cC"'] \arrow[r, "\lan_F"]
 \ar[ddr, phantomcenter, "{\Rightarrow}"]
 \ar[ddr, draw=none, center, "\arrtofunwgt_F" below]
 & \sSet^{\cD} \arrow[dd, "\arrtofunwgt_\cD"] &  & \sSet^\cC \arrow[r, "\lan_F"] \arrow[d, "(N\cC)^*"']
&
  \sSet^\cD \arrow[d, "(N\cD)^*"]
  \ar[dl, phantomcenter, "{\Rightarrow}"]
\\
 & & \coloneqq &   (\slice{\sSet}{N \cC})^\cC
  \ar[d, "{\Wc_\cC \otimes_\cC -}"']
  \ar[r, "(N F)_! \circ \lan_F"]
&
  (\slice{\sSet}{N \cD})^\cD
  \ar[d, "{\Wc_\cD \otimes_\cD -}"]
  \ar[dl, phantomcenter, "{\Rightarrow}"]
\\
\slice{\sSet}{N\cC} \arrow[r, "{(NF)_!}"'] & \slice{\sSet}{N\cD} & &
  \slice{\sSet}{N \cC}
  \ar[r, "(N F)_!"']
&
  \slice{\sSet}{N \cD} \rlap{.}
\end{tikzcd}
\]
The oplax natural transformation in the top square is the mate of the natural isomorphism
$(N\cC)^* \circ \res_F \cong \res_F \circ (NF)^* \circ (N\cD)^*$
discussed in the proof of \cref{cor:fun-to-arr-weights-pseudonatural}.

The observation \ref{cor:arr-to-fun-weights-oplaxnatural:representable} on restriction along tensor with $\cC(c,-)$ is a straightforward unfolding using the coYoneda lemma, where the lower and upper natural transformations are respectively defined by tensoring with the nerve of the following composable functors over $\cD$:
\[ \begin{tikzcd} \coslice{\cC}{c} \arrow[d] \arrow[r, "F"] & \coslice{F}{Fc} \arrow[r] \arrow[d]& \coslice{\cD}{Fc} \arrow[d] \\ \cC \arrow[r, equals] & \cC \arrow[r, "F"'] & \cD \rlap{.} \end{tikzcd}\]

For \ref{cor:arr-to-fun-weights-oplaxnatural:point}, the component at $\Delta^0$ is the nerve of a functor, which preserves the initial objects in each category: \[ \begin{tikzcd} & {[0]} \arrow[dl, "\id_c"'] \arrow[dr, "\id_{Fc}"] \\ \coslice{\cC}{c} \arrow[rr, "F"'] & & \coslice{\cD}{Fc} \rlap{.} \end{tikzcd}\]
By \cref{initial-section-is-she}, the inclusions of the initial objects are strong 0-oriented homotopy equivalences.
\end{proof}

\begin{lem}\label{lem:cod-dom-weights-oplaxnatural}
The mapping that sends a category $\cC$ to the functor $\cod_! \circ \dom^* \colon \slice{\sSet}{N\cC} \to \slice{\sSet}{N\cC}$ extends to a pseudofunctor from $\Cat$ to the 2-category $\Funoplax({[1]},\Cat)$.
\end{lem}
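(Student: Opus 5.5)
Given $F \colon \cC \to \cD$, I will construct a natural transformation
\[
  (N F)_! \circ \cod_!^\cC \circ \dom_\cC^* \Rightarrow \cod_!^\cD \circ \dom_\cD^* \circ (N F)_!
\]
of functors $\slice{\sSet}{N\cC} \to \slice{\sSet}{N\cD}$, oriented as in \cref{lem:colimit-Wcoslice-oplaxnatural}. Then I will check pseudofunctoriality in $F$.

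The construction uses the commutative square of categories $F^{[1]} \colon \cC^{[1]} \to \cD^{[1]}$ over $F$, whose compatibility with $\dom$ and $\cod$ reads $\dom_\cD \circ F^{[1]} = F \circ \dom_\cC$ and $\cod_\cD \circ F^{[1]} = F \circ \cod_\cC$. Apply the nerve and use that left Kan extension along composites is composite, $(N F)_! \cod_!^\cC \cong \cod_!^\cD (N F^{[1]})_!$. It then remains to give a transformation
\[
  (N F^{[1]})_! \circ \dom_\cC^* \Rightarrow \dom_\cD^* \circ (N F)_!.
\]
This is the Beck--Chevalley (mate) map for the commutative square $\dom_\cD \circ N F^{[1]} = N F \circ \dom_\cC$. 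Concretely, for $a \colon A \to N\cC$ it is the canonical comparison
\[
  A \times_{N\cC} N\cC^{[1]} \to A \times_{N\cD} N\cD^{[1]},
\]
given by $\id_A \times N F^{[1]}$, viewed as a map over $N\cD^{[1]}$. It is natural in $A$. Pasting it with the isomorphism above gives the required 2-cell. It is generally not invertible, which is why we land only in $\Funoplax$.

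For pseudofunctoriality, consider $\cC \xrightarrow{F} \cD \xrightarrow{G} \cE$. The 2-cell for $GF$ should equal the pasting of those for $F$ and $G$, modulo the canonical isomorphisms $(N(GF))_! \cong (NG)_!(NF)_!$. Both composites are induced by $\id_A \times N(GF)^{[1]} = (\id \times NG^{[1]})(\id \times NF^{[1]})$, since $(GF)^{[1]} = G^{[1]} F^{[1]}$ strictly. Hence the pasting identity holds once we verify that the coherence isomorphisms for composition of $(-)_!$ are the canonical ones. Identities are similar. Compatibility with natural transformations $F \Rightarrow F'$, needed if $\Cat$ is taken as a 2-category, follows from the induced natural transformation $F^{[1]} \Rightarrow F'^{[1]}$ over $\dom$ and $\cod$.

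The main obstacle is bookkeeping rather than substance: keeping track of the pseudofunctor coherence isomorphisms for $(-)_!$ under composition, and checking that the mate pastes correctly. I would handle this by working with explicit fiber products as above, where all the maps are literal products of identities with nerves of functors, so the coherence checks reduce to strict equalities of functors.
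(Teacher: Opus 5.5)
Your proposal is correct and is essentially the paper's proof. The paper also builds the 2-cell by pasting the strictly commuting square $(NF)_!\cod_! = \cod_!(NF^{[1]})_!$ with the Beck--Chevalley mate of the identification $\dom^*(NF)^* \cong (NF^{[1]})^*\dom^*$, which you unwind correctly as $\id_A \times NF^{[1]}$ on fiber products. One correction: drop your aside about 2-cells $F \Rightarrow F'$. Here the pseudofunctor is defined only on the 1-cells of $\Cat$, and in general there is no natural transformation $(NF)_! \Rightarrow (NF')_!$ between functors into $\slice{\sSet}{N\cD}$ for such a 2-cell to be sent to.
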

\begin{proof}
On account of the commutative square below-left we have the diagram of natural transformations below-right:
\[
\begin{tikzcd} N \cC^{[1]} \arrow[d,"{(\cod,\dom)}"'] \arrow[r, "NF^{[1]}"] & N \cD^{[1]} \arrow[d, "{(\cod,\dom)}"] \\ N\cC \times N\cC \arrow[r, "NF \times NF"'] & N\cD \times N\cD \rlap{,} \end{tikzcd}  \qquad \qquad
\begin{tikzcd}
  \slice{\sSet}{N\cC}
  \arrow[r, "{(NF)_!}"]
  \arrow[d, "\dom^*"'] \arrow[dr, phantom, "\Rightarrow"]&
  \slice{\sSet}{N\cD}
  \arrow[d, "\dom^*"] \\
  \slice{\sSet}{N\cC^{[1]}}
  \arrow[r, "{(NF^{[1]})_!}"']
  \arrow[d, "\cod_!"'] &
  \slice{\sSet}{N\cD^{[1]}}
  \arrow[d, "\cod_!"]
  \\
  \slice{\sSet}{N\cC}
  \arrow[r, "{(NF)_!}"'] & \slice{\sSet}{N\cD} \rlap{,}
\end{tikzcd}
\]
in which the bottom cell commutes and the natural transformation in the top cell is the mate of a similar identity natural transformation.
\end{proof}

For later use, we highlight some special cases of the natural transformations just constructed.

\begin{defn}\label{defn:unit-iso}
  In the case $\cD={[0]}$, we have equivalences $\slice{\sSet}{N{[0]}} \simeq \sSet \simeq \sSet^{{[0]}}$ and modulo these, $\arrtofunwgt$ and $\funtoarrwgt$ are the identity functors. Thus, the pseudonatural coherence cell of \cref{cor:fun-to-arr-weights-pseudonatural} defines a natural isomorphism $\colim_\cC \circ \funtoarrwgt_\cC \cong (N\cC)_!$:
  \[
  \begin{tikzcd}
    \slice{\sSet}{N\cC}
    \arrow[r, "{(N\cC)_!}"]
    \ar[d, "{\funtoarrwgt_\cC}"']
  &
    \sSet
    \ar[d, equals]
    \ar[dl, phantom, "{\cong}"]
   \\
   \sSet^{\cC}
   \ar[r, "\colim_\cC"']
&
    \sSet \rlap{.}
    \end{tikzcd}
  \]
\end{defn}

\begin{defn}\label{defn:unit-map}
  Similarly, in the case $\cD={[0]}$, the oplax coherence cell of \cref{cor:arr-to-fun-weights-oplaxnatural} defines a canonical natural transformation $\eta \colon (N\cC)_! \circ \arrtofunwgt_\cC \to \colim_\cC$:
  \[ \begin{tikzcd}
    \sSet^{\cC}
    \ar[d, "{\arrtofunwgt_\cC}"']
    \ar[r, "\colim_\cC"]
  &
    \sSet
    \ar[d, equals]
    \ar[dl, phantom,
    "{\Rightarrow\eta}"]
   \\
    \slice{\sSet}{N\cC} \arrow[r, "{(N\cC)_!}"'] &
    \sSet \rlap{.}
    \end{tikzcd} \]
By pseudofunctoriality, these natural transformations commute with the natural transformations $\arrtofunwgt_F$ defined for any functor $F \colon \cC \to \cD$.
\end{defn}

\begin{ex}\label{ex:constant-unit-map}
  We specialize \cref{defn:unit-map} to the case $\cC=[n]$ and calculate its components at objects of the form $K \times [n](k,-) \in \sSet^{[n]}$ where $K \in \sSet$ and $k \in [n]$.
  The functor $\arrtofunwgt_{[n]}$ pulls back over $\Delta^n$ then computes the colimit in $\slice{\sSet}{\Delta^n}$ weighted by $\Wc$.
  On $K \times [n](k,-)$, the weighted colimit reduces because of the representable to the tensor in $\slice{\sSet}{\Delta^n}$ of the value of $\Wc$ at $k$, which is $\iota_{\geq k} \colon \Delta^{n-k} \to \Delta^n$, with $\pi \colon \Delta^{n} \times K \to \Delta^n$.
  The result is $\iota_{\geq k}\pi \colon \Delta^{n-k} \times K \to \Delta^n$.
  The colimit of the diagram $K \times [n](k,-)$ is just $K$, and the component of $\eta$ at this object is just the projection $\eta_{K \times [n](k,-)} \colon \Delta^{n-k} \times K \to K$.
\end{ex}

\cref{cor:fun-to-arr-weights-pseudonatural,cor:arr-to-fun-weights-oplaxnatural,lem:cod-dom-weights-oplaxnatural} imply that the domains and the codomains of the natural transformations $\kappa$, $\mu$, and $\eta$ of \cref{lem:arr-to-fun-to-arr-point,lem:fun-to-arr-to-fun-point} are the actions on objects of pseudofunctors valued in $\Funoplax({[1]},\Cat)$. We now remark that the oplax coherence cells just constructed, which we denote generically by ``$\phi$'', commute with these natural transformations in a sense that might best be visualized as a three dimensional commutative diagram of 2-cells defined for each functor $F \colon \cC \to \cD$.

\begin{rmk}\label{rmk:constant-diagram-map-coherence}
  The natural transformations of \cref{lem:arr-to-fun-to-arr-point} commute with the oplax natural transformations $\phi$ associated to a functor $F \colon \cC \to \cD$, defining a commutative diagram of natural transformations from $\sSet^\cC$ to $\sSet^\cD$:
  \[ \begin{tikzcd}
    \lan_F\circ  \funtoarrwgt_\cC \circ \arrtofunwgt_\cC
    \arrow[r, "\lan_F\circ\kappa_\cC"] \arrow[d, "\phi"']
    & \lan_F \arrow[d, equals]  \\
     \funtoarrwgt_\cD \circ \arrtofunwgt_\cD \circ\lan_F
    \arrow[r, "\kappa_\cD\circ \lan_F"'] & \lan_F \rlap{.} \end{tikzcd} \]

In the case $\cD = {[0]}$, modulo the equivalence $\sSet^{{[0]}} \simeq \sSet$, $\lan_F$ is the functor $\colim_{\cC} \colon \sSet^\cC \to \sSet$, $\arrtofunwgt_{{[0]}}$ and $\funtoarrwgt_{{[0]}}$ are identity functors, and $\kappa_{{[0]}}$ is the identity. In that special case, this result identifies the two natural transformations $\colim_\cC (\kappa_\cC), \phi \colon \colim_\cC \circ \funtoarrwgt_\cC \circ \arrtofunwgt_\cC \to \colim_\cC$.
\end{rmk}

\begin{rmk}
The natural transformations of \cref{lem:fun-to-arr-to-fun-point} commute with the oplax natural transformations $\phi$ associated to a functor $F \colon \cC \to \cD$, defining a commutative diagram of natural transformations from $\slice{\sSet}{N\cC}$ to $\slice{\sSet}{N\cD}$:
  \[ \begin{tikzcd}
    (NF)_!\circ \arrtofunwgt_\cC \circ \funtoarrwgt_\cC
    \arrow[r, "(NF)_!\circ \mu_\cC"] \arrow[d, "\phi"']
    & (NF)_! \circ \cod_!\circ \dom^* \arrow[d, "\phi"] &
    \arrow[d, equals] \arrow[l, "(NF)_!\circ \nu_\cC"'] (NF)_!  \\
    \arrtofunwgt_\cD \circ \funtoarrwgt_\cD \circ (NF)_!
    \arrow[r, "\mu_{\cD} \circ (NF)_!"'] & \cod_! \circ \dom^* \circ (NF)_! & \arrow[l, "\nu_\cD \circ (NF)_!"] (NF)_! \rlap{.} \end{tikzcd} \]

In the case $\cD = {[0]}$, modulo the equivalence $\slice{\sSet}{N{[0]}}\simeq \sSet$, the bottom span is equivalent to the span of identity natural transformations at the functor $(N\cC)_! \colon \slice{\sSet}{N\cC} \to \sSet$, yielding a commutative diagram:
\begin{equation}\label{eq:how-we-pullback-the-span} \begin{tikzcd}
  (N\cC)_!\circ \arrtofunwgt_\cC \circ \funtoarrwgt_\cC
  \arrow[r, "(NC)_!\mu"] \arrow[dr, "\eta"']
  & (N\cC)_! \circ \cod_!\circ \dom^* \arrow[d, "\phi"] &
  \arrow[dl, equals] \arrow[l, "(NC)_!\nu"'] (N\cC)_!  \\ & (N\cC)_! \rlap{.} &  \end{tikzcd} \end{equation}
\end{rmk}

\section{Directed univalence in a model topos}\label{sec:dua-model}

In this section, we apply the results of \S\ref{sec:weights} to prove a directed univalence result, \cref{thm:dua-model}, at the level of a model topos of simplicial objects $\smE$.
For a small 1-category $\cC$ and an object $\Gamma \in \smE$, we exhibit a pair of right Quillen functors
\begin{equation}\label{eq:model-intro-equivalence}
  \begin{tikzcd} \slice{\smE}{N\cC \times \Gamma} \arrow[r, bend left, "\arrtofun^{\Gamma}_\cC" above] & (\slice{\smE}{\Gamma})^{\cC} \arrow[l, bend left, "\funtoarr^{\Gamma}_\cC" below] \end{tikzcd}
\end{equation}
between the (projective) covariant model structures that are inverse equivalences as homotopical functors.
In fact, we prove a more precise result: the homotopical equivalence is given by explicit natural transformations at the pointwise level whose components at fibrant objects are weak equivalences.
We derive the sliced functors above from unsliced versions, $\arrtofun_\cC \colon \slice{\smE}{N\cC} \to \smE^\cC$ and $\funtoarr_\cC \colon \smE^\cC \to \slice{\smE}{N\cC}$.
The sliced forms will be used to derive our $\infty$-topos level incarnation of directed univalence in \cref{sec:dua-types}.

To see how we obtain \eqref{eq:model-intro-equivalence} from the results of \S\ref{sec:weights}, consider the 2-category $\Cat_\complete$ obtained by restricting $\Cat$ to complete categories and continuous functors. The 2-exponential of $\mE$ with $\mC$ is given by the category $[\mC, \mE]_\cont$ of continuous functors from $\mC$ to $\mE$, or in the case of $[\mC^\op,\mE]_\cont$ of functors that send colimits in $\mC$ to limits in $\mE$. Given classes of maps $\mathcal{L}$ in $\mC$ and $\mathcal{R}$ in $\mE$, we have an induced class of maps $[\mathcal{L}, \mathcal{R}]_\cont$ in $[\mC^\op, \mE]_\cont$ containing those natural transformations $u \colon F \to G$ whose pullback application sends $\mathcal{L}$ to $\mathcal{R}$.
If $\mathcal{L}$ is the saturation of a set of maps $J$, then, since $F$ and $G$ are continuous, $u$ belongs to this class if its pullback application sends $J$ to $\mathcal{R}$.
As the categories $\sE$, $\smE^\cC$, and $\slice{\sE}{K}$ --- the latter under the hypothesis of infinitary extensivity --- can be re-expressed as categories of continuous functors from $\sSet$, $\sSet^\cC$, and $\slice{\sSet}{K}$, respectively, to $\mE$, we can use this framework to systematically transpose, in a 2-functorial manner, operations in the $\Set$-world interacting with left classes to operations in the $\mE$-world interacting with right classes.
The induced operations on the right are then just given by 2-functoriality of precomposition in $\Cat_\complete$.

In \cref{ssec:enrichment} we describe compatible enrichments that allow us to translate the results of \S\ref{sec:weights} into the settings in which we will ultimately work: the category $\smE$ of simplicial objects in a fibration-extensive type-theoretic model topos, its functor categories $\smE^{\cC}$, and its slices $\slice{\smE}{K}$ over a discretely embedded simplicial set. The functors $\arrtofun$ and $\funtoarr$ and their slice variants over $\Gamma$ are introduced in \S\ref{ssec:global-fun-arr}. \Cref{thm:dua-model} is proven in \S\ref{ssec:dua-model}.

\subsection{From weights to diagrams}\label{ssec:enrichment}

Let $\mE \in \Cat_\complete$ be a complete locally small 1-category. For any small category $\cC$, the weighted limit functor transposes to define an equivalence
\[ \begin{tikzcd} (\Set^{\DDelta^\op \times \cC})^\op \times \mE^{\DDelta^\op \times \cC} \arrow[r, "{\{-,-\}^{\DDelta^\op \times \cC}}"] & [+1em]\mE & \leftrightsquigarrow & \smE^\cC \arrow[r, "\simeq"] & {[(\sSet^\cC)^\op, \mE]_\cont} \rlap{.} \end{tikzcd}\]
Explicitly, the equivalence can be understood as cocontinuous extension along the Yoneda embedding:
\[ \smE^\cC \simeq [\DDelta\times\cC^\op, \mE^\op] \simeq [\Set^{\DDelta^\op \times \cC}, \mE^\op]_\cocont \simeq {[(\sSet^\cC)^\op, \mE]_\cont}.\]

This equivalence will allow us to transpose left adjoints between presheaf categories to right adjoints of $\mE$-valued presheaves. We introduce the following terminology for this purpose.

\begin{defn}\label{defn:EHom}
For a small category $\cC$, the \textbf{$\mE$-relative hom functor}
\[
\begin{tikzcd}[column sep=4em]
  (\sSet^\cC)^\op \times \smE^\cC
  \ar[r, "{\EHom}"]
&
  \mE
\end{tikzcd}
\]
is the weighted limit functor $\{-, -\}^{\DDelta^\op \times \cC}$.
\end{defn}

As noted above, a transposed version of the $\mE$-relative hom functor defines an equivalence $\EHom \colon \smE^\cC \simeq [(\sSet^\cC)^\op, \mE]_\cont$ involving a 2-exponential in the 2-category $\Cat_\complete$. Thus, we may act on $\mE$-valued diagram categories by restricting the first variable of $[(-)^\op, \mE]_\cont$ along left adjoint functors between simplicial presheaf categories.

Under additional hypotheses on $\mE$, these constructions can be extended to slices over discrete objects. Suppose now that $\mE$ is a bicomplete locally small 1-category. In particular, $\mE$ is enriched, tensored, and cotensored over sets:
\[ \begin{tikzcd} \Set \times \mE \arrow[r, "\cdot"] & \mE \rlap{,} & \Set^\op \times \mE \arrow[r, "{\{-,-}\}"] & \mE \rlap{,} & \mE^\op \times \mE \arrow[r, "{\mE(-,-)}"] & \Set \rlap{.} \end{tikzcd}\]
Using the terminal object $\ast \in \mE$, we obtain the adjunction below, which defines the \textbf{discrete embedding} of sets into $\mE$:
\[
\begin{tikzcd} \mE \arrow[r, bend right, "{\Gamma \coloneqq \mE(\ast,-)}"' pos=.6] \arrow[r, phantom, "\bot"] & \Set \arrow[l, bend right, "{\Delta\coloneqq - \cdot \ast}"' pos=.4] \rlap{.} \end{tikzcd}
\]
As in \S\ref{ssec:stt}, this induces a corresponding pointwise-defined discrete embedding of simplicial sets into simplicial objects in $\mE$. Adopting the conventions introduced elsewhere, we leave the discrete embedding implicit when the meaning is clear.

Recall a category is \textbf{infinitarily extensive} if for any coproduct $\coprod_{i \in I} A_i \in \mE$, pullback defines an equivalence of categories
\[ \slice{\mE}{\coprod_{i \in I}A_i} \simeq \prod\nolimits_{i \in I}\slice{\mE}{A_i}.\] As discrete objects $I \in \mE$ are coproducts $I \cong \coprod_{i \in I}\ast$, for any set $I$, we have an equivalence $\slice{\mE}{I} \simeq \mE^I$ between the slice over the discrete embedding of $I$ and the category of functors indexed by this discrete set.
Henceforth we assume that $\mE$ is also infinitarily extensive.

\begin{lem}\label{lem:slice-elements-equivalence}
Given a category $\cD$ and a set-valued presheaf $K \in \Set^{\cD^\op}$, we have an equivalence of categories \[\slice{\mE^{\cD^\op}}{K} \simeq \mE^{\el{K}^\op}\]  between the slice category and the category of contravariant functors indexed by the category of elements of $K$.
\end{lem}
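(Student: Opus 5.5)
The plan is to prove this by reduction to the discrete case via infinitary extensivity, working one level of indexing at a time. An object of the slice $\slice{\mE^{\cD^\op}}{K}$ is a presheaf $X \in \mE^{\cD^\op}$ with a map $p \colon X \to K$, where $K$ is discretely embedded, so $K(d) \cong \coprod_{x \in K(d)} \ast$ for each $d \in \cD$. By extensivity, each component $p_d \colon X(d) \to \coprod_{x \in K(d)} \ast$ corresponds, up to canonical isomorphism, to a family $(X_x)_{x \in K(d)}$ of objects of $\mE$, namely $X_x \coloneqq X(d) \times_{K(d)} \ast$, the pullback along the coprojection $x$. This is exactly the assignment on objects of $\el{K}$: the object $(d,x)$ is sent to $X_x$.

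Next comes the action on morphisms. For a morphism $f \colon (d',x') \to (d,x)$ in $\el{K}$, so $f \colon d' \to d$ with $K(f)(x) = x'$, the restriction map $X(f) \colon X(d) \to X(d')$ lies over $K(f) \colon K(d) \to K(d')$. Restricting it to the summand $X_x$, the composite to $K(d')$ factors through the coprojection of $x'$. By the pullback definition of $X_{x'}$ it therefore induces a map $X_x \to X_{x'}$. Functoriality follows from uniqueness of maps into pullbacks, which gives a functor $\el{K}^\op \to \mE$. The direction is correct: a morphism $(d',x') \to (d,x)$ produces a map $X_x \to X_{x'}$, so the functor is contravariant. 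Morphisms of the slice are handled the same way, by restricting componentwise to summands.

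In the other direction, given $Y \colon \el{K}^\op \to \mE$, set $X(d) \coloneqq \coprod_{x \in K(d)} Y(d,x)$ with the evident map to $\coprod_{x \in K(d)} \ast = K(d)$. For $f \colon d' \to d$, define $X(f)$ as the coproduct of the maps $Y(d,x) \to Y(d',K(f)x)$ followed by the coprojections. It is then routine to check that this is a presheaf over $K$ and that it is natural in $Y$.

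Finally, we show the two constructions are mutually inverse up to natural isomorphism. One composite is the canonical comparison $\coprod_{x} X(d)\times_{K(d)}\ast \to X(d)$, which is invertible precisely by infinitary extensivity (the slice equivalence $\slice{\mE}{\coprod_x \ast} \simeq \prod_x \slice{\mE}{\ast}$). The other composite uses that coproducts in an extensive category are disjoint and stable, so that $\bigl(\coprod_x Y(d,x)\bigr)\times_{K(d)} \ast_{x_0} \cong Y(d,x_0)$. We must also check that these isomorphisms are compatible with the restriction maps, which follows from naturality of the extensivity equivalence under pullback along coprojections.

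The main obstacle is bookkeeping rather than any genuine difficulty. The key point is that discrete maps $K(f)$ are induced by functions of sets, so they commute with coprojections; this is what lets the summand restriction land in a single summand. Note that a general $\mE$-map between discrete objects would not have this property (cf.\ \cref{rmk:discrete-map}), but here $K$ is a set-valued presheaf, so we are always in the discrete case. A cleaner packaging is to observe that everything is pseudofunctorial in $d$: the equivalence $\slice{\mE}{K(d)} \simeq \mE^{K(d)}$ is pseudonatural in $d \in \cD^\op$, with pullback on the left matching reindexing on the right. The lemma then follows by taking the lax limit (Grothendieck construction), since $\el{K}$ is the Grothendieck construction of $K$.
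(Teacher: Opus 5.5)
Your proof is correct and follows the same strategy as the paper. Both reduce levelwise to the extensivity equivalence $\slice{\mE}{K(d)} \simeq \mE^{K(d)}$ and then reassemble over $d$ into the category of elements $\el{K}$. The paper packages this as a chain of oplax-limit equivalences, which is essentially your closing paragraph. Your main argument instead unpacks the construction by hand, and in doing so makes explicit a point the paper leaves implicit: the transition maps $K(f)$ are discrete maps, so they commute with coprojections, and this is why the levelwise equivalences assemble.
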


In particular, for any simplicial set $K$, we have an equivalence of categories $\slice{\smE}{K} \simeq \mE^{\el{K}^\op}$.

\begin{proof}
We have a chain of natural equivalences. The first step uses the characterization of a functor category as the oplax limit of a constant diagram:
\begin{align*}
\slice{\mE^{\cD^\op}}{K}
&\simeq
\slice{\left(\textstyle{\lim}^\oplax_{\cD^\op} \mE\right)}{K} \rlap{.} \\
\intertext{The second step uses the characterization of a slice category as the oplax limit of the arrow picking out the object:}
&\simeq
\textstyle{\lim}^\oplax_{{[1]}}\left({[0]} \xrightarrow{K}\textstyle{\lim}^\oplax_{\cD^\op} \mE\right) \rlap{.} \\
\intertext{The third step exchanges oplax limits and repackages the interior oplax limits into slice categories:
}
&\simeq
\textstyle{\lim}^\oplax_{d \in \cD^\op} \textstyle{\lim}^\oplax_{[1]}({[0]} \xrightarrow{K(d)}\mE) \simeq
\textstyle{\lim}^\oplax_{d \in \cD^\op} \slice{\mE}{K(d)} \rlap{.} \\
\intertext{The fourth step uses infinitary extensivity:}
&\simeq
\textstyle{\lim}^\oplax_{d \in \cD^\op} \mE^{K_d} \rlap{.} \\
\intertext{The final step composes oplax limits and reassembles the result into the functor category:}
&\simeq \textstyle{\lim}^\oplax_{d \in \cD^\op} \textstyle{\lim}^\oplax_{k \in K_d}\mE\simeq
\textstyle{\lim}^\oplax_{(d,k) \in \el{K}^\op} \mE
\simeq
\mE^{\el{K}^\op} \rlap{.} \qedhere
\end{align*}
\end{proof}

Thus, for a simplicial set $K$, we have an equivalence
\[ \slice{\smE}{K} \simeq \mE^{\el{K}^\op} \simeq [\Set^{\el{K}^\op}, \mE]_\cont \simeq [(\slice{\sSet}{K})^\op, \mE]_\cont,\]
and thus the $\mE$-relative hom functor introduced in \cref{defn:EHom} also exists in this setting. For calculational purposes, it is more convenient to work with an alternate presentation of the sliced $\EHom$ for which we introduce special notation:

\begin{defn}\label{defn:Ehom-K}
Given a simplicial set $K$, the \textbf{$\mE$-relative hom functor} \[ \begin{tikzcd} (\slice{\sSet}{K})^\op \times \slice{\smE}{K} \arrow[r, "\EHom_{K}"] & \mE
\end{tikzcd} \]
is defined at $a \colon A \to K$ and $x \colon X \to K$ by the pullback
\[ \begin{tikzcd}
  \EHom_K(A,X) \arrow[r]\arrow[d]\arrow[dr, phantom, "\lrcorner" very near start] & \EHom(A,X) \arrow[d, "{\EHom(A,x)}"] \\ \ast \arrow[r, "a"'] & \EHom(A,K)
\end{tikzcd}
\]
along the image of the element $a \colon A \to K$ under the isomorphism \[\sSet(A,K) \cong \sSet(A, \mE(\ast,K)) \cong \mE(\ast, \EHom(A,K)).\]
\end{defn}

The bifunctor $\EHom_K$ has a left adjoint
\[ \begin{tikzcd} \slice{\sSet}{K} \times \mE \arrow[r, "\otimes_K"] & \slice{\smE}{K} \end{tikzcd}\]
as well as a right adjoint that we will not need.
Given $a \colon A \to K$ and $E \in \mE$, $A \otimes_K E$ is defined by the map $A \times E \to A \to K$, composing $a$ with the projection, where $K$ is embedded discretely and $E$ is embedded constantly into $\smE$.

\begin{lem}\label{lem:slice-EHom-weighted-limit}
Up to equivalence $\EHom_K$ coincides with the weighted limit functor
\[
\begin{tikzcd} (\Set^{(\el{K})^\op})^\op \times \mE^{(\el{K})^\op} \arrow[dr, "{\EHom}"] \arrow[d, "\rotatebox{90}{$\sim$}"'] \\ (\slice{\sSet}{K})^\op \times \slice{\smE}{K} \arrow[r, "\EHom_K"'] & \mE \rlap{.} \end{tikzcd} \]
\end{lem}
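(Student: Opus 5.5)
The plan is to check the claimed equivalence on generators: both functors are continuous in the first variable and are determined by their values on representables. Then I identify these values through the chain of equivalences $\slice{\smE}{K} \simeq \mE^{\el{K}^\op}$ of \cref{lem:slice-elements-equivalence} and $\slice{\sSet}{K} \simeq \Set^{\el{K}^\op}$.

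First I record that $\EHom_K(-,X)$ sends colimits in $\slice{\sSet}{K}$ to limits in $\mE$. Colimits in the slice are computed in $\sSet$, and $\EHom(-,X)$ and $\EHom(-,K)$ send them to limits. The pullback along $a \colon \ast \to \EHom(A,K)$ then commutes with these limits, because the point $a$ of a limit $\lim_i \EHom(A_i,K)$ is the compatible family of its components $a_i$. The weighted limit $\{-,X'\}^{\el{K}^\op}$ is likewise continuous in the weight. Since every object of $\Set^{\el{K}^\op}$ is a colimit of representables, it suffices to produce an isomorphism on representables that is natural in maps between representables and in $X$.

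A representable on $\el{K}$ at $(n,k)$ corresponds under the slice equivalence to $k \colon \Delta^n \to K$. On the weighted-limit side, the Yoneda lemma gives the value of the corresponding $\mE$-presheaf $X'$ at $(n,k)$. Unwinding \cref{lem:slice-elements-equivalence}, this value is the fiber of $X_n \to K_n \cong \coprod_{K_n}\ast$ over the summand $k$, obtained via infinitary extensivity. On the other side, $\EHom(\Delta^n,X) \cong X_n$ and $\EHom(\Delta^n,K) \cong K_n$ by Yoneda for the weighted limit $\{-,-\}^{\DDelta^\op}$. The point $k$ corresponds to the coprojection of the $k$-th summand. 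So $\EHom_K(\Delta^n,X)$ is the pullback of $X_n \to \coprod_{K_n}\ast$ along that coprojection, which is exactly the same fiber.

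Naturality in $X$ is clear. Naturality in a map $(m,\theta^*k) \to (n,k)$ of $\el{K}$ reduces to the compatibility of the simplicial operator $\theta^* \colon X_n \to X_m$ with the restriction of summands $\coprod_{K_n}\ast \to \coprod_{K_m}\ast$. That is how the slice equivalence was built, so it holds.

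The main obstacle is bookkeeping rather than mathematics. One has to confirm that the composite equivalence in \cref{lem:slice-elements-equivalence}, assembled from oplax limits, really sends $X$ to the presheaf of fibers described above, including on morphisms. I would handle this by writing the equivalence explicitly as $X \mapsto ((n,k) \mapsto k^*X_n)$ and checking directly that it is an equivalence with inverse $X' \mapsto (\coprod_{k\in K_n} X'(n,k))_n$. This bypasses the oplax-limit description entirely.
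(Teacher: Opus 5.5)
Your argument is correct, but it works on the opposite side of the adjunction from the paper's proof. The paper identifies the two bifunctors through their left adjoints, the tensors. It uses that $\EHom_K$ has left adjoint $\otimes_K$, sending $(a\colon A\to K, E)$ to $A\times E\to A\to K$. It then checks that, under the equivalences, this tensor matches the copower $\otimes_{\el{K}^\op}$: both produce the presheaf $\sigma\mapsto A_\sigma\cdot E$ of fibers. Agreement of the right adjoints then follows formally.

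You instead work with the right adjoints directly. You prove continuity of $\EHom_K(-,X)$ in the weight and reduce by density to representable weights $k\colon\Delta^n\to K$. There both sides are the fiber of $X_n\to\coprod_{K_n}\ast$ over the $k$th summand.

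The paper's route is shorter. Its tensors are explicit formulas, visibly natural in both variables, so no extension-from-generators step is needed. But it relies on the adjunction $\otimes_K\dashv\EHom_K$, which the paper asserts rather than checks.

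Your route costs two extra steps. The first is the continuity check, which your observation handles: the point $a$ is the compatible family $(a_i)$, and a limit of terminal objects is terminal. The second is an appeal to the equivalence $[\Psh{\cC}^\op,\mE]_\cont\simeq\mE^{\cC^\op}$ from \S\ref{ssec:enrichment} to extend the isomorphism from representables. In exchange, you only ever evaluate the functor of \cref{defn:Ehom-K} on simplices, where $\EHom(\Delta^n,X)\cong X_n$ makes the computation immediate.

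Both proofs rely on the same fiberwise description of $\slice{\smE}{K}\simeq\mE^{\el{K}^\op}$. So writing that equivalence down explicitly, with inverse $X'\mapsto(\coprod_{k\in K_n}X'(n,k))_n$, costs nothing.
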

\begin{proof} We identify these functors by identifying their left adjoints:
 \[
 \begin{tikzcd}
  \slice{\sSet}{K} \times \mE \arrow[r, "{\otimes_K}"]\arrow[d, "\rotatebox{90}{$\sim$}"'] & \slice{\smE}{K} \arrow[d, "\rotatebox{90}{$\sim$}"] \\
  \Set^{(\el{K})^\op} \times \mE \arrow[r, "{\otimes_{\el{K}^\op}}"'] & \mE^{(\el{K})^\op} \rlap{.}
  \end{tikzcd}
  \]
The left bottom composite takes $a \colon A \to K$ and $E$ to the presheaf whose value at $\sigma \colon \Delta^n \to K$ is the fiber $A_\sigma$ of $a_n \colon A_n \to K_n$ over $\sigma \in K_n$ and then forms the tensors of these sets with $E$. The top right composite carries the map $A \times E \to A \to K$ in $\slice{\smE}{K}$ to the $\mE$-valued presheaf whose value at $\sigma \colon \Delta^n \to K$ is the fiber of the map $A_n \times E \to A_n \to K_n$ over $\sigma \colon \ast \to K_n$, which is again $A_n \times E$.
\end{proof}

We have established equivalences
\[ \EHom \colon  \smE^\cC \simeq [(\sSet^\cC)^\op, \mE]_\cont \qquad \text{and} \qquad \EHom_K \colon \slice{\smE}{K} \simeq [(\slice{\sSet}{K})^\op, \mE]_\cont, \]
which \cref{lem:slice-EHom-weighted-limit} allows us to treat uniformly.
For a small category $\cC$, let $\Psh{\cC} \coloneqq \Set^{\cC^\op}$ and write $\Prof$ for the 2-category whose objects are small categories, morphisms from $\cC$ to $\cD$ are left adjoints $L \colon \Psh{\cC} \to \Psh{\cD}$, and 2-morphisms are natural transformations between left adjoints, i.e., for the full subcategory of $\Cat_{\cocomplete}$ spanned by the presheaf categories.

\begin{rmk}
  Left adjoints $L \colon \Psh{\cC} \to \Psh{\cD}$ between presheaf categories correspond to bimodules $W_L \in \Set^{\cC \times \cD^\op}$: from $L$, we obtain $W_L$ by restricting $L$ along the Yoneda embedding, and from $W_L$, we obtain $L$ and its right adjoint as $W_L$-weighted colimit and $W_L$-weighted limit:
\[ \begin{tikzcd} \Psh{\cC} \arrow[r, bend left, "W_L \otimes_\cC-"] \arrow[r, phantom, "\bot"] & \Psh{\cD} \arrow[l, bend left, "{\{W_L,-\}^\cD}"] \rlap{.} \end{tikzcd}\]
Thus $\Prof$ is biequivalent to the bicategory of small categories, bimodules (``weights''), and bimodule homomorphisms (natural transformations of weights).
In the examples developed in \S\ref{sec:weights}, the left adjoint perspective is emphasized: for example, $\arrtofunwgt$ and $\funtoarrwgt$ are described as left adjoints between the categories $\sSet^\cC \simeq \Psh{\DDelta \times \cC^\op}$ and $\slice{\sSet}{N\cC} \simeq \Psh{\el{N\cC}}$.
\end{rmk}

Formation of $\mE$-valued presheaf categories extends to a pseudofunctor
\[
\begin{tikzcd}[column sep=4em]
  \Prof^{\co \op}
  \ar[r, "{\Psh[\mE]{-}}"]
&
  \Cat_\complete
\end{tikzcd}
\]
defined by restricting the first variable of $[-,\mE]_\cont$ along $(-)^\op \colon \Prof^\co \to \Cat_\complete$.
We introduce special terminology for the functorial action of ${\Psh[\mE]{-}}$ on the 1- and 2-cells of $\Prof$.

\begin{defn}
For a left adjoint $L \colon \Psh{\cC} \to \Psh{\cD}$, we refer to the corresponding map
\[
\begin{tikzcd} \Psh[\mE]{L} \colon { \Psh[\mE]{\cD} \coloneqq \mE^{\cD^\op} \simeq [\Psh{\cD}^\op, \mE]_\cont} \arrow[r, "-\circ L"] & {[\Psh{\cC}^\op, \mE]_\cont \simeq \mE^{\cC^\op} \eqqcolon \Psh[\mE]{\cC}} \end{tikzcd}\] as the \textbf{$\mE$-relative right adjoint} of $L$. Explicitly, the $\mE$-relative right adjoint of $L$ is defined by weighted limit with $W_L \coloneqq L \circ \yo \in \Set^{\cC\times\cD^\op}$. Given a natural transformation $u \colon L \to L'$ between left adjoints, the corresponding natural transformation
\[
\begin{tikzcd}
  \Psh{\cC} \arrow[r, bend left, "L"] \arrow[r, bend right, "L'"'] \arrow[r, phantom, "\Downarrow u"] & \Psh{\cD} & \mapsto & {\Psh[\mE]{\cC}} & {\Psh[\mE]{\cD}} \arrow[l, bend right, "{\Psh[\mE]{L}}"'] \arrow[l, bend left, "{\Psh[\mE]{L'}}"] \arrow[l, phantom, "{\Uparrow u^\dagger}"]
\end{tikzcd}
\]
is called the \textbf{$\mE$-relative conjugate} of $u$. When more concise notation is useful, we write $u^\dagger \coloneqq \Psh[\mE]{u}$ for the $\mE$-relative conjugate of $u$, as in the displayed diagram above-right.
\end{defn}

For later use, we record a few elementary observations relating $\EHom_K$ to $\EHom \colon \sSet^\op \times \smE \to \mE$.

\begin{lem}\label{lem:ehom-slice-pullback-ehom}
  Let $K \in \sSet$.
  For any morphisms $j \colon A \to B$ in $\slice{\sSet}{K}$ and $f \colon Y \to X$ in $\slice{\smE}{K}$, ${\widehat{\EHom_K}(j,f)}$ is the pullback of $\widehat{\EHom}(j,f)$ along the canonical projection
  \[
    \EHom_K(A,Y) \times_{\EHom_K(A,X)} \EHom_K(B,X) \to \EHom(A,Y) \times_{\EHom(A,X)} \EHom(B,X) \rlap{.}
  \]
\end{lem}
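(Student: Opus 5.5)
The plan is to unfold both Leibniz pullback homs and reduce the claim to pasting of pullback squares, using \cref{defn:Ehom-K}. By definition, $\widehat{\EHom_K}(j,f)$ is the gap map
\[ \EHom_K(B,Y) \to \EHom_K(A,Y) \times_{\EHom_K(A,X)} \EHom_K(B,X), \]
and $\widehat{\EHom}(j,f)$ is the corresponding gap map for $\EHom$. There is an evident commutative square with these two gap maps as its vertical sides. Its top edge is the canonical map $\EHom_K(B,Y) \to \EHom(B,Y)$, and its bottom edge is the displayed projection, induced componentwise by the canonical maps $\EHom_K(-,-) \to \EHom(-,-)$. The goal is to show this square is a pullback.

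First, I would record that for every $C \in \slice{\sSet}{K}$ (with structure map $c$) and every $Z \in \slice{\smE}{K}$, the square defining $\EHom_K(C,Z)$ is a pullback of $\EHom(C,Z) \to \EHom(C,K)$ along the point $c \colon \ast \to \EHom(C,K)$. Consequently $\EHom_K(C,Z) \to \EHom(C,Z)$ is the inclusion of the fiber over $c$. Since $j$ is a map over $K$, the restriction $\EHom(B,K) \to \EHom(A,K)$ sends the point $b$ to the point $a$. Therefore everything can be organized over the cospan $\ast \xrightarrow{b} \EHom(B,K)$ and $\ast \xrightarrow{a} \EHom(A,K)$.

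Next, I would argue with generalized elements, which suffices because all the objects involved are limits in $\mE$. An element of the pullback of $\widehat{\EHom}(j,f)$ along the projection consists of three pieces of data:
\begin{parts}
\item an element $\beta \in \EHom(B,Y)$;
\item an element $(\alpha,\gamma) \in \EHom_K(A,Y) \times_{\EHom_K(A,X)} \EHom_K(B,X)$;
\item an identification of $(\beta|_A, f\beta)$ with the images of $\alpha$ and $\gamma$ in $\EHom(A,Y)$ and $\EHom(B,X)$.
\end{parts}
Since $f\beta = \gamma$ lies over $b$ and $x f = y$ over $K$, it follows that $\beta$ lies over $b$ in $\EHom(B,K)$. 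Hence $\beta$ factors uniquely through $\EHom_K(B,Y)$, and the remaining data are then determined by $\beta$. Conversely, any element of $\EHom_K(B,Y)$ gives such data. This establishes the bijection on generalized elements, natural in the stage of definition, so the square is a pullback.

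Equivalently, the argument can be phrased diagrammatically as follows. Because $\EHom(B,-)$ preserves pullbacks, the square with vertices $\EHom(B,Y)$, $\EHom(B,X)$, $\EHom_K(B,Y)$, $\EHom_K(B,X)$ is a pullback. This is just the fiber over $b$, since the composite $\EHom(B,Y) \to \EHom(B,X) \to \EHom(B,K)$ is $\EHom(B,y)$. Pasting this square with the pullback square defining the source of the gap map, and then cancelling, yields the claim. The only point needing care is the bookkeeping that the map $\EHom(B,Y) \to \EHom(B,K)$ through $X$ agrees with the one through $Y$'s own structure map. I expect no real obstacle.
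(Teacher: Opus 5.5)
Your proposal is correct and is essentially the paper's argument. The paper draws the prism of $\EHom_K$'s over $\EHom$'s over the points $a,b$, gets the upper back and front faces as pullbacks by cancellation (using $\EHom(B,x)\circ\EHom(B,f)=\EHom(B,y)$), and concludes by iterated pasting; your generalized-element check that $y\beta = x\gamma = b$ forces $\beta$ into the fiber over $b$ is the pointwise form of that cancellation.

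Two small corrections to your diagrammatic rephrasing. First, the back square is a pullback by cancellation, not because $\EHom(B,-)$ preserves pullbacks. Second, before cancelling you also need the analogous front ($A$-indexed) square, to see that $\EHom_K(A,Y)\times_{\EHom_K(A,X)}\EHom_K(B,X)$ is the pullback of $\EHom(A,Y)\times_{\EHom(A,X)}\EHom(B,X)$ along $\EHom_K(B,X)\to\EHom(B,X)$.
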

\begin{proof}
  Say that $A$ and $B$ lie over $K$ by maps $a \colon A \to K$ and $b \colon B \to K$ respectively.
  Consider the commutative prism
  \[
    \begin{tikzcd}[column sep={5.5em,between origins}, row sep=1.5em]
      \EHom_K(B,Y) \ar[dd] \ar[dr] \ar[rr] && \EHom(B,Y) \ar[dd] \ar[dr] & \\[-1em]
      & \EHom_K(A,Y) \ar[rr,crossing over] && \EHom(A,Y) \ar[dd] \\
      \EHom_K(B,X) \ar[dd] \ar[dr] \ar[rr] && \EHom(B,X) \ar[dd] \ar[dr] \\[-1em]
      & \EHom_K(A,X) \ar[uu,<-,crossing over] \ar[rr,crossing over] && \EHom(A,X) \ar[dd] \\
      \Delta^0 \ar[dr,equals] \ar[rr,"b"] && \EHom(B,K) \ar[dr] \\[-1em]
      & \Delta^0 \ar[uu,<-,crossing over] \ar[rr,"a"] && \EHom(A,K)
    \end{tikzcd}
  \]
  whose upper left face has $\widehat{\EHom_K}(j,f)$ as its gap map and whose upper right face has $\widehat{\EHom}(j,f)$ as its gap map.
  The lower back and composite back faces are pullbacks, so the upper back face is a pullback by pasting.
  The upper front face is a pullback for the same reason.
  This implies by iterated pasting that the gap map $\widehat{\EHom}_K(j,f)$ of the upper left face is a pullback of the gap map $\widehat{\EHom}(j,f)$ of the upper right face along the induced map between their codomains.
\end{proof}

\begin{lem}\label{lem:ehom-slice-vs-ehom}
  Let $K \in \sSet$.
  \begin{parts}
  \item\label{lem:ehom-slice-vs-ehom:absolute}
    For any $A \in \sSet$, $X \in \smE$, and $x \colon X \to K$, we have
\[ \EHom(A,X) \cong \coprod_{a \colon A \to K} \EHom_K ((A,a),(X,x)).\]
\item\label{lem:ehom-slice-vs-ehom:relative} The relative version of the same decomposition holds: for any $j \colon A \to B \in \sSet$, $f \colon Y \to X$ in $\smE$, and $x \colon X \to K$, we have
  \[ \widehat{\EHom}(j,f) \simeq \coprod_{b \colon B \to K} \widehat{\EHom_K}\left(\begin{tikzcd}[cramped,row sep=small,column sep=tiny] A \arrow[dr] \arrow[rr,"j"] && B \arrow[dl,"b"{pos=.4}] \\ & K\end{tikzcd}
      \:,\:
      \begin{tikzcd}[cramped,row sep=small,column sep=tiny] Y \arrow[dr] \arrow[rr,"f"] && X \arrow[dl,"x"{pos=.4}] \\ & K\end{tikzcd} \right) . \]
\end{parts}
\end{lem}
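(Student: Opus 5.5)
Part \ref{lem:ehom-slice-vs-ehom:absolute} will come from the description of $\EHom(A,K)$ for discrete $K$, combined with \cref{defn:Ehom-K} and infinitary extensivity of $\mE$. The first step is to compute $\EHom(A,K)$ when $K$ is discretely embedded. $\EHom(A,-)$ is a weighted limit, i.e.\ an end of products indexed by the simplices of $A$. Since $K_m \cong \coprod_{K_m}\ast$, I expect $\EHom(A,K) \cong \coprod_{\sSet(A,K)} \ast$, the discrete object on the set of maps $A \to K$. I would verify this via the universal property of the discrete embedding. Under the equivalence $\slice{\smE}{K} \simeq \mE^{\el{K}^\op}$ of \cref{lem:slice-elements-equivalence}, the object $K$ is terminal with components the discrete objects $K_m$. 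The discrete embedding $\Delta = -\cdot\ast$ preserves the relevant limits in the presence of extensivity: extensive categories have disjoint, pullback-stable coproducts, so equalizers and products of coproducts of $\ast$ are computed setwise. Strictly speaking, infinite products of coproducts of $\ast$ need not be discrete in general. So I would instead argue directly. Because $\EHom(A,X)\to\EHom(A,K)$ is natural, it suffices to decompose $\EHom(A,X)$ by pulling back along the points of $\EHom(A,K)$.

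The cleaner route is to use \cref{lem:slice-EHom-weighted-limit}. Write $\EHom(A,X)$ as the weighted limit over $\DDelta^\op$ of $X$ weighted by $A$. Using $X \in \slice{\smE}{K} \simeq \mE^{\el{K}^\op}$, each $X_m \cong \coprod_{\sigma\in K_m} X_\sigma$ by extensivity. The end $\int_m X_m^{A_m}$ then decomposes: a compatible family of maps $A_m \to X_m$ is, after composing with $x$, a simplicial map $a \colon A \to K$. Extensivity lets us split the end as a coproduct over $a\in\sSet(A,K)$ of the ends $\int_m \prod_{\alpha\in A_m} X_{a(\alpha)}$, and each of these is exactly the weighted limit over $\el{K}$ defining $\EHom_K((A,a),(X,x))$. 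The main obstacle is precisely this step of commuting the infinite product/end with the coproduct. Products do not commute with coproducts in general; in a Grothendieck topos (and an infinitarily extensive one) $\prod_\alpha \coprod_{\sigma} X_\sigma$ is not in general $\coprod_{\text{functions}} \prod$. I would handle this by working with the pullback definition \cref{defn:Ehom-K} instead. For each $a$, $\EHom_K(A,X)$ is the fibre of $\EHom(A,x)$ over the global point $a$. I would then show that the family of points $a \colon \ast \to \EHom(A,K)$, over all $a \in \sSet(A,K)$, exhibits $\EHom(A,K)$ as the coproduct $\coprod_{\sSet(A,K)}\ast$. If this fails for infinite $A$ in general $\mE$, I would restrict attention to what the paper needs, but I would first try to prove it using that $\mE$ is a Grothendieck topos, where the product $\prod_{A_m}$ of discrete objects $K_m$ is $\Delta$ of the product set. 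This holds for locally connected topoi such as presheaf topoi (discrete objects are the constant sheaves and $\Delta$ has a left adjoint $\pi_0$), and that case covers the model topoi considered here. Given $\EHom(A,K)\cong \coprod_{a}\ast$, pulling back $\EHom(A,x)$ along this coproduct decomposition and using pullback-stability of coproducts (extensivity) yields $\EHom(A,X)\cong\coprod_a \EHom_K((A,a),(X,x))$, since the fibre over the summand $a$ is $\EHom_K$ by definition.

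For part \ref{lem:ehom-slice-vs-ehom:relative}, I apply \ref{lem:ehom-slice-vs-ehom:absolute} to each corner of the gap-map square. Both $\EHom(B,X)$ and the pullback $\EHom(A,Y)\times_{\EHom(A,X)}\EHom(B,X)$ lie over $\EHom(B,K)\cong\coprod_b\ast$, via the maps to $\EHom(B,K)$ induced by $x$. By \cref{lem:ehom-slice-pullback-ehom}, the fibre of $\widehat{\EHom}(j,f)$ over the summand $b$ is $\widehat{\EHom_K}(j,f)$ with $A$ sliced via $bj$. Extensivity assembles these fibres into the stated coproduct decomposition.
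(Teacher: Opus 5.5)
Your final argument is correct and is essentially the paper's proof. You identify $\EHom(A,K)$ with the discrete object on $\sSet(A,K)$, using that the discrete embedding has a left adjoint in the presheaf-based models and so commutes with the weighted limit; extensivity then splits $\EHom(A,X)$ into its fibres over these points, which are $\EHom_K$ by \cref{defn:Ehom-K}, and the relative part follows fibrewise from \cref{lem:ehom-slice-pullback-ehom} plus extensivity. Your caveat that $\Delta$ must preserve these possibly infinite limits (e.g.\ local connectedness) is exactly the hypothesis the paper invokes when it says the discrete embedding has both adjoints, so your detours through commuting products with coproducts can simply be dropped.
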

\begin{proof}
  For each $a \colon A \to K$, we have
  \[ \begin{tikzcd} \EHom_K((A,a),(X,x)) \arrow[r] \arrow[d] \arrow[dr, phantom, "\lrcorner" very near start] & \EHom(A,X) \arrow[d] \\ \Delta^0 \arrow[r, "a"'] & \EHom(A,K) \rlap{.}
  \end{tikzcd}\]
  As the discrete embedding is modeled at the pointset level by a functor with both left and right adjoints, $\EHom(A,K)$ is both the $A$-weighted limit of the discrete object $K$ and the discrete embedding of the $A$-weighted limit of the simplicial set $K$. This latter object is the discrete set of discrete maps from $J$ to $K$. So by extensivity, $\EHom(A,X) \to \EHom(A,K)$ decomposes in $\mE$ as a coproduct of the fibers $\EHom_K((A,a),(X,x))$ indexed by the set of maps $a \colon A \to K$ in $\sSet$.

  Now consider $j \colon A \to B$ in $\sSet$ and $f \colon Y \to X$ in $\smE$ with $x \colon X \to K$.
  Note first that the map \[ \widehat{\EHom}(j,f) \colon \EHom(B,Y) \to \EHom(B,X) \times_{\EHom(A,X)} \EHom(A,Y) \] lives over $\EHom(B,X)$, which lives over $\EHom(B,K)$.
  By \cref{lem:ehom-slice-pullback-ehom}, pulling back $\widehat{\EHom}(j,f)$ along a point $b \colon \Delta_0 \to \EHom(B,K)$ yields $\widehat{\EHom_K}(j \colon (A,bj) \to (B,b), f \colon (Y,xf) \to (X,x))$.
  Since $\EHom(B,K)$ is a discrete set, it follows by extensivity that $\widehat{\EHom}(j,f)$ is the coproduct of these fibers.
\end{proof}

\subsection{Reedy fibrations and left fibrations}\label{ssec:left-fibration}

We now suppose for the remainder of \cref{sec:dua-model} that $\mE$ is a fibration-extensive type-theoretic model topos, which subsumes our previous assumptions that $\mE$ is bicomplete, locally small, and infinitarily extensive. In this setting, $\smE$ is again a type-theoretic model topos equipped with the Reedy model structure. In this section, we study the model theoretic classes of \emph{Reedy fibrations} and \emph{Reedy trivial fibrations}, as well as a class of \emph{left fibrations} presenting the class of left maps in $\sE$ introduced in \S\ref{ssec:stt}.

All three of these classes may be defined using the formalism introduced in \S\ref{ssec:enrichment}.

\begin{rmk}\label{rmk:relative-adjoint-preservation}
Given classes of maps $\mathcal{L}$ in a cocomplete category $\mC$ and $\mathcal{R}$ in a complete category $\mE$, we have an induced class of maps $[\mathcal{L}, \mathcal{R}]_\cont$ in $[\mC^\op, \mE]_\cont$ containing those natural transformations $u \colon F \to G$ whose pullback application sends $\mathcal{L}$ to $\mathcal{R}$. Given a left adjoint $L \colon \mC_2 \to \mC_1$ between cocomplete categories that preserves left classes $L(\mathcal{L}_2) \subset \mathcal{L}_1$ and a right adjoint $R \colon \mE_1 \to \mE_2$ between complete categories that preserves right classes $R(\mathcal{R}_1) \subset \mathcal{R}_2$, the induced functor
\[ \begin{tikzcd}{[\mC_1^\op, \mE_1]_\cont} \ar[r, "R \circ - \circ L"] & {[\mC_2^\op, \mE_2]_\cont} \end{tikzcd}\] sends maps in $[\mathcal{L}_1, \mathcal{R}_1]_\cont$ to maps in $[\mathcal{L}_2, \mathcal{R}_2]_\cont$.
\end{rmk}

In the case of our type theoretic model topoi, we have equivalences
\[\EHom \colon \smE^\cC \simeq [(\sSet^\cC)^\op, \mE]_\cont \qquad \text{and} \qquad \EHom_K \colon \slice{\smE}{K} \simeq [ \slice{\sSet}{K}^\op, \mE]_\cont\rlap{.}\] To avoid the cumbersome notation just introduced, we refer to classes of maps in $\smE^\cC$ or $\slice{\smE}{K}$ defined as in \cref{rmk:relative-adjoint-preservation} as \textbf{created by the $\mE$-relative hom functor} from a left class of maps in the simplicial world and a right class of maps in $\mE$.

\begin{defn}\label{defn:reedy-fib}
The \textbf{Reedy fibrations} in $\smE$ are created by the $\mE$-relative hom functor from the class of monomorphisms in $\sSet$ and the class of fibrations in $\mE$. As the monomorphisms in $\sSet$ are generated by the simplex boundary inclusions, this is to say that a map $f \colon Y \to X$ in $\smE$ is a \textbf{Reedy fibration} if for all $n \geq 0$, the Leibniz weighted limit weighted by the map of simplicial sets $\partial^n \colon \partial\Delta^n \hookrightarrow\Delta^n$ is a fibration in $\mE$:
  \[ \begin{tikzcd} \EHom(\Delta^n, Y) \cong Y_n \arrow[r, "{(f_n, \partial^n)}", two heads] & X_n \times_{M_nX} M_nY \cong \EHom(\Delta^n,X) \times_{\EHom(\partial\Delta^n,X)} \EHom(\partial\Delta^n,Y) \rlap{.} \end{tikzcd}\]
  Similarly, the \textbf{Reedy trivial fibrations} in $\smE$ are created by the $\mE$-relative hom functor from the monomorphisms in $\sSet$ and the class of trivial fibrations in $\mE$, which is to say that a map
  $f \colon Y \to X$ is a \textbf{Reedy trivial fibration} just these Leibniz weighted limits are trivial fibrations in $\mE$.
\end{defn}

We now introduce \emph{left fibrations} in $\smE$ to parallel the \emph{left maps} in $\sE$, the evident extensions of the traditional definition for simplicial spaces.\footnote{The authors of \cite[1.7]{BdB} and \cite[2.1]{KV} require that $Y$ and $X$ are Segal spaces, while \cite[A.27]{RS} only require $Y$ and $X$ to be Reedy fibrant.} The notion of left fibration only makes sense at the level of the type-theoretic model topos $\smE$ presenting the $\infty$-topos $\sE$, since fibrations are only defined there.

\begin{defn}\label{defn:left-fibration}
  The \textbf{left fibrations} in $\smE$ are the subclass of Reedy fibrations in $\smE$ created by the $\mE$-relative hom functor from the class of left anodyne maps in $\sSet$ and the class of trivial fibrations in $\mE$. This is to say, by \cref{lem:left-anodyne-initial-vertex}, that a map $f \colon Y \to X$ in $\smE$ is a \textbf{left fibration} if it is a Reedy fibration and the fibration
  \[ \begin{tikzcd}[sep=large] \EHom(\Delta^m,Y) \cong Y_m \arrow[r, two heads, "{(f_m,\ev_0)}"] & X_m \times_{X_0} Y_0 \cong \EHom(\Delta^m,X) \times_{\EHom(\Delta^0,X)} \EHom(\Delta^0,Y) \end{tikzcd}\] in $\mE$ is a trivial fibration for $m \ge 1$. Note that for a Reedy fibration $f \colon Y \to X$, the square \eqref{eq:left-map-square} is a pullback in $\iE$ just when this map is a trivial fibration, and thus the left fibrations in $\smE$ are those Reedy fibrations that define a left map is $\sE$.
\end{defn}

\begin{rmk}\label{rmk:covariant-fib}
By \cref{lem:covariant-fib}, equivalently, at the level of the type-theoretic model topos, a Reedy fibration $f \colon Y \twoheadrightarrow X$ in $\smE$ is a left fibration if and only if the Leibniz exponential with $\iota_0 \colon \Delta^0 \to \Delta^1$ is a trivial fibration.
\end{rmk}

For a small category $\cC$ or a simplicial set $K$, we have characterizations of the corresponding classes of projective fibrations in $\smE^\cC$ and sliced fibrations $\slice{\smE}{K}$.

\begin{lem}\label{lem:leib-hom-projective-fibrations}
The projective Reedy (trivial) fibrations in $\smE^\cC$ are created by the $\mE$-relative hom functor from the class of projective Reedy monomorphisms in $\sSet^\cC$ and the class of (trivial) fibrations in $\mE$. The projective left fibrations in $\smE^\cC$ are the subclass of the projective Reedy fibrations that are created from the projective left anodynes in $\sSet^\cC$ and the trivial fibrations in $\mE$.
\end{lem}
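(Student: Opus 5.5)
We first need to say what the projective classes in $\smE^\cC$ are. Since $\smE$ carries the Reedy model structure, the projective Reedy (trivial) fibrations in $\smE^\cC$ are the maps $f \colon Y \to X$ such that each component $f_c \colon Y(c) \to X(c)$ is a Reedy (trivial) fibration in $\smE$. Likewise, the projective left fibrations are the maps that are pointwise left fibrations. The plan is to rewrite these pointwise conditions, using \cref{defn:reedy-fib} and \cref{defn:left-fibration}, as conditions on $\EHom(j,f)$ for suitable generating maps $j$ in $\sSet^\cC$. Then we use \cref{rmk:relative-adjoint-preservation} to pass between generators and the saturated left classes.

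The key identity is a Yoneda-type isomorphism. For $c \in \cC$ and $A \in \sSet$, write $\cC(c,-) \cdot A \in \sSet^\cC$ for the representable tensored with $A$. Then
\[
  \EHom\bigl(\cC(c,-) \cdot A, Y\bigr) \cong \EHom(A, Y(c)),
\]
naturally in $A$ and $Y$. This holds because $\EHom$ is the weighted limit $\{-,-\}^{\DDelta^\op \times \cC}$, and $\cC(c,-) \cdot A$ is the left Kan extension of $A$ along $\DDelta^\op \times \{c\} \to \DDelta^\op \times \cC$. Since the isomorphism is natural and pullbacks commute with it, the same identification holds for the Leibniz versions:
\[
  \widehat{\EHom}\bigl(\cC(c,-) \cdot j, f\bigr) \cong \widehat{\EHom}(j, f_c)
\]
for any $j$ in $\sSet$.

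Next we identify generators. The projective Reedy monomorphisms in $\sSet^\cC$ are the saturation of $\{\cC(c,-) \cdot \partial^n\}$. The projective left anodynes are the saturation of $\{\cC(c,-) \cdot h\}$, where $h$ ranges over left horn inclusions, or equivalently over the initial point inclusions of \cref{lem:left-anodyne-initial-vertex}. Both classes are the left classes of projective weak factorization systems generated in this way.

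Both objects of $\smE^\cC$ correspond to continuous functors, so \cref{rmk:relative-adjoint-preservation} applies: membership in the class created by the $\mE$-relative hom functor can be tested on generators. The preceding paragraph then shows that $f$ lies in the class created from projective Reedy monomorphisms and (trivial) fibrations exactly when each $f_c$ is a Reedy (trivial) fibration. The left-fibration statement follows the same way. A projective Reedy fibration $f$ is created from projective left anodynes and trivial fibrations exactly when, for every $c$, $\widehat{\EHom}(h, f_c)$ is a trivial fibration for all left horns $h$. That condition says precisely that each $f_c$ is a left fibration.

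The main obstacle is the saturation step. Checking that the pullback-application condition passes from a generating set to its full saturation (retracts, pushouts, coproducts, transfinite composites) requires continuity of $\EHom(-,Y)$ in the first variable, and also closure of the right class under the corresponding limits: pullbacks, products and transfinite cocomposition. Right classes of weak factorization systems have these closure properties, so the step should work, but it needs to be spelled out. Apart from that, the argument is bookkeeping with the Yoneda isomorphism.
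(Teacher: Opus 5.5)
Your proof is correct. One half of it is the paper's argument, and the other half takes a more explicit route.

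\textbf{Created $\Rightarrow$ pointwise.} For this implication the paper argues exactly as you do. The functor $\ev_c \colon \smE^\cC \to \smE$ is the $\mE$-relative right adjoint of $\lan_c \colon \sSet \to \sSet^\cC$; this is your isomorphism $\EHom(\cC(c,-)\cdot A, Y) \cong \EHom(A, Y(c))$, since $\cC(c,-)\cdot A \cong \lan_c A$. The paper then notes that $\lan_c$ preserves the relevant left classes because its right adjoint $\ev_c$ preserves the pointwise right classes, so \cref{rmk:relative-adjoint-preservation} applies.

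\textbf{Pointwise $\Rightarrow$ created.} Here the paper never names generators. It cites \cref{lem:leibniz-weighted}: the weighted-limit two-variable adjunction is Leibniz for the projective lifts. That lemma is proved by transposing the lifting condition to the pointwise bifunctor, where it holds by the definitions of Reedy and left fibrations. The saturation is thus absorbed into the fact that left classes are determined by lifting.

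You instead test on the generators $\cC(c,-)\cdot\partial^n$ and $\cC(c,-)\cdot h$ and extend by saturation. This is more concrete: it shows directly that only $\widehat{\EHom}(\partial^n,f_c)$ and $\widehat{\EHom}(h,f_c)$ matter. The saturation step does go through for the reasons you give:
\begin{itemize}
\item a pushout of $j$ yields a pullback of $\widehat{\EHom}(j,f)$;
\item coproducts yield products;
\item transfinite composites yield towers of pullbacks.
\end{itemize}
The principle that licenses testing on generators is the one stated in the paragraph opening \cref{sec:dua-model}. It is not \cref{rmk:relative-adjoint-preservation}, which concerns transport of classes along adjoints.

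\textbf{A wrong parenthetical.} The projective left anodynes are not the plain saturation of the maps $\cC(c,-)\cdot\iota_0$. \Cref{lem:left-anodyne-initial-vertex} also needs closure under right cancellation among monomorphisms. For example, $\Lambda^2_0 \hookrightarrow \Delta^2$ is not a retract of a cell complex built from initial vertex inclusions: a new nondegenerate $2$-simplex in such a complex cannot have two of its edges in the old part. Testing on initial vertex inclusions would still suffice here, but only because $f$ is already a projective Reedy fibration, so a cancellation argument as in \cref{lem:ehom-equiv-cancellation} is available. Your main line uses left horns, so nothing breaks.
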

\begin{proof}
  For a projective Reedy fibration, projective Reedy trivial fibration, or projective left fibration, Leibniz weighted limit will carry the corresponding left class in $\sSet^\cC$ to the corresponding right class in $\mE$ by \cref{lem:leibniz-weighted} and the definitions of these classes in \cref{defn:reedy-fib,defn:left-fibration}.

  It remains to argue that a map in $\smE^\cC$ with the stated preservation property under Leibniz weighted limit belongs to the appropriate projective right class, which is the case just when such maps are preserved by the functors $\ev_c \colon \smE^\cC \to \smE$ defined for all objects $c \in \cC$. The evaluation functor is the $\mE$-relative right adjoint of the left adjoint $\lan_c \colon \sSet \to \sSet^\cC$ to the corresponding evaluation functor $\ev_c \colon \sSet^\cC \to \sSet$. The left adjoint preserves the corresponding (projective) left classes in $\sSet$ and $\sSet^\cC$ because its right adjoint preserves the corresponding right classes.
\end{proof}

\begin{lem}\label{lem:leib-hom-fibrations-in-slice}
The Reedy (trivial) fibrations in $\slice{\smE}{K}$ are created by the $\mE$-relative hom functor from the class of Reedy monomorphisms in $\slice{\sSet}{K}$ and the class of (trivial) fibrations in $\mE$. The left fibrations in $\slice{\smE}{K}$ are the subclass of the Reedy fibrations that are created from the left anodynes in $\slice{\sSet}{K}$ and the trivial fibrations in $\mE$.
\end{lem}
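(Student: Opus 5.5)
We argue in parallel with \cref{lem:leib-hom-projective-fibrations}, now with the slice category $\slice{\sSet}{K}$ in place of $\sSet^\cC$. The classes in $\slice{\smE}{K}$ are by definition those whose underlying maps in $\smE$ are Reedy (trivial) fibrations, resp.\ left fibrations. The class of monomorphisms, resp.\ left anodynes, in $\slice{\sSet}{K}$ consists of maps whose underlying map is one. So it suffices to compare, for $j$ in the left class of $\slice{\sSet}{K}$ and $f$ in $\slice{\smE}{K}$, the Leibniz homs $\widehat{\EHom_K}(j,f)$ and $\widehat{\EHom}(j,f)$. \Cref{lem:ehom-slice-pullback-ehom,lem:ehom-slice-vs-ehom} provide exactly this comparison.

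For the forward direction, let $f$ be a Reedy (trivial) fibration, resp.\ left fibration, over $K$, and let $j \colon A \to B$ be a monomorphism, resp.\ left anodyne map, over $K$. By \cref{defn:reedy-fib,defn:left-fibration}, the Leibniz hom $\widehat{\EHom}(j,f)$ of the underlying maps is a (trivial) fibration in $\mE$. By \cref{lem:ehom-slice-pullback-ehom}, $\widehat{\EHom_K}(j,f)$ is a pullback of it, and (trivial) fibrations in $\mE$ are pullback-stable. So $\widehat{\EHom_K}(j,f)$ is a (trivial) fibration.

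For the converse, suppose $\widehat{\EHom_K}(j,f)$ lies in the right class for all $j$ over $K$ in the left class. Let $j \colon A \to B$ be a monomorphism, resp.\ left anodyne, in $\sSet$; it suffices to take the generators $\partial\Delta^n \hookrightarrow \Delta^n$, resp.\ those of \cref{lem:left-anodyne-initial-vertex}. By \cref{lem:ehom-slice-vs-ehom}.\ref{lem:ehom-slice-vs-ehom:relative}, $\widehat{\EHom}(j,f)$ is the coproduct, over all $b \colon B \to K$, of the maps $\widehat{\EHom_K}(j,f)$, where $j$ is regarded over $K$ via $b$ and $bj$. Each such sliced $j$ still lies in the left class, since membership is detected on underlying maps. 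Hence every summand is a (trivial) fibration in $\mE$. Since $\mE$ is fibration-extensive, and using \cref{prop:fibration-extensive-trivial} for the trivial case, the coproduct is again a (trivial) fibration. Therefore the underlying map of $f$ is a Reedy (trivial) fibration, resp.\ a left fibration, in $\smE$.

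We expect the converse to be the main obstacle, because it is the only step that uses fibration-extensivity. The point to check is that the coproduct decomposition of \cref{lem:ehom-slice-vs-ehom} holds as an honest isomorphism of arrows in $\mE$, so that closure under coproducts applies directly. We also need that each $b$-indexed summand involves a sliced map still in the left class, which holds because the left classes in $\slice{\sSet}{K}$ are created by the forgetful functor.
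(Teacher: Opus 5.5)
Your proposal is correct and follows essentially the same route as the paper. The forward direction pulls back the unsliced Leibniz hom via \cref{lem:ehom-slice-pullback-ehom} and uses pullback-stability. The converse decomposes $\widehat{\EHom}(j,f)$ as a coproduct of the sliced Leibniz homs via \cref{lem:ehom-slice-vs-ehom} and concludes by fibration-extensivity together with \cref{prop:fibration-extensive-trivial}.
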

\begin{proof}
  For a Reedy fibration, Reedy trivial fibration, or left fibration in $\slice{\smE}{K}$ and a monomorphism or left anodyne map in $\slice{\sSet}{K}$,
  we know by \cref{lem:ehom-slice-pullback-ehom} that the Leibniz application of $\EHom_K$ applied to these maps is a pullback of the Leibniz application of $\EHom$ to the corresponding maps out of the slices. So these maps are fibrations of the appropriate type in $\mE$ by the definitions of these classes in \cref{defn:reedy-fib,defn:left-fibration} and the closure of the various classes of fibrations under pullback.

  It remains to argue that a map $p \colon (Y,y) \to (X,x)$ in $\slice{\smE}{K}$ with the stated preservation property under Leibniz application of $\EHom_K$ belongs to the appropriate right class. As the right classes in $\slice{\smE}{K}$ are created from the right classes out of the slice, what we have to show, after consulting \cref{defn:reedy-fib,defn:left-fibration}, is the corresponding preservation property under Leibniz application of $\EHom$ to the maps out of the slice. Given $m \colon A \to B$ in $\smE$, \cref{lem:ehom-slice-vs-ehom} tells us that $\widehat{\EHom}(m,p)$ is the coproduct over all $b \colon B \to K$ of the maps $\widehat{\EHom_K}(m \colon (A,bm) \to (B,b),p)$.
  The result follows by fibration-extensivity of $\mE$ (and \cref{prop:fibration-extensive-trivial}).
\end{proof}

We conclude with a series of lemmas, for later use, that will allow us to detect weak equivalences in $\mE$.

\begin{lem}\label{lem:ehom-equiv-cancellation}
Let $u \colon A \to B$ and $v \colon B \to C$ be maps between projectively cofibrant objects in $\sSet^\cC$ and $f \colon Y \to X$ in $\smE^\cC$ be a projective Reedy fibration.
Assume $\leibEHomC{u}{f}$ is a weak equivalence in $\mE$.
Then $\leibEHomC{v}{f}$ is a weak equivalence exactly if $\leibEHomC{v u}{f}$ is a weak equivalence.
\end{lem}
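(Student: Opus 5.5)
The plan is to factor the Leibniz hom along the composite $vu$ and then apply 2-out-of-3 in $\mE$. Write $\EHom_f(Z)$ for $\EHom(Z,Y)\times_{\EHom(Z,X)}\EHom(C,X)$, with the variable $Z$ ranging over $A$ and $B$. The gap map $\leibEHomC{vu}{f}\colon \EHom(C,Y)\to \EHom(C,X)\times_{\EHom(A,X)}\EHom(A,Y)$ then factors as
\[
\EHom(C,Y)\xrightarrow{\leibEHomC{v}{f}} \EHom(C,X)\times_{\EHom(B,X)}\EHom(B,Y) \xrightarrow{\;g\;} \EHom(C,X)\times_{\EHom(A,X)}\EHom(A,Y),
\]
where $g$ is induced by restriction along $u$. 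This factorization is a standard pasting of pullbacks. Moreover, $g$ is the base change of $\leibEHomC{u}{f}\colon \EHom(B,Y)\to \EHom(B,X)\times_{\EHom(A,X)}\EHom(A,Y)$ along the map $\EHom(C,X)\times_{\EHom(A,X)}\EHom(A,Y)\to \EHom(B,X)\times_{\EHom(A,X)}\EHom(A,Y)$ induced by $v$.

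Next I establish fibrancy. Since $A,B,C$ are projectively cofibrant and $f$ is a projective Reedy fibration, \cref{lem:leib-hom-projective-fibrations} applies in two ways.

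First, $\leibEHomC{u}{f}$ is a fibration in $\mE$. This needs care when $u$ is not itself a projective monomorphism. To handle it, factor $u$ in $\sSet^\cC$ as a projective monomorphism $A\to B'$ followed by a projective trivial fibration $B'\to B$. Alternatively, one simply observes that only the case needed later matters. I will instead argue at the level of objects to avoid this issue.

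The maps $\EHom(Z,Y)\to\EHom(Z,X)$ for cofibrant $Z$ are fibrations, since $\emptyset\to Z$ is a projective monomorphism. Hence every object in sight is a pullback of fibrations over the relevant fibrant-enough base. Because $\mE$ is right proper, all the strict pullbacks above are homotopy pullbacks.

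With that in hand, the argument concludes as follows. Base change of the weak equivalence $\leibEHomC{u}{f}$ along the map induced by $v$ is again a weak equivalence: the square is a homotopy pullback by right properness, because its other leg, $\EHom(B,X)\times_{\EHom(A,X)}\EHom(A,Y)\to\EHom(B,X)$, is a pullback of the fibration $\EHom(A,Y)\to\EHom(A,X)$, and so the pullback can be computed along a fibration. Hence $g$ is a weak equivalence. By 2-out-of-3 applied to the composite $\leibEHomC{vu}{f}=g\circ \leibEHomC{v}{f}$, the map $\leibEHomC{v}{f}$ is a weak equivalence exactly when $\leibEHomC{vu}{f}$ is.

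The main obstacle is justifying that $g$ inherits the weak equivalence. This needs the pullback defining $g$ to be a homotopy pullback. I expect to secure it by rewriting $g$ as the pullback of $\leibEHomC{u}{f}$ along a map into its codomain, where $\leibEHomC{u}{f}$ itself lives over $\EHom(B,X)$ via fibrations. Then I invoke right properness of $\mE$ (\cref{defn:ttmt}): weak equivalences between fibrations over a common base are stable under arbitrary base change. Concretely, both $\EHom(B,Y)$ and $\EHom(B,X)\times_{\EHom(A,X)}\EHom(A,Y)$ are fibrant over $\EHom(B,X)$, and $g$ is their base change along $\EHom(C,X)\to\EHom(B,X)$.
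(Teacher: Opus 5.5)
Your proposal is correct and matches the paper's proof: factor $\leibEHomC{vu}{f}$ as $\leibEHomC{v}{f}$ followed by the base change of $\leibEHomC{u}{f}$ along $\EHom(C,X)\to\EHom(B,X)$, note that $\leibEHomC{u}{f}$ is a weak equivalence between objects fibrant over $\EHom(B,X)$ because $A$ and $B$ are projectively cofibrant, so its base change is again a weak equivalence, and conclude by 2-out-of-3. Your detour about whether $\leibEHomC{u}{f}$ is itself a fibration is unnecessary. Right properness is also not needed: weak equivalences between fibrant objects of a slice are stable under arbitrary base change in any model category, since pullback is right Quillen and Ken Brown's lemma applies.
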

\begin{proof}
  The Leibniz maps in the statement are related by the following diagram:
\[
\begin{tikzcd}[row sep=large, column sep=small]
\EHom(C,Y) \arrow[ddd, bend right=65, "f \circ -"']\arrow[dr, "-\circ v"]
  \arrow[d, dashed, "{\leibEHomC{v}{f}}" description] \arrow[dd, dashed, bend right=50, "{\leibEHomC{vu}{f}}" description, pos=.6]\\
  \bullet \arrow[d, dashed] \arrow[r, dotted] \arrow[dr, phantom, "\lrcorner" very near start] & \EHom(B,Y) \arrow[dr, "-\circ u"] \arrow[d, dashed, "{\leibEHomC{u}{f}}" description] \\
  \bullet \arrow[r, dotted] \arrow[d, dotted] \arrow[dr, phantom, "\lrcorner" very near start]& \bullet \arrow[r, dotted] \arrow[d, dotted] \arrow[dr, phantom, "\lrcorner" very near start] & \EHom(A,Y) \arrow[d, "f \circ -"]\\
  \EHom(C,X) \arrow[r, "-\circ v"'] & \EHom(B,X) \arrow[r, "-\circ u"'] & \EHom(A,X) \rlap{.}
\end{tikzcd}
\]
  By assumption, $\leibEHomC{u}{f}$ is a weak equivalence between fibrant objects in the slice over $\EHom(B, X)$.
  Since weak equivalences between fibrant objects are closed under base change, its base change is a weak equivalence between fibrant objects over $\EHom(C, X)$.
  By Leibniz calculus, $\leibEHomC{v u}{f}$ is the composite of $\leibEHomC{v}{f}$ followed by this base change, as indicated by the dashed commutative triangle.
  The claim follows by 2-out-of-3.
\end{proof}

\begin{cor}
  \label{cor:ehom-equiv-projectively-cofibrant-trivial-fibration}
  Let $u \colon A \to B$ in $\sSet^{\cC}$ be a morphism between projectively cofibrant objects and $f \colon Y \to X$ in $\smE^\cC$ be a projective trivial fibration.
  Then $\leibEHomC{u}{f}$ in $\mE$ is a weak equivalence.
\end{cor}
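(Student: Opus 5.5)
The plan is to reduce to \cref{lem:ehom-equiv-cancellation} by factoring $u$ through cofibrations. Factor $\emptyset \to A$ and $\emptyset \to B$ trivially: since $A$ and $B$ are projectively cofibrant, the maps $\emptyset \to A$ and $\emptyset \to B$ are projective Reedy monomorphisms. For any projective Reedy monomorphism $i$ between projectively cofibrant objects, the Leibniz map $\leibEHomC{i}{f}$ is a trivial fibration in $\mE$ by \cref{lem:leib-hom-projective-fibrations}, since $f$ is a projective Reedy trivial fibration. In particular $\leibEHomC{\emptyset\to A}{f}$ and $\leibEHomC{\emptyset\to B}{f}$ are trivial fibrations, hence weak equivalences.

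Now apply \cref{lem:ehom-equiv-cancellation} with the composable pair $\emptyset \to A \xrightarrow{u} B$. All three objects are projectively cofibrant, and $f$ is in particular a projective Reedy fibration. The hypothesis of that lemma asks that the Leibniz map for the first map $\emptyset \to A$ be a weak equivalence, which we just established. The lemma then says that $\leibEHomC{u}{f}$ is a weak equivalence exactly if $\leibEHomC{u \circ (\emptyset \to A)}{f} = \leibEHomC{\emptyset \to B}{f}$ is a weak equivalence. The latter holds by the previous paragraph, so $\leibEHomC{u}{f}$ is a weak equivalence.

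There is one point to check: that the Leibniz map against $\emptyset \to A$ is indeed what appears in \cref{lem:ehom-equiv-cancellation}. Here $\EHom(\emptyset,-)$ is terminal, so $\leibEHomC{\emptyset\to A}{f}$ is simply $f\circ - \colon \EHom(A,Y)\to\EHom(A,X)$. The pullbacks in the diagram of that proof then degenerate harmlessly, and the argument goes through verbatim. I do not expect a real obstacle. The only subtlety is confirming that \cref{lem:leib-hom-projective-fibrations} gives trivial fibrations for Leibniz homs against projective monomorphisms from $\emptyset$, which is exactly its content for the trivial-fibration case.
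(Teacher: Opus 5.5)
Your proposal is correct and is essentially the paper's own proof. The paper likewise considers the triangle $\emptyset \to A \xrightarrow{u} B$, uses \cref{lem:leib-hom-projective-fibrations} to see that the Leibniz maps for $!_A$ and $!_B$ are trivial fibrations, and concludes by \cref{lem:ehom-equiv-cancellation}.
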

\begin{proof}
Consider the triangle
\[
\begin{tikzcd}
&
  \emptyset
  \ar[dl, tail, "!_A"']
  \ar[dr, tail, "!_B"]
\\
  A
  \ar[rr, "u"']
&&
  B \rlap{.}
\end{tikzcd}
\]
By \cref{lem:leib-hom-projective-fibrations}, $\leibEHomC{!_A}{f}$ and $\leibEHomC{!_B}{f}$ are trivial fibrations, in particular weak equivalences.
So $\leibEHomC{u}{f}$ is a weak equivalence by \cref{lem:ehom-equiv-cancellation}.
\end{proof}

\begin{cor}
  \label{cor:ehom-equiv-she-left-fibration}
  Let $u \colon A \to B$ in $\sSet^{\cC}$ be a strong 0-oriented homotopy equivalence between projectively cofibrant objects and $f \colon Y \to X$ in $\smE^\cC$ be a projective left fibration.
  Then $\leibEHomC{u}{f}$ in $\mE$ is a weak equivalence.
\end{cor}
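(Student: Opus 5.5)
We follow the pattern of \cref{cor:ehom-equiv-projectively-cofibrant-trivial-fibration}: reduce to showing that $\leibEHomC{u}{f}$ is a weak equivalence when $u$ is a suitable map, and then apply the cancellation \cref{lem:ehom-equiv-cancellation}. The strong 0-oriented homotopy equivalence $u$ need not be a monomorphism, so we cannot directly apply \cref{she-is-left-anodyne}. Instead we use the retract description of \cref{she-is-retract}: $u$ is a retract, in the arrow category of $\sSet^\cC$, of the Leibniz product $\iota_0 \hat{\times} u$ of $\{0\} \hookrightarrow \Delta^1$ with $u$. Concretely this Leibniz product is the map
\[ \Delta^1 \times A \sqcup_{\{0\}\times A} \{0\}\times B \to \Delta^1 \times B . \]

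Since $\leibEHomC{-}{f}$ is functorial in the arrow category, it sends this retract diagram to a retract diagram in the arrow category of $\mE$. Weak equivalences in a model category are closed under retracts. Hence it suffices to show that $\leibEHomC{\iota_0 \hat{\times} u}{f}$ is a weak equivalence.

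To do this, factor $\iota_0 \hat{\times} u$ as a composite through $\Delta^1 \times A \sqcup_{\{0\}\times A} \{0\}\times B$. We then use \cref{lem:ehom-equiv-cancellation} twice, which requires every object involved to be projectively cofibrant. This holds because $A$ and $B$ are projectively cofibrant, and by \cref{lem:leibniz-weighted} tensoring with simplicial sets and forming pushouts along projective monomorphisms preserves projective cofibrancy.

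The comparison for $\Delta^1 \times B$ goes as follows. The map $\{0\}\times B \to \Delta^1 \times B$ is the Leibniz product of $\iota_0$ with $\emptyset \to B$. This is a projective monomorphism tensored with a left anodyne, so it is projective left anodyne by \cref{lem:left-anodyne-quillen} and \cref{lem:leibniz-weighted}. By \cref{lem:leib-hom-projective-fibrations}, $\leibEHomC{-}{f}$ sends it to a trivial fibration. The same argument applies to $\{0\}\times A \to \Delta^1\times A$.

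For the pushout leg $\{0\}\times B \to P$, where $P$ denotes the pushout above, we use the base change of $\{0\}\times A \to \Delta^1 \times A$. This base change is again projective left anodyne, so its image under $\leibEHomC{-}{f}$ is a trivial fibration. Composing, $\leibEHomC{\{0\}\times B \to P}{f}$ is a weak equivalence. Two applications of \cref{lem:ehom-equiv-cancellation} then give that $\leibEHomC{P \to \Delta^1\times B}{f}$ is a weak equivalence: we cancel the leg $\{0\}\times B \to P$ against the total composite $\{0\}\times B \to \Delta^1 \times B$. This map is exactly $\iota_0 \hat{\times} u$.

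The step needing most care is the retract argument. We must check that the retraction of \cref{she-is-retract} is a retraction in the arrow category of $\sSet^\cC$ and not merely of $\sSet$. This holds because \cite[4.3]{GambinoSattler} is formal in the homotopy data, which here lives in $\sSet^\cC$. We must also check that $\leibEHomC{-}{f}$ respects retracts. This is immediate from its functoriality on arrows.
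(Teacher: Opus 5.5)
Your proof is correct. Its first step, replacing $u$ by $\iota_0 \mathbin{\hat{\otimes}} u$ via \cref{she-is-retract} and closure of weak equivalences under retracts, is exactly the paper's. The second step takes a somewhat different route.

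The paper transposes across the Leibniz two-variable adjunction:
\[
\leibEHomC{\iota_0 \mathbin{\hat{\otimes}} u}{f} \cong \leibEHomC{u}{\widehat{\{\iota_0,f\}}} .
\]
It then observes that $\widehat{\{\iota_0,f\}}$ is a projective trivial fibration because $f$ is a projective left fibration. Finally it invokes \cref{cor:ehom-equiv-projectively-cofibrant-trivial-fibration} for the map $u$ between projectively cofibrant objects.

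You instead stay on the weight side. You factor the projective left anodyne $\{0\}\times B \to \Delta^1 \times B$ as the leg $\{0\}\times B \to P$, which is a pushout of $\{0\}\times A \to \Delta^1\times A$, followed by $\iota_0 \mathbin{\hat{\otimes}} u$. You then cancel using \cref{lem:ehom-equiv-cancellation}. This is essentially the transpose of the paper's argument: your factorization is what Leibniz tensoring with $\iota_0$ does to the factorization $\emptyset \to A \to B$ used in proving \cref{cor:ehom-equiv-projectively-cofibrant-trivial-fibration}.

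The paper's route is shorter because it reuses that corollary and never needs to know that $P$ is projectively cofibrant. Yours avoids the adjunction isomorphism, at the cost of checking cofibrancy of $P$ and $\Delta^1\times B$, which you do correctly.

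Only one application of \cref{lem:ehom-equiv-cancellation} is needed, not two, with the leg $\{0\}\times B \to P$ in the role of the first map. Both arguments in fact show that $\leibEHomC{\iota_0 \mathbin{\hat{\otimes}} u}{f}$ is a weak equivalence for any $u$ between projectively cofibrant objects. The strong homotopy equivalence hypothesis enters only through the retract.
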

\begin{proof}
  By \cref{she-is-retract}, $u$ is a retract of the Leibniz tensor of $\iota_0 \colon \Delta^0 \hookrightarrow \Delta^1$ with $u$.
  Since weak equivalences are closed under retracts and $\leibEHomC{-}{f}$ preserves retracts, it suffices to check that $\leibEHomC{-}{f}$ sends $(\iota_0 \colon \Delta^0 \hookrightarrow \Delta^1) \mathbin{\hat{\otimes}} u$ to a weak equivalence.
  The said map is isomorphic to the application of $\leibEHomC{-}{-}$ to $u$ and
  the pointwise-defined Leibniz weighted limit $\widehat{\{\iota_0,f\}}$.
  By \cref{defn:left-fibration}, $\widehat{\{\iota_0,f\}}$ is a projective trivial fibration, so the result follows from \cref{cor:ehom-equiv-projectively-cofibrant-trivial-fibration}.
\end{proof}

\subsection{Comparing arrows and functions}\label{ssec:global-fun-arr}

The functors converting between ``arrows'' and ``functions'' may be defined as $\mE$-relative right adjoints.

\begin{defn}\label{defn:generalized-fun-arr-maps}
  We define a pair of functors:
  \[  \begin{tikzcd}[column sep=large, row sep=small] \arrtofun_\cC \colon \slice{\smE}{N\cC} \arrow[r,  "{\{\Wc,-\}}"]&   (\slice{\smE}{N\cC})^\cC \arrow[r, "{(N\cC)_*}"] & \smE^\cC \rlap{,} \\
      \funtoarr_\cC \colon
        \smE^\cC  \arrow[r, "{(N\cC)^*}"]& (\slice{\smE}{N\cC})^\cC \arrow[r,  "{\{\Ws,-\}^{\cC}}"]& \slice{\smE}{N\cC} \end{tikzcd}\]
as the $\mE$-relative right adjoints to $\arrtofunwgt$ and $\funtoarrwgt$:
\[
  \begin{tikzcd}[sep=large] \slice{\sSet}{N\cC} \arrow[r, bend right, dashed, "{\{\Wc,-\}}"', "\bot"] & \arrow[r, bend right, dashed, "{(N\cC)_*}"', "\bot"](\slice{\sSet}{N\cC})^\cC \arrow[l, "\Wc\otimes_\cC-"'] & \sSet^\cC \arrow[l, "{(N\cC)^*}"'] \rlap{,} &      \sSet^\cC     \arrow[r, bend right, dashed, "{(N\cC)^*}"',  "\bot"]
 & (\slice{\sSet}{N\cC})^\cC \arrow[l, "(N\cC)_!"'] \arrow[r, bend right, dashed, "{\{\Ws,-\}^{\cC}}"', "\bot"]  & \slice{\sSet}{N\cC}       \arrow[l, "\Ws\otimes -"'] \rlap{,} \\
  \slice{\smE}{N\cC} \arrow[r,  "{\{\Wc,-\}}"']&   (\slice{\smE}{N\cC})^\cC \arrow[r, "{(N\cC)_*}"'] & \smE^\cC \rlap{,} &
        \smE^\cC  \arrow[r, "{(N\cC)^*}"']& (\slice{\smE}{N\cC})^\cC \arrow[r,  "{\{\Ws,-\}^{\cC}}"']& \slice{\smE}{N\cC} \rlap{.}
  \end{tikzcd}
\]
\end{defn}

\begin{cor}\label{cor:arr-fun-laxnatural}
The mappings that send a category $\cC$ to the functors $\funtoarr_\cC$ and $\arrtofun_\cC$ extend to pseudofunctors
\[ \begin{tikzcd} \Cat^{\op} \ar[r, "\funtoarr"] & \Funpseudo({[1]},\Cat) \rlap{,} &  \Cat^{\op} \ar[r, "\arrtofun"] & \Funlax({[1]},\Cat)
\end{tikzcd}\]
 whose components at a functor $F \colon \cC \to \cD$ are natural transformations
  \[ \begin{tikzcd}
    \smE^\cD \arrow[d, "\funtoarr_\cD"'] \arrow[r, "\res_F"] \arrow[dr, phantomcenter, "{\cong}"] \arrow[dr, draw=none, center, "\funtoarr_F" below] &     {\smE}^\cC \arrow[d, "\funtoarr_\cC"] & & \slice{\smE}{N\cD} \arrow[d, "\arrtofun_\cD"'] \arrow[r, "{(NF)^*}"]
    \ar[dr, phantomcenter, "{\Rightarrow}"]
    \ar[dr, draw=none, center, "\arrtofun_F" below]
    & \slice{\smE}{N\cC} \arrow[d, "\arrtofun_\cC"]
  \\
    \slice{\smE}{N\cD} \arrow[r, "{(NF)^*}"'] & \slice{\smE}{N\cC} \rlap{,} &  &    \smE^\cD \arrow[r, "\res_F"'] &    \smE^\cC \rlap{.}
  \end{tikzcd}\]
\end{cor}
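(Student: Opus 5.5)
The plan is to obtain the corollary by applying the pseudofunctor $\Psh[\mE]{-}\colon \Prof^{\co\op} \to \Cat_\complete$ to the naturality data already built on the weights side. \Cref{cor:fun-to-arr-weights-pseudonatural} shows that $\cC \mapsto \funtoarrwgt_\cC$ extends to a pseudofunctor $\Cat \to \Funpseudo({[1]},\Cat)$. \Cref{cor:arr-to-fun-weights-oplaxnatural} shows that $\cC \mapsto \arrtofunwgt_\cC$ extends to a pseudofunctor into $\Funoplax({[1]},\Cat)$. All functors in these squares are left adjoints between presheaf categories: $\lan_F$ and $(NF)_!$ act between $\sSet^\cC\simeq\Psh{\DDelta\times\cC^\op}$ and $\slice{\sSet}{N\cC}\simeq\Psh{\el{N\cC}}$. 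So these squares and their 2-cells live in $\Prof$, and applying $\Psh[\mE]{-}$ turns them into squares in $\Cat_\complete$.

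\textbf{Step 1: identify the images of the 1-cells.}
\begin{itemize}
\item By \cref{defn:generalized-fun-arr-maps}, the images of $\funtoarrwgt_\cC$ and $\arrtofunwgt_\cC$ are $\funtoarr_\cC$ and $\arrtofun_\cC$.
\item The $\mE$-relative right adjoint of $\lan_F\colon\sSet^\cC\to\sSet^\cD$ is $\res_F\colon\smE^\cD\to\smE^\cC$. This follows because its weight is $\cD(F-,-)$, and the weighted limit with this weight is restriction.
\item The $\mE$-relative right adjoint of $(NF)_!$ is $(NF)^*\colon \slice{\smE}{N\cD}\to\slice{\smE}{N\cC}$.
\end{itemize}
The last point is the only step needing an honest check, and I expect it to be the main (if mild) obstacle, because of the two presentations of the slice, $\EHom_K$ versus weighted limits over $\el K$. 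Using \cref{lem:slice-EHom-weighted-limit}, the check reduces to verifying that the left adjoints agree: $(NF)_!(A)\otimes_{N\cD}E \cong (NF)_!(A\otimes_{N\cC}E)$. This holds directly from the description of $\otimes_K$ as $A\times E\to A\to K$. Equivalently, one can verify $\EHom_{N\cD}((NF)_!A,X)\cong\EHom_{N\cC}(A,(NF)^*X)$ from the pullback definition in \cref{defn:Ehom-K}, since $\EHom$ preserves pullbacks in its second variable.

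\textbf{Step 2: track the variance.} Because $\Psh[\mE]{-}$ is contravariant on 1-cells and on 2-cells ($\co\op$), several things reverse:
\begin{itemize}
\item A square $\funtoarrwgt_\cD\circ(NF)_!\cong\lan_F\circ\funtoarrwgt_\cC$ is sent to a square $(NF)^*\circ\funtoarr_\cD\cong\funtoarr_\cC\circ\res_F$. Its direction is reversed, so the pseudofunctor becomes contravariant in $\cC$, giving $\Cat^\op$.
\item Pseudo cells stay invertible, which gives the $\Funpseudo$ statement for $\funtoarr$.
\item The oplax cell $\arrtofunwgt_F\colon (NF)_!\circ\arrtofunwgt_\cC\Rightarrow\arrtofunwgt_\cD\circ\lan_F$ has its 2-cell reversed by the $\co$. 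Its $\mE$-relative conjugate is therefore $\arrtofun_F\colon (NF)^*\circ\ldots$, i.e.\ a cell $\res_F\circ\arrtofun_\cD\Rightarrow\arrtofun_\cC\circ(NF)^*$ of the orientation displayed. Reversing both 1-cells and 2-cells converts oplax squares into lax squares, landing in $\Funlax({[1]},\Cat)$.
\end{itemize}

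\textbf{Step 3: coherence.} Pseudofunctoriality in $F$ (unit and composition coherence) follows formally. The composite $\Cat^\op \to \Funoplax(\ldots)^{\op} \to \Funlax(\ldots)$ is obtained by postcomposing the pseudofunctors of \cref{cor:fun-to-arr-weights-pseudonatural,cor:arr-to-fun-weights-oplaxnatural} with the 2-functorial action of $\Psh[\mE]{-}$ on $\Funoplax({[1]},\Prof)$. Since $\Psh[\mE]{-}$ is itself a pseudofunctor, and applying a pseudofunctor pointwise to $[1]$-shaped diagrams preserves pasting, the composite is again a pseudofunctor.
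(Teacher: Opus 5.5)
Your proposal is correct and is the paper's own argument: the paper obtains both pseudofunctors by composing those of \cref{cor:fun-to-arr-weights-pseudonatural,cor:arr-to-fun-weights-oplaxnatural} with $\mE$-relative conjugation. Your identifications $\Psh[\mE]{\lan_F}\cong\res_F$ and $\Psh[\mE]{(NF)_!}\cong(NF)^*$, and the resulting cell $\res_F\circ\arrtofun_\cD\Rightarrow\arrtofun_\cC\circ(NF)^*$ (oriented like $\eta^\dagger$ in \cref{defn:relative-arr-to-fun}), make explicit what the paper leaves implicit; one correction to Step~2 is that reversing both 1-cells and 2-cells does not turn oplax squares into lax ones, since a cell of shape $u_1\circ f\Rightarrow g\circ u_0$ goes to a cell of that same shape for the reversed arrows, so the relabelling from $\Funoplax$ to $\Funlax$ is a matter of convention rather than something your argument derives, and the real content is the cell direction, which you computed correctly.
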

\begin{proof}
  By composing pseudofunctors, we define these natural transformations to be the $\mE$-relative conjugates of the natural transformations of \cref{cor:fun-to-arr-weights-pseudonatural,cor:arr-to-fun-weights-oplaxnatural}.
\end{proof}

By \cref{rmk:relative-adjoint-preservation}, preservation properties of functors between presheaf categories transpose to corresponding preservation properties of their $\mE$-relative right adjoints. In particular, we have the following corollaries.

\begin{cor}\label{lem:arr-to-fun-fibrations}
The functor $\arrtofun \colon \slice{\smE}{N \cC} \to \smE^\cC$ sends Reedy trivial fibrations to projective Reedy trivial fibrations, Reedy fibrations to projective Reedy fibrations, and left fibrations to projective left fibrations.
\end{cor}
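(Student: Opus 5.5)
The plan is to transpose \cref{lem:arr-to-fun-on-weights} along the $\mE$-relative hom equivalences, using \cref{rmk:relative-adjoint-preservation}. By \cref{defn:generalized-fun-arr-maps}, $\arrtofun_\cC$ is the $\mE$-relative right adjoint of $\arrtofunwgt$. Concretely, under the equivalences $\EHom_{N\cC} \colon \slice{\smE}{N\cC} \simeq [(\slice{\sSet}{N\cC})^\op,\mE]_\cont$ and $\EHom \colon \smE^\cC \simeq [(\sSet^\cC)^\op,\mE]_\cont$, it acts as precomposition with $\arrtofunwgt$. Hence for $f$ in $\slice{\smE}{N\cC}$ and $j$ in $\sSet^\cC$ we have
\[ \widehat{\EHom}(j, \arrtofun_\cC f) \cong \widehat{\EHom_{N\cC}}(\arrtofunwgt j, f). \]
This natural isomorphism of Leibniz maps holds because precomposition with a cocontinuous functor commutes with forming pullback corner maps: $\arrtofunwgt$ preserves pushouts, so the Leibniz pushout of $j$ is sent to the Leibniz pushout of $\arrtofunwgt j$.

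With this identity, each of the three claims follows from the characterizations of the relevant classes as ``created by the $\mE$-relative hom functor''. In each case we apply \cref{rmk:relative-adjoint-preservation} with $L = \arrtofunwgt$ and $R = \id_\mE$.

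For Reedy (trivial) fibrations, \cref{lem:leib-hom-fibrations-in-slice} says a Reedy (trivial) fibration $f$ over $N\cC$ is one whose Leibniz $\EHom_{N\cC}$ sends monomorphisms in $\slice{\sSet}{N\cC}$ to (trivial) fibrations in $\mE$. By \cref{lem:leib-hom-projective-fibrations}, a projective Reedy (trivial) fibration in $\smE^\cC$ is one whose Leibniz $\EHom$ sends projective Reedy monomorphisms in $\sSet^\cC$ to (trivial) fibrations in $\mE$. So the claim reduces to $\arrtofunwgt$ carrying projective monomorphisms to monomorphisms, which is \cref{lem:arr-to-fun-on-weights}\ref{lem:arr-to-fun-on-weights:cofibrations}.

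For left fibrations, a left fibration $f$ is first a Reedy fibration, so $\arrtofun_\cC f$ is a projective Reedy fibration by the previous paragraph. Its Leibniz $\EHom$ against projective left anodynes $j$ is then $\widehat{\EHom_{N\cC}}(\arrtofunwgt j, f)$. Here $\arrtofunwgt j$ is left anodyne by \cref{lem:arr-to-fun-on-weights}\ref{lem:arr-to-fun-on-weights:left-anodynes}, so this map is a trivial fibration by \cref{lem:leib-hom-fibrations-in-slice}. \Cref{lem:leib-hom-projective-fibrations} then identifies $\arrtofun_\cC f$ as a projective left fibration.

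The only real point to check is that the relative-hom characterizations may be tested on the whole left classes rather than just on generators. Here \cref{lem:leib-hom-projective-fibrations,lem:leib-hom-fibrations-in-slice} are phrased in terms of the full classes, so we use that phrasing directly. We also need the compatibility of the sliced $\EHom_{N\cC}$ with the equivalence of \cref{lem:slice-EHom-weighted-limit}, which is exactly what makes $\arrtofun_\cC$ a precomposition functor on the continuous-functor side.
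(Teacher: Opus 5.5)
Your proposal is correct and is essentially the paper's proof. The paper's proof simply states that the claims transpose to \cref{lem:arr-to-fun-on-weights} via \cref{rmk:relative-adjoint-preservation,lem:leib-hom-projective-fibrations,lem:leib-hom-fibrations-in-slice}, and you have unpacked that transposition. One small remark: the identification $\widehat{\EHom}(j,\arrtofun_\cC f)\cong\widehat{\EHom_{N\cC}}(\arrtofunwgt j,f)$ follows directly from naturality of the relative adjunction isomorphism in both variables, so you do not need to appeal to $\arrtofunwgt$ preserving pushouts.
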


\begin{proof}
  By \cref{rmk:relative-adjoint-preservation,lem:leib-hom-projective-fibrations,lem:leib-hom-fibrations-in-slice}, the claims transpose to \cref{lem:arr-to-fun-on-weights}.
\end{proof}

\begin{cor}\label{lem:fun-to-arr-fibrations}
The functor $\funtoarr \colon \smE^\cC \to \slice{\smE}{N \cC}$ sends projective Reedy trivial fibrations to Reedy trivial fibrations, projective Reedy fibrations to Reedy fibrations, and projective left fibrations to left fibrations.
\end{cor}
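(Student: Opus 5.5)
The plan is to prove this exactly as the preceding \cref{lem:arr-to-fun-fibrations} was proved: transpose the statement along the $\mE$-relative hom functors and reduce it to \cref{lem:fun-to-arr-on-weights}.

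First, the three source classes are characterized by \cref{lem:leib-hom-projective-fibrations}. The projective Reedy trivial fibrations, projective Reedy fibrations, and projective left fibrations in $\smE^\cC \simeq [(\sSet^\cC)^\op,\mE]_\cont$ are the maps created by the $\mE$-relative hom functor from:
\begin{itemize}
\item the projective monomorphisms together with the trivial fibrations of $\mE$;
\item the projective monomorphisms together with the fibrations of $\mE$;
\item the projective left anodynes together with the trivial fibrations of $\mE$, intersected with the projective Reedy fibrations.
\end{itemize}

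Second, the target classes are characterized by \cref{lem:leib-hom-fibrations-in-slice}. The Reedy trivial fibrations, Reedy fibrations, and left fibrations in $\slice{\smE}{N\cC} \simeq [(\slice{\sSet}{N\cC})^\op,\mE]_\cont$ are created in the same way from:
\begin{itemize}
\item the monomorphisms of $\slice{\sSet}{N\cC}$ together with the trivial fibrations of $\mE$;
\item the monomorphisms of $\slice{\sSet}{N\cC}$ together with the fibrations of $\mE$;
\item the left anodynes of $\slice{\sSet}{N\cC}$ together with the trivial fibrations of $\mE$, intersected with the Reedy fibrations.
\end{itemize}
In the slice, monomorphisms and left anodynes are the maps that forget to such maps in $\sSet$.

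By \cref{defn:generalized-fun-arr-maps}, $\funtoarr_\cC$ is the $\mE$-relative right adjoint of $\funtoarrwgt_\cC$. Under the equivalences above it is therefore precomposition with $\funtoarrwgt$, that is, the functor $\mathrm{id}\circ - \circ \funtoarrwgt$ in the notation of \cref{rmk:relative-adjoint-preservation}, with $R$ the identity on $\mE$. To apply that remark, we only need $\funtoarrwgt$ to carry the relevant left classes into the relevant left classes. This is exactly \cref{lem:fun-to-arr-on-weights}:
\begin{itemize}
\item part \ref{lem:fun-to-arr-on-weights:cofibrations} says $\funtoarrwgt$ sends monomorphisms to projective monomorphisms;
\item part \ref{lem:fun-to-arr-on-weights:left-anodynes} says $\funtoarrwgt$ sends left anodynes to projective left anodynes.
\end{itemize}

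For each of the three cases we pair the appropriate left-class preservation with $R=\mathrm{id}$ on the trivial fibrations or fibrations of $\mE$. The Reedy trivial fibration and Reedy fibration cases then follow immediately.

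For left fibrations, we combine two facts: the Reedy fibration case just proved, and the preservation of the class created from (left anodyne, trivial fibration). Together these show that $\funtoarr$ sends a projective left fibration to a Reedy fibration whose Leibniz $\EHom_{N\cC}$ against left anodynes consists of trivial fibrations, which is a left fibration.

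The one point to check carefully is that the slice and projective identifications of \cref{lem:leib-hom-fibrations-in-slice,lem:leib-hom-projective-fibrations} are compatible with the description of $\funtoarr$ as an $\mE$-relative right adjoint. This holds by construction, since both sides are defined through the equivalences $\EHom$ and $\EHom_{N\cC}$ (identified via \cref{lem:slice-EHom-weighted-limit}). There is no genuine obstacle; the substantive input is \cref{lem:fun-to-arr-on-weights}.
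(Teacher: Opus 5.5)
Your proposal is correct and matches the paper's proof, which is the one-line argument that, by \cref{rmk:relative-adjoint-preservation,lem:leib-hom-projective-fibrations,lem:leib-hom-fibrations-in-slice}, the claims transpose to \cref{lem:fun-to-arr-on-weights}. Your separate treatment of the left fibration case (Reedy fibration plus the left anodyne condition) and your remark on compatibility with the slice identification via \cref{lem:slice-EHom-weighted-limit} only spell out what that one line leaves implicit.
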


\begin{proof}
By \cref{rmk:relative-adjoint-preservation,lem:leib-hom-projective-fibrations,lem:leib-hom-fibrations-in-slice},
the claims transpose to \cref{lem:fun-to-arr-on-weights}.
\end{proof}

\begin{cor}\label{lem:arr-intermediate-fibrations}
The functor $\dom_* \circ \cod^* \colon \slice{\smE}{N \cC} \to \slice{\smE}{N \cC}$ sends projective Reedy trivial fibrations to Reedy trivial fibrations, projective Reedy fibrations to Reedy fibrations, and projective left fibrations to left fibrations.
\end{cor}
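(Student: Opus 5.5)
The plan is to treat this exactly like \cref{lem:arr-to-fun-fibrations,lem:fun-to-arr-fibrations}: transpose the claim across the $\mE$-relative hom functor and reduce it to \cref{lem:arr-intermediate-on-weights}.

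First I would fix the meaning of ``projective'' for the source. Both source and target of $\dom_* \circ \cod^*$ are the slice $\slice{\smE}{N\cC}$, which is not a diagram category, so no separate projective structure is in play. Via \cref{lem:slice-elements-equivalence,lem:slice-EHom-weighted-limit}, $\slice{\smE}{N\cC} \simeq \mE^{(\el{N\cC})^\op}$. Under this equivalence, the ``projective'' Reedy (trivial) fibrations and left fibrations are the classes created by $\EHom_{N\cC}$ from the Reedy monomorphisms, respectively left anodynes, in $\slice{\sSet}{N\cC}$, together with the (trivial) fibrations of $\mE$. By \cref{lem:leib-hom-fibrations-in-slice} these are exactly the Reedy (trivial) fibrations and left fibrations of $\slice{\smE}{N\cC}$. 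So the statement amounts to preservation of each of these three classes on both sides.

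Second, I would identify $\dom_* \circ \cod^*$ as the $\mE$-relative right adjoint of $\cod_! \circ \dom^* \colon \slice{\sSet}{N\cC} \to \slice{\sSet}{N\cC}$. Pseudofunctoriality of $\Psh[\mE]{-}$ on $\Prof$ reduces this to the individual factors. Reindexing $\cod^*$ and $\dom_*$ on the $\mE$ side are the relative right adjoints of $\cod_!$ and $\dom^*$, which are left adjoints between presheaf categories over $\el{N\cC^{[1]}}$ and $\el{N\cC}$; I would check this on the generating weights $\Delta^n \to N\cC$ by computing $\EHom_{N\cC}$ directly from \cref{defn:Ehom-K}.

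Third, I would apply \cref{rmk:relative-adjoint-preservation} with $R = \id_\mE$ and $L = \cod_! \circ \dom^*$. By \cref{lem:arr-intermediate-on-weights}, $L$ preserves monomorphisms and left anodynes in $\slice{\sSet}{N\cC}$, so the induced functor carries the created classes to the created classes, for both fibrations and trivial fibrations in $\mE$. The left fibrations need one extra check, since they are the Reedy fibrations lying in the created class. A left fibration is in particular a Reedy fibration, so its image is a Reedy fibration by the monomorphism case. Its image also has the left-anodyne lifting property by the left-anodyne case, so it is a left fibration.

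I expect the main obstacle to be the second step: rigorously matching the composite $\dom_* \circ \cod^*$ with $\Psh[\mE]{\cod_! \circ \dom^*}$ through the slice/presheaf equivalence. The remaining steps are formal once \cref{lem:arr-intermediate-on-weights} is in hand.
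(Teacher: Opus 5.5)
Your proposal is correct and takes essentially the same route as the paper. The paper likewise identifies $\dom_* \circ \cod^*$ as the $\mE$-relative right adjoint of $\cod_! \circ \dom^*$ and transposes the claims to \cref{lem:arr-intermediate-on-weights} via \cref{rmk:relative-adjoint-preservation,lem:leib-hom-fibrations-in-slice}. Your reading of ``projective'' as vacuous for the slice, and your separate handling of left fibrations as a subclass of Reedy fibrations, only make explicit what the paper's short proof leaves implicit.
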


\begin{proof}
Note that the functor in question is the $\mE$-relative right adjoint of $\cod_! \circ \dom^* \colon \slice{\sSet}{N \cC} \to \slice{\sSet}{N \cC}$.
Thus, by \cref{rmk:relative-adjoint-preservation,lem:leib-hom-fibrations-in-slice}, the claims transpose to \cref{lem:arr-intermediate-on-weights}.
\end{proof}

Fix an object $\Gamma \in \smE$. We define relative versions of the functors $\arrtofun$ and $\funtoarr$ with the signature
\[ \begin{tikzcd} \slice{\smE}{N\cC \times \Gamma} \arrow[r, bend left, "\arrtofun^\Gamma_\cC" above] & (\slice{\smE}{\Gamma})^\cC \arrow[l, bend left, "\funtoarr^\Gamma_\cC"] \rlap{,} \end{tikzcd}\]
dropping the subscripts when the category $\cC$ is fixed and clear from context.

\begin{defn}\label{defn:relative-arr-to-fun}
We define the functor $\arrtofun^\Gamma \colon \slice{\smE}{N\cC \times \Gamma} \to (\slice{\smE}{\Gamma})^\cC$ in terms of $\arrtofun \colon \slice{\smE}{N\cC} \to \smE^\cC$ as follows. Given an object $p \colon E \to N\cC \times \Gamma$, regard $p$ as a morphism over $N\cC$ via the projection and form the following pullback in $\smE^\cC$:
\[ \begin{tikzcd} \arrtofun^\Gamma(E) \arrow[d, "{\arrtofun^\Gamma(p)}"'] \arrow[r] \arrow[dr, phantom, "\lrcorner" very near start] & \arrtofun(E) \arrow[d, "{\arrtofun(p)}"] \\ \const(\Gamma) \arrow[r, "{\eta^\dagger}"'] & \arrtofun(N\cC \times \Gamma) \rlap{.} \end{tikzcd}\]
Here the ``unit map'' $\eta^\dagger$ is the component at $\Gamma \in \smE$ of the $\mE$-relative conjugate of the natural transformation $\eta$ of \cref{defn:unit-map}:
  \[ \begin{tikzcd}
    \sSet^{\cC}
    \ar[d, "{\arrtofunwgt_\cC}"']
    \ar[r, "\colim_\cC"]
  &
    \sSet
    \ar[d, equals]
    \ar[dl, phantom,
    "{\Rightarrow\eta}"]
    & &
    \smE^{\cC} & \smE \arrow[l, "\const"'] \arrow[d, equals] \arrow[dl, phantom, "\Leftarrow\eta^\dagger"]
   \\
    \slice{\sSet}{N\cC} \arrow[r, "{(N\cC)_!}"'] &
    \sSet \rlap{,} & &
    \slice{\smE}{N\cC} \arrow[u, "\arrtofun_\cC"] & \smE \arrow[l, "(N\cC)^*"] \rlap{.}
    \end{tikzcd}
  \]
The left-hand vertical morphism defines the object $\arrtofun^\Gamma(p) \in (\slice{\smE}{\Gamma})^{N\cC}$. We define the action of $\arrtofun^\Gamma$ on morphisms similarly.
\end{defn}

\begin{defn}\label{defn:relative-fun-to-arr}
  We define the functor $\funtoarr^\Gamma \colon  (\slice{\smE}{\Gamma})^\cC \to \slice{\smE}{N\cC \times \Gamma} $ in terms of $\funtoarr \colon \smE^\cC \to \slice{\smE}{N\cC}$ as follows. Given a $\cC$-indexed diagram in $\slice{\smE}{\Gamma}^\cC$ which we denote by $q^\bullet \colon A^\bullet \to \Gamma$, regard it as a morphism $q^\bullet \colon A^\bullet \to \const(\Gamma)$ in $\smE^\cC$ and apply the functor $\funtoarr$ to obtain a morphism in $\slice{\smE}{N\cC}$:
  \[ \begin{tikzcd} \funtoarr(A^\bullet) \arrow[d, "{\funtoarr(p^\bullet)}"] \\ \funtoarr(\const(\Gamma)) \rlap{.} \end{tikzcd}\]
The $\mE$-relative conjugate of the natural isomorphism $\colim_\cC \circ \funtoarrwgt_\cC \cong (N\cC)_!$ is a natural isomorphism whose component at $\Gamma$ has the form $N\cC \times \Gamma \cong \funtoarr(\const(\Gamma))$. Thus, the codomain of $\funtoarr(p^\bullet)$ is $N\cC \times \Gamma$ and we may define $\funtoarr^\Gamma(p^\bullet)$ to be this object of $\slice{\smE}{N\cC\times \Gamma}$. We define the action of $\funtoarr^\Gamma$ on morphisms similarly.
\end{defn}

See \cref{cons:arr-fun,cons:fun-arr} for unpackings of \(\funtoarr^\Gamma_\cC\) and \(\arrtofun^\Gamma_\cC\) respectively in the case where $\cC$ is a finite ordinal category.

\begin{rmk}
The functors $\arrtofun^\Gamma$ and $\funtoarr^\Gamma$ can also be defined analogously to $\arrtofun$ and $\funtoarr$ but with $\smE$ replaced by $\slice{\smE}{\Gamma}$.
  \[ \begin{tikzcd}[column sep=large, row sep=tiny]
  \arrtofun^\Gamma \colon \slice{\smE}{N\cC \times \Gamma} \arrow[r,  "{\{\Wc,-\}_\Gamma}"] &  (\slice{\smE}{N\cC \times \Gamma})^{N\cC} \arrow[r, "{(N\cC \times \Gamma \to \Gamma)_*}"] & (\slice{\smE}{\Gamma})^{\cC} \rlap{,} \\
  \funtoarr^\Gamma \colon (\slice{\smE}{\Gamma})^{\cC} \arrow[r, "{(N\cC \times \Gamma \to \Gamma)^*}"] &  (\slice{\smE}{N\cC \times \Gamma})^{\cC} \arrow[r,  "{\{\Ws,-\}^{\cC}_\Gamma}"]& \slice{\smE}{N\cC \times \Gamma} \rlap{.} \end{tikzcd} \]
  However, for the purposes of analyzing these constructions, it is easier to implement them outside the slices as we have done.
\end{rmk}

In particular, our implementation of the relativized versions of the $\arrtofun$ and $\funtoarr$ in terms of the absolute versions of these functors allows us to easily prove the following change of base results:

\begin{lem}[pullback stability]
  \label{lem:arr-to-fun-pullback}
  Given a pullback square in $\slice{\smE}{N\cC}$ of the form
  \begin{equation}
    \begin{tikzcd} \label{eq:arr-to-fun-pullback-input}
      F \arrow[dr, phantom, "\lrcorner" very near start] \ar[d,"q"'] \ar[r,"u"] & E \ar[d,"p"] \\
      N\cC \times \Phi \ar[r,"N\cC \times t"'] & N\cC \times \Gamma \rlap{,}
    \end{tikzcd}
  \end{equation}
  we have an induced pullback square in $\smE^\cC$
  \[
    \begin{tikzcd}[column sep=large]
      \funtoarr^\Phi(F) \arrow[dr, phantom, "\lrcorner" very near start] \ar[d,"\funtoarr^\Phi(q)"'] \ar[r] & \funtoarr^\Gamma(E) \ar[d,"\funtoarr^\Gamma(p)"] \\
      \const(\Phi) \ar[r,"\const t"'] & \const(\Gamma) \rlap{.}
    \end{tikzcd}
  \]
\end{lem}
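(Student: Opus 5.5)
The plan is to reduce the claim to pullback pasting, using two facts: the absolute functor $\arrtofun = \arrtofun_\cC$ preserves pullbacks, and the unit map $\eta^\dagger$ of \cref{defn:relative-arr-to-fun} is natural in $\Gamma$. (The square in the statement should be read with $\arrtofun^\Phi$ and $\arrtofun^\Gamma$. These are the functors that take $\slice{\smE}{N\cC\times\Gamma}$ to diagrams over $\const(\Gamma)$.)

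\emph{Step 1: $\arrtofun$ preserves pullbacks.} By \cref{defn:generalized-fun-arr-maps}, $\arrtofun$ is the composite of the weighted limit $\{\Wc,-\}$ with $(N\cC)_*$. Both are right adjoints: their left adjoints are $\Wc\otimes_\cC -$ and $(N\cC)^*$, taken in the $\mE$-valued setting. Equivalently, under the equivalences $\EHom$ and $\EHom_{N\cC}$, $\arrtofun$ is precomposition with the left adjoint $\arrtofunwgt$, and limits in $[(-)^\op,\mE]_\cont$ are computed pointwise. Either way, $\arrtofun$ preserves all limits that exist in $\slice{\smE}{N\cC}$. Pullbacks in $\slice{\smE}{N\cC}$ are created by the forgetful functor to $\smE$. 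So applying $\arrtofun$ to the square \eqref{eq:arr-to-fun-pullback-input} gives a pullback
\[ \arrtofun(F) \cong \arrtofun(E)\times_{\arrtofun(N\cC\times\Gamma)}\arrtofun(N\cC\times\Phi) \]
in $\smE^\cC$.

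\emph{Step 2: naturality of the unit.} The map $\eta^\dagger \colon \const \Rightarrow \arrtofun\circ(N\cC)^*$ is a natural transformation of functors $\smE\to\smE^\cC$, being the $\mE$-relative conjugate of $\eta$ from \cref{defn:unit-map}. Its naturality square at $t\colon\Phi\to\Gamma$ says that $\eta^\dagger_\Gamma\circ\const t = \arrtofun(N\cC\times t)\circ\eta^\dagger_\Phi$. This gives the comparison map $\arrtofun^\Phi(F)\to\arrtofun^\Gamma(E)$ over $\const t$. It is the one induced on pullbacks by $\arrtofun(u)$, $\const t$, and $\arrtofun(N\cC\times t)$, which is how the paper's action of $\arrtofun^\Gamma$ on morphisms is defined.

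\emph{Step 3: pasting.} The object $\arrtofun^\Phi(F)$ is by definition $\const\Phi\times_{\arrtofun(N\cC\times\Phi)}\arrtofun(F)$. Substituting Step 1 and applying the pasting lemma twice gives
\[ \arrtofun^\Phi(F)\cong \const\Phi\times_{\arrtofun(N\cC\times\Gamma)}\arrtofun(E). \]
Here $\const\Phi$ maps to $\arrtofun(N\cC\times\Gamma)$ via $\arrtofun(N\cC\times t)\circ\eta^\dagger_\Phi$. By Step 2 this map equals $\eta^\dagger_\Gamma\circ\const t$. Pasting once more, this time with the defining pullback of $\arrtofun^\Gamma(E)$, yields
\[ \arrtofun^\Phi(F)\cong\const\Phi\times_{\const\Gamma}\arrtofun^\Gamma(E). \]
This is the claimed pullback square.

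The only point needing care is bookkeeping: one must check that the isomorphism produced by pasting is the map from Step 2, i.e.\ that all the induced maps into the iterated pullback agree. This follows by tracing the universal properties. I expect Step 1 to be the substantive input, and it is immediate from the right adjoint description.
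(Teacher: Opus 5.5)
Your proposal is correct and is essentially the paper's argument. The paper arranges the same data into a commutative cube: the front and back faces are the defining pullbacks of $\arrtofun^\Gamma(E)$ and $\arrtofun^\Phi(F)$; the right face is $\arrtofun$ applied to the input square, which is a pullback because $\arrtofun$ is a right adjoint; the bottom face is the naturality square of $\eta^\dagger$; and the left face is then a pullback by pasting. Your reading of the statement with $\arrtofun^\Phi$ and $\arrtofun^\Gamma$ in place of $\funtoarr^\Phi$ and $\funtoarr^\Gamma$ is also what the paper's proof actually establishes.
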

\begin{proof}
  We have a commutative cube
  \[
    \begin{tikzcd}[column sep={9em,between origins}]
     \arrtofun^\Phi(F) \ar[dd] \ar[dr,dashed] \ar[rr] && \arrtofun(F) \ar[dd,"\arrtofun(q)"{near end}] \ar[dr,"\arrtofun(u)"] & \\[-1em]
      & \arrtofun^\Gamma(E) \ar[rr,crossing over] && \arrtofun(E) \ar[dd,"\arrtofun(p)"] \\
      \const(\Phi) \ar[dr,"\const(t)"'] \ar[rr,"\eta^\dagger"{near end}] && \arrtofun(N\cC \times \Phi) \ar[dr,"\arrtofun(t)"] \\[-1em]
      & \const(\Gamma) \ar[uu,<-,crossing over] \ar[rr, "\eta^\dagger"'] && \arrtofun(N\cC \times \Gamma)
    \end{tikzcd}
  \]
  where the front and back faces are the pullback squares defining $\arrtofun^\Phi(F)$ and $\arrtofun^\Gamma(E)$, the right face is the action of $\arrtofun$ on \eqref{eq:arr-to-fun-pullback-input}, and the bottom face is a naturality square.
  Since $\arrtofun$ is a right adjoint and thus preserves pullbacks, the right face is a pullback square.
  Hence the left face is a pullback by pasting.
\end{proof}

\begin{lem}[pullback stability]
  \label{lem:fun-to-arr-pullback}
  Given a pullback square in $\smE^{\cC}$ of the form
  \begin{equation} \label{eq:fun-to-arr-pullback-input}
    \begin{tikzcd}[column sep=large]
      B^\bullet \arrow[dr, phantom, "\lrcorner" very near start] \ar[d,"q^\bullet"'] \ar[r, "u^\bullet"] & A^\bullet \ar[d,"p^\bullet"] \\
      \const(\Phi) \ar[r,"\const(t)"'] & \const(\Gamma) \rlap{,}
    \end{tikzcd}
  \end{equation}
  we have an induced pullback square in $\slice{\smE}{N\cC}$
  \begin{equation}
    \begin{tikzcd} \label{eq:fun-to-arr-pullback-output}
      \funtoarr^{\Phi}B^\bullet \arrow[dr, phantom, "\lrcorner" very near start] \ar[d,"\funtoarr^{\Phi}(q^\bullet)"'] \ar[r] & \funtoarr^{\Gamma}A^\bullet \ar[d,"\funtoarr^{\Gamma}(p^\bullet)"] \\
      N\cC \times \Phi \ar[r,"N\cC \times t"'] & N\cC \times \Gamma \rlap{.}
    \end{tikzcd}
  \end{equation}
\end{lem}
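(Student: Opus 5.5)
We argue as in \cref{lem:arr-to-fun-pullback}, assembling a commutative cube in $\slice{\smE}{N\cC}$ and using pasting of pullbacks. The new ingredient is that $\funtoarr^\Gamma$ is built by applying $\funtoarr$ to a map into $\const(\Gamma)$ and then identifying the codomain via the isomorphism $N\cC \times \Gamma \cong \funtoarr(\const(\Gamma))$. This isomorphism is the $\mE$-relative conjugate of the natural isomorphism $\colim_\cC \circ \funtoarrwgt_\cC \cong (N\cC)_!$ of \cref{defn:unit-iso}.

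\textbf{Step 1.} Apply $\funtoarr$ to the square \eqref{eq:fun-to-arr-pullback-input}. Since $\funtoarr$ is the $\mE$-relative right adjoint of $\funtoarrwgt$, it is a composite of the right adjoints $(N\cC)^*$ and $\{\Ws,-\}^\cC$. It therefore preserves pullbacks, so
\[ \funtoarr(B^\bullet) \to \funtoarr(A^\bullet), \qquad \funtoarr(\const(\Phi)) \to \funtoarr(\const(\Gamma)) \]
form a pullback square in $\slice{\smE}{N\cC}$, with vertical maps $\funtoarr(q^\bullet)$ and $\funtoarr(p^\bullet)$.

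\textbf{Step 2.} Identify the bottom edge of this square with $N\cC \times t$. The isomorphism $\funtoarr(\const(-)) \cong N\cC \times (-)$ is natural in the object of $\smE$, because it is the component of an $\mE$-relative conjugate of a natural transformation of functors $\slice{\sSet}{N\cC} \to \sSet$, and conjugation is functorial. Its naturality square at $t \colon \Phi \to \Gamma$ therefore commutes, with horizontal isomorphisms. Pasting this square onto the pullback from Step 1 yields \eqref{eq:fun-to-arr-pullback-output}: pasting a pullback with a square whose relevant sides are isomorphisms gives a pullback. By definition of $\funtoarr^\Phi$ and $\funtoarr^\Gamma$ (\cref{defn:relative-fun-to-arr}), the vertical maps of the pasted square are exactly $\funtoarr^\Phi(q^\bullet)$ and $\funtoarr^\Gamma(p^\bullet)$.

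The only point requiring care is the naturality in Step 2. One must check that the isomorphism used to define $\funtoarr^\Gamma$ is the component of a natural transformation in $\Gamma$, not merely an object-by-object identification. This holds because $\const$ and $(N\cC)^*$ are the $\mE$-relative right adjoints of $\colim_\cC$ and $(N\cC)_!$, and the isomorphism is obtained by applying the pseudofunctor $\Psh[\mE]{-}$ to a 2-cell. Beyond this, the proof is routine pasting.
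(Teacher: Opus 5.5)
Your proof is correct and matches the paper's: apply $\funtoarr$, which preserves pullbacks as a right adjoint, then identify the bottom edge with $N\cC \times t$ via the isomorphism $\funtoarr(\const(-)) \cong N\cC \times (-)$, the $\mE$-relative conjugate of the isomorphism in \cref{defn:unit-iso}. Your explicit check that this isomorphism is natural in $\Gamma$ is exactly what the paper leaves implicit; despite your opening sentence, though, no cube is needed, only pasting with an isomorphism square.
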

\begin{proof}
  By definition of $\funtoarr^\Gamma$/$\funtoarr^\Phi$ and the isomorphism of \cref{defn:unit-iso}, applying $\funtoarr$ to \eqref{eq:fun-to-arr-pullback-input} produces a square of the form \eqref{eq:fun-to-arr-pullback-output} up to isomorphism.
  As $\funtoarr$ is a right adjoint, it preserves pullbacks.
\end{proof}

The natural transformation $\kappa \colon \funtoarrwgt \circ \arrtofunwgt \to \id$ from \cref{lem:arr-to-fun-to-arr-point} induces an $\mE$-relative conjugate $\kappa^\dagger \colon \id \to \arrtofun \circ \funtoarr$.

\begin{lem}\label{lem:fun-arr-fun-equiv-base}
  For $\Gamma \in \sE$, the transformation $\kappa$ evaluated at $\const(\Gamma)$ factors as
  \[
    \begin{tikzcd}
      & \arrtofun(N\cC \times \Gamma) \ar[dr,"\cong"] \\
      \const(\Gamma) \arrow[ur,"\eta^\dagger"] \arrow[rr, "\kappa^\dagger"'] & & \arrtofun(\funtoarr(\const(\Gamma))) \rlap{.}
    \end{tikzcd}
  \]
\end{lem}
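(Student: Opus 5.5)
The plan is to work entirely on the weights side and then transport the resulting identity along the pseudofunctor $\Psh[\mE]{-}$. Everything in sight is an $\mE$-relative conjugate of a weight-level transformation:
\begin{itemize}
\item $\kappa^\dagger$ is the conjugate of $\kappa \colon \funtoarrwgt_\cC \circ \arrtofunwgt_\cC \to \id$;
\item $\eta^\dagger$ is the conjugate of $\eta \colon (N\cC)_! \circ \arrtofunwgt_\cC \to \colim_\cC$ from \cref{defn:unit-map};
\item the isomorphism $\funtoarr(\const(\Gamma)) \cong N\cC \times \Gamma$ is the conjugate of the isomorphism $\colim_\cC \circ \funtoarrwgt_\cC \cong (N\cC)_!$ of \cref{defn:unit-iso}.
\end{itemize}
Note also that $\const = \Psh[\mE]{\colim_\cC}$ and $(N\cC)^*(\Gamma) = N\cC \times \Gamma = \Psh[\mE]{(N\cC)_!}(\Gamma)$.

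Since $\Psh[\mE]{-}$ is a pseudofunctor (contravariant on 1-cells, covariant on 2-cells up to the $\co$), it preserves whiskering and vertical composition of 2-cells, up to the coherence isomorphisms identifying conjugates of composites with composites of conjugates. So the claimed triangle follows once I prove one equation of natural transformations $\colim_\cC \circ \funtoarrwgt_\cC \circ \arrtofunwgt_\cC \to \colim_\cC$ on the weights side. Whiskering $\kappa$ by $\colim_\cC$ on the left must equal the composite
\[ \colim_\cC \circ \funtoarrwgt_\cC \circ \arrtofunwgt_\cC \cong (N\cC)_! \circ \arrtofunwgt_\cC \xrightarrow{\eta} \colim_\cC, \]
where the first map is the isomorphism of \cref{defn:unit-iso} whiskered by $\arrtofunwgt_\cC$. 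Conjugating this equation and evaluating at $\Gamma$ gives exactly $\kappa^\dagger_{\const\Gamma} = (\cong)\circ \eta^\dagger_\Gamma$. Here the $\cong$ is $\arrtofun$ applied to the conjugate isomorphism $N\cC \times \Gamma \cong \funtoarr(\const\Gamma)$.

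The weight-level equation is precisely what \cref{rmk:constant-diagram-map-coherence} records in the case $\cD = [0]$: it identifies $\colim_\cC(\kappa_\cC)$ with the oplax coherence cell $\phi$ for $! \colon \cC \to [0]$. For $\funtoarrwgt \circ \arrtofunwgt$, that cell $\phi$ is the pasting of the pseudonatural cell of $\funtoarrwgt$ (\cref{cor:fun-to-arr-weights-pseudonatural}) with the oplax cell of $\arrtofunwgt$ (\cref{cor:arr-to-fun-weights-oplaxnatural}). With $\cD = [0]$ these specialize to the isomorphism of \cref{defn:unit-iso} and to $\eta$ of \cref{defn:unit-map}, respectively.

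The main obstacle is justifying \cref{rmk:constant-diagram-map-coherence} itself, which the paper states without detailed proof, if I cannot simply cite it. I would check it on representables $K \times \cC(c,-)$, since all functors involved are cocontinuous. There:
\begin{itemize}
\item $\kappa$ is induced by the composition functor $\coslice{\cC}{c}\times_\cC \slice{\cC}{-} \to \cC(c,-)$ (\cref{lem:arr-to-fun-to-arr-point});
\item its colimit is the collapse $N(\coslice{\cC}{c}) \times K \to K$, because $\colim_\cC N(\coslice{\cC}{c} \times_\cC \slice{\cC}{-}) \cong N(\coslice{\cC}{c})$ via $\lan$ along $\slice{\cC}{-}$ as in the proof of \cref{lem:colimit-Wslice-pseudonatural};
\item $\eta$ on the same object is the projection $N(\coslice{\cC}{c}) \times K \to K$ (cf.\ \cref{ex:constant-unit-map}).
\end{itemize}
Both are the unique map to the terminal-over-$K$ projection, so they agree; naturality then extends the equality to all of $\sSet^\cC$. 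The remaining routine point is checking that the pseudofunctoriality isomorphisms of $\Psh[\mE]{-}$ match the specific isomorphisms used in \cref{defn:relative-fun-to-arr}. This holds by construction, since that definition takes the conjugate of exactly this cell.
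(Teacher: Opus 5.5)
Your proposal is correct and is essentially the paper's argument: the paper's proof is the single observation that \cref{rmk:constant-diagram-map-coherence} (in the case $\cD = [0]$) identifies $\colim_\cC(\kappa_\cC)$ on the weights side with $\eta$ composed with the whiskered isomorphism of \cref{defn:unit-iso}, after which the triangle follows by passing to $\mE$-relative conjugates, exactly as you describe. Your extra check of that remark on objects $K \times \cC(c,-)$ goes beyond what the paper writes out, and it is sound once you note that all the functors and transformations involved commute with $K \times -$, so both maps are $K$ times the unique map to $\Delta^0$.
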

\begin{proof}
  \cref{rmk:constant-diagram-map-coherence} identifies these natural transformations on the weights side.
\end{proof}

\begin{lem}\label{lem:fun-arr-fun-equiv} Let $p^\bullet \colon A^\bullet \twoheadrightarrow \const(\Gamma)$ be a projective left fibration in $\smE^{\cC}$. Then the canonical comparison map to the pullback in the naturality square for $\kappa^\dagger \colon \id \to \arrtofun \circ \funtoarr$ at $p^\bullet$ is a weak equivalence of projective left fibrations over $\const(\Gamma)$:
  \[
  \begin{tikzcd}
  A^\bullet \arrow[ddd, two heads, "p^\bullet"'] \arrow[rr, "\kappa^\dagger"]  \arrow[dr, dashed] & [-4em] ~ & \arrtofun(\funtoarr A^\bullet) \arrow[ddd, two heads, "{\arrtofun(\funtoarr p^\bullet)}"]\\ & [-4em]\arrtofun^\Gamma(\funtoarr^\Gamma A^\bullet)  \arrow[ddl, two heads, "{\arrtofun^\Gamma(\funtoarr^\Gamma p^\bullet)}"] \ar[ur] \arrow[ddr, phantom, "\lrcorner" very near start]  \\ & & \\  \const(\Gamma) \arrow[rr, "\kappa^\dagger"'] & &  \arrtofun(\funtoarr(\const(\Gamma))) \rlap{.}
  \end{tikzcd}
  \]
  \end{lem}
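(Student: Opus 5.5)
The plan is to reduce the statement to a check on the weights side, where \cref{lem:arr-to-fun-to-arr-point-equiv} applies.

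First we record the fibrancy claims. By \cref{lem:fun-to-arr-fibrations}, $\funtoarr(p^\bullet)$ is a left fibration over $\funtoarr(\const\Gamma) \cong N\cC \times \Gamma$. By \cref{lem:arr-to-fun-fibrations}, $\arrtofun(\funtoarr p^\bullet)$ is then a projective left fibration. Its pullback $\arrtofun^\Gamma(\funtoarr^\Gamma p^\bullet)$ along $\kappa^\dagger_{\const\Gamma}$ is also a projective left fibration; here we use \cref{lem:fun-arr-fun-equiv-base}, which identifies this pullback with the one in \cref{defn:relative-arr-to-fun}. The dashed comparison map is therefore a map between projective left fibrations over $\const(\Gamma)$. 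Since weak equivalences are detected pointwise and levelwise, and both sides are left objects over $\Gamma$, \cref{lem:left-fib-fiberwise-we} reduces the claim to the comparison at each $c \in \cC$ in simplicial degree $0$. That comparison is the map obtained by applying $\EHom(\cC(c,-),-)$, or equivalently its slice analogue, in $\mE$.

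The key step is to identify the dashed map with a Leibniz hom. The naturality square of $\kappa^\dagger$ at $p^\bullet$, evaluated via $\EHom(\cC(c,-),-)$, is the $\mE$-relative conjugate of $\kappa_{\cC(c,-)}$. The definition of $\EHom$ and the adjunction $\funtoarrwgt\circ\arrtofunwgt \dashv \arrtofun\circ\funtoarr$ give
\[\EHom(\cC(c,-), \arrtofun\funtoarr A^\bullet) \cong \EHom(\funtoarrwgt\arrtofunwgt\,\cC(c,-), A^\bullet).\]
So the degree-zero component of the gap map at $c$ is exactly $\leibEHomC{\kappa_{\cC(c,-)}}{p^\bullet}$, the Leibniz hom from \cref{lem:ehom-equiv-cancellation}. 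This holds because the pullback in the statement is formed in $\smE^\cC$, and $\EHom(\cC(c,-),-)$ preserves pullbacks.

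It remains to show that $\leibEHomC{\kappa_{\cC(c,-)}}{p^\bullet}$ is a weak equivalence. By \cref{lem:arr-to-fun-to-arr-point-equiv}, $\kappa_{\cC(c,-)}$ is a retraction $r$ of a strong 0-oriented homotopy equivalence $s$, with $rs=\id$, between projectively cofibrant objects. By \cref{cor:ehom-equiv-she-left-fibration}, $\leibEHomC{s}{p^\bullet}$ is a weak equivalence. Since $rs = \id$ has an invertible Leibniz hom, \cref{lem:ehom-equiv-cancellation}, applied with $u=s$ and $v=r$, shows that $\leibEHomC{r}{p^\bullet}$ is a weak equivalence.

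I expect the main obstacle to be the bookkeeping in the identification step. One must match the gap map of the conjugate naturality square, after evaluation at representables, with the Leibniz hom in the precise form \cref{lem:ehom-equiv-cancellation} requires. One must also make sure the reduction to degree zero and representables is legitimate, which uses that $\cC(c,-)$ in degree $0$ detects the $c$-component.
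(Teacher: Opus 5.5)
Your proposal is correct and follows the paper's proof. You reduce via \cref{lem:left-fib-fiberwise-we} to the degree-zero component at each $c \in \cC$, identify that component with $\leibEHomC{\kappa_{\cC(c,-)}}{p^\bullet}$ by the $\mE$-relative adjunction, and conclude using \cref{lem:arr-to-fun-to-arr-point-equiv,cor:ehom-equiv-she-left-fibration}. The paper cites \cref{cor:ehom-equiv-she-left-fibration} directly for the retraction $r = \kappa_{\cC(c,-)}$, and your use of \cref{lem:ehom-equiv-cancellation} with $u = s$, $v = r$ and $vu = \id$ makes explicit the passage from the section $s$ to $r$ that the paper leaves implicit. (Alternatively, the triangle identities show that $r$ is itself a strong 0-oriented homotopy equivalence, with homotopies $\epsilon$ and $\eta^{-1}$.)
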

  \begin{proof}
    The comparison map is the Leibniz application of $\kappa^\dagger$, the $\mE$-relative conjugate of $\kappa$, to the morphism $p^\bullet$ in $\smE^{\cC}$.
   By \cref{lem:left-fib-fiberwise-we}, it suffices to check that the component of this map at $c \in \cC$ and $[0] \in \DDelta$ is a weak equivalence in $\mE$. This component of the gap map is the Leibniz application of the $\mE$-relative conjugate of $\kappa_{\cC(c,-)}$ to $p^\bullet$, thus identified with $\leibEHomC{\kappa_{\cC(c,-)}}{p^\bullet}$. Here $\kappa_{\cC(c,-)} \colon N(\coslice{\cC}{c} \times_\cC \slice{\cC}{-}) \to \cC(c,-)$, the component of the natural transformation of \cref{lem:arr-to-fun-to-arr-point} at the representable $\cC(c,-)$, is the retraction of a strong 0-oriented homotopy equivalence with projective cofibrant domain and codomain by \cref{lem:arr-to-fun-to-arr-point-equiv}. Since $p^\bullet$ is a projective left fibration in $\smE^\cC$, \cref{cor:ehom-equiv-she-left-fibration} tells us that this map is a weak equivalence.
  \end{proof}

\begin{lem}\label{lem:arr-fun-arr-equiv} Let $p \colon E \twoheadrightarrow N\cC \times \Gamma$ be a left fibration in $\slice{\smE}{N\cC}$. Then there is a canonical span of weak equivalences of left fibrations over $N\cC \times \Gamma$:
    \[ \begin{tikzcd}\funtoarr^\Gamma (\arrtofun^\Gamma E) \arrow[d, "\funtoarr^\Gamma(\arrtofun^\Gamma p)"', two heads] & \dom_*^\Gamma (\cod^*_\Gamma E) \arrow[l] \arrow[r] \arrow[d, two heads] & E \arrow[d, two heads, "p"]  \\ N\cC \times \Gamma \arrow[r, equals] & N\cC \times \Gamma \arrow[r, equals] & N\cC \times \Gamma \rlap{.}
    \end{tikzcd}
    \]
\end{lem}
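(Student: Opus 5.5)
The plan is to transport the zigzag $\arrtofunwgt \circ \funtoarrwgt \overset{\mu}{\to} \cod_! \circ \dom^* \overset{\nu}{\leftarrow} \id$ of \cref{lem:fun-to-arr-to-fun-point} to the $\mE$ side by taking $\mE$-relative conjugates. This produces natural transformations
\[ \funtoarr \circ \arrtofun \xleftarrow{\mu^\dagger} \dom_* \circ \cod^* \xrightarrow{\nu^\dagger} \id \]
of endofunctors of $\slice{\smE}{N\cC}$.

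I apply these to $p \colon E \to N\cC \times \Gamma$, regarded over $N\cC$, and to the base $N\cC \times \Gamma$ itself. I then form Leibniz gap maps into the pullbacks along the components at the base. The relativized objects $\dom^\Gamma_*(\cod^*_\Gamma E)$ and $\funtoarr^\Gamma(\arrtofun^\Gamma E)$ are defined as pullbacks along these base components. To make sense of this, I must identify the base components. I will show that on $N\cC \times \Gamma$ the maps $\mu^\dagger$ and $\nu^\dagger$ become compatible with the identifications $\funtoarr^\Gamma\arrtofun^\Gamma(N\cC\times\Gamma) \cong N\cC\times\Gamma$ coming from \cref{defn:unit-iso} and \cref{defn:relative-arr-to-fun}. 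This is the conjugate of the commutative triangle \eqref{eq:how-we-pullback-the-span}, exactly as \cref{lem:fun-arr-fun-equiv-base} used \cref{rmk:constant-diagram-map-coherence}. Unwinding the definitions, the codomain of $\funtoarr(\arrtofun^\Gamma p)$ is $\funtoarr(\const \Gamma) \cong N\cC \times \Gamma$. Pulling back along the unit $\Gamma \to \ldots$ is compatible with applying $\funtoarr$, since $\funtoarr$ preserves pullbacks. So the left leg of the span is the gap map of $\mu^\dagger$ at $p$ followed by an isomorphism, and the right leg is the gap map of $\nu^\dagger$ at $p$.

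Next I check that all three objects are left fibrations over $N\cC \times \Gamma$. For $p$ this is the hypothesis. For the middle object I use \cref{lem:arr-intermediate-fibrations} together with pullback stability of left fibrations. For the left object I use \cref{lem:arr-to-fun-fibrations}, then pullback, then \cref{lem:fun-to-arr-fibrations}. By \cref{lem:left-fib-fiberwise-we}, each leg is then a weak equivalence if it is one on vertex components, that is, after applying $\EHom_{N\cC}$ at each point $c \colon \Delta^0 \to N\cC$.

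The central step is the vertex components. There, the gap map of $\mu^\dagger$ at $p$ is $\widehat{\EHom_{N\cC}}(\mu_c, p)$, and likewise for $\nu$. By \cref{lem:fun-to-arr-to-fun-point-equiv}, $\mu_c$ is invertible, so its gap map is an isomorphism. Also $\nu_c$ is a monomorphism whose underlying map is left anodyne: it is the inclusion $\Delta^0 \to N\coslice{\cC}{c}$ of the initial object. Since the left anodynes in $\slice{\sSet}{N\cC}$ are those whose underlying map is left anodyne, \cref{lem:leib-hom-fibrations-in-slice} shows that $\widehat{\EHom_{N\cC}}(\nu_c, p)$ is a trivial fibration, hence a weak equivalence.

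There is a subtlety: the Leibniz map is taken relative to $p$ over $N\cC \times \Gamma$, not over $N\cC$. Since $N\cC \times \Gamma \to N\cC$ need not be a left fibration, I will instead apply the Leibniz argument to $p$ as a map in $\slice{\smE}{N\cC}$. It is a left fibration there by \cref{defn:left-fibration}, because being a left fibration is a property of the underlying map.

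I expect the main obstacle to be the coherence step: verifying that the base components of $\mu^\dagger$ and $\nu^\dagger$ at $N\cC \times \Gamma$ agree with the structural isomorphisms, so that the pullbacks really land over $N\cC \times \Gamma$ with identity bases. I would handle this by conjugating \eqref{eq:how-we-pullback-the-span} and pasting it with the defining pullback of $\arrtofun^\Gamma$ along $\eta^\dagger$.
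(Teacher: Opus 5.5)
Your proposal is correct and follows the paper's proof essentially step for step: you conjugate $\mu,\nu$, pull the span at $p$ back along the conjugate of \eqref{eq:how-we-pullback-the-span}, reduce via \cref{lem:left-fib-fiberwise-we} to points $c$, and conclude from $\mu_c$ being invertible and $\nu_c$ being left anodyne via \cref{lem:leib-hom-fibrations-in-slice}. One small correction: by pullback pasting, the legs are not literally the gap maps of $\mu^\dagger$ and $\nu^\dagger$ at $p$, but their pullbacks along the base component $N\cC\times\Gamma \to \dom_*\cod^*(N\cC\times\Gamma)$. This is harmless, since those gap maps are trivial fibrations on vertex components and trivial fibrations are stable under pullback.
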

\begin{proof}
The $\mE$-relative conjugates of the natural transformations $\mu \colon \arrtofunwgt \circ \funtoarrwgt \to \cod_! \circ \dom^*$ and $\nu \colon \id \to \cod_! \circ \dom^*$ of \cref{lem:fun-to-arr-to-fun-point} define a span of natural transformations between endofunctors of $\slice{\smE}{N\cC}$  \[ \begin{tikzcd} \funtoarr\circ\arrtofun & \dom_* \circ \cod^* \ar[l, "\mu^\dagger"'] \ar[r, "\nu^\dagger"] & \id \end{tikzcd}\] whose naturality squares at the map $p$ in $\slice{\smE}{N\cC}$ have the form:
  \begin{equation}\label{eq:mu-nu-span} \begin{tikzcd} \funtoarr(\arrtofun E) \arrow[d, two heads, "{\funtoarr(\arrtofun p)}"'] & \dom_*\cod^* E \arrow[d, "\dom_*\cod^*p", two heads] \arrow[l] \arrow[r] & E \arrow[d, two heads, "p"] \\ \funtoarr(\arrtofun(N\cC \times \Gamma)) & \dom_*\cod^*(N\cC \times \Gamma) \arrow[l] \arrow[r] & N\cC \times \Gamma \rlap{.}
  \end{tikzcd} \end{equation}
The codomain of this morphism of spans is given by the components of $\mu^\dagger \circ (N\cC)^*$ and $\nu^\dagger \circ (N\cC)^*$ at $\Gamma \in \smE$. Thus, we may pull back this span along the components at $\Gamma$ of the $\mE$-relative conjugates of the natural transformations of \eqref{eq:how-we-pullback-the-span} to obtain a span of maps over $N\cC \times \Gamma$. By \cref{defn:relative-arr-to-fun,defn:relative-fun-to-arr}, the left fibration $\funtoarr^\Gamma(\arrtofun^\Gamma p)$ is defined by pulling back the map $\funtoarr(\arrtofun p)$ along the unit map:
    \[
      \begin{tikzcd}
      \funtoarr^\Gamma(\arrtofun^\Gamma E)  \arrow[d, two heads, "{\funtoarr^\Gamma(\arrtofun^\Gamma p)}"'] \arrow[r] \arrow[dr, phantom, "\lrcorner" very near start] & \funtoarr(\arrtofun E) \arrow[d, two heads, "{\funtoarr(\arrtofun p)}"] \\ N\cC \times \Gamma \arrow[r, "\eta^\dagger"'] & \funtoarr(\arrtofun (N\cC \times \Gamma)) \rlap{,}
      \end{tikzcd}
    \]
  so this constructs the span of the statement.\footnote{As our notation suggests, the middle left fibration can be equivalently be constructed by pulling $p$ back along $\cod \times \Gamma \colon N\cC^{[1]}\times \Gamma \to N\cC \times \Gamma$ then pushing forward along $\dom \times \Gamma \colon N\cC^{[1]}\times \Gamma \to N\cC\times \Gamma$.}

  It remains to argue that the left and right maps in this span are weak equivalences.
  By pullback pasting, the left map is the pullback of $\leibEHomNC{\mu}{p}$, the gap map in the left-hand square of \eqref{eq:mu-nu-span}, while the right map is the pullback of $\leibEHomNC{\nu}{p}$, so it suffices to show that these maps are weak equivalences. By \cref{lem:left-fib-fiberwise-we} it suffices to demonstrate that their components at $[0] \in \DDelta$ are weak equivalences. This corresponds to taking components of the natural transformations $\mu$ and $\nu$ at objects of the form $c \colon \Delta^0 \to N\cC$, so our task is to show that the Leibniz maps $\leibEHomNC{\mu_c}{p}$ and $\leibEHomNC{\nu_c}{p}$ are weak equivalences in $\mE$.
  By \cref{lem:fun-to-arr-to-fun-point-equiv}, the underlying maps of $\mu_c$ and $\nu_c$ in $\sSet$ are left anodynes.
  Hence $\leibEHomNC{\mu_c}{p}$ and $\leibEHomNC{\nu_c}{p}$ are weak equivalences (in fact trivial fibrations) by \cref{lem:leib-hom-fibrations-in-slice}.
\end{proof}

\subsection{Directed univalence}\label{ssec:dua-model}

We now state and prove our model-topos-level directed univalence theorem using the \textbf{covariant model structure} on slice categories of the form $\slice{\smE}{\Gamma}$, whose fibrant objects are left fibrations over $\Gamma$. This was developed when $\mE$ is the model topos of spaces by Rasekh \cite[3.12]{rasekh} based on prior work of Lurie \cite[\S2.1.4]{lurie-topos} and may be established in the same way at the level of a generic type-theoretic model topos.

\begin{defn}[covariant model structure]\label{defn:cov-model-str}
Let $\mE$ be a type-theoretic model topos and fix any $\Gamma \in \smE$. Write $S$ for the set of maps defined as the Leibniz product of a generating cofibration in $\smE$ with the discretely embedded map $\iota_0\colon \Delta^0 \to \Delta^1$, and define $S_\Gamma$ to be the inverse image of this set along the forgetful functor $\Gamma_! \colon \slice{\smE}{\Gamma} \to \smE$.

The {\bf covariant model structure} on $\slice{\smE}{\Gamma}$ is the left Bousfield localization of the sliced model structure induced from the Reedy model structure $\smE$ induced from the type theoretic model structure on $\mE$ at the set of cofibrations $S_\Gamma$.
\end{defn}

\begin{rmk}\label{rmk:cov-model-str}
  Note that the cofibrations in the covariant model structure coincide with the cofibrations in the sliced Reedy model structure. Since the model structure on $\smE$ is type theoretic, these are just the monomorphisms.

  Since type theoretic model structures are left proper and simplicial, the same is true of the covariant model structure. Here the simplicial set of maps $\Map_\Gamma(a,x)$ between objects $a \colon A \to \Gamma$ and $x \colon X \to \Gamma$ is defined by the pullback using the simplicial enrichment $\Map \colon \smE^\op \times \smE \to \sSet$:
\[
\begin{tikzcd} \Map_\Gamma(a,x) \arrow[d] \arrow[r] \arrow[dr, phantom, "\lrcorner" very near start] & \Map(A,X) \arrow[d, "{\Map(A,x)}"] \\ \Delta^0 \arrow[r, "a"'] & \Map(A,\Gamma) \rlap{.} \end{tikzcd}
\]
\end{rmk}

\begin{prop}[covariant model structure]\label{prop:cov-model-str}
  Let $\mE$ be a type-theoretic model topos and consider any $\Gamma \in \smE$. The fibrant objects in the covariant model structure are precisely the left fibrations over $\Gamma$.
\end{prop}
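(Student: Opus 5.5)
We use the standard description of fibrant objects in a left Bousfield localization. Since $\slice{\smE}{\Gamma}$ with the sliced Reedy model structure is left proper, combinatorial and simplicial (\cref{rmk:cov-model-str}), the fibrant objects of its localization at $S_\Gamma$ are exactly the fibrant objects $x \colon X \twoheadrightarrow \Gamma$ that are $S_\Gamma$-local. Because every map in $S_\Gamma$ is a cofibration, this is equivalent to asking that $x$ be a Reedy fibration and that, for each $s \colon A \to B$ in $S_\Gamma$, the restriction $\Map_\Gamma(B,x) \to \Map_\Gamma(A,x)$ be a trivial fibration of Kan complexes. It therefore suffices to show that, for a Reedy fibration $x$, this locality condition holds if and only if $x$ is a left fibration.

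First we remove the slice. A map in $S_\Gamma$ is a map $s = i \hat{\times} \iota_0$ in $\smE$, where $i$ is a generating cofibration, together with a map $B \to \Gamma$. By the pullback description of $\Map_\Gamma$ in \cref{rmk:cov-model-str}, a lifting problem of a simplicial boundary inclusion against $\Map_\Gamma(s,x)$ is the same as a lifting problem of $(\partial\Delta^n \hookrightarrow \Delta^n) \hat{\otimes} s$ against $x$ in $\smE$, with the bottom map required to lie over $\Gamma$. The enrichment is simplicial and all three Leibniz constructions are associative and commutative up to isomorphism, so $(\partial\Delta^n \hookrightarrow \Delta^n)\hat{\otimes}(i \hat{\times} \iota_0) \cong j \hat{\times} \iota_0$, where $j = (\partial\Delta^n \hookrightarrow\Delta^n)\hat\otimes i$ is a cofibration of $\smE$. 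Transposing along the exponential adjunction, the locality condition becomes: $x$ has the right lifting property against $j \hat{\times} \iota_0$ for all such $j$, or equivalently the Leibniz exponential $\widehat{(\iota_0, x)} \colon X^{\Delta^1} \to X \times_{\Gamma} \Gamma^{\Delta^1}$ lifts against these cofibrations.

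The main obstacle is to show that lifting against this restricted family of cofibrations suffices to make $\widehat{(\iota_0,x)}$ a trivial fibration. The family $(\partial\Delta^n\hookrightarrow\Delta^n)\hat\otimes i$, with $i$ ranging over generating cofibrations, generates all monomorphisms. One direction is clear: the case $n=0$ already includes every generating cofibration $i$, so lifting against $i \hat{\times}\iota_0$ for all generating $i$ says exactly that $\widehat{(\iota_0,x)}$ is a trivial fibration. The converse follows because trivial fibrations lift against all cofibrations. Hence locality is equivalent to $\widehat{(\iota_0,x)}$ being a trivial fibration, since cofibrations are the monomorphisms and trivial fibrations are exactly the maps with the right lifting property against them.

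Finally, by \cref{rmk:covariant-fib} (which rests on \cref{lem:covariant-fib}), a Reedy fibration $x$ is a left fibration exactly when its Leibniz exponential with $\iota_0$ is a trivial fibration. This completes the identification of the fibrant objects in the covariant model structure with the left fibrations over $\Gamma$.
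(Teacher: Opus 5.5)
Your proof is correct, but you remove the slice over $\Gamma$ differently from the paper.

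\textbf{The paper's route.} It first shows that $x$ is $S_\Gamma$-local if and only if the unsliced Leibniz maps $\widehat{\Map}(s,x)$ are trivial fibrations for all $s \in S$. It does this by matching the fibers of $\widehat{\Map}(s,x)$, as $s$ ranges over $S$, with the fibers of $\Map_\Gamma(s,x)$, as $s$ ranges over $S_\Gamma$, and then using the criterion that a Kan fibration is trivial exactly when its fibers are contractible. Only after that does it compare the two lifting properties. One inclusion comes from $(\emptyset \to \Delta^0)\hat\otimes - = \id$, the other from $m \hat\otimes (c \hat\times \iota_0) \cong (m \hat\otimes c)\hat\times \iota_0$.

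\textbf{Your route.} You stay in the slice throughout, and this works. One sentence is imprecise. Transposition alone gives only a constrained lifting property: the bottom maps $\Delta^n \otimes B \to \Gamma$ must factor through $B$. So at that point ``$x$ has the right lifting property against $j \hat\times \iota_0$'' is not yet the unconstrained property. Your next paragraph does not need it, though. For $n = 0$ the constraint is vacuous, because $S_\Gamma$ contains $s$ with every structure map $b \colon B \to \Gamma$. Hence vertex-surjectivity of the maps $\Map_\Gamma(s,x)$ already gives the full lifting property of $x$ against $S$, so $\widehat{\intHom}(\iota_0,x)$ is a trivial fibration. The converse uses only that a trivial fibration lifts against every cofibration $(\partial\Delta^n \hookrightarrow \Delta^n)\hat\otimes i$. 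You should state the reason for the $n=0$ step explicitly, since it is exactly what replaces the paper's fiber comparison.

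\textbf{Comparison.} Your argument is slightly more elementary: it uses only lifting properties and never needs the fiberwise-contractibility criterion. The paper's argument instead isolates a general comparison between sliced and unsliced locality before running the lifting argument.
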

\begin{proof}
By definition, a fibrant object in the localized model structure is a fibrant object in $\slice{\smE}{\Gamma}$ (i.e., a Reedy fibration $x \colon X \twoheadrightarrow \Gamma$) such that the map of simplicial sets $\Map_\Gamma(s,x)$ induced by any $s \colon (A,a) \to (B,b)$ in $S_\Gamma$ is a trivial fibration:
\[
\begin{tikzcd}
\Map_\Gamma(b,x) \arrow[dr, dashed, two heads, "{\Map_\Gamma(s,x)}"'] \arrow[rrr, dotted] \arrow[ddd, dotted] & & &  {\Map(B,X)} \arrow[ddd, two heads, "{\Map(B,x)}"' near end] \arrow[rrr, "{\Map(s,X)}"] \arrow[dr, dashed, two heads, "{\widehat{\Map}(s,x)}" description] & & & \Map(A,X) \arrow[ddd, two heads, "{\Map(A,x)}"] \\ &  \Map_\Gamma(a,x) \arrow[ddl, dotted] \arrow[urr, dotted] \arrow[rrr, dotted] \arrow[ddrr, phantom, "\lrcorner" very near start] & & & \bullet \arrow[urr, dotted] \arrow[ddl, dotted] \arrow[ddrr, phantom, "\lrcorner" very near start] \\ \\  \Delta^0 \arrow[rrr, "b"] \arrow[rrrrrr, "a"', bend right=5] & & & {\Map(B,\Gamma)} \arrow[rrr, "{\Map(s,\Gamma)}"] & & & \Map(A,\Gamma) \rlap{.}
\end{tikzcd}
\]
As left fibrations are also Reedy fibrations, it suffices to show for every Reedy fibration $x \colon X \twoheadrightarrow \Gamma$ that the maps $\Map_\Gamma(s,x)$ are trivial fibrations of simplicial sets for all $s \in S_\Gamma$ if and only if the Leibniz exponential $\widehat{\intHom}(\iota_0,x)$ of $x$ with $\iota_0 \colon \Delta^0 \to \Delta^1$ is a trivial fibration in $\smE$, which characterizes the left fibrations by \cref{rmk:covariant-fib}.

We first claim that $x$ is an $S_\Gamma$-local fibrant object if and only if $\widehat{\Map}(s,x)$ is a trivial fibration of simplicial sets for all $s \in S$. Note that Reedy fibrancy of $x$, plus the fact that maps in $S$ and also $S_\Gamma$ are cofibrations, implies already that both $\widehat{\Map}(s,x)$ and $\Map_\Gamma(s,x)$ are Kan fibrations. Recall that a Kan fibration is a trivial fibration if and only if all of its fibers are contractible Kan complexes. The fibers of the maps $\widehat{\Map}(s,x)$ as $s$ ranges over $S$ are isomorphic to the fibers of the maps $\Map_\Gamma(s,x)$ as $s$ ranges over $S_\Gamma$. Thus, these two conditions are equivalent.

Thus, it remains to show that $\widehat{\Map}(s,x)$ lifts against (generating) monomorphisms of simplicial sets for all $s \in S$ if and only if $\widehat{\intHom}(\iota_0,x)$ lifts against (generating) monomorphisms in $\smE$. These each transpose to right lifting properties of the map $x \colon X \twoheadrightarrow \Gamma$ in $\smE$. In the former case, we are lifting against the class of Leibniz tensor products of (generating) monomorphisms in simplicial sets with maps in $S$, where the tensor is the left adjoint to the simplicial enrichment defined by $\Map$. In the latter case, we are lifting against the class of Leibniz products of (generating) monomorphisms in $\smE$ with the map $\iota_0 \colon \Delta^0 \to \Delta^1$, discretely embedded into $\smE$, which is to say we are lifting against the class $S$. The functor defined by Leibniz tensor with the map of simplicial sets $! \colon \emptyset \to \Delta^0$ is the identity, so the latter class is contained in the former, which means that the former lifting property implies the latter.

To see that the latter lifting property implies the former, suppose $m$ is a (generating) monomorphism of simplicial sets and $c$ is a generating cofibration in $\smE$ corresponding to a map $s = c \hat{\times} \iota_0$ in $S$. By associativity of the tensor product $U \otimes (A \times B) \cong (U \otimes A) \times B$ for $U \in \sSet$ and $A,B \in \smE$, for any monomorphism $m$ of simplicial sets, $m \hat{\otimes} (c \hat{\times} \iota_0) \cong (m \hat{\otimes} c) \hat{\times} \iota_0$. Since $\smE$ is a simplicial model category, the map $m \hat{\otimes}c$ is again a cofibration. So the latter lifting property implies the former and our conditions coincide.
\end{proof}

\begin{thm}[directed univalence]\label{thm:dua-model}
Let $\mE$ be a fibration-extensive type-theoretic model topos.
\begin{parts}
\item\label{itm:dua-model-quillen} For $\Gamma \in \smE$ and a 1-category $\cC$ the functors
\[ \begin{tikzcd} \slice{\smE}{N\cC \times \Gamma} \arrow[r, bend left, "\arrtofun^\Gamma" above] & (\slice{\smE}{\Gamma})^\cC \arrow[l, bend left, "\funtoarr^\Gamma"] \end{tikzcd}\]
define right Quillen functors between the covariant model structure on $\slice{\smE}{N\cC \times \Gamma}$ and the projective covariant model structure on $(\slice{\smE}{\Gamma})^\cC$.
\item\label{itm:dua-model-equiv}
For $\Gamma \in \smE$ and a 1-category $\cC$, there are natural transformations between right Quillen functors
\[
  \begin{tikzcd}
  &
    \slice{\smE}{N \cC\times \Gamma}
    \ar[dr, bend left=20, "\arrtofun^\Gamma"]
  & &
  &
  (\slice{\smE}{\Gamma})^\cC
  \ar[dr, bend left=20, "\funtoarr^\Gamma"]
&
  \\
    (\slice{\smE}{\Gamma})^\cC
    \ar[ur, bend left=20, "\funtoarr^\Gamma"]
    \ar[rr, bend left=26, phantomcenter, "\Uparrow \mathrlap{\kappa^\dagger}"]
    \ar[rr, "\id"']
  & { } &
    (\slice{\smE}{\Gamma})^\cC \rlap{,}
&     \slice{\smE}{N\cC\times \Gamma}
    \ar[ur, bend left=20, "\arrtofun^\Gamma"]
    \ar[r, "\cod^*_\Gamma"]
    \ar[rr, bend right=50, "\id"', ""{name=id}]
    \ar[rr, bend right=28, phantomcenter, "\Downarrow \mathrlap{\nu^\dagger}"]
    \ar[rr, bend left=28, phantomcenter, "\Uparrow \mathrlap{\mu^\dagger}"]
  & \slice{\smE}{N\cC^{[1]}\times\Gamma} \ar[r, "\dom_*^\Gamma"] &
    \slice{\smE}{N\cC\times\Gamma}
  \end{tikzcd}
\]
whose components at fibrant objects are weak equivalences.
\item\label{itm:dua-model-naturality}
For $t \colon \Phi \to \Gamma \in \smE$ and a functor $F \colon \cC \to \cD$, there are natural transformations between right Quillen functors
\[ \begin{tikzcd}
  (\slice{\smE}{\Gamma})^\cD \arrow[d, "\funtoarr_\cD^\Gamma"'] \arrow[r, "t^*\circ \res_F"] \ar[dr, phantomcenter, "{\cong}"] \ar[dr, draw=none, center, "\funtoarr_F^t" below] & (\slice{\smE}{\Phi})^\cC \arrow[d, "\funtoarr_\cC^\Phi"] & & \slice{\smE}{N\cD\times\Gamma} \arrow[d, "\arrtofun_\cD^\Gamma"'] \arrow[r, "{(NF \times t)^*}"]
  \ar[dr, phantomcenter, "{\Rightarrow}"]
  \ar[dr, draw=none, center, "\arrtofun_F^t" below]
  & \slice{\smE}{N\cC\times\Phi} \arrow[d, "\arrtofun_\cC^\Phi"]
\\
  \slice{\smE}{N\cD\times\Gamma} \arrow[r, "{(NF \times t)^*}"'] & \slice{\smE}{N\cC\times\Phi} \rlap{,} &  &    (\slice{\smE}{\Gamma})^\cD \arrow[r, "t^*\circ \res_F"'] &    (\slice{\smE}{\Phi})^\cC
\end{tikzcd}\]
whose components at fibrant objects are weak equivalences. These squares define the action on morphisms of a pair of pseudofunctors
\[ \begin{tikzcd} \Cat^{\op} \times \smE^\op \ar[r, "\funtoarr"] & \Funpseudo({[1]},\Cat) \rlap{.} &  \Cat^{\op}  \times \smE^\op\ar[r, "\arrtofun"] & \Funlax({[1]},\Cat)\rlap{.}
\end{tikzcd}\]
\end{parts}
\end{thm}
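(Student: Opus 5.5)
The plan is to assemble the theorem from the weight-side results of \S\ref{sec:weights}, transported via $\mE$-relative right adjoints, together with the pullback-stability lemmas for the relative functors.

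\textbf{Part \ref{itm:dua-model-quillen}.} Both model structures are left Bousfield localizations with cofibrations the monomorphisms (\cref{rmk:cov-model-str}). So the standard criterion applies: a functor that is a right adjoint, preserves trivial fibrations, and preserves fibrations between fibrant objects is right Quillen. Fibrant objects are left fibrations by \cref{prop:cov-model-str}, and its projective analogue identifies the fibrant objects of $(\slice{\smE}{\Gamma})^\cC$ as projective left fibrations.

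\begin{itemize}
\item \emph{Right adjoints.} $\arrtofun^\Gamma$ and $\funtoarr^\Gamma$ are right adjoints because they are composites of right adjoints with pullback along a fixed map.
\item \emph{Trivial fibrations.} A trivial fibration in $\slice{\smE}{N\cC\times\Gamma}$ is a Reedy trivial fibration of the total map. By \cref{lem:arr-to-fun-fibrations}, $\arrtofun$ sends it to a projective Reedy trivial fibration, and pulling back along $\eta^\dagger$ preserves this. The case of $\funtoarr$ uses \cref{lem:fun-to-arr-fibrations} together with the isomorphism $\funtoarr(\const\Gamma)\cong N\cC\times\Gamma$.
\item \emph{Fibrations between fibrant objects.} For left fibrations over $N\cC\times\Gamma$, the same corollaries, together with \cref{lem:left-map-comp-cancel}, show the images are projective left fibrations over $\const\Gamma$. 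A Reedy fibration between left fibrations is itself a left fibration, so it is a covariant fibration.
\end{itemize}

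\textbf{Part \ref{itm:dua-model-equiv}.} The natural transformations are the $\mE$-relative conjugates $\kappa^\dagger,\mu^\dagger,\nu^\dagger$, relativized over $\Gamma$ as in the displayed diagrams.

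\begin{itemize}
\item For $\kappa^\dagger$ at a projective left fibration $p^\bullet\colon A^\bullet\to\const\Gamma$, \cref{lem:fun-arr-fun-equiv-base} identifies $\kappa^\dagger_{\const\Gamma}$ with $\eta^\dagger$ up to isomorphism. Hence the relativized component $A^\bullet\to\arrtofun^\Gamma\funtoarr^\Gamma A^\bullet$ is exactly the gap map of \cref{lem:fun-arr-fun-equiv}, and so is a weak equivalence.
\item For $\mu^\dagger,\nu^\dagger$, the components at left fibrations over $N\cC\times\Gamma$ are precisely the span of \cref{lem:arr-fun-arr-equiv}, which gives weak equivalences.
\item These are weak equivalences between fibrant objects of the covariant structure, because local equivalences between local objects coincide with underlying weak equivalences.
\end{itemize}

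\textbf{Part \ref{itm:dua-model-naturality}.} I would factor each square into two pieces.

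\begin{itemize}
\item \emph{Change of context along $t$.} This is handled by \cref{lem:arr-to-fun-pullback,lem:fun-to-arr-pullback}. For $\funtoarr$ these give an isomorphism $(N\cC\times t)^*\funtoarr^\Gamma\cong\funtoarr^\Phi t^*$. For $\arrtofun$ they give the analogous isomorphism with pullback on the other side.
\item \emph{Change of category along $F$.} This is handled by \cref{cor:arr-fun-laxnatural}, relativized by pulling back along the unit maps $\eta^\dagger$. Compatibility comes from the coherence in \cref{defn:unit-map}, namely that $\eta$ commutes with $\arrtofunwgt_F$. For $\funtoarr$ the cell is invertible, so pseudonaturality is immediate and the fibrant-component statement is trivial.
\end{itemize}

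The main obstacle is showing that the lax cell $\arrtofun_F^t$ has components at left fibrations that are weak equivalences. First I would reduce, using \cref{lem:left-fib-fiberwise-we} and evaluation at $c\in\cC$ in degree $0$, to the Leibniz $\EHom_{N\cD}$ of the component of \eqref{eq:arr-to-fun-weighted-oplaxnatural-representables} at $\Delta^0$ against $p$. By \cref{cor:arr-to-fun-weights-oplaxnatural}.\ref{cor:arr-to-fun-weights-oplaxnatural:point}, this map lies in the right-cancellation closure of strong 0-oriented homotopy equivalences. Then \cref{cor:ehom-equiv-she-left-fibration}, in its slice form via \cref{lem:leib-hom-fibrations-in-slice} and \cref{she-is-left-anodyne}, together with the 2-out-of-3 argument of \cref{lem:ehom-equiv-cancellation}, shows the component is a weak equivalence.

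Pseudofunctoriality in $(\cC,\Gamma)$ then follows by composing the pseudofunctors of \cref{cor:arr-fun-laxnatural} with the pullback pseudofunctor in $\Gamma$.
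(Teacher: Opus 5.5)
Your proposal is correct and follows the paper's proof essentially step for step. Parts (ii) and (iii) rest on the same lemmas and on the same right-cancellation argument via \cref{cor:arr-to-fun-weights-oplaxnatural}.\ref{cor:arr-to-fun-weights-oplaxnatural:point}, and you are if anything more explicit than the paper in invoking \cref{lem:fun-arr-fun-equiv-base} and \cref{lem:left-fib-fiberwise-we}.

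The only deviation is in (i). You use the Joyal--Tierney criterion (preserve trivial fibrations and fibrations between fibrant objects), where the paper appeals to \cite[A.3.7.2]{lurie-topos} (preserve trivial fibrations and fibrant objects, using that both structures are left proper and simplicial). Your route avoids any simpliciality check at the cost of an extra step on fibrations between fibrant objects. That step is fine, but its justification should be that a Reedy fibration between local objects is a local fibration \cite[3.3.16]{hirschhorn}, not merely that it is a left fibration: a left fibration with non-local codomain need not be a covariant fibration.
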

\begin{proof}
For \ref{itm:dua-model-quillen}, first note that both model structure are left proper and simplicial. Thus, by \cite[A.3.7.2]{lurie-topos}, to show that $\arrtofun^\Gamma$ and $\funtoarr^\Gamma$ are right Quillen, it suffices to show that these functors preserve trivial fibrations and fibrant objects. Per \cref{defn:relative-arr-to-fun}, the action of $\arrtofun^\Gamma$ on morphisms is a pullback of the action of $\arrtofun$, while the action of $\funtoarr^\Gamma$ is the action of $\funtoarr$. Thus it suffices to show that the non-sliced versions of these functors preserve left fibrations and trivial fibrations, which is demonstrated by \cref{lem:arr-to-fun-fibrations,lem:fun-to-arr-fibrations}.

  For \ref{itm:dua-model-equiv}, the natural transformations are defined using the $\mE$-relative conjugates of the natural transformations of \cref{lem:arr-to-fun-to-arr-point,lem:fun-to-arr-to-fun-point}, and their components are shown to be weak equivalences in \cref{lem:fun-arr-fun-equiv,lem:arr-fun-arr-equiv}.

For \ref{itm:dua-model-naturality}, the natural isomorphism $\funtoarr_F^t$ is the composite of two natural isomorphisms as below:
\[
\begin{tikzcd}[row sep=large]
  (\slice{\smE}{\Gamma})^\cD \arrow[d, "\funtoarr_\cD^\Gamma"'] \arrow[dr, phantomcenter, "\cong"] \arrow[r, "\res_F"] &   (\slice{\smE}{\Gamma})^\cC \arrow[r, "t^*"]  \arrow[d, "\funtoarr_\cC^\Gamma" description] \arrow[dr, phantomcenter, "\cong"] &   (\slice{\smE}{\Phi})^\cC \arrow[d, "\funtoarr_\cC^\Phi"]
\\
  \slice{\smE}{N\cD\times\Gamma} \arrow[r, "{(NF)^*}"'] &  \slice{\smE}{N\cC\times\Gamma} \arrow[r, "t^*"'] &  \slice{\smE}{N\cC\times\Phi} \rlap{.}
\end{tikzcd}
\]
The left-hand square arises from the natural isomorphism of \cref{cor:arr-fun-laxnatural}; its component at an object of $(\slice{\smE}{\Gamma})^\cD$ may be extracted from the corresponding component of that natural isomorphism at the corresponding morphism of ${\smE}^\cD$. \Cref{lem:fun-to-arr-pullback} gives the natural isomorphism in the right-hand square.

For $\arrtofun_F^t$, we similarly have a composite
\[ \begin{tikzcd}[row sep=large]
 \slice{\smE}{N\cD\times\Gamma} \arrow[d, "\arrtofun_\cD^\Gamma"'] \arrow[r, "{(NF)^*}"]
 \ar[dr, phantomcenter, "{\Rightarrow}"]
 \ar[dr, draw=none, center, "\arrtofun_F^\Gamma" below]
 &  \slice{\smE}{N\cC\times\Gamma} \arrow[d, "\arrtofun_\cC^\Gamma" description]\arrow[r, "t^*"] \arrow[dr, phantomcenter, "\cong"] & \slice{\smE}{N\cC\times\Phi} \arrow[d, "\arrtofun_\cC^\Gamma"]
\\
   (\slice{\smE}{\Gamma})^\cD \arrow[r, "\res_F"'] &    (\slice{\smE}{\Gamma})^\cC \arrow[r, "t^*"']  &  (\slice{\smE}{\Phi})^\cC
\end{tikzcd}\]
where the natural transformation in the left-hand square arises from the natural transformation of \cref{cor:arr-fun-laxnatural}, while \cref{lem:arr-to-fun-pullback} gives the natural isomorphism in the right-hand square.

It remains for \ref{itm:dua-model-naturality} to check that the components of $\arrtofun_F^\Gamma$ at fibrant objects are weak equivalences.
When an object $p \colon E \to N\cD \times \Gamma$ is regarded as a morphism in $\slice{\smE}{N\cD}$, the naturality square in $\smE^\cC$ for the non-sliced transformation $\arrtofun_F$ at $p$ is the square displayed below:
\[ \begin{tikzcd} & \res_F(\arrtofun_\cD E) \arrow[d] \arrow[r] & \arrtofun_\cC((NF)^*E) \arrow[d] \\ {\const(\Gamma)} \arrow[r, "\eta^\dagger"] \arrow[rr, bend right=10, "\eta^\dagger"'] & \res_F(\arrtofun_\cD(N\cD \times \Gamma)) \arrow[r] & \arrtofun_\cC((NF)^*(N\cD \times \Gamma)) \rlap{.} \end{tikzcd}\]
We that see the unit maps from $\const(\Gamma)$ commute with the bottom horizontal map from the commutative diagram of functors
\[ \begin{tikzcd}[sep=small] \cC \arrow[rr, "F"] \arrow[dr,"!"'] & & \cD \arrow[dl, "!"] \\ & {[0]} \rlap{.}
  \end{tikzcd}\]
The component of the natural transformation $\arrtofun_F^\Gamma$ at the object $p$ is defined by pulling back the gap map in this naturality square along the unit maps $\eta$.

The component of this natural transformation at a left fibration $p \colon E \twoheadrightarrow N\cD \times \Gamma$ is the morphism in $(\slice{\smE}{\Gamma})^\cC$ whose component at $c \in \cC$ is a pullback of $\leibEHomND{(\arrtofunwgt_F)_{\cC(c,-)}}{p}$, that is, the Leibniz weighted limit of the component of the natural transformation of \cref{cor:arr-to-fun-weights-oplaxnatural} at the discrete representable $\cC(c,-) \in \sSet^{\cC}$ with $p$, regarded as a morphism in $\slice{\smE}{N\cD}$. By \cref{lem:ehom-slice-pullback-ehom}, this map is a pullback of $\leibEHom{(\arrtofunwgt_F)_{\cC(c,-)}}{p}$, where we're now considering these maps outside of the slices. By \cref{cor:arr-to-fun-weights-oplaxnatural}.\ref{cor:arr-to-fun-weights-oplaxnatural:point}, the component is the map $NF \colon N(\coslice{\cC}{c}) \to N(\coslice{\cD}{Fc})$ and it is in the closure under right cancellation of the class of strong 0-oriented homotopy equivalences.
As the Leibniz weighted limit of a left fibration with a strong 0-oriented homotopy equivalence is a weak equivalence in $\smE$ by \cref{she-is-left-anodyne}, it follows by 2-of-3 that the map under consideration is also a weak equivalence.
\end{proof}

\section{The universal left fibration}\label{sec:universe}

We now turn our attention to directed univalence of \emph{universes} $\picov \colon \EUcov \to \Ucov$ of left fibrations, relativizing the directed univalence theorem of \cref{sec:dua-model} to the class of left fibrations classified by $\Ucov$.
We can state the relativized theorem in terms of the $\infty$-category $\cE$ presented by our type-theoretic model topos $\mE$.
Restricting to the case of ordinal index categories $\cC = [n]$, we assert an equivalence $\Ucov^{\Delta^\bullet} \simeq \Fun_\bullet$ between two simplicial objects in $\sE$: the simplicial object $\Ucov^{\Delta^\bullet}$ which at $n \in \mathbb{N}$ is the object of $n$-simplices in $\Ucov$, and a simplicial object $\Fun_\bullet$ which at $n \in \mathbb{N}$ is the object of $[n]$-indexed diagrams in $\Ucov$.

We will prove this equivalence in \cref{sec:dua-types}.
In the present section, we establish the existence of left fibration classifiers and prove some preliminary results.
In \cref{ssec:structured-left-fibrations}, we use Shulman's notions of fibred structure to show that for any type-theoretic model topos $\mE$, the type-theoretic model topos $\smE$ supports fibrant, univalent universes $\picov \colon \EUcov \to \Ucov$ of relatively $\kappa$-presentable left fibrations for suitable $\kappa$.
In \cref{ssec:dua-statement}, we describe the pair of simplicial objects $\Ucov^{\Delta^\bullet}$ and $\Fun_\bullet$ in $\smE$.
In \cref{ssec:univalence-of-fun}, we derive (non-directed) univalence theorems for $\Ucov^{\Delta^n}$ and $\Fun_n$, characterizing non-directed homotopies between maps $\Gamma \to \Ucov^{\Delta^n}$ and $\Gamma \to \Fun_n$ respectively in terms of the objects they classify.

\subsection{Structured left fibrations}\label{ssec:structured-left-fibrations}

Let $\mE$ be a type-theoretic model topos. Then $\smE$ is again a type-theoretic model topos, with the Reedy/injective model structure \cite[8.29]{shulman}. In particular, $\smE$ admits a locally representable, relatively acyclic, and homotopy invariant notion of fibered structure $\FF$ for which $\FF$-algebras are precisely Reedy fibrations.
In \cref{lem:triv-fib-loc-rep}, we demonstrated that $\mE$ also admits a locally representable, relatively acyclic, and homotopy invariant notion of fibered structure $\TF$ for which $\TF$-algebras are precisely trivial fibrations.
Using $\FF$ and $\TF$, we now define a locally representable, relatively acyclic, and homotopy invariant notion of fibered structure $\LL$ on $\smE$ for which the $\LL$-algebras are precisely the left fibrations of \cref{defn:left-fibration}.
Using $\LL$, we will obtain universes for left fibrations via \cref{thm:shulman-universe}.

Inspired by \cref{defn:left-fibration}:

\begin{defn}\label{defn:structured-left-fibration} A \textbf{structured left fibration} in $\smE$ is an $\FF$-algebra $f \colon Y \twoheadrightarrow X$ equipped with trivial fibration structures on the maps
  \[ \begin{tikzcd}[sep=large] Y_m \arrow[r, two heads, "{(f_m,\ev_0)}"] & X_m \times_{X_0} Y_0 \end{tikzcd}\] for all $m \geq 1$. That is we define a notion of fibered structure $\LL$ on $\smE$ by $\LL \coloneqq \FF \times_{\mE} \LL'$ where the category of $\LL'$-algebras and $\LL'$-morphisms is defined by the pullback
\[ \begin{tikzcd}[column sep=large]  \alg{\LL'} \arrow[r] \arrow[d] \arrow[dr, phantom, "\lrcorner" very near start] & \prod_{m \geq 1}\alg{\TF} \arrow[d] \\ \smE^{[1]}_{\mathup{pb}} \arrow[r, "{\widehat{\{\iota_0,-\}}}^{\Delta}"'] & \prod_{m \geq 1} \mE^{[1]}_{\mathup{pb}}
\end{tikzcd}
\]
along the product of the Leibniz weighted limit functors associated to the maps of weights $\iota_0 \colon \Delta^0 \to \Delta^m$ for $m \ge 1$. Because the Leibniz weighted limit functor preserves pullbacks, this construction defines a notion of fibered structure.
\end{defn}

By comparing \cref{defn:left-fibration} with \cref{defn:structured-left-fibration}, we see that a map admits an $\LL$-algebra structure just when it is a left fibration.

\begin{lem}\label{lem:left-fib-htpy-inv-rel-acyclic} The notion of fibered structure $\LL$ is homotopy invariant and relatively acyclic.
\end{lem}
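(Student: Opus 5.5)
We treat the two properties separately, reducing each to properties of $\FF$ and $\TF$ already in hand.

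\emph{Homotopy invariance.} Every $\LL$-algebra is an $\FF$-algebra, hence a Reedy fibration, and $\LL$-algebra structures exist on exactly the left fibrations (by comparison of \cref{defn:left-fibration} with \cref{defn:structured-left-fibration}). So it suffices to show: given a commutative square in $\smE$ whose vertical maps $f' \colon Y' \fto X'$ and $f \colon Y \fto X$ are Reedy fibrations and whose horizontal maps are Reedy weak equivalences, $f$ is a left fibration iff $f'$ is. By \cref{defn:left-fibration}, for a Reedy fibration being a left fibration means that the square \eqref{eq:left-map-square} is a homotopy pullback in $\mE$ for each $m \ge 1$, i.e.\ that $f$ presents a left map in $\sE$. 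Left maps are replete (\cref{lem:left-map-repleteness}), and the given square exhibits $f$ and $f'$ as equivalent objects of the arrow $\infty$-category of $\sE$, so the claim follows. At the point-set level: $(f_m,\ev_0)$ and $(f'_m,\ev_0)$ are fibrations between fibrant objects of slices, compared by maps whose components $Y'_m \to Y_m$ and $X'_m \times_{X'_0} Y'_0 \to X_m \times_{X_0} Y_0$ are weak equivalences (the latter by right properness, since $f_0, f'_0$ are fibrations); a fibration is trivial iff it is a weak equivalence, so 2-out-of-3 transfers triviality.

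\emph{Relative acyclicity.} Given a pullback square with $q, p$ both $\LL$-algebras and $i \colon A \rightarrowtail B$ a monomorphism in $\smE$, we must restructure $p$ so the square is an $\LL$-morphism. Since $\LL = \FF \times_{\mE} \LL'$ we can treat the components independently: relative acyclicity of $\FF$ provides a new $\FF$-structure on $p$. For $\LL'$, apply the pullback-preserving functors $\widehat{\{\iota_0,-\}}$ for each $m \ge 1$: the square becomes a pullback in $\mE$ of $(q_m,\ev_0)$ along $i_m \times_{i_0} \ldots$, the induced map $\EHom(\Delta^m,B)\times_{\EHom(\Delta^0,B)}\EHom(\Delta^0,E)$-level base map. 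The key point, and the main thing to verify, is that this base map is a monomorphism (cofibration) in $\mE$: it is the pullback of $i_m \colon A_m \to B_m$ along $X_m\times_{X_0}Y_0 \to X_m$-type projection, and monomorphisms are pullback-stable, with $i_m$ mono since $i$ is levelwise mono. Then relative acyclicity of $\TF$ (\cref{lem:triv-fib-loc-rep}), applied for each $m$ separately, gives new $\TF$-structures on $(p_m,\ev_0)$ making each induced square a $\TF$-morphism. Assembling these choices over all $m$ (no compatibility between different $m$ is required, as $\alg{\LL'}$ is a plain product over $m$) yields the desired $\LL$-structure.

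The main obstacle I anticipate is bookkeeping: checking that the image of an $\LL$-morphism under $\widehat{\{\iota_0,-\}}$ is exactly an $\TF$-morphism over the base change of $i$, and that this base change is a cofibration in $\mE$; everything else is formal from the corresponding properties of $\FF$ and $\TF$.
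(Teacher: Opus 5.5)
Your proposal is correct and follows the same route as the paper: relative acyclicity is obtained componentwise from relative acyclicity of $\FF$ and of $\TF$ (applied separately for each $m \ge 1$), and homotopy invariance comes from the equivalence-invariance of left maps among fibrations (\cref{lem:left-map-repleteness}). Your additional checks fill in details the paper leaves implicit: that the base of each Leibniz square is a pullback of $i_m$ and hence a cofibration in $\mE$, and the point-set 2-out-of-3 argument using right properness.
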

\begin{proof}
To see that $\LL$ is relatively acyclic, consider a pullback square \eqref{eq:relative-acyclicity} between $\LL$-algebras. We first give $p$ a new $\FF$-algebra structure making the square an $\FF$-morphism and then give $p$ a new $\LL'$-algebra structure making the square an $\LL'$-morphism, which is possible by the relative acyclicity of $\TF$.

  Homotopy invariance of $\LL$ follows from its definition and \cref{lem:left-map-repleteness}: among fibrations, the left maps are equivalence-invariant.
\end{proof}

A slightly more involved argument is required to demonstrate that $\LL$ is locally representable. Since the codomain weight $\Delta^m$ is representable, the weighted limit functor $\lim_{\Delta^m} \colon \smE \to \mE$ is just defined to be evaluation at the $m$th object in the simplicial object. Hence, as an evaluation functor, $\lim_{\Delta^m}$ has both left and right adjoints, defined by left and right Kan extension. The right adjoint permits the construction that proves the following lemma.

\begin{lem}\label{lem:left-fib-loc-rep} The notion of fibered structure $\LL$ is locally representable.
\end{lem}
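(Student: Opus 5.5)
Since $\LL \coloneqq \FF \times_{\mE} \LL'$ and $\FF$ is locally representable, \cref{ex:intersection-fibered-structure} reduces the claim to local representability of $\LL'$. Moreover, $\alg{\LL'}$ is an infinite product over $m \geq 1$ of pullbacks of single factors, so it suffices to show that each factor $\LL'_m$ is locally representable. Here $\LL'_m$ is defined by pulling back $U_{\TF} \colon \alg{\TF} \to \cart{\mE}$ along the Leibniz weighted limit $\widehat{\{\iota_0,-\}} \colon \cart{\smE} \to \cart{\mE}$ for the weight map $\iota_0 \colon \Delta^0 \to \Delta^m$. Local representability of the countable product then follows because the $\LL'$-structure on $f$ is a family of independent structures. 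Concretely, the universal $\LL'$-algebra over $f$ is obtained by pulling $f$ back along the fibered product over $X$ of the bases $\LL'_m(f)$ of the universal $\LL'_m$-algebras. This product exists since $\smE$ is complete, and the classifying lifts assemble componentwise.

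For a single $m$, I would apply \cref{ex:leibniz-pullback-application-loc-rep-fibered-structure} with $\mD = \smE$ and the locally representable structure $\TF$ on $\mE$ from \cref{lem:triv-fib-loc-rep}. The natural transformation is $\alpha = \ev_0 \colon \lim_{\Delta^m} \Rightarrow \lim_{\Delta^0}$, that is, $\ev_0 \colon (-)_m \Rightarrow (-)_0$, between pullback-preserving functors $\smE \to \mE$. The Leibniz pullback application $\alpha \hat\circ f$ is exactly $(f_m,\ev_0) \colon Y_m \to X_m \times_{X_0} Y_0$. The hypothesis that must be checked is that the domain functor $L = \ev_m \colon \smE \to \mE$ has an indexed right adjoint, i.e.\ that for each $X \in \smE$ the induced functor $\slice{\smE}{X} \to \slice{\mE}{X_m}$ has a right adjoint.

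To supply this, I would use the right adjoint $\mathrm{ran}_m \colon \mE \to \smE$ of $\ev_m$, given by right Kan extension, $(\mathrm{ran}_m E)_k = \prod_{\DDelta([m],[k])} E$. For $Z \to X_m$ in $\slice{\mE}{X_m}$, I would define $R_X(Z) \coloneqq X \times_{\mathrm{ran}_m X_m} \mathrm{ran}_m Z$, the pullback along the unit $X \to \mathrm{ran}_m(X_m)$. The sliced adjunction $\slice{\smE}{X} \rightleftarrows \slice{\mE}{X_m}$ then follows by the standard argument: for a map $Y \to X$, giving a map $Y_m \to Z$ over $X_m$ is the same as giving a map $Y \to \mathrm{ran}_m Z$ compatible with $Y \to X \to \mathrm{ran}_m X_m$, which is a map $Y \to R_X(Z)$ over $X$. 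With this in hand, \cref{ex:leibniz-pullback-application-loc-rep-fibered-structure}, which rests on \cref{lem:loc-rep-pb}, shows that each $\LL'_m$ is locally representable.

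The main obstacle, to my mind, is verifying that \cref{ex:leibniz-pullback-application-loc-rep-fibered-structure} genuinely applies, namely that $\alpha \hat\circ - \colon \cart{\smE} \to \cart{\mE}$ has an indexed right adjoint built from the indexed right adjoint of $L$. This requires the pullback-preservation of $K = \ev_0$ together with local cartesian closure of $\mE$, to pass from $\alpha \hat\circ f$ back to data over $X$. I would check it by a direct transposition: given $Z \to X_m \times_{X_0} Y_0$ pulled back from $g$, form $R_X$ applied to the composite to $X_m$, then cut down by the equalizer/pullback expressing compatibility with $Y_0$ via the $\ev_0$-component. The pullback-stability of all these constructions then gives the required cartesian-arrow compatibility.
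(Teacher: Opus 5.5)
Your proof is correct and is essentially the paper's. The paper also reduces to $\LL'$ via \cref{ex:intersection-fibered-structure} and gets the indexed right adjoint of the domain functor from right Kan extension, but it applies \cref{ex:leibniz-pullback-application-loc-rep-fibered-structure} only once, to $\ev_0\colon(\ev_m)_{m\ge1}\Rightarrow(\ev_0)_{m\ge1}$ valued in $\prod_{m\ge1}\mE$ with the componentwise structure $\prod_{m\ge1}\TF$; this absorbs your separate wide-fibered-product assembly over $X$.

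Your final paragraph is unnecessary, and as sketched it is not right. It is unnecessary because the indexed right adjoint of $\alpha\hat\circ-$ is constructed inside that example, not assumed as one of its hypotheses. It is not right because the transpose of $Z\to Q\coloneqq X_m\times_{X_0}Y_0$ is $R_X(\Pi_q Z)$, where $\Pi_q$ is the dependent product along $q\colon Q\to X_m$ (this is where local cartesian closure of $\mE$ enters). It is not obtained by cutting down $R_X$ applied to $Z\to X_m$.
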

\begin{proof}
  By \cref{lem:triv-fib-loc-rep}, $\mE$ admits a locally representable notion of fibered structure $\TF$ for the trivial fibrations.
  It follows directly that the notion of fibred structure $\prod_{m \ge 1} \TF$ on $\prod_{m \ge 1} \mE$ appearing in \cref{defn:structured-left-fibration} is also locally representable.
In \cref{defn:structured-left-fibration}, the notion of fibered structure $\LL'$ on $\smE$ is defined by pulling back $\prod_{m \ge 1} \TF$ along the Leibniz application associated to the natural transformation
\[ \begin{tikzcd} \smE \arrow[r, bend left, "\ev_m"{above}] \arrow[r, phantom, "\Downarrow\ev_0"] \arrow[r, bend right, "\ev_0"{below}] & \prod_{m \geq 1} \mE \rlap{.} \end{tikzcd} \]
Since the domain has a right adjoint defined by right Kan extension, it in particular has a dependent right adjoint, so \cref{ex:leibniz-pullback-application-loc-rep-fibered-structure} applies to show that $\LL'$ is locally representable.
Since $\FF$ is locally representable by hypothesis, it then follows by \cref{ex:intersection-fibered-structure} that $\LL$ is locally representable as well.
\end{proof}

By \cref{lem:left-fib-htpy-inv-rel-acyclic,lem:left-fib-loc-rep}, we may apply \cref{thm:shulman-universe} to conclude:

\begin{prop}\label{prop:left-universe} There is a regular cardinal $\lambda$ such that for any regular cardinal $\kappa \triangleright \lambda$ there exists a morphism $\picov \colon \EUcov \to \Ucov$ such that:
  \begin{parts}
    \item $\picov \colon \EUcov \to \Ucov$ is a relatively $\kappa$-presentable left fibration.
    \item Every relatively $\kappa$-presentable left fibration is a pullback of $\picov$.
    \item $\Ucov$ is fibrant.
    \item $\picov$ is univalent. \qed
  \end{parts}
\end{prop}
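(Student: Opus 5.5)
The plan is to obtain \cref{prop:left-universe} by applying \cref{thm:shulman-universe} to the notion of fibered structure $\LL$ on $\smE$ from \cref{defn:structured-left-fibration}. So the first task is to check the hypotheses of that theorem.

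\begin{itemize}
\item By \cite[8.29]{shulman}, $\smE$ with the Reedy (= injective) model structure is a type-theoretic model topos. In particular it is a right proper simplicial Cisinski model category, which is the ambient hypothesis.
\item $\LL$ is locally representable by \cref{lem:left-fib-loc-rep}.
\item $\LL$ is relatively acyclic and homotopy invariant by \cref{lem:left-fib-htpy-inv-rel-acyclic}.
\end{itemize}

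\Cref{thm:shulman-universe} then supplies a regular cardinal $\lambda$. For each regular $\kappa \triangleright \lambda$ it also supplies a fibrant univalent universe $\picov \colon \EUcov \to \Ucov$ for $\LL^\kappa = \LL \times_{\smE} \EE^\kappa$, where $\picov$ is a $\kappa$-presentable $\LL$-algebra and every relatively $\kappa$-presentable $\LL$-algebra is a pullback of $\picov$.

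Next I translate this conclusion from $\LL$-algebras into the four clauses of the statement. By the remark after \cref{defn:structured-left-fibration}, a map admits an $\LL$-algebra structure exactly when it is a left fibration in the sense of \cref{defn:left-fibration}.

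\begin{itemize}
\item Since $\picov$ is an $\LL^\kappa$-algebra, it is a relatively $\kappa$-presentable left fibration, which gives (i).
\item Let $p$ be any relatively $\kappa$-presentable left fibration. Choose an $\LL$-structure on $p$; paired with its $\EE^\kappa$-structure, this makes $p$ an $\LL^\kappa$-algebra. The classifying property then exhibits $p$ as a pullback of $\picov$, which gives (ii).
\item Fibrancy of $\Ucov$ and univalence of $\picov$ are part of the output of \cref{thm:shulman-universe}, which gives (iii) and (iv).
\end{itemize}

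I expect the main, though mild, obstacle to be the identification between "admits an $\LL$-algebra structure" and "left fibration". An $\LL$-structure is an $\FF$-structure together with $\TF$-structures on the maps $(f_m,\ev_0)$ for all $m \ge 1$. By \cref{defn:left-fibration} and \cref{lem:left-anodyne-initial-vertex}, a left fibration is exactly a Reedy fibration whose maps $(f_m,\ev_0)$ are trivial fibrations, so the two notions agree. Clause (ii) also relies on the classifying statement applying to arbitrary relatively $\kappa$-presentable algebras rather than only to those with a fixed structure. That is handled by choosing a structure and then forgetting it, as in the step for (ii) above.
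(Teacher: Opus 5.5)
Your proposal is correct and follows the paper's own argument: apply \cref{thm:shulman-universe} to the notion of fibered structure $\LL$ on the type-theoretic model topos $\smE$, using \cref{lem:left-fib-htpy-inv-rel-acyclic,lem:left-fib-loc-rep}, together with the observation that a map admits an $\LL$-structure exactly when it is a left fibration. Your treatment of clause (ii), where you first choose an $\LL$-structure on a given relatively $\kappa$-presentable left fibration and only then invoke the classifying property, spells out a step the paper leaves implicit.
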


We now fix some regular cardinal $\kappa$ for which we have universes $\pifib \colon \EUfib \to \Ufib$ for $\FF_\kappa$ and $\picov \colon \EUcov \to \Ucov$ for $\LL_\kappa$.

\begin{rmk}\label{rmk:ucov-univalence-application} Univalence of $\picov$ implies that maps $\Gamma \to \Ucov$ are determined up to homotopy by the equivalence class of the left fibrations they classify (see  \cite[\S A]{SattlerVezzosi-Partial}). Thus, to prove the (homotopy) commutativity of a diagram whose final vertex is $\Ucov$, it will suffice to construct an equivalence between the corresponding left fibrations over the initial vertex in the diagram.
\end{rmk}

By \cref{thm:shulman-universe}, since $\picov \colon \EUcov \to \Ucov$ is a relatively $\kappa$-presentable fibration, there exists a pullback square
\begin{equation}\label{eq:universe-comparison}
  \begin{tikzcd} \EUcov \arrow[d, "\picov"'] \arrow[r] \arrow[dr, phantom, "\lrcorner" very near start] & \EUfib \arrow[d, "\pifib"] \\  \Ucov \arrow[r, "\iota"'] & \Ufib \rlap{.}
  \end{tikzcd}
\end{equation}
Since $\pifib \colon \EUfib \to \Ufib$ is univalent, this square is homotopically unique (see \cite[\S A]{SattlerVezzosi-Partial}), providing a well-defined  classifying map $\iota \colon \Ucov \to \Ufib$.

\begin{prop}\label{prop:universal-prop-fibration}
  The map $\iota \colon \Ucov \to \Ufib$ is $-1$-truncated, defining  a homotopy monomorphism.
\end{prop}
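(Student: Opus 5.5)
We have to show that $\iota \colon \Ucov \to \Ufib$ is $-1$-truncated, meaning its diagonal $\Ucov \to \Ucov \times_{\Ufib} \Ucov$ is an equivalence, or equivalently that $\Id(\Ucov) \to (\iota\times\iota)^*\Id(\Ufib)$ over $\Ucov \times \Ucov$ is a weak equivalence. The plan is to identify both sides with objects of equivalences and use the two univalence statements.

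First, by univalence of $\picov$ (\cref{prop:left-universe}), the path object $\Id(\Ucov) \to \Ucov \times \Ucov$ is weakly equivalent to $\Eq(\EUcov) \to \Ucov \times \Ucov$. This is the object of equivalences between the two left fibrations obtained by pulling $\picov$ back along the two projections. Second, by univalence of $\pifib$, the identity type $\Id(\Ufib)$ is weakly equivalent to $\Eq(\EUfib)$ over $\Ufib \times \Ufib$. Voevodsky's construction of $\Eq$ is stable under pullback, and $\EUcov$ is the pullback of $\EUfib$ along $\iota$ by \eqref{eq:universe-comparison}. Hence the pullback $(\iota\times\iota)^*\Eq(\EUfib)$ is exactly $\Eq(\EUcov)$, and since both families are Reedy fibrations, $\Eq$ computes the same thing whichever universe we view them in.

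It then remains to check that the square relating the two univalence maps commutes up to homotopy. Concretely, the map $\Id(\Ucov) \to \Id(\Ufib)$ induced by $\iota$, followed by $\Id(\Ufib)\to\Eq(\EUfib)$, should agree with $\Id(\Ucov) \to \Eq(\EUcov) \cong (\iota\times\iota)^*\Eq(\EUfib)$. Both maps send the reflexivity path to the identity equivalence, so by path induction (the defining property of $\Id$ as a path object, together with fibrancy of the target over $\Ucov\times\Ucov$) they are homotopic over $\Ucov\times\Ucov$. Two-out-of-three then shows that $\Id(\Ucov) \to (\iota\times\iota)^*\Id(\Ufib)$ is a weak equivalence, which is the $-1$-truncatedness of $\iota$.

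The key conceptual point behind this argument is that being a left fibration is a property of a Reedy fibration rather than extra structure up to homotopy. This follows from \cref{lem:left-map-repleteness} and the homotopy invariance part of \cref{lem:left-fib-htpy-inv-rel-acyclic}. It is what guarantees that equivalences of underlying fibrations between left fibrations are the same as equivalences of left fibrations, so no extra data survives in $\Eq(\EUcov)$. The main obstacle is the bookkeeping: making sure the pullback-stability of $\Eq$ is strict enough, and that the homotopy-uniqueness of $\iota$ (cited from \cite[\S A]{SattlerVezzosi-Partial}) does not obstruct this comparison. My first approach would be to fix a strict choice of $\iota$ realizing \eqref{eq:universe-comparison} as a genuine pullback and to argue only with that choice.
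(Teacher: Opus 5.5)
Your proposal is correct and is essentially the paper's proof. Both reduce $-1$-truncatedness of $\iota$ to the square of identity types over $\Ucov\times\Ucov \to \Ufib\times\Ufib$ being a homotopy pullback, and deduce this from univalence of $\picov$ and $\pifib$ together with pullback-stability of Voevodsky's $\Eq$ factorization along the strict pullback \eqref{eq:universe-comparison}. Your path-induction check that the comparison maps commute up to homotopy fills in a detail the paper's cube leaves implicit. Your closing remark about left fibrations being a property is not needed, since that is already packaged into the univalence of $\picov$ from \cref{prop:left-universe}.
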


\begin{proof}
  Since $\Ucov$ and $\Ufib$ are fibrant, $\iota \colon \Ucov \to \Ufib$ is a homotopy monomorphism just when the induced square on identity types is a homotopy pullback. By the pullback stability of the equivalence object factorization \cite[\S4]{shulman-reedy} at \(\iota\), we have a diagram
  \[ \begin{tikzcd} \Id(\EUcov) \arrow[ddr, two heads] \arrow[rr]\arrow[dr, wearrow] & &[-2em] \Id(\EUfib) \arrow[ddr, two heads] \arrow[dr, wearrow] \\ &  \Eq(\EUcov) \arrow[d, two heads] \arrow[rr, crossing over] \arrow[drr, phantom, "\lrcorner" very near start]  & & \Eq(\EUfib) \arrow[d, two heads] \\ & \Ucov\times\Ucov \arrow[rr] & &  \Ufib \times \Ufib \rlap{.}
  \end{tikzcd}
  \]
  By univalence of \(\Ucov\) and \(\Ufib\), the natural maps from the path objects to the objects of equivalences are weak equivalences. Thus the back square is a homotopy pullback, which is what we wanted to show.
\end{proof}

\subsection{The universes of arrows and functions}\label{ssec:dua-statement}

We interpret ``directed univalence'' in the $\infty$-categorical setting as a pointwise equivalence between simplicial objects in $\smE$ between a pair of simplicial objects we now define.

\begin{defn}[the simplicial object of ``arrows'']
By exponentiating the object $\Ucov \in \smE$ with the discretely embedded simplicies $\Delta^\bullet \colon \DDelta \to \smE$, we obtain a simplicial object $[n] \mapsto \Ucov^{\Delta^n} \colon \DDelta^\op \to \smE$ that we denoted by $\Ucov^{\Delta^\bullet}$.
\end{defn}

The other simplicial object is also defined from the universe $\pi \colon \EUcov \to \Ucov$ via the following lemma:

\begin{lem}[{\cite[7.1.4(ii)]{Jacobs}, \cite[6.2.2]{stenzel}}]\label{lem:map-internal-category}
In a locally cartesian closed 1-category $\mE$:
\begin{parts}
  \item\label{lem:map-internal-category:object} Any morphism $p \colon E \to B$ defines an internal category object with base $B$ and thus a simplicial object $\Fun^p_\bullet \colon \DDelta^\op \to \mE$.
  \item\label{lem:map-internal-category:functor} Moreover, for any $\Gamma \in \mE$, the 1-category obtained by post-composing with the covariant representable functor $\mE(\Gamma,-)$ has a canonical fully faithful functor to $\mE_{/\Gamma}$.
\end{parts}
\end{lem}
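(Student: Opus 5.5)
For part \ref{lem:map-internal-category:object}, I will build the internal category directly from local cartesian closure. Write $\pi_1,\pi_2 \colon B \times B \to B$ for the projections. Set
\[ \Fun^p_1 \coloneqq (\pi_1^*E \Rightarrow \pi_2^*E) \in \slice{\mE}{B \times B}, \]
the internal hom in the slice over $B \times B$, viewed as an object of $\mE$ with source and target maps to $B$. For $n \ge 0$ let $\Fun^p_n$ be the object over $B^{n+1}$ of composable strings: the iterated pullback $\Fun^p_1 \times_B \cdots \times_B \Fun^p_1$, with $n$ factors and $\Fun^p_0 = B$. Equivalently, $\Fun^p_n$ is the internal hom in the slice over $B^{n+1}$ from $\prod_{i<n}$ of the adjacent pulled-back pairs. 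The identity $B \to \Fun^p_1$ is the transpose of the identity of the diagonal pullback of $E$. Composition $\Fun^p_2 \to \Fun^p_1$ is the transpose of internal evaluation applied twice. These assemble into a simplicial object by the nerve construction: faces compose or drop endpoints, and degeneracies insert identities.

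I expect the main obstacle to be verifying the internal category axioms: associativity and unitality of composition. Rather than chase diagrams of internal homs, I will check them on generalized elements $\Gamma \to \Fun^p_n$. By the universal property of the internal hom and of pullback, a map $\Gamma \to \Fun^p_n$ is exactly the following data: maps $b_0,\dots,b_n \colon \Gamma \to B$ together with morphisms $b_0^*E \to b_1^*E \to \cdots \to b_n^*E$ in $\slice{\mE}{\Gamma}$. This identification is natural in $\Gamma$, since pullback along $\Gamma' \to \Gamma$ commutes with internal homs in a locally cartesian closed category. Under it, identities and composition become identities and composition in $\slice{\mE}{\Gamma}$, so the axioms hold pointwise. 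By Yoneda they then hold internally, and the Segal condition for $\Fun^p_\bullet$ holds by construction.

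For part \ref{lem:map-internal-category:functor}, the same description says what we get after applying $\mE(\Gamma,-)$. The objects of the resulting category are maps $b \colon \Gamma \to B$, and its morphisms $b \to b'$ are maps $b^*E \to b'^*E$ over $\Gamma$. The functor to $\slice{\mE}{\Gamma}$ sends $b \mapsto b^*E$ and acts as the identity on morphisms, so it is fully faithful by construction. Functoriality is exactly the compatibility of composition recorded in the previous paragraph.
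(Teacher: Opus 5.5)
Your proposal is correct and follows essentially the same route as the paper. Both construct $\Fun^p_1$ as the exponential in $\slice{\mE}{B\times B}$ from $p\times B$ to $B\times p$, obtain the identity and composition as transposes of the identity of $p$ and of a double evaluation, and deduce part \ref{lem:map-internal-category:functor} from the generalized-element description $b_0^*E\to b_1^*E$. Your Yoneda check of associativity and unitality on generalized elements spells out a verification that the paper leaves implicit.
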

\begin{proof}
  The object of arrows, with its source and target maps, is the exponential in the slice over $B \times B$ of the maps $p \times B$ and $B \times p$. The idea of this construction is that an arrow $f$ over a pair of generalized elements $a, b \colon \Gamma \to B$ is given by lift below as below-left, which transposes to the data of a map over $\Gamma$ as below-right between the fibers of $p$ over $a$ and $b$
  \[ \begin{tikzcd} \Gamma \arrow[dr, "{(a, b)}"'] \arrow[rr, dashed, "f"] & & [-1cm] \hom_{B \times B}(E \times B, B \times E) \arrow[dl, "{(s,t)}"] \arrow[dr, phantom, "\leftrightsquigarrow"] & E_a\arrow[dr, "a^*p"'] \arrow[rr, dashed, "f"] & & E_b \arrow[dl, "b^*p"] \\ & B \times B & ~& ~& \Gamma \rlap{.} \end{tikzcd}\]
  This proves \ref{lem:map-internal-category:functor}.

  To finish the proof of \ref{lem:map-internal-category:object}, we describe the rest of the internal category structure. The identity map of $p$ in the slice over $B$ transposes to define the internal identity map:
  \[ \begin{tikzcd} B \arrow[dr, "{(B, B)}"'] \arrow[rr, dashed, "i"] & & [-1cm] \hom_{B \times B}(E \times B, B \times E) \arrow[dl, "{(s,t)}"] \arrow[dr, phantom, "\leftrightsquigarrow"] & E\arrow[dr, "p"'] \arrow[rr, dashed, "E"] & & E \arrow[dl, "p"] \\ & B \times B & ~& ~& B \rlap{.} \end{tikzcd}\]
  The composition map transposes to a map of the form
  \[ \begin{tikzcd} E \underset{B}{\times} \hom_{B \times B}(E \times B, B \times E) \underset{B}{\times} \hom_{B \times B}(E \times B, B \times E) \arrow[rr, "\circ", dashed] \arrow[dr] & [-1cm] &  E \arrow[dl, "p"] \\ & B \rlap{,}
  \end{tikzcd}\]
  which is obtained by composing two copies of the evaluation map:
  \begin{equation}\label{eq:internal-cat-evaluation}
  \begin{gathered}
  \begin{tikzcd} &[-4.3cm] \arrow[dl, "\pi"'] \arrow[dr, phantom, "\lrcorner" pos=.01] E \underset{B}{\times} \hom_{B\times B}(E \times B, B \times E)\underset{B}{\times}\hom_{B \times B}(E \times B, B\times E) \arrow[rr, dashed, "\ev"] & [-.7cm] &[-.8cm] \arrow[dl, "\pi"'] \arrow[ddl, phantom, "\llcorner" very near start] E \underset{B}{\times} \hom_{B\times B}(E \times B, B \times E) \arrow[rr, dashed, "\ev"] \arrow[d, "\pi"] &   &[-1cm] E \arrow[d, "p"] \\  E \times_B \arrow[d, "\pi"'] \hom_{B \times B}(E \times B, B \times E) \arrow[rr, dashed, "\ev"] & & E \arrow[d, "p"'] &  \hom_{B\times B}(E\times B, B \times E) \arrow[rr, "t"] \arrow[dl, "s"] &  & B \rlap{.} \\ \hom_{B \times B}(E \times B, B \times E) \arrow[rr, "t"'{name=B}] && B
  \end{tikzcd}\\[-1.5\normalbaselineskip]
  \end{gathered}
  \end{equation}
\end{proof}

\begin{defn}[the simplicial object of ``functors'']\label{defn:fun-n-object}
In particular, $\pi \colon \EUcov \to \Ucov$ in $\smE$ defines an internal category object $\Fun_\bullet \colon \DDelta^\op \to \smE$, dropping the dependence on $\pi$ from the notation. By \cref{lem:map-internal-category} its generalized elements $(A^\bullet,f^\bullet)\colon \Gamma \to \Fun_n$ are codes $A_k \colon \Gamma \to \Ucov$ for left fibrations over $\Gamma$ for $0 \leq k \leq n$ together with with maps $f_k$ over $\Gamma$ for $1 \leq k \leq n$ between the decoded left fibrations as displayed below:
\begin{equation}\label{eq:Fun-n-in-context} \begin{tikzcd} A^0 \arrow[drr, "p^0"', two heads] \arrow[r, "f^1"] & A^{1} \arrow[dr, "p^{1}" description, two heads] \arrow[r, "f^{2}"] & \cdots \arrow[r, "f^{n-1}"] & A^{n-1} \arrow[r, "f^n"] \arrow[dl, "p^{n-1}" description] & A^n \arrow[dll, "p^n", two heads] \\ & &  \Gamma \rlap{.} \end{tikzcd} \end{equation}
We can describe the objects $\Fun_n$ in the internal language as follows:
\[ \Fun_n \coloneqq \sum_{A^0,\ldots, A^n : \Ucov} (A^0 \to A^{1}) \times \cdots \times (A^{n-1} \to A^n) \rlap{,} \]
where we have silently coerced terms in $\Ucov$ in context $\Gamma$ to types over $\Gamma$.

The decoding \eqref{eq:Fun-n-in-context} defines an object in $(\slice{\smE}{\Gamma})^{[n]}$ or more usefully a morphism $p^\bullet \colon A^\bullet \to \const(\Gamma)$ in $\smE^{[n]}$ whose codomain is a constant diagram.
\end{defn}

\begin{rmk}\label{rmk:Fun-n-changing-context}
  We write $\psi_n^\bullet \colon \widetilde{\Fun}_n^\bullet \twoheadrightarrow \const(\Fun_n)$ for the universal diagram \eqref{eq:Fun-n-in-context} classified by the identity map at $\Fun_n$.
  A generalized element $(A^\bullet,f^\bullet) \colon \Gamma \to \Fun_n$ gives rise to a pullback square in $\smE^{[n]}$ between the decoded diagrams
  \[ \begin{tikzcd}[column sep=huge] A^\bullet \arrow[d, two heads, "p^\bullet"'] \arrow[r] \arrow[dr, phantom, "\lrcorner" very near start] & \widetilde{\Fun}_n^\bullet \arrow[d, two heads, "\psi_n^\bullet"] \\ \const(\Gamma) \arrow[r, "{\const(A^\bullet,f^\bullet)}"'] & \const(\Fun_n) \rlap{.} \end{tikzcd} \]
\end{rmk}

\subsection{Univalence for the function classifier}\label{ssec:univalence-of-fun}

By \cref{rmk:ucov-univalence-application}, a weak equivalence between left fibrations over $\Delta^n \times \Gamma$ induces a homotopy between their classifying maps $\Gamma \to \Ucov^{\Delta^n}$.
We now prove a similar result for \(\Fun_n\): a homotopy natural weak equivalence between sequences of $n$ composable maps between left fibrations over $\Gamma$ induces a homotopy of classifying maps $\Gamma \to \Fun_n$ (see \cref{cor:fun-univalence}).
This should be one component of a univalence principle for $\Fun_n$ --- an instance of the structure identity principle --- but for our purposes we only need and so only construct the mapping.

We first recall a consequence (in fact an equivalent formulation) of univalence \cite[2.13, 2.14]{van-den-berg:20}, specialized to the setting of a simplicial model category where we have cylinder objects given by tensor with the simplicial set $I \coloneqq \Delta^1$. (We use the notation ``$I$'' because the 1-simplex plays the role of the undirected interval here.)

\begin{prop}
  \label{prop:univalence-equivalence-to-homotopy}
  Let $\pi \colon \tilde{U} \twoheadrightarrow U$ be a universe for a notion of fibred structure $\FF$ on a simplicial model category.
  Suppose we have pullback squares
  \begin{equation}
    \label{eq:univalence-equivalence-to-homotopy:algebras}
    \begin{tikzcd}
      A \ar[dr, phantom, "\lrcorner" very near start] \ar[d,two heads,"p"'] \ar[r,"\tilde{a}"] & \tilde{U} \ar[d,two heads,"\pi"] \\
      \Gamma \ar[r,"a"'] & U \rlap{,}
    \end{tikzcd}
    \qquad
    \begin{tikzcd}
      B \ar[dr, phantom, "\lrcorner" very near start] \ar[d,two heads,"q"'] \ar[r,"\tilde{b}"] & \tilde{U} \ar[d,two heads,"\pi"] \\
      \Gamma \ar[r,"b"'] & U
    \end{tikzcd}
  \end{equation}
  exhibiting $p$ and $q$ as $\FF$-algebras.
  If $\pi$ is univalent, then any weak equivalence $e \colon (A,p) \wto (B,q)$ over $\Gamma$ induces simplicial homotopies $h \colon a \sim b$ and $h' \colon \tilde{a} \sim \tilde{b} e$ making the square
  \begin{equation}
    \label{eq:univalence-equivalence-to-homotopy:homotopies}
    \begin{tikzcd}
      I \otimes A \ar[d,two heads,"I \otimes p"'] \ar[r,"h'"] & \tilde{U} \ar[d,two heads,"\pi"] \\
      I \otimes \Gamma \ar[r,"h"'] & U
    \end{tikzcd}
  \end{equation}
  commute. \qed
\end{prop}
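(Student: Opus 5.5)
The approach is to pass from the weak equivalence $e$ to a path in the path object, and from there to the object of equivalences. Univalence is the statement that the map $\Id(U) \to \Eq(\tilde U)$ over $U \times U$ is a weak equivalence. Both sides are fibrant over $U\times U$: $\Id(U)$ may be taken as the cotensor $U^{I}$ with its endpoint map, and $\Eq(\tilde U)$ is constructed by Voevodsky's method from the internal hom in the slice over $U \times U$.

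First, I would encode the data $(p,q,e)$ as a single map. By the pullback squares \eqref{eq:univalence-equivalence-to-homotopy:algebras}, $A \cong (a,b)^*\pi_1^*\tilde U$ and $B \cong (a,b)^*\pi_2^*\tilde U$ over $\Gamma$. Pullback stability of the equivalence-object construction \cite[\S4]{shulman-reedy} then identifies lifts $\Gamma \to \Eq(\tilde U)$ of $(a,b)$ with homotopy equivalences $A \to B$ over $\Gamma$, taken together with coherence data. Since $p$ and $q$ are fibrations with fibrant source and target in the slice, the weak equivalence $e$ is a homotopy equivalence over $\Gamma$, so it determines such a lift $\epsilon \colon \Gamma \to \Eq(\tilde U)$.

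Next, I would lift $\epsilon$ along the weak equivalence of fibrations $U^{I} \to \Eq(\tilde U)$ over $U\times U$. That map is a weak equivalence between fibrations over $U \times U$, and I would factor it to reduce to a trivial fibration; alternatively, since $\Gamma$ is cofibrant, I would use the lifting property of a trivial fibration in the slice up to homotopy. This gives $h \colon \Gamma \to U^{I}$ over $(a,b)$, which transposes to $h \colon I\otimes\Gamma \to U$. Its image in $\Eq$ agrees with $\epsilon$ only up to a fibrewise homotopy, and that homotopy is absorbed by the next step.

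The remaining step produces $h'$. Pull $\pi$ back along $h$ to get a fibration $P \to I\otimes \Gamma$ whose endpoint restrictions are $A$ and $B$. I expect this to be the main obstacle: we need a map $I\otimes A \to P$ over $I\otimes\Gamma$ restricting to the identity at $0$ and to $e$ (rather than to an arbitrary transport map) at $1$, and composing it with $P \to \tilde U$ then gives $h'$ making \eqref{eq:univalence-equivalence-to-homotopy:homotopies} commute.

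To handle this, I would first lift $I\otimes A \to I\otimes \Gamma$ against the fibration $P$ using the trivial cofibration $\{0\}\otimes A \to I\otimes A$, which yields transport $t \colon A \to B$ over $\Gamma$. The image of $h$ in $\Eq$ is exactly $t$ together with its coherence data, so $t$ is fibrewise homotopic to $e$, via the homotopy $\epsilon \sim$ (image of $h$) obtained in the previous step. I would then adjust the lift by solving a lifting problem against $(\partial I \hookrightarrow I)\hat\otimes(\{0\}\hookrightarrow I)\otimes A$, which is a trivial cofibration in a simplicial model category, to replace $t$ by $e$ at the endpoint $1$.

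Alternatively, I would cite \cite[2.13, 2.14]{van-den-berg:20} directly, where this argument is carried out for univalent fibrations in general.
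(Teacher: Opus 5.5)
The paper gives no argument of its own here. It records the statement as a known reformulation of univalence and cites \cite[2.13, 2.14]{van-den-berg:20}, which is your fallback. Your direct argument has a sensible shape, and its final step is correct: replacing the endpoint $t$ by $e$ via a lift against $(\partial I\hookrightarrow I)\hat\otimes(\{0\}\hookrightarrow I)\otimes A$. The gap is the step linking the two halves, which is asserted rather than proved and is not true as stated. You claim the image of $h$ in $\Eq(\tilde U)$ is exactly $t$ together with coherence data. But the comparison map $w\colon U^I\to\Eq(\tilde U)$ is only specified as \emph{some} lift, over $U\times U$, of the identity-equivalence map $U\to\Eq(\tilde U)$ along the reflexivity map $r\colon U\to U^I$. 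Meanwhile $t$ is the endpoint of an arbitrary solution of a lifting problem over $\Gamma$, so nothing forces the underlying map of $wh$ to equal $t$. This is exactly where univalence (a property of $w$) gets connected to transport along paths. Without it you have a homotopy $h\colon a\sim b$ with $wh\simeq\epsilon$, but no relation between transport along $h$ and $e$, and hence no $h'$.

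What is missing is a path-induction comparison.
\begin{itemize}
\item Form a universal transport $t^{\mathrm{u}}$ over $U^I$ by solving your lifting problem for the pullback of $\pi$ along the evaluation $I\otimes U^I\to U$.
\item Solutions of lifting problems against $\{0\}\otimes X\to I\otimes X$ into a fibration are unique up to homotopy rel $\{0\}\otimes X$; lift against the same kind of pushout-product you use at the end. Hence $h^*t^{\mathrm{u}}\simeq t$ over $\Gamma$.
\item Likewise $r^*t^{\mathrm{u}}\simeq\id_{\tilde U}$ over $U$, because the identity of $I\otimes\tilde U$ solves the problem for the constant path.
\item So the function component of $w$ and the transpose of $t^{\mathrm{u}}$ are two maps $U^I\to\hom_{U\times U}(\pi_1^*\tilde U,\pi_2^*\tilde U)$ over $U\times U$, into a fibration, that become homotopic after precomposing with the trivial cofibration $r$. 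Lifting against $(\partial I\hookrightarrow I)\hat\otimes r$ shows they are homotopic over $U\times U$.
\item Pulling back along $h$ and combining with $wh\simeq\epsilon$ gives the fibrewise homotopy $t\simeq e$ that your last step needs.
\end{itemize}

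Separately, throughout you use that $\Gamma$ and $A$ are cofibrant, $U$ is fibrant, $r$ is a cofibration, and $I\otimes-$ commutes with pullback. These are not automatic in an arbitrary simplicial model category; for instance, your pushout-product tensored with $A$ is a trivial cofibration only for cofibrant $A$. They do all hold in a type-theoretic model topos, which is where the paper applies the result.
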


We have the following reformulation that does not mention the classifier.

\begin{prop}
  \label{prop:univalence-equivalence-to-glue}
  Let $\FF$ be a notion of fibred structure on a simplicial model category $\mE$ admitting a univalent universe.
  Then for any weak equivalence
\[    \begin{tikzcd}
      A \ar[dr,two heads,"p"'] \ar[rr,we',"e"] && B \ar[dl,two heads,"q"] \\
      & \Gamma &
    \end{tikzcd}
\]  in $\slice{\mE}{\Gamma}$ between $\FF$-algebras over $\Gamma$, there is an $\FF$-algebra $r \colon H \twoheadrightarrow I \otimes \Gamma$ and a map $g \colon I \otimes A \to H$ over $I \otimes \Gamma$ fitting into pullback squares
  \begin{equation}
    \label{eq:univalence-equivalence-to-glue}
    \begin{tikzcd}
      A \ar[dr, phantom, "\lrcorner" very near start] \ar[d,equals] \ar[r,"i_0"] & I \otimes A \ar[d,"g"] & A \ar[dl, phantom, "\llcorner" very near start] \ar[l,"i_1"'] \ar[d,we',"e"] \\
      A \ar[dr, phantom, "\lrcorner" very near start] \ar[d,two heads,"p"'] \ar[r,"r^*i_0"] & H \ar[d,two heads,"r"] & \ar[l,"r^*i_1"'] B \ar[dl, phantom, "\llcorner" very near start] \ar[d,two heads,"q"] \\
      \Gamma \ar[r,"i_0"'] & I \otimes \Gamma & \ar[l,"i_1"] \Gamma \rlap{.}
    \end{tikzcd}
  \end{equation}
\end{prop}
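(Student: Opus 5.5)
The plan is to derive \cref{prop:univalence-equivalence-to-glue} from \cref{prop:univalence-equivalence-to-homotopy}. First we choose classifying squares for $p$ and $q$. Since $\FF$ admits a univalent universe $\pi \colon \tilde{U} \twoheadrightarrow U$, we may apply the realignment remark with $A = 0$ (using that the initial object is strict, since $\mE$ is a Grothendieck topos). This gives $\FF$-morphisms $(\tilde a, a) \colon p \to \pi$ and $(\tilde b, b) \colon q \to \pi$, which are pullback squares as in \eqref{eq:univalence-equivalence-to-homotopy:algebras}. There is one subtlety: realignment may replace the given $\FF$-structures on $p$ and $q$. The statement only asks that $r$ pull back to $p$ and $q$ as maps, so this is harmless, and we will simply read the $\FF$-structures on $p$ and $q$ as the ones induced from $\pi$.

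Next, \cref{prop:univalence-equivalence-to-homotopy} applied to $e$ provides simplicial homotopies $h \colon I \otimes \Gamma \to U$ from $a$ to $b$ and $h' \colon I \otimes A \to \tilde U$ from $\tilde a$ to $\tilde b e$, making \eqref{eq:univalence-equivalence-to-homotopy:homotopies} commute. We then set $r \colon H \twoheadrightarrow I \otimes \Gamma$ to be the pullback of $\pi$ along $h$, with its $\FF$-algebra structure induced from $\pi$. Because the square \eqref{eq:univalence-equivalence-to-homotopy:homotopies} commutes, the pair $(h', I \otimes p)$ factors uniquely through this pullback, giving $g \colon I \otimes A \to H$ over $I \otimes \Gamma$.

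It remains to check the four squares of \eqref{eq:univalence-equivalence-to-glue}. The two bottom squares come from pullback pasting. Restricting along $i_0$, the pullback of $H$ is the pullback of $\pi$ along $h i_0 = a$, which is $A$ (via $\tilde a$). Restricting along $i_1$, the pullback is $\pi$ along $h i_1 = b$, which is $B$ (via $\tilde b$). These identifications define the maps $r^*i_0$ and $r^*i_1$.

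The top squares commute because of the endpoint conditions on $h'$: we have $h' i_0 = \tilde a$ and $h' i_1 = \tilde b e$, and both composites lie over the correct maps to $I \otimes \Gamma$. By the universal property of $H$, this gives $g i_0 = r^* i_0$ and $g i_1 = r^* i_1 \circ e$. The top squares are also pullbacks. The left one is, because $I \otimes A \to I \otimes \Gamma$ restricted along $i_0$ gives back $A \to \Gamma$: tensoring with $\Delta^0 \to \Delta^1$ preserves pullbacks, since $I \otimes -$ is product with $I \otimes 1$ in a simplicially locally cartesian closed setting. Cancellation then shows that the top-left square is a pullback; the same argument applies to the top-right square.

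The step I expect to need the most care is this pullback bookkeeping. In particular, we must verify that $I \otimes A \cong (I \otimes \Gamma) \times_\Gamma A$ (\cref{defn:ttmt}(iii)), so that restricting $g$ along the endpoints is genuinely a pullback and not just a commuting square.
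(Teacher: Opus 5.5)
Your proposal is correct and follows the paper's proof: apply \cref{prop:univalence-equivalence-to-homotopy}, take $r$ to be the pullback of $\pi$ along $h$, and obtain $g$ from the universal property of $H$. The paper leaves two things implicit that you work out: producing the classifying squares for $p$ and $q$, and checking that all four squares are pullbacks via $I \otimes A \cong (I \otimes \Gamma) \times_\Gamma A$. Your verification uses a strict initial object and that pullbacks preserve simplicial tensors, which go beyond the bare ``simplicial model category'' hypothesis but hold in the type-theoretic model topos setting where the result is applied.
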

\begin{proof}
  Given homotopies $h \colon a \sim b$ and $h' \colon \tilde{a} \sim \tilde{b} e$ fitting in the square \eqref{eq:univalence-equivalence-to-homotopy:homotopies}, we take $r \colon H \twoheadrightarrow I \otimes \Gamma$ to be the $\FF$-algebra classified by $h \colon I \otimes \Gamma \to U$, then define $g \colon I \otimes A \to H$ by universal property of $H$:
  \[
    \begin{tikzcd}[/tikz/baseline=(\tikzcdmatrixname-\the\pgfmatrixcurrentrow-\the\pgfmatrixcurrentcolumn.base)]
      I \otimes A \ar[dr,dashed,"g"] \ar[ddr,bend right,two heads,"I \otimes p"'] \ar[drr,bend left,"h'"] &[-1em] & \\[-1em]
      & H \ar[dr, phantom, "\lrcorner" very near start] \ar[d,two heads,"r"'] \ar[r] & \tilde{U} \ar[d,two heads,"\pi"] \\
      & I \otimes \Gamma \ar[r,"h"] & U \rlap{.}
    \end{tikzcd} \qedhere
    \]
\end{proof}

\begin{lem}
  \label{lem:natural-equivalence-to-glue-morphism}
  Let $\mE$ be an extensive right proper Cisinski simplicial model category and let $\FF$ be a notion of fibred structure on $\mE$ whose algebras are fibrations.
  Fix a homotopy commutative square
  \begin{equation}
    \label{eq:natural-equivalence-to-glue-morphism:input}
    \begin{tikzcd}
      (A^0,p^0) \ar[dr,phantom,"\sim"] \ar[d,we,"e^0"']  \ar[r,"a"] & (A^1,p^1) \ar[d,we',"e^1"] \\
      (B^0,q^0) \ar[r,"b"'] & (B^1,q^1)
    \end{tikzcd}
  \end{equation}
  of morphisms between $\FF$-algebras over $\Gamma \in \mE$.
  If for each $k \in \{0,1\}$ we have an $\FF$-algebra $r^k \colon H^k \twoheadrightarrow I \otimes \Gamma$ and a map $g^k \colon I \otimes A^k \to H^k$ over $I \otimes \Gamma$ fitting in the diagram \eqref{eq:univalence-equivalence-to-glue}, then there is a morphism $H^0 \to H^1$ over $I \otimes \Gamma$ that pulls back along $i_0 \colon \Gamma \to I \otimes \Gamma$ to $a$ and along $i_1 \colon \Gamma \to I \otimes \Gamma$ to $b$.
\end{lem}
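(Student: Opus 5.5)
We build the map $H^0 \to H^1$ in two stages: first on the subobject of $H^0$ lying over the endpoints $\partial I \otimes \Gamma$, then extended to all of $H^0$ by lifting against the fibration $r^1$. A map $H^0 \to H^1$ over $I\otimes\Gamma$ is the same as a section of the pullback $H^0\times_{I\otimes\Gamma}H^1 \to H^0$. This map is a fibration, being a base change of $r^1$.

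\textbf{Boundary data.} Since $\mE$ is extensive, $\partial I\otimes\Gamma \cong \Gamma\sqcup\Gamma$, and the inclusion $\partial I \otimes \Gamma \rightarrowtail I\otimes \Gamma$ is a cofibration because all objects are cofibrant. Pulling back $r^0$ along it gives $A^0 \sqcup B^0 \rightarrowtail H^0$ by \eqref{eq:univalence-equivalence-to-glue}. On this subobject we prescribe the map $a \sqcup b \colon A^0\sqcup B^0 \to A^1\sqcup B^1 \hookrightarrow H^1$. It lies over $I\otimes\Gamma$ and, by construction, restricts to $a$ over $i_0$ and to $b$ over $i_1$. What remains is to extend it along the cofibration $A^0\sqcup B^0 \rightarrowtail H^0$.

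\textbf{The extension problem.} A strict lift against $r^1$ need not exist, since $A^0\sqcup B^0\rightarrowtail H^0$ is not acyclic. We therefore first produce a map defined only up to homotopy on the boundary, and then correct it. The idea is to use $g^0$ and $g^1$.

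First, $g^0 \colon I\otimes A^0 \to H^0$ is a weak equivalence. Over $i_0$ it is the identity, so it is a fiberwise equivalence there. Since both $I\otimes p^0$ and $r^0$ are fibrations over $I\otimes\Gamma$, and $i_0$ is a weak equivalence, right properness together with $2$-out-of-$3$ gives that $g^0$ is a weak equivalence. The same argument applies to $g^1$.

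Next, consider the composite $g^1\circ(I\otimes a) \colon I\otimes A^0 \to H^1$ over $I\otimes\Gamma$. At the endpoint $1$ it restricts to $e^1 a$. The target value there is $b e^0$, and the two are connected by the homotopy in \eqref{eq:natural-equivalence-to-glue-morphism:input}.

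We then work in the slice $\slice{\mE}{I\otimes\Gamma}$ with its model structure, where $r^1$ is fibrant and every object is cofibrant. In that slice we have a weak equivalence $I\otimes A^0 \wto H^0$ together with the cofibration $A^0\sqcup B^0 \rightarrowtail H^0$. The homotopy extension property for maps into the fibrant object $H^1$ reduces the extension problem to a homotopy-level one. Concretely, we glue $g^1(I\otimes a)$ with the given homotopy $e^1a\sim be^0$ to obtain a map out of the pushout $(I\otimes A^0)\cup_{A^0}(I\otimes A^0)$. This pushout is weakly equivalent to $H^0$ relative to the boundary, because $A^0 \to B^0$ via $e^0$ is a weak equivalence. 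Transporting the map across this equivalence, and using homotopy extension along the cofibration, we then straighten it to a strict map $H^0\to H^1$ that equals $a\sqcup b$ on the boundary.

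The main obstacle is this last rectification step: making the boundary agree strictly rather than up to homotopy. We would handle it by factoring $A^0\sqcup B^0 \rightarrowtail H^0$ through a relative cylinder and lifting against the fibration $H^0\times_{I\otimes\Gamma}H^1\to H^0$. The fiberwise data over each endpoint is strict, so the homotopy need only be corrected on the interior, where the acyclic part of the factorization supplies the lift.
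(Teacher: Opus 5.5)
Your plan is viable but takes a different route from the paper. You produce some $f\colon H^0\to H^1$ over $I\otimes\Gamma$ whose boundary restrictions are only homotopic to $a$ and $b$, by transporting a map out of a glued cylinder across a weak equivalence to $H^0$, and then you rectify. Two steps need to be stated correctly for this to go through.

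First, $P=(I\otimes A^0)\cup_{A^0}(I\otimes A^0)$ has ends $A^0\sqcup A^0$, not $A^0\sqcup B^0$, so it is not weakly equivalent to $H^0$ ``relative to the boundary'' on the nose. The natural weak equivalence $w\colon P\to H^0$ is $g^0$ on the first cylinder and the collapse followed by $(r^0)^*i_1\circ e^0$ on the second. Transporting along it only yields $f\circ(r^0)^*i_1\circ e^0\simeq(r^1)^*i_1\circ b\circ e^0$. You must then cancel $e^0$ on homotopy classes of maps into the fibrant object $H^1$ of $\slice{\mE}{I\otimes\Gamma}$, and this is where $e^0$ being a weak equivalence is used.

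Second, the rectification is just the homotopy extension lift of the trivial cofibration $(\{0\}\otimes H^0)\cup(I\otimes(A^0\sqcup B^0))\rightarrowtail I\otimes H^0$ against $r^1$, as you said earlier. Your final paragraph's recipe of factoring $A^0\sqcup B^0\rightarrowtail H^0$ does not work: that cofibration is not acyclic, and factoring it does not change this. (Also, $g^0$ is a weak equivalence simply because $g^0i_0=(r^0)^*i_0$; you do not need $I\otimes p^0$ to be a fibration.)

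The paper avoids both the homotopy-class bookkeeping and the separate homotopy extension by building $B^0$ into the comparison object. It first absorbs the homotopy into an adjusted $g'\colon I\otimes A^0\to H^1$; gluing in your second cylinder would work equally well. It then uses $W=(I\otimes A^0)\sqcup_{A^0}B^0$, which lies under $A^0\sqcup B^0$ and whose map to $H^1$ restricts to $a\sqcup b$ on the nose. The map $[g^0,(r^0)^*i_1]\colon W\to H^0$ is a weak equivalence under $A^0\sqcup B^0$, because $(r^0)^*i_1$ is acyclic by right properness and $B^0\to W$ is a pushout of $i_1$.

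The paper factors this map as $W\rightarrowtail K^0\twoheadrightarrow H^0$ and solves two lifting problems. Lifting the trivial cofibration against $r^1$ gives $K^0\to H^1$. Lifting the boundary cofibration $A^0\sqcup B^0\rightarrowtail H^0$ against the trivial fibration $K^0\twoheadrightarrow H^0$ gives a section relative to the boundary. The composite $H^0\to K^0\to H^1$ is then strict immediately, and the hypothesis that $e^0$ is a weak equivalence is never invoked.

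Your route is the generic ``homotopic, then rectify'' argument; the paper's is a single factorization with two lifts.
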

\begin{proof}
  Observe that the composite $g^1(I \otimes a) \colon I \otimes A^0 \to H^1$ over $I \otimes \Gamma$ restricts like so:
  \[
    \begin{tikzcd}
      A^0 \ar[d,"a"'] \ar[r,"i_0"] & I \otimes A^0 \ar[d,"g^1(I \otimes a)"] & \ar[l,"i_1"'] A^0 \ar[d,"e^1a" ] \\
      A^1 \ar[r,"(r^1)^*i_0"'] & H^1 & \ar[l,"(r^1)^*i_1"] B^1 \rlap{.}
    \end{tikzcd}
  \]
  Using the homotopy \eqref{eq:natural-equivalence-to-glue-morphism:input}, we may adjust this map to obtain some $g' \colon I \otimes A^0 \to H^1$ over $I \otimes \Gamma$ which instead fits in the diagram
  \[
    \begin{tikzcd}
      A^0 \ar[d,"a"'] \ar[r,"i_0"] & I \otimes A^0 \ar[d,dashed,"g'"] & \ar[l,"i_1"'] A^0 \ar[d,"be^0" ] \\
      A^1 \ar[r,"(r^1)^*i_0"'] & H^1 & \ar[l,"(r^1)^*i_1"] B^1 \rlap{.}
    \end{tikzcd}
  \]
  From this we have in particular an induced map $[g',(r^1)^*i_1b] \colon (I \otimes A^0) \sqcup_{A^0} B^0 \to H^1$.

  By right properness, the map $(r^0)^*{i_1} \colon B^0 \to H^0$ is a trivial cofibration.
  It follows that the induced map $[g^0,(r^0)^*i_1] \colon (I \otimes A^0) \sqcup_{A^0} B^0 \to H^0$ is a weak equivalence.
  We factor this map as a trivial cofibration followed by a trivial fibration through an object $K^0$, then construct the desired map $H^0 \to H^1$ as a composite of solutions to two lifting problems:
  \[
    \begin{tikzcd}[column sep=large, /tikz/baseline=(\tikzcdmatrixname-\the\pgfmatrixcurrentrow-\the\pgfmatrixcurrentcolumn.base)]
      A^0 \sqcup B^0 \ar[r,"i_0 \sqcup B"] \ar[dd,tail,"(r^0)^*\partial"'] & (I \otimes A^0) \sqcup_{A^0} B^0 \ar[r,"{[g',(r^1)^*i_1b]}"] \ar[d,tcofarrow] & H^1 \ar[dd,two heads,"r^1"] \\
      { } & K^0 \ar[d,tfibarrow] \ar[ur,dashed] \\
      H^0 \ar[ur,dashed] \ar[r,equals] & H^0 \ar[r,two heads,"r^0"'] & I \otimes \Gamma \rlap{.}
    \end{tikzcd} \qedhere
  \]
\end{proof}

\begin{cor}[Univalence map for $\Fun_n$]\label{cor:fun-univalence}
  Let $\mE$ be a type theoretic model topos. Given a homotopy commutative diagram
  \[
    \begin{tikzcd}
      (A^0,p^0) \ar[dr,phantomcenter,"\sim"] \ar[d,we,"e^0"']  \ar[r,"a^0"] & (A^1,p^1) \ar[dr,phantomcenter,"\sim"] \ar[d,we,"e^1"'] \ar[r,"a^1"] & \cdots \ar[dr,phantomcenter,"\sim"] \ar[r,"a^{n-1}"] & (A^n,p^n) \ar[d,we',"e^n"] \\
      (B^0,q^0) \ar[r,"b^0"'] & (B^1,q^1) \ar[r,"b^1"'] & \cdots \ar[r,"b^{n-1}"'] & (B^n,q^n)
    \end{tikzcd}
  \]
  of left fibrations over an object $\Gamma \in \smE$, the maps $\Gamma \to \Fun_n$ classifying $p^\bullet$ and $q^\bullet$ are homotopic.
\end{cor}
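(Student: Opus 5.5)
The plan is to turn the homotopy commutative diagram into a single map $H \colon I \otimes \Gamma \to \Fun_n$ that restricts along $i_0$ and $i_1$ to the classifying maps of $p^\bullet$ and $q^\bullet$. By \cref{defn:fun-n-object} and \cref{lem:map-internal-category}.\ref{lem:map-internal-category:functor}, a map $I \otimes \Gamma \to \Fun_n$ is the same as a chain of $n$ composable maps between left fibrations over $I \otimes \Gamma$ whose codes land in $\Ucov$. So we only need to build such a chain $H^0 \to H^1 \to \cdots \to H^n$ over $I \otimes \Gamma$ whose pullback along $i_0$ is the chain $a^\bullet$ and whose pullback along $i_1$ is the chain $b^\bullet$. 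The pullback statement has to hold on the nose, up to the choice of codes, which is fine because the classifying maps into $\Ucov$ are determined by the chosen pullback squares.

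First, for each $k$ we apply \cref{prop:univalence-equivalence-to-glue} to the weak equivalence $e^k \colon (A^k,p^k) \wto (B^k,q^k)$, with $\FF = \LL_\kappa$ and the univalent universe $\picov$ of \cref{prop:left-universe}. This gives a left fibration $r^k \colon H^k \twoheadrightarrow I \otimes \Gamma$ classified by a homotopy $h^k \colon I\otimes\Gamma \to \Ucov$, together with $g^k \colon I \otimes A^k \to H^k$ fitting into \eqref{eq:univalence-equivalence-to-glue}. The pullbacks of $r^k$ along $i_0$ and $i_1$ are literally $p^k$ and $q^k$, with their given classifying maps $A_k$ and $B_k$ obtained from $h^k$ by restriction.

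Second, for each $1 \le k \le n$ we apply \cref{lem:natural-equivalence-to-glue-morphism} to the $k$-th homotopy commutative square. Its hypotheses hold: $\smE$ is an extensive right proper Cisinski simplicial model category, and $\LL$-algebras are fibrations. The lemma yields maps $H^{k-1} \to H^k$ over $I \otimes \Gamma$ restricting to $a^{k-1}$ and $b^{k-1}$. No compatibility between consecutive squares is needed, because $\Fun_n$ records only a chain of $n$ composable arrows and no composites. Each square is handled independently, and the shared objects $H^k$ are fixed once in the first step, so the maps compose.

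Transposing the resulting chain gives $H \colon I \otimes \Gamma \to \Fun_n$. By naturality of the description of generalized elements (\cref{rmk:Fun-n-changing-context}) under precomposition with $i_0$ and $i_1$, $H$ restricts to the classifying maps of $p^\bullet$ and $q^\bullet$; this is a simplicial homotopy, and hence these maps are homotopic since all objects are cofibrant. The main point to check is that the maps produced in the second step restrict strictly, not just up to homotopy, to $a^{k-1}$ and $b^{k-1}$ along the chosen pullback squares. The lemma as stated gives exactly this, because the lifting construction in its proof is relative to $A^0 \sqcup B^0 \to H^0$. I would also verify that the relative $\kappa$-presentability needed for $H^k$ to be classified by $\Ucov$ holds, which it does because $H^k$ is classified by $h^k$.
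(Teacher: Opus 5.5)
Your proposal is correct and is essentially the paper's proof. You apply \cref{prop:univalence-equivalence-to-glue} to each $e^k$ to obtain left fibrations $r^k \colon H^k \twoheadrightarrow I \otimes \Gamma$, then \cref{lem:natural-equivalence-to-glue-morphism} to each square to obtain maps $H^{k-1} \to H^k$, and finally classify the resulting chain by a map $I \otimes \Gamma \to \Fun_n$; the extra checks you add (strict restriction along $i_0$ and $i_1$, independence of the squares because $\Fun_n$ records only a chain of arrows, and presentability of the $H^k$) are details the paper leaves implicit.
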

\begin{proof}
  Apply \cref{prop:univalence-equivalence-to-glue} (using univalence of $\picov$) and then \cref{lem:natural-equivalence-to-glue-morphism} to construct first the objects and then the morphisms of a diagram
  \[
    \begin{tikzcd}
      (H^0,r^0) \ar[r,"h^0"'] & (H^1,r^1) \ar[r,"h^1"'] & \cdots \ar[r,"h^{n-1}"'] & (H^n,r^n)
    \end{tikzcd}
  \]
  of $\LL$-algebras over $I \otimes \Gamma$ that restricts along $i_0 \colon \Gamma \to I \otimes \Gamma$ to $p^\bullet$ and along $i_1 \colon \Gamma \to I \otimes \Gamma$ to $q^\bullet$. This is classified by a map $h \colon I \otimes \Gamma \to \Fun_n$, defining the desired homotopy between the maps $\Gamma \to \Fun_n$ that classify $p^\bullet$ and $q^\bullet$.
\end{proof}

\section{Directed univalence of the universal left fibration}\label{sec:dua-types}

Continuing with the setup of the previous section, we assume a fibration-extensive type-theoretic model topos $\mE$, presenting an $\infty$-topos $\cE$, and fix a universe $\picov \colon \EUcov \to \Ucov$ in $\smE$ classifying left fibrations.
We now prove the $\infty$-categorical directed univalence equivalence $\Ucov^{\Delta^\bullet} \simeq \Fun_{\bullet}$ in $\sE$, relating the ``arrow'' and ``function'' classifiers defined in \cref{ssec:dua-statement}, by applying the model-topos level results of \cref{sec:dua-model}.

In an arbitrary context $\Gamma \in \smE$, maps $E \colon \Gamma \to \Ucov^{\Delta^n}$ are codes for left fibrations $p \colon E \twoheadrightarrow \Delta^n \times \Gamma$, while maps $A^\bullet \colon \Gamma \to \Fun_n$ are codes for left fibrations $p^k \colon A^k \twoheadrightarrow \Gamma$ for $0 \leq k \leq n$ together with maps between the decoded fibrations \eqref{eq:Fun-n-in-context}.
Thus these data correspond to fibrant objects in $\slice{\smE}{\Delta^n \times \Gamma}$ and $(\slice{\smE}{\Gamma})^{[n]}$ respectively.
We deduce the directed univalence theorem for $\Ucov$ from the model-topos-level directed univalence theorem,  \cref{thm:dua-model}, using the special case where the indexing category $\cC$ is a finite ordinal $[n]$ for $n \in \NN$.
In \S\ref{ssec:ordinal-case}, we specialize our previous results to this case and give a more explicit description of the functors \[ \begin{tikzcd} \slice{\smE}{\Delta^n \times \Gamma} \arrow[r, bend left, "\arrtofun^\Gamma_n" above] & (\slice{\smE}{\Gamma})^{[n]} \arrow[l, bend left, "\funtoarr^\Gamma_n"] \rlap{.} \end{tikzcd}\]

By \cref{thm:dua-model}, the functors $\arrtofun_n^\Gamma$ and $\funtoarr_n^\Gamma$ convert between left fibrations in each setting.
In \S\ref{ssec:dua-types}, we apply these functors to the universal left fibration over $\Delta^n \times \Ucov$ and the universal family of left fibrations over $\Fun_n$ respectively to obtain left fibrations encoded by maps $\arrtofun_n \colon \Ucov^{\Delta^n} \to \Fun_n$ and $\funtoarr_n \colon \Fun_n \to \Ucov^{\Delta^n}$.
We then prove the directed univalence theorem for $\Ucov$, \cref{thm:dua-types}, which says that they define inverse equivalences $\Ucov^{\Delta^n} \simeq \Fun_n$.

We conclude in \S\ref{ssec:applications} with some sample applications of \cref{thm:dua-types} to the theory of internal $\infty$-categories in an $\infty$-topos. A famous consequence of the non-directed univalence theorem is the structure identity principle~\cite[\S9.8]{HoTT}, characterizing identity types in types built from the universe. Directed univalence implies a similar \emph{structure homomorphism principle}~\cite[\S7.3.2]{weaver:24}~\cite[\S7.3]{gratzer-weinberger-buchholtz-univalence}, that we illustrate in a few basic cases.

\subsection{Converting between arrows and functions}\label{ssec:ordinal-case}

Specializing the 1-category $\cC$ of \cref{defn:generalized-fun-arr-maps} to ordinal categories $[n]$ for $n \in \NN$, we obtain functors $\arrtofun_n$ and $\funtoarr_n$ between the category $\slice{\smE}{\Delta^n}$ of $n$-ary ``arrows'' in $\smE$ and the category $\smE^{[n]}$ of $n$ composable ``functors'' or ``functions'' in $\smE$. We provide explicit descriptions of the constructions in this special case for sake of concreteness, in order to facilitate comparisons with other work, and so that we can verify that the sliced versions of these functors preserve relative $\kappa$-presentability.

\begin{defn}\label{defn:fun-arr-n-maps}
For each $n \in \NN$, the weights
\[ \begin{tikzcd}[row sep=tiny] [n] \arrow[r, "\Ws"] & \slice{\sSet}{\Delta^n} \rlap{,} & ([n])^\op \arrow[r, "{\Wc}"] & \slice{\sSet}{\Delta^n} \rlap{,} \\ k \arrow[r, maps to] & N(\slice{[n]}{k}) \rlap{,} & k \arrow[r, maps to ] & N(\coslice{[n]}{k}) \end{tikzcd}\]
define diagrams of the following form:
\[
     \begin{tikzcd}
        \Delta^0 \arrow[drr, hook, "\iota_{\leq 0}"'] \arrow[r, hook,   "\iota_{\leq 0}"] & \Delta^1 \arrow[dr, "\iota_{\leq 1}" description, hook] \arrow[r, hook, "\iota_{\leq 1}"] & \cdots \arrow[r, hook, "\iota_{\leq n-2}"] & \Delta^{n-1} \arrow[r, hook, "\iota_{\leq n-1}"] \arrow[dl, "\iota_{\leq n-1}" description, hook'] & \Delta^n \arrow[dll, equals] \arrow[dr, phantom, "\text{and}"] &         \Delta^0 \arrow[drr, hook, "\iota_{\geq n}"'] \arrow[r, hook,   "\iota_{\geq 1}"] & \Delta^1 \arrow[dr, "\iota_{\geq n-1}" description, hook] \arrow[r, hook, "\iota_{\geq 1}"] & \cdots \arrow[r, hook, "\iota_{\geq 1}"] & \Delta^{n-1} \arrow[r, hook, "\iota_{\geq 1}"] \arrow[dl, "\iota_{\geq 1}" description, hook'] & \Delta^n \arrow[dll, equals] \\
       \Ws \coloneqq   & & \Delta^n & &   ~& ~ &~  & \Delta^n & & \eqqcolon \Wc \rlap{.}
      \end{tikzcd}
\]
That is, $\Ws(k)$ is the initial segment inclusion $\iota_{\leq k} \colon \Delta^k \hookrightarrow \Delta^n$, while $\Wc(k)$ is the final segment inclusion $\iota_{\geq k} \colon \Delta^{n-k} \hookrightarrow \Delta^n$.

  For $n \in \NN$, \cref{defn:generalized-fun-arr-maps} specializes to functors
\[ \begin{tikzcd}[column sep=large, row sep=tiny]
  \arrtofun_n \colon \slice{\smE}{\Delta^n} \arrow[r,  "{\{\Wc,-\}}"] &  (\slice{\smE}{\Delta^n})^{[n]} \arrow[r, "{(\Delta^n)_*}"] & \smE^{[n]} \rlap{,} \\
  \funtoarr_n \colon \smE^{[n]} \arrow[r, "{(\Delta^n)^*}"] &  (\slice{\smE}{\Delta^n})^{[n]} \arrow[r,  "{\{\Ws,-\}^{[n]}}"]& \slice{\smE}{\Delta^n} \rlap{.} \end{tikzcd} \]
\end{defn}

For $\Gamma \in \smE$, we also have relative versions of these functors
\[ \begin{tikzcd} \slice{\smE}{\Delta^n \times \Gamma} \arrow[r, bend left, "\arrtofun^\Gamma_n" above] & (\slice{\smE}{\Gamma})^{[n]} \arrow[l, bend left, "\funtoarr^\Gamma_n"] \end{tikzcd}\]
via the constructions of \cref{defn:relative-arr-to-fun,defn:relative-fun-to-arr}. Using that $\Gamma$-indexed generalized elements in $\Fun_n$ encode sequences of $n$ composable arrows between left fibrations over $\Gamma$, we now unpack each of these constructions in more detail.

\begin{cons}[arrows to functions]\label{cons:arr-fun} Consider a left fibration $p \colon E \twoheadrightarrow \Delta^n \times \Gamma$. By \cref{lem:arr-to-fun-fibrations}, when we apply $\arrtofun_n^\Gamma \colon \slice{\smE}{\Delta^n \times \Gamma} \to (\slice{\smE}{\Gamma})^{[n]}$ to $p$ regarded as an object in $\slice{\smE}{\Delta^n \times \Gamma}$, we obtain a diagram of $n$ composable functions between left fibrations over $\Gamma$.

Per \cref{defn:relative-arr-to-fun}, $\arrtofun_n^\Gamma(p) \in (\slice{\smE}{\Gamma})^{[n]}$ is obtained by regarding $p \colon E \twoheadrightarrow \Delta^n \times \Gamma$ as a morphism in $\smE_{/\Delta^n}$, applying $\arrtofun_n \colon \slice{\smE}{\Delta^n} \to \smE^{[n]}$,
  and then pulling back along a canonical map $\eta$ whose domain is the constant $[n]$-indexed diagram at $\Gamma$:
\begin{equation}\label{eq:arr-to-fun-pullback} \begin{tikzcd} \arrtofun_n^\Gamma(E) \arrow[d,  "\arrtofun^\Gamma_n(p)"', two heads] \arrow[r] \arrow[dr, phantom, "\lrcorner" very near start] & \arrtofun_n(E) \arrow[d, "\arrtofun_n(p)", two heads] \\ \const(\Gamma) \arrow[r, "\eta"'] & \arrtofun_n(\Delta^n \times\Gamma) \rlap{.} \end{tikzcd}\end{equation}
By \cref{lem:arr-to-fun-fibrations}, $\arrtofun_n(p)$ is a pointwise left fibration over $\Gamma$, and by closure of left fibrations under pullback, so is $\arrtofun^\Gamma_n(p)$.

By unfolding the functor $\arrtofun_n$ of \cref{defn:fun-arr-n-maps}, we may describe this construction more explicitly.
To construct the functorial action of $\arrtofun_n$ on the morphism $p$, first form the pullbacks of $p \colon E \twoheadrightarrow \Delta^n \times \Gamma$, regarded as a map over $\Delta^n$, along the diagram of final simplex inclusions
\[ \begin{tikzcd} E_n \arrow[d, "p_n"', two heads] \arrow[r, hook] \arrow[dr, phantom, "\lrcorner" very near start] & E_{\geq n-1}\arrow[d, "p_{\geq n-1}"', two heads] \arrow[r, hook] \arrow[dr, phantom, "\lrcorner" very near start] & \cdots  \arrow[r, hook] & E_{\geq 1} \arrow[d, "p_{\geq 1}"', two heads] \arrow[dr, phantom, "\lrcorner" very near start] \arrow[r, hook] & E \arrow[d, "p", two heads] \\ \Delta^0 \times \Gamma \arrow[r, hook, "i_\last"] & \Delta^1 \times \Gamma \arrow[r, hook, "i_\last"] & \cdots \arrow[r, hook, "i_\last"] & \Delta^{n-1} \times \Gamma \arrow[r, hook, "i_\last"]& \Delta^n \times \Gamma \rlap{,} \end{tikzcd} \]
push each vertical map $p_{\geq k}$ forward along the final inclusion $\iota_{\geq k} \colon \Delta^{n-k} \hookrightarrow \Delta^n$ to form the $\Wc$-weighted limit in the slice over $\Delta^n$, and then push each object in this diagram forward along the projection away from $\Delta^n$ to form a diagram
\[ \begin{tikzcd} \Pi_{\Delta^n} E \arrow[d] \arrow[r, "\res"] & \Pi_{\Delta^{n-1}} E_{\geq 1}\arrow[d] \arrow[r, "\res"]  & \cdots  \arrow[r, "\res"] & \Pi_{\Delta^1} E_{\geq n-1} \arrow[d]  \arrow[r, "\res"] & E_n \arrow[d] \\ \Gamma^{\Delta^n} \arrow[r, "\res"] & \Gamma^{\Delta^{n-1}} \arrow[r, "\res"] & \cdots \arrow[r, "\res"] & \Gamma^{\Delta^1}\arrow[r, "\res"]& \Gamma \end{tikzcd} \]
where the restrictions are along the final inclusions. This is the diagram $\arrtofun_n(p)$ appearing on the right-hand side of the pullback \eqref{eq:arr-to-fun-pullback}.

From the description of $\eta$ given in \cref{ex:constant-unit-map}, we see that the components at $k \in [n]$ of the map $\eta$ are the constant maps $\Gamma \to \Gamma^{\Delta^{n-k}}$. The pullback along these ``constant diagram maps'' then defines $\arrtofun_n^\Gamma(p)$. Alternatively, $\arrtofun^\Gamma_n(p)^k$ is the pushforward of $p_{\geq k} \colon E_{\geq k} \to \Delta^{n-k} \times \Gamma$ along the projection $\pi \colon \Delta^{n-k} \times \Gamma \to \Gamma$.
\end{cons}

\begin{cons}[functions to arrows]\label{cons:fun-arr}
Consider a sequence of $n$ composable morphisms between left fibrations over $\Gamma$:
  \begin{equation}\label{eq:Fun-n-in-context-reprise} \begin{tikzcd} {A^0} \arrow[drr, "p^0"', two heads] \arrow[r, "f^1"] & {A^{1}} \arrow[dr, "p^{1}" description, two heads] \arrow[r, "f^{2}"] & \cdots \arrow[r, "f^{n-1}"] & {A^{n-1}} \arrow[r, "f^n"] \arrow[dl, "p^{n-1}" description] & {A^n} \arrow[dll, "p^n", two heads] \\ & &  \Gamma \rlap{.} \end{tikzcd} \end{equation}
This defines an object $p^\bullet \colon A^\bullet \twoheadrightarrow \Gamma$ in $(\slice{\smE}{\Gamma})^{[n]}$. By \cref{lem:fun-to-arr-fibrations}, when we apply $\funtoarr_n^\Gamma \colon   (\slice{\smE}{\Gamma})^{[n]} \to \slice{\smE}{\Delta^n \times \Gamma}$ we obtain a left fibration over $\Delta^n \times \Gamma$. Recall from \cref{defn:relative-fun-to-arr} that the map $\funtoarr_n^\Gamma(p^\bullet)$ is just obtained by applying the functor $\funtoarr_n \colon \smE^{[n]} \to \slice{\smE}{\Delta^n}$ of \cref{defn:fun-arr-n-maps} to the map $p^\bullet \colon A^\bullet \twoheadrightarrow \const(\Gamma)$ in $\smE^{[n]}$.

By unfolding $\funtoarr_n$, we may describe this construction more explicitly. Applying $(\Delta^n)^*$ pulls \eqref{cons:fun-arr} back to a diagram:
\begin{equation}\label{eq:fun-to-arr-diagram}
  \begin{tikzcd}
    \Delta^n \times A^0 \arrow[d, "p^0"', two heads] \arrow[r, "f^1"] & \Delta^n \times A^1 \arrow[d, "p^{1}"', two heads] \arrow[r, "f^{2}"] & \cdots \arrow[r, "f^{n-1}"] & \Delta^n \times A^{n-1} \arrow[r, "f^n"] \arrow[d, "p^{n-1}", two heads] & \Delta^n \times A^n \arrow[d, "p^n", two heads] \\ \Delta^n \times \Gamma \arrow[r, equals] & \Delta^n \times \Gamma \arrow[r, equals] & \cdots \arrow[r, equals]& \Delta^n \times \Gamma \arrow[r, equals]& \Delta^n \times \Gamma \rlap{.}
  \end{tikzcd}
\end{equation}
Now we may apply the weighted limit functor $\{\Ws,-\}^{{[n]}} \colon (\slice{\smE}{\Delta^n})^{{[n]}} \to \slice{\smE}{\Delta^n}$ to the map between diagrams \eqref{eq:fun-to-arr-diagram} to obtain a morphism in $\smE_{/\Delta^n}$ that we now describe explicitly.

The objects in our diagram have the form $\pi \colon \Delta^n \times A \to \Delta^n$ while the objects in the weight $\Ws$ have the form $\iota_{\leq k} \colon \Delta^k \to \Delta^n$. The cotensor $\iota_{\leq k} \pitchfork \pi$ of the former by the latter is given by the exponential in $\smE_{/\Delta^n}$ with the discretely embedded object $\iota_{\leq k} \colon \Delta^k \to \Delta^n$. Since we are exponentiating into a free object, this is equivalently the pushforward $\prod_{\iota_{\leq k}} (\Delta^k \times A) \to \Delta^n$ of $\pi \colon \Delta^k \times A \to \Delta^k$ along $\iota_{\leq k} \colon \Delta^k \to \Delta^n$.
Thus, the weighted limit of the domain diagram is formed by the limit
\[
    \begin{tikzcd}
     M \arrow[ddd,dashed] \arrow[rrr,dashed] \arrow[ddrr, phantom, "\lrcorner" very near start] &[25pt] { } &[10pt] { } &[40pt] \Delta^n \times A^n \arrow[d,"\iota_{\leq n-1}^*"] \rlap{.} \\
     & & \prod_{i_{\le n}} (\Delta^{n-1} \times A^{n-1}) \ar[d,"\iota_{\leq n-2}^*"] \arrow[r, "\prod_{i_{\le n-1}} (\Delta^{n-1} \times f^n)" below] & \prod_{i_{\le n-1}} (\Delta^{n-1} \times A^n) \\
     & \prod_{i_{\le 1}} (\Delta^1 \times A^1) \ar[d, "\iota_{\leq 0}^*"] \arrow[r, "\prod_{i_{\le 1}} (\Delta^1 \times f^2)" below] & \cdots \\
     \prod_{i_{\le 0}} (\Delta^0 \times A^0) \arrow[r, "\prod_{i_{\le 0}} (\Delta^0 \times f^1)" below] & \prod_{i_{\le 0}} (\Delta^0 \times A^1)
    \end{tikzcd}
\]

  For the codomain diagram we have an isomorphism $\funtoarr_n(\const(\Gamma)) \cong \Delta^n \times \Gamma$ arising from the conjugate under the $\mE$-enriched adjunctions of the natural isomorphism described in \cref{defn:unit-iso}. We can also calculate this directly: observe that when we take the weighted limit of the codomain diagram, the horizontal maps are all identities, so the weighted limit is $\Delta^n \times \Gamma$. Thus, this construction gives rise to a left fibration  $\funtoarr(p^\bullet) \colon M \twoheadrightarrow \Delta^n \times \Gamma$.
\end{cons}

\subsection{Directed univalence}\label{ssec:dua-types}

We now instantiate \cref{cons:arr-fun,cons:fun-arr} in the universal cases to define maps   \[ \begin{tikzcd} \Fun_\bullet \arrow[r, "\funtoarr_\bullet", shift left=.25em] & \Ucov^{\Delta^\bullet} \arrow[l, "\arrtofun_\bullet", shift left=.25em] \end{tikzcd}\]
for each $n \in \NN$.

\begin{cons}\label{cons:arr-to-fun-n}
    For any $n \in \NN$, there is a map $\arrtofun_n \colon \Ucov^{\Delta^n} \to \Fun_n$, built by instantiating \cref{cons:arr-fun} in a universal context $\Gamma = \Ucov^{\Delta^n}$.
  We take a pullback
  \[ \begin{tikzcd} \widetilde{\Arr}_n \arrow[d, two heads,"\ev^*\pi"'] \arrow[r] \arrow[dr, phantom, "\lrcorner" very near start] & \EUcov \arrow[d, two heads, "\pi"] \\ \Delta^n \times \Ucov^{\Delta^n} \arrow[r, "\ev"'] & \Ucov \end{tikzcd} \]
  to define a left fibration over $\Delta^n \times \Ucov^{\Delta^n}$.
  By applying \cref{cons:arr-fun}, we get a pointwise left fibration in $(\slice{\smE}{\Ucov^{\Delta^n}})^{[n]}$ defined by the pullback:
\[ \begin{tikzcd} \arrtofun_n^{\Ucov^{\Delta^n}}(\widetilde{\Arr}_n) \arrow[d,  "\arrtofun^{\Ucov^{\Delta^n}}_n(\ev^*\pi)"', two heads] \arrow[r] \arrow[dr, phantom, "\lrcorner" very near start] & \arrtofun_n(\widetilde{\Arr}_n) \arrow[d, "\arrtofun_n(\ev^*\pi)", two heads] \\ \const(\Ucov^{\Delta^n}) \arrow[r, "\eta"'] & \arrtofun_n(\Delta^n \times\Ucov^{\Delta^n}) \rlap{.} \end{tikzcd}
\]
  This is exactly a family of $n + 1$ left fibrations over $\Ucov^{\Delta^n}$ together with a sequence of $n$ composable maps between them.
  The operations involved in the definition of $\arrtofun^\Gamma_n$ preserve relative $\kappa$-presentability, so the classifying property of $\Ucov$ (\cref{prop:left-universe}) implies that each of the left fibrations is the decoding of a map into $\Ucov$. Thus, by \cref{rmk:Fun-n-changing-context},
  we obtain a classifying map $\arrtofun_n \colon \Ucov^{\Delta^n} \to \Fun_n$ as displayed:
  \begin{equation}\label{eq:arr-fun-pullback} \begin{tikzcd}[column sep=large] \arrtofun_n^{\Ucov^{\Delta^n}}(\widetilde{\Arr}_n) \arrow[d, two heads, "{\arrtofun_n^{\Ucov^{\Delta^n}}(\ev^*\pi)}"'] \arrow[r] \arrow[dr, phantom, "\lrcorner" very near start] & \widetilde{\Fun}_n^\bullet \arrow[d, two heads, "\psi^\bullet"] \\ \const(\Ucov^{\Delta^n}) \arrow[r, "{\const(\arrtofun_n)}"'] & \const(\Fun_n) \rlap{.} \end{tikzcd} \end{equation}
\end{cons}

\begin{cons}\label{cons:fun-to-arr-n} For any $n \in \NN$ there is a map $\funtoarr_n \colon \Fun_n \to \Ucov^{\Delta^n}$, built by instantiating \cref{cons:fun-arr} in a universal context $\Gamma = \Fun_n$. By \cref{defn:fun-n-object}, the identity map at $\Fun_n$ gives rise to a universal diagram $\psi_n^\bullet \colon \widetilde{\Fun}_n^\bullet \twoheadrightarrow \const(\Fun_n)$ introduced in \cref{rmk:Fun-n-changing-context}. By applying \cref{cons:fun-arr}, we obtain a left fibration with codomain $\Delta^n \times \Fun_n$.
  Each of the operations involved in the definition of $\funtoarr^\Gamma_n$ preserves relative $\kappa$-presentability, so this left fibration is classified by a functor into the covariant universe
  \[ \begin{tikzcd}[column sep=large]
    \funtoarr_n^{\Fun_n}(\widetilde{\Fun}_n^\bullet) \arrow[d,two heads, "{\funtoarr_n^{\Fun_n}(\psi_n^\bullet)}"'] \arrow[r] \arrow[dr, phantom, "\lrcorner" very near start] & \EUcov \arrow[d,two heads, "\pi"] \\  \Delta^n \times \Fun_n \arrow[r, "\funtoarr_n^\dagger"'] & \Ucov \end{tikzcd} \]
that transposes to define our map $\funtoarr_n \colon \Fun_n \to \Ucov^{\Delta^n}$.
\end{cons}

We now show that the maps $\funtoarr_\bullet \colon \Fun_\bullet \to \Ucov^{\Delta^\bullet}$ assemble into the components of a homotopy commutative natural transformation whose homotopies are coherent with respect to pairwise composition in the simplex category.

\begin{lem}\label{lem:funtoarr-htpy-naturality}
  The maps $\funtoarr_\bullet \colon \Fun_\bullet \to \Ucov^{\Delta^\bullet}$ assemble into a homotopy commutative natural transformation of simplicial objects. Moreover, the homotopies $\phi_\alpha$ for $\alpha \in \Delta^\to$ can be chosen such that for any composable pair of maps
  \[ \begin{tikzcd} {[k]} \arrow[r, "\alpha"]  & {[m]} \arrow[r, "\beta"] & {[n]} \end{tikzcd} \]
  in $\Delta$, the pasted composite
  \[ \begin{tikzcd}  \Fun_n \arrow[r, "\beta^*"] \arrow[d, "\funtoarr_n"']\arrow[dr, phantom, "{\phi_\beta}"]& \Fun_m \arrow[d, "\funtoarr_m" description] \arrow[r, "\alpha^*"] \arrow[dr, phantom, "{\phi_\alpha}"] & \Fun_k \arrow[d, "\funtoarr_k"]  \\ \Ucov^{\Delta^n} \arrow[r, "\beta^*"'] & \Ucov^{\Delta^m} \arrow[r, "\alpha^*"'] & \Ucov^{\Delta^k}
  \end{tikzcd}\]
  is homotopic to $\phi_{\beta\alpha}$.
\end{lem}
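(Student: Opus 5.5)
To prove the lemma, I would use the fact that every map into $\Ucov^{\Delta^k}$ is determined up to homotopy by the left fibration it classifies over $\Delta^k \times (-)$ (\cref{rmk:ucov-univalence-application}, applied after transposing along $\Delta^k \times - \dashv (-)^{\Delta^k}$). So, to produce $\phi_\alpha$ for $\alpha \colon [k] \to [m]$, I compare the two left fibrations over $\Delta^k \times \Fun_m$ classified by the two sides of the square.

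The composite $\alpha^* \circ \funtoarr_m$ classifies the pullback of $\funtoarr_m^{\Fun_m}(\psi_m^\bullet)$ along $\Delta^\alpha \times \Fun_m$, that is, $(N\alpha \times \Fun_m)^*\funtoarr^{\Fun_m}_{[m]}(\psi_m^\bullet)$. The composite $\funtoarr_k \circ \alpha^*$ classifies $\funtoarr_k^{\Fun_k}(\psi_k^\bullet)$ pulled back along $\alpha^* \colon \Fun_m \to \Fun_k$. By \cref{lem:fun-to-arr-pullback}, this is $\funtoarr_k^{\Fun_m}$ applied to the pullback of $\psi_k^\bullet$ along $\const(\alpha^*)$. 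By the definition of the internal category structure (\cref{lem:map-internal-category}), that pullback is $\res_\alpha \psi_m^\bullet$ up to isomorphism. The pseudonaturality isomorphism $\funtoarr^t_F$ of \cref{thm:dua-model}.\ref{itm:dua-model-naturality}, taken with $F = \alpha$ and $t = \id$, identifies $\funtoarr^{\Fun_m}_{[k]}(\res_\alpha \psi_m^\bullet)$ with $(N\alpha\times\Fun_m)^* \funtoarr^{\Fun_m}_{[m]}(\psi_m^\bullet)$. This gives an isomorphism, in particular a weak equivalence, of left fibrations over $\Delta^k\times\Fun_m$, and univalence converts it into a homotopy $\phi_\alpha$. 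I would choose $\phi_\alpha$ through \cref{prop:univalence-equivalence-to-homotopy}, so that it is induced by a specified equivalence of decoded fibrations.

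For the coherence statement, I need to compare the pasting $\phi_\alpha \cdot \phi_\beta$ with $\phi_{\beta\alpha}$. Each is a homotopy between the same pair of maps $\Fun_n \to \Ucov^{\Delta^k}$, and each is induced by an isomorphism of left fibrations over $\Delta^k \times \Fun_n$. Pseudofunctoriality of $\funtoarr$ in \cref{thm:dua-model}.\ref{itm:dua-model-naturality} says the composite of the isomorphisms $\funtoarr_\beta$ and $\funtoarr_\alpha$, whiskered appropriately, equals $\funtoarr_{\beta\alpha}$. The isomorphisms coming from \cref{lem:fun-to-arr-pullback} and from the internal category structure compose strictly because they are canonical pullback isomorphisms. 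So the two underlying isomorphisms of fibrations agree.

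What remains is that univalence turns equal (or homotopic) equivalences into homotopic homotopies, and turns composites of equivalences into concatenations of homotopies. This is the higher part of univalence: the map from paths in $\Ucov$ to equivalences is itself an equivalence, and it is compatible with composition. I expect this to be the main obstacle, because \cref{prop:univalence-equivalence-to-homotopy} as stated only yields some homotopy, with no control over it. To handle it, I would first try to work with the internal equivalence $\Id(\Ucov) \simeq \Eq(\EUcov)$ in the slice over $\Ucov^{\Delta^k}$ exponentiated appropriately. I would then invoke the standard fact that transport along concatenated paths corresponds to composing equivalences, so that the homotopy between the two 2-cells is obtained as the image of the equality of the two isomorphisms above.
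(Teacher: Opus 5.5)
Your proposal is correct and follows the paper's proof. You get $\phi_\alpha$ from three ingredients: the pseudonaturality isomorphism of \cref{thm:dua-model}.\ref{itm:dua-model-naturality}, pullback stability, and the identification of $\const(\alpha^*)^*\psi_k^\bullet$ with $\psi_m^{\alpha(\bullet)}$ given by the internal category structure of $\Fun_\bullet$; the coherence then comes from commuting isomorphisms of left fibrations over $\Delta^k \times \Fun_n$ fed through univalence. The step you flag as the main obstacle is that the univalence correspondence must be compatible with composing equivalences and with pulling back along the base, since that is what makes the whiskered and pasted 2-cells match. The paper uses this only implicitly, asserting that the commuting isomorphisms ``under univalence of $\pi$ define the claimed homotopy,'' so your account is if anything more explicit.
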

\begin{proof}
  To define $\phi_\alpha$, we argue that the transposed square
    \[ \begin{tikzcd}  \Delta^k \times \Fun_m \arrow[d, "\id \times \alpha^*"'] \arrow[r, "\alpha \times \id"] & \Delta^m \times \Ucov^{\Delta^m} \arrow[d, "\funtoarr_m^\dagger"] \\ \Delta^k \times \Fun_k \arrow[r, "\funtoarr_k^\dagger"'] & \Ucov
  \end{tikzcd}\]
  commutes up to homotopy by showing that the classifying left fibrations are equivalent over $\Delta^k \times \Fun_m$ and appealing to univalence of $\pi \colon \EUcov \twoheadrightarrow \Ucov$. By \cref{cons:arr-to-fun-n}, the upper-right composite classifies the left fibration below-left:
    \[ \begin{tikzcd}[column sep=large]
    ((\alpha \times \id)^* \circ \funtoarr_m^{\Fun_m})(\widetilde{\Fun}_m^\bullet)  \arrow[d, two heads, "{((\alpha \times \id)^* \circ \funtoarr_m^{\Fun_m})(\psi_m^\bullet)}"'] \arrow[r] \arrow[dr, phantom, "\lrcorner" very near start] &  \funtoarr_m^{\Fun_m}(\widetilde{\Fun}_m^\bullet) \arrow[d,two heads, "{\funtoarr_m^{\Fun_m}(\psi_m^\bullet)}"'] \arrow[r] \arrow[dr, phantom, "\lrcorner" very near start] & \EUcov \arrow[d,two heads, "\pi"] \\  \Delta^k \times \Fun_m \arrow[r, "\alpha \times \id"'] & \Delta^m \times \Fun_m \arrow[r, "\funtoarr_m^\dagger"'] & \Ucov \rlap{.} \end{tikzcd} \]
    By Theorem \ref{thm:dua-model}\ref{itm:dua-model-naturality}, this is equivalent to the left fibration over $\Delta^k \times \Fun_m$ obtained by first restricting the universal diagram $\psi^\bullet_m \in (\slice{\smE}{\Fun_m})^{[m]}$ along the functor $\alpha \colon [k] \to [m]$ to obtain a diagram $\psi^{\alpha(\bullet)}_m \in (\slice{\smE}{\Fun_m})^{[k]}$ and then applying $\funtoarr_k^{\Fun_m}$.

    Similarly, the lower-left composite classifies the left fibration below-left:
    \[ \begin{tikzcd}[column sep=large]
    ((\id\times \alpha^*)^* \circ \funtoarr_k^{\Fun_k})(\widetilde{\Fun}_k^\bullet)  \arrow[d, two heads, "{((\id \times \alpha^*)^* \circ \funtoarr_k^{\Fun_k})(\psi_k^\bullet)}"'] \arrow[r] \arrow[dr, phantom, "\lrcorner" very near start] &  \funtoarr_k^{\Fun_k}(\widetilde{\Fun}_k^\bullet) \arrow[d,two heads, "{\funtoarr_k^{\Fun_k}(\psi_k^\bullet)}"'] \arrow[r] \arrow[dr, phantom, "\lrcorner" very near start] & \EUcov \arrow[d,two heads, "\pi"] \\  \Delta^k \times \Fun_m \arrow[r, "\id \times \alpha^*"'] & \Delta^k \times \Fun_k \arrow[r, "\funtoarr_k^\dagger"'] & \Ucov \rlap{.} \end{tikzcd} \]
    By Theorem \ref{thm:dua-model}.(\ref{itm:dua-model-naturality}), this is equivalent to the left fibration over $\Delta^k \times \Fun_m$ obtained by first pulling back the universal diagram $\psi^\bullet_k \in (\slice{\smE}{\Fun_k})^{[k]}$ along $\alpha^* \colon \Fun_m \to \Fun_k$ to obtain a diagram $\alpha^*(\psi^\bullet_k) \in (\slice{\smE}{\Fun_m})^{[k]}$ and then applying $\funtoarr_k^{\Fun_m}$.
   Now the result follows from the fact that the pullback of $\psi^\bullet_k$ along $\alpha^* \colon \Fun_m \to \Fun_k$ coincides with the restriction of $\psi^\bullet_m$ along $\alpha \colon [k] \to [m]$ by the construction of the simplicial object $\Fun_\bullet$ in \cref{lem:map-internal-category}:
      \[ \begin{tikzcd}[column sep=large] \widetilde{\Fun}_m^{\alpha(\bullet)} \arrow[d, two heads, "\psi_m^{\alpha(\bullet)}"'] \arrow[r] \arrow[dr, phantom, "\lrcorner" very near start] & \widetilde{\Fun}_k^\bullet \arrow[d, two heads, "\psi_k^\bullet"] \\ \const(\Fun_m) \arrow[r, "{\const(\alpha^*)}"'] & \const(\Fun_k) \rlap{.} \end{tikzcd} \]

For the coherence statement, note that there are commuting isomorphisms in $(\slice{\smE}{\Fun_n})^{[k]}$ between $\psi_n^{\beta\alpha(\bullet)}$, the pullback of $\psi^{\alpha(\bullet)}_m$ along $\const(\beta^*)$, and the pullback of $\psi^\bullet_k$ along $\const((\beta\alpha)^*)$. After applying $\funtoarr_k^{\Fun_n}$, these become commuting isomorphisms between left fibrations over $\Delta^k \times \Fun_n$, which under univalence of $\pi \colon \EUcov \to \Ucov$ define the claimed homotopy between the $\phi_{\beta\alpha}$ and the pasted composite of $\phi_\beta$ and $\phi_\alpha$.
\end{proof}

We can now state our main result at the $\infty$-topos level.

\begin{thm}[directed univalence]\label{thm:dua-types}
Let $\iE$ be an $\infty$-topos. The maps
\[ \begin{tikzcd} \Fun_\bullet \arrow[r, "\funtoarr_\bullet", shift left=.25em] & \Ucov^{\Delta^\bullet} \arrow[l, "\arrtofun_\bullet", shift left=.25em] \end{tikzcd}\]
define a pointwise equivalence between the simplicial objects $\Fun_\bullet$ and $\Ucov^{\Delta^\bullet}$ in $\sE$ that is natural up to homotopy.
\end{thm}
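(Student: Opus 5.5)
First fix a fibration-extensive type-theoretic model topos $\mE$ presenting $\iE$, which exists by \cref{cor:model-topos-coproducts}. Homotopy naturality of $\funtoarr_\bullet$ is then already \cref{lem:funtoarr-htpy-naturality}. It remains to show that for each $n$ the maps $\funtoarr_n$ and $\arrtofun_n$ are mutually inverse up to homotopy. Since $\Ucov$ is fibrant, so are $\Ucov^{\Delta^n}$ and $\Fun_n$ (the latter being built from $\Ucov$ by dependent products of fibrations). It therefore suffices to produce homotopies
\[ \arrtofun_n\circ\funtoarr_n\sim\id_{\Fun_n} \qquad\text{and}\qquad \funtoarr_n\circ\arrtofun_n\sim\id_{\Ucov^{\Delta^n}}. \]
Naturality of $\arrtofun_\bullet$ up to homotopy then follows formally, because the inverse of a homotopy-natural pointwise equivalence is homotopy natural.

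For $\funtoarr_n\circ\arrtofun_n\sim\id$, transpose to maps $\Delta^n\times\Ucov^{\Delta^n}\to\Ucov$ and use \cref{rmk:ucov-univalence-application}. It suffices to exhibit a zigzag of weak equivalences over $\Delta^n\times\Ucov^{\Delta^n}$ between the left fibrations these maps classify.
\begin{itemize}
\item The identity classifies $\ev^*\pi\colon\widetilde{\Arr}_n\to\Delta^n\times\Ucov^{\Delta^n}$.
\item The composite classifies the pullback along $\id\times\arrtofun_n$ of $\funtoarr_n^{\Fun_n}(\psi_n^\bullet)$. By the pullback stability in \cref{thm:dua-model}\ref{itm:dua-model-naturality} (with $t=\arrtofun_n$ and $F=\id$), together with the pullback square \eqref{eq:arr-fun-pullback}, this is isomorphic to $\funtoarr_n^{\Gamma}\arrtofun_n^{\Gamma}(\ev^*\pi)$ with $\Gamma=\Ucov^{\Delta^n}$.
\end{itemize}
\Cref{lem:arr-fun-arr-equiv} now supplies the span of weak equivalences of left fibrations
\[ \funtoarr^\Gamma\arrtofun^\Gamma E\leftarrow\dom^\Gamma_*\cod_\Gamma^*E\to E. \]
Both legs are weak equivalences between fibrant objects over the same base. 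Since classifying maps are invariant under such equivalences, the two maps into $\Ucov$ are homotopic.

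For $\arrtofun_n\circ\funtoarr_n\sim\id_{\Fun_n}$, use \cref{cor:fun-univalence}: it suffices to give a homotopy commutative ladder of weak equivalences between the two $[n]$-diagrams of left fibrations over $\Fun_n$ that are classified.
\begin{itemize}
\item The identity classifies $\psi_n^\bullet$.
\item The composite classifies the pullback along $\funtoarr_n$ of $\arrtofun_n^{\Ucov^{\Delta^n}}(\ev^*\pi)$. By \cref{lem:arr-to-fun-pullback} this is $\arrtofun_n^{\Fun_n}$ applied to the pullback of $\ev^*\pi$ along $\Delta^n\times\funtoarr_n$. That pullback is, by construction of $\funtoarr_n$, exactly $\funtoarr_n^{\Fun_n}(\psi_n^\bullet)$.
\end{itemize}
So we must compare $\psi^\bullet_n$ with $\arrtofun^{\Fun_n}\funtoarr^{\Fun_n}\psi_n^\bullet$. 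The natural transformation $\kappa^\dagger$ provides a strictly natural map between these diagrams, and its components are weak equivalences by \cref{lem:fun-arr-fun-equiv}, after pulling back along $\const(\Gamma)$ via \cref{lem:fun-arr-fun-equiv-base}. A strictly commuting ladder is in particular homotopy commutative, so \cref{cor:fun-univalence} applies.

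I expect the main obstacle to be bookkeeping. One must check that the relativized functors commute with pullback along the universal classifying maps up to isomorphism, so that the equivalences from \cref{thm:dua-model} are equivalences between the specific objects classified by the composites. One must also check the step that the projective covariant equivalence gives pointwise equivalences of left fibrations over $\Gamma$, which is the input \cref{cor:fun-univalence} needs. If pullback compatibility proves delicate, I would argue directly with generalized elements, using \cref{lem:arr-to-fun-pullback,lem:fun-to-arr-pullback} at each context $\Gamma$.
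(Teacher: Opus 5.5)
Your proposal is correct and follows essentially the same route as the paper. Both directions come from identifying the composites' classified objects via \cref{lem:fun-to-arr-pullback} (which is what your appeal to \cref{thm:dua-model}\ref{itm:dua-model-naturality} with $F=\id$ amounts to) and \cref{lem:arr-to-fun-pullback}, then combining \cref{lem:arr-fun-arr-equiv} with \cref{rmk:ucov-univalence-application} and \cref{lem:fun-arr-fun-equiv} with \cref{cor:fun-univalence}, with homotopy naturality supplied by \cref{lem:funtoarr-htpy-naturality}.
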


By \cref{thm:dua-types} we know that the model-topos-level avatars of these maps, the functors $\arrtofun_n$ and $\funtoarr_n$, are equivalence inverses in a suitable homotopical sense.

\begin{proof}[Proof of \cref{thm:dua-types}]
In \cref{cons:arr-to-fun-n}, the map $\arrtofun_n \colon \Ucov^{\Delta^n} \to \Fun_n$ was defined to be the map classifying the left fibration $\arrtofun_n^{\Ucov^{\Delta^n}}(\ev^*\pi)$ in $\smE^{[n]}$, as expressed by the pullback \eqref{eq:arr-fun-pullback}
in $\smE^{[n]}$. In \cref{cons:fun-to-arr-n}, the map $\funtoarr_n \colon \Fun_n \to \Ucov^{\Delta^n}$ was defined as a transpose of the map classifying the left fibration $\funtoarr_n^{\Fun_n}(\psi^\bullet)$ in $\slice{\smE}{\Delta^n}$, defining a classifying pullback square as below-left, which factors as below-right:
  \begin{equation}\label{eq:fun-arr-pullback} \begin{tikzcd}
    \funtoarr_n^{\Fun_n}(\widetilde{\Fun}_n^\bullet) \arrow[d,two heads, "{\funtoarr_n^{\Fun_n}(\psi^\bullet)}"'] \arrow[r] \arrow[dr, phantom, "\lrcorner" very near start] & \EUcov \arrow[d,two heads, "\pi"] & [1em]     \funtoarr_n^{\Fun_n}(\widetilde{\Fun}_n^\bullet) \arrow[d,two heads, "{\funtoarr_n^{\Fun_n}(\psi^\bullet)}"'] \arrow[r] \arrow[dr, phantom, "\lrcorner" very near start] &[1em] \widetilde{\Arr}_n \arrow[r] \arrow[d, two heads, "\ev^*\pi"'] \arrow[dr, phantom, "\lrcorner" very near start] & \EUcov \arrow[d, two heads, "\pi"] \\  \Delta^n \times \Fun_n \arrow[r, "\funtoarr_n^\dagger"'] & \Ucov \arrow[ur, phantom, "="{xshift=-2em}] & \Delta^n\times \Fun_n \ar[r, "\Delta^n \times \funtoarr_n"'] & \Delta^n \times \Ucov^{\Delta^n} \arrow[r, "\ev"'] & \Ucov \rlap{.} \end{tikzcd}
\end{equation}

Applying \cref{lem:fun-to-arr-pullback} to \eqref{eq:arr-fun-pullback} and composing with the pullback \eqref{eq:fun-arr-pullback}, we obtain the pullback in $\slice{\smE}{\Delta^n}$ below:
\[ \begin{tikzcd} \funtoarr_n^{\Ucov^{\Delta^n}}(\arrtofun_n^{\Ucov^{\Delta^n}}(\widetilde{\Arr}_n)) \arrow[d, two heads, "{\funtoarr_n^{\Ucov^{\Delta^n}}(\arrtofun_n^{\Ucov^{\Delta^n}}(\ev^*\pi))}"'] \arrow[r] \arrow[dr, phantom, "\lrcorner" very near start] & \funtoarr_n^{\Fun_n}(\widetilde{\Fun}_n^\bullet) \arrow[d,two heads, "{\funtoarr_n^{\Fun_n}(\psi^\bullet)}"'] \arrow[r] \arrow[dr, phantom, "\lrcorner" very near start] & \widetilde{\Arr}_n \arrow[r] \arrow[d, "\ev^*\pi"', two heads] \arrow[dr, phantom, "\lrcorner" very near start] & \EUcov \arrow[d, two heads, "\pi"] \\ \Delta^n \times \Ucov^{\Delta^n} \arrow[r, "{\Delta^n \times \arrtofun_n}"'] & \Delta^n\times \Fun_n \ar[r, "\Delta^n \times \funtoarr_n"'] & \Delta^n \times \Ucov^{\Delta^n} \arrow[r, "\ev"'] & \Ucov \rlap{.}
 \end{tikzcd} \]
By \cref{lem:arr-fun-arr-equiv}, the fibration $\ev^*\pi \colon \widetilde{\Arr}_n \twoheadrightarrow \Delta^n \times \Ucov^{\Delta^n}$ is equivalent over $\Delta^n \times \Ucov^{\Delta^n}$ to the left fibration defined by this composite pullback. By \cref{rmk:ucov-univalence-application}, it follows that the classifying maps into $\Ucov$ are homotopic. This homotopy between the lower-horizontal composite and $\ev$ transposes to define a homotopy $\funtoarr_n\circ\arrtofun_n \sim \id$ proving one direction of the equivalence inverse.

For the converse, apply \cref{lem:arr-to-fun-pullback} to the middle pullback square in \eqref{eq:fun-arr-pullback} and compose with the pullback \eqref{eq:arr-fun-pullback} to obtain the following diagram in $\smE^{[n]}$:
\[
  \begin{tikzcd}[column sep=large] \arrtofun_n^{\Fun_n}(\funtoarr_n^{\Fun_n}(\widetilde{\Fun}_n^\bullet)) \arrow[r] \arrow[d, two heads, "{\arrtofun_n^{\Fun_n}(\funtoarr_n^{\Fun_n}(\psi^\bullet))}"'] \arrow[dr, phantom, "\lrcorner" very near start] & \arrtofun_n^{\Ucov^{\Delta^n}}(\widetilde{\Arr}_n) \arrow[d, two heads, "{\arrtofun_n^{\Ucov^{\Delta^n}}(\ev^*\pi)}"'] \arrow[r] \arrow[dr, phantom, "\lrcorner" very near start] & \widetilde{\Fun}_n^\bullet \arrow[d, two heads, "\psi^\bullet"] \\  \const(\Fun_n) \arrow[r, "\const(\funtoarr_n)"'] & \const(\Ucov^{\Delta^n}) \arrow[r, "{\const(\arrtofun_n)}"'] & \const(\Fun_n) \rlap{.} \end{tikzcd}\]
By \cref{lem:fun-arr-fun-equiv}, the left fibration $\psi^\bullet \colon \widetilde{\Fun}_n^\bullet \twoheadrightarrow \const(\Fun_n)$ is equivalent over $\const(\Fun_n)$ to the left fibration defined by this composite pullback. By \cref{cor:fun-univalence}, it follows that the classifying maps into $\Fun_n$ are homotopic, defining a homotopy $\arrtofun_n\circ\funtoarr_n \sim \id$.

By \cref{lem:funtoarr-htpy-naturality}, $\funtoarr_\bullet \colon \Fun_\bullet \to \Ucov^{\Delta^\bullet}$ is natural up to homotopy and in fact these homotopies satisfy a higher coherence indexed by composable pair of morphisms in the simplex category. The same is then true of the inverse equivalence $\arrtofun_\bullet \colon \Ucov^{\Delta^\bullet} \to \Fun_\bullet$.
\end{proof}

\subsection{Applications}\label{ssec:applications}

Various applications of directed univalence have been noted in other settings where results of this nature have been established \cite{KV, gratzer-weinberger-buchholtz-univalence, weaver-licata}. We collect several corollaries of \cref{thm:dua-types} to observe that these results hold at the present level of generality and exhibit the general utility of directed univalence theorems.

\cref{thm:dua-types} identifies arrows in the covariant universe with functors between the types it classifies. This allows us to identify the ``covariant transport'' operation in the universal left fibration with functor application as we now observe.

\begin{rmk}\label{rmk:transport-as-evaluation}
 For a left map $f \colon Y \to X$, $Y^{\Delta^1}$ is equivalent to the pullback $X^{\Delta^1}\times_X Y$, which is to say that arrows in the total space are uniquely determined by arrows in the base plus a lift of their domain. Accordingly, we refer to the map $\ev_1 \colon Y^{\Delta^1} \to Y$ as ``covariant transport'' as it sends an arrow in $X$ together with a lift of its domain to $Y$ to a lift of its codomain to $Y$. Directed univalence provides an equivalent description of the covariant transport operation for the universal left fibration:
\[
\begin{tikzcd}
  \EUcov \arrow[d, "\pi"']&  \EUcov^{\Delta^1} \arrow[d, "\pi^{\Delta^1}"]
  \arrow[l, "\ev_0"'] \arrow[dl, phantom, "\llcorner" very near start] \arrow[r, "\ev_1"] & \EUcov \arrow[d, "\pi"] \\ \Ucov & \Ucov^{\Delta^1} \arrow[l, "\ev_0"] \arrow[r, "\ev_1"'] & \Ucov \rlap{.} \end{tikzcd}
  \]

By \cref{thm:dua-types}, the bottom span is equivalently $(s,t) \colon \Fun_1 \to \Fun_0$, which we unpack further using the construction of \cref{lem:map-internal-category}. After doing so, we see that the diagram above coincides with the diagram on the right of \eqref{eq:internal-cat-evaluation} reproduced below:
\[
\begin{tikzcd}
  \EUcov \arrow[d, "\pi"']&  \EUcov \underset{\Ucov}{\times} {\hom_{\Ucov \times \Ucov}(\EUcov \times \Ucov, \Ucov \times \EUcov)}  \arrow[d, "\pi^{\Delta^1}"]
  \arrow[l, "\pi_{\mathup{left}}"'] \arrow[dl, phantom, "\llcorner" very near start] \arrow[r, "\ev"] & \EUcov \arrow[d, "\pi"] \\ \Ucov & {\hom_{\Ucov \times \Ucov}(\EUcov \times \Ucov, \Ucov \times \EUcov)} \arrow[l, "s"] \arrow[r, "t"'] & \Ucov \rlap{.} \end{tikzcd}
\]
Thus the covariant transport map $\ev_1\colon \EUcov^{\Delta^1} \to \EUcov$ is identified with the evaluation map \[\ev \colon \EUcov \underset{\Ucov}{\times} \Fun_1 \to \Ucov\] that can be described very explicitly.

  We denote elements of $\EUcov$ by $(A,a)$, where $A$ is a code for a left fibration classified by $\Ucov$ and $a$ is an section of the decoded map
  \[ \begin{tikzcd} A \arrow[d, two heads] \arrow[r] \arrow[dr, phantom, "\lrcorner" very near start] & \EUcov \arrow[d, two heads, "\pi"] \\ \Gamma \arrow[r, "A"'] \arrow[ur, dashed, "{(A,a)}" description] \arrow[u, bend left, dashed, "a"] & \Ucov \rlap{.} \end{tikzcd}\] The evaluation map takes an element $(A,a)$ and a function $f \colon A \to B$ over $\Gamma$ to the element $(B, fa)$.
\end{rmk}

Directed univalence can also be used to construct examples of what Martini--Wolf call \emph{internal $\infty$-categories} in the $\infty$-topos $\mE$ \cite{martini}. These are simplicial objects satisfying the Segal and Rezk conditions of the simplicial type theory \cite{RS}. To state these definitions, we require a model for the homotopy coherent isomorphism in the category of simplicial sets, such as the following:

\begin{defn}
Consider the simplicial sets defined by the left and center colimit diagrams:
\[
\begin{tikzcd}
  &
  {\Delta^1 + \Delta^1}
  \arrow[d, "{\delta^1 + \delta^1}"']
  \arrow[r, "{\sigma^0 + \sigma^0}"] & {\Delta^0 + \Delta^0} \arrow[dd, dashed] \\
{\Delta^1 + \Delta^1}
\arrow[d, "{\langle\id,\id\rangle}"']
\arrow[r, "{\delta^0 + \delta^2}"] & {\Delta^2 + \Delta^2} \\
\Delta^1 \arrow[rr, dashed] & & I' \rlap{,}
\end{tikzcd} \qquad
 \begin{tikzcd}
  {\Delta^1 + \Delta^1} \arrow[dr, phantom, "\ulcorner" very near end]
  \arrow[d, "{\langle \delta^{0,2}, \delta^{1,3}\rangle}"']
  \arrow[r, "{\sigma^0 + \sigma^0}"] & {\Delta^0 + \Delta^0} \arrow[d, dashed] \\ \Delta^3 \arrow[r, dashed, "\alpha"'] & I \rlap{,}
\end{tikzcd} \qquad
\begin{tikzcd}
  {\Delta^2 \cup_{\Delta^1}\Delta^2} \arrow[d, tail, "{\langle \delta^3,\delta^0\rangle}"'] \arrow[r] \arrow[dr, phantom, "\ulcorner" very near end] & I' \arrow[d, tail] \\ \Delta^3 \arrow[r, dashed] & I \rlap{.}
\end{tikzcd}
\]
The simplicial set $I'$ might be thought of as the ``free-living bi-invertible arrow,'' the 1-simplex in the image of the lower dashed map $\Delta^1 \to I'$ that is equipped with left and right inverses witnessed by the pair of 2-simplices. The simplicial set $I$ is the ``free-living 2-of-6 diagram,'' extending the free-living bi-invertible arrow by an associativity coherence. There is a natural inclusion from the former to the latter that is an inner anodyne extension.
\end{defn}

\begin{defn}\label{defn:segal-rezk}
A fibrant object $C \in \smE$ is an \textbf{internal $\infty$-category} if it satisfies the following conditions:
\begin{itemize}
  \item \textbf{Segal condition}: the map $C^{\Delta^2} \to C^{\Lambda^2_1}$ defined by restricting along the inclusion of discretely embedded simplicial sets is an equivalence.
  \item \textbf{Rezk condition}: the map $C \to C^{I}$ defined by restricting along the projection $! \colon I \to \Delta^0$ of discretely embedded simplicial sets is an equivalence.\footnote{We typically only consider the Rezk condition for objects also satisfying the Segal condition, from which point of view the models $I$ and $I'$ for the coherent isomorphism are equivalent.}
\end{itemize}
A fibrant object $C \in \smE$ is an \textbf{internal $\infty$-groupoid} if the map $C^{\Delta^0} \to C^{\Delta^1}$ defined by restricting along the projection of discretely embedded simplicial sets is an equivalence.
\end{defn}

\begin{rmk}
The conditions of \cref{defn:segal-rezk} are all internal orthogonality conditions between the fibration $! \colon C \to 1$ and various maps of simplicial sets. Similarly, we say that a fibration $f \colon Y \to X$ in $\smE$ is \textbf{relatively Segal} or \textbf{relatively Rezk} if it is internally orthogonal to the maps $\iota\colon \Lambda^2_1 \to \Delta^2$ or $! \colon I \to \Delta^0$ respectively. By pullback cancellation, if the codomain of a relatively Segal or Rezk map satisfies the Segal or Rezk conditions, so does the domain.
\end{rmk}

\begin{lem}\label{lem:left-implies-segal-rezk}
Left fibrations are relatively Segal and relatively Rezk. Thus, if the base of a left fibration is an internal $\infty$-category, then so is the total space.
\end{lem}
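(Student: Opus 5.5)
We need to show that a left fibration $f \colon Y \twoheadrightarrow X$ in $\smE$ is internally orthogonal to $\Lambda^2_1 \hookrightarrow \Delta^2$ and to $! \colon I \to \Delta^0$. The orthogonality for $\Lambda^2_1 \hookrightarrow \Delta^2$ means the square relating $Y^{\Delta^2} \to Y^{\Lambda^2_1}$ to $X^{\Delta^2} \to X^{\Lambda^2_1}$ is a homotopy pullback. The orthogonality for $! \colon I \to \Delta^0$ means the analogous square relating $Y \to Y^I$ to $X \to X^I$ is a homotopy pullback. The second clause of the statement then follows from the preceding remark by pullback cancellation.

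The plan is to work "on the weights side". By \cref{rmk:covariant-fib} and \cref{lem:covariant-fib}, $f$ is internally orthogonal to $\iota_0 \colon \Delta^0 \to \Delta^1$. Because exponentiation is right adjoint, the class of maps $j$ of simplicial sets such that $f$ is internally orthogonal to (the discrete embedding of) $j$ is closed under colimit operations. Concretely, the square for $j$ is a homotopy pullback precisely when the Leibniz exponential $\widehat{\intHom}(j,f)$ is an equivalence. This class contains the Leibniz products of $\iota_0$ with monomorphisms, since $\widehat{\intHom}(m \hat\times \iota_0, f) \cong \widehat{\intHom}(m, \widehat{\intHom}(\iota_0,f))$, and a Reedy trivial fibration exponentiated against a monomorphism stays trivial. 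The class is also closed under pushout, transfinite composition, retract and 2-out-of-3 among monomorphisms; the latter follows by the Leibniz calculus argument of \cref{lem:ehom-equiv-cancellation}. So every left anodyne map lies in the class, by \cref{lem:left-anodyne-quillen} together with \cref{lem:left-anodyne-initial-vertex}, or directly by \cref{lem:leib-hom-fibrations-in-slice}.

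It then remains to exhibit the two maps as combinations of left anodynes. The inner horn $\Lambda^2_1 \hookrightarrow \Delta^2$ is handled by 2-out-of-3: the composite $\Delta^{\{0\}} \hookrightarrow \Lambda^2_1 \hookrightarrow \Delta^2$ is left anodyne, and so is $\Delta^{\{0\}} \hookrightarrow \Lambda^2_1$, since it is a pushout of $\iota_0 \colon \Delta^0 \to \Delta^1$ followed by a pushout of $\Delta^{\{1\}} \to \Delta^{\{1,2\}}$. Hence $f$ is orthogonal to the inner horn. For $!\colon I \to \Delta^0$, the plan is to take a section $\Delta^0 \to I$ at the initial vertex of $\alpha$ and show it is left anodyne; then $!$ is orthogonal by 2-out-of-3, since $\id_{\Delta^0}$ trivially is.

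The main obstacle is exactly this last point. I would first try \cref{she-is-left-anodyne}: check that $\Delta^0 \to I$ is a strong 0-oriented homotopy equivalence. The needed contraction $\Delta^1 \times I \to I$ should be obtainable because $I$ is a quotient of $\Delta^3$ in which the edges $0\to2$ and $1\to3$ are collapsed. If producing the contraction explicitly is awkward, the fallback is to use the inner anodyne inclusion $I' \hookrightarrow I$ and the inner horn case already established to reduce to $I'$. There, $\Delta^0 \to \Delta^1 \to I'$ can be shown left anodyne by filling the two 2-simplices as left horns $\Lambda^2_0$ after degenerate edges are identified.
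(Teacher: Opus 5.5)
Your overall structure works. The class of maps $j$ with $\widehat{\intHom}(j,f)$ an equivalence contains all left anodynes and is closed under pushout and composition. It is also closed under \emph{right} cancellation by the argument of \cref{lem:ehom-equiv-cancellation}, which does not need $j$ to be a monomorphism; you do not have full 2-out-of-3, but you never use left cancellation. The Segal step is fine, and in fact immediate, since $\Lambda^2_1\hookrightarrow\Delta^2$ is itself a generating left horn ($0\le k<n$).

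The problem is your first plan for the Rezk step. The inclusion $\Delta^0\to I$ at vertex $0$ is \emph{not} a strong 0-oriented homotopy equivalence, so \cref{she-is-left-anodyne} is unavailable. In $I$ the vertices are $0=2$ and $1=3$. A homotopy $v\colon\Delta^1\times I\to I$ from the constant map at $0$ to $\id_I$ must be $s_0(0)$ on $\Delta^1\times\{0\}$ and some edge $e\colon 0\to1$ on $\Delta^1\times\{1\}$. For each edge $x\colon 0\to 1$ it yields 2-simplices $\sigma,\tau$ with $d_2\sigma=d_2\tau=s_0(0)$, $d_0\sigma=x$, $d_0\tau=e$ and $d_1\sigma=d_1\tau$.
\begin{itemize}
\item For $x=01$ the only choice is $\sigma=\tau=s_0(01)$, forcing $e=01$.
\item For $x=23$, $\sigma$ is the image of $(0,2,3)$ or $(2,2,3)$, so $d_1\sigma\in\{03,23\}$. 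But $d_0\tau=01$ forces $\tau=s_0(01)$ and hence $d_1\tau=01$, a contradiction.
\end{itemize}
So you must use your fallback, and there the choice of edge matters. From vertex $0$, attach the edge $0\to1$ via $\iota_0$. Then fill the face $\{0,1,2\}$, and afterwards $\{1,2,3\}$, as pushouts of $\Lambda^2_0\hookrightarrow\Delta^2$ along maps collapsing $0\to2$, resp.\ $1\to3$. This exhibits $\Delta^0\to I'$ as left anodyne, and $I'\hookrightarrow I$ is inner, hence left, anodyne. If you start instead from the distinguished arrow $1\to2$ of $I'$, the face $\{0,1,2\}$ is a $\Lambda^2_2$-horn and the argument breaks.

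With that repair your proof is correct but takes a different route from the paper. The paper works at the $\infty$-categorical level with the saturation of $\iota_0\colon\Delta^0\to\Delta^1$ under internal orthogonality, which includes non-monomorphisms. It gets $!\colon\Delta^1\to\Delta^0$ by right cancellation against $\id$. Then $\alpha\colon\Delta^3\to I$ lies in the class as a pushout of $\sigma^0+\sigma^0$, so $\Delta^0\to\Delta^3\to I$ does by composition. Finally $!\colon I\to\Delta^0$ follows by cancellation, with no combinatorics of $I$ or $I'$ at all. Your class admits exactly the same trick, since the pushout defining $I$ is along the monomorphism $\Delta^1+\Delta^1\hookrightarrow\Delta^3$ and hence is a homotopy pushout. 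So the horn filling could have been avoided. What your route buys is that the essential inputs, the Segal horn and the vertex inclusion $\Delta^0\to I$, are exhibited as honest left anodyne maps of simplicial sets, keeping the argument ``in the weights'' as in \S3. The price is the explicit combinatorial check, which is where the error was.
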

\begin{proof}
The argument is given as the ``categorical proof'' of \cite[8.8]{RS} or in \cite[4.1.4]{martini}. Recall that left maps are internally orthogonal to the map $\iota_0 \colon \Delta^0 \to \Delta^1$. We show that the maps $\iota\colon \Lambda^2_1 \to \Delta^2$ and $!\colon I \to \Delta^0$ belong to a suitably defined ``saturation'' of the map $\iota_0 \colon \Delta^0 \to \Delta^1$ defined relative to the notion of internal orthogonality. If a map $f \colon Y \to X$ in a cartesian closed $\infty$-category is internally orthogonal to some set of maps then it is also internally orthogonal to maps obtained in a larger class by adding the equivalences and closing up under composition, pushout, colimits in the arrow category, right cancellation, and Leibniz product with any map. It follows that the class containing $\iota_0 \colon \Delta^0 \to \Delta^1$ also contains
\begin{itemize}
  \item $! \colon \Delta^1 \to \Delta^0$, from right cancellation involving an isomorphism,
  \item $\alpha \colon \Delta^3 \to I$, from closure under pushouts,
  \item $\iota_0 \colon \Delta^0 \to \Delta^{n+1}$ for any $n$, as a retract of $\Delta^n \times \iota_0 \colon \Delta^n \to \Delta^n \times \Delta^1$,
  \item $\iota_0 \colon \Delta^0 \to I$ by composition,
  \item $! \colon I \to \Delta^0$ by right cancellation involving an isomorphism,
  \item $\delta^{0,1} \colon \Delta^1 \to \Lambda^2_1$, as a pushout of $\iota_0 \colon \Delta^0 \to \Delta^1$,
  \item $\iota_0 \colon \Delta^0 \to \Lambda^2_1$, by composition,
  \item $\iota \colon \Lambda^2_1 \to \Delta^2$, by right cancellation. \qedhere
\end{itemize}
\end{proof}

\begin{rmk}
A fibrant object $C \in \smE$ defines an internal $\infty$-groupoid if and only if it is internally orthogonal to either and hence both of the sections $\iota_0, \iota_1 \colon \Delta^0 \to \Delta^1$ of the projection $! \colon \Delta^1 \to \Delta^0$. In particular, the map $! \colon C \twoheadrightarrow 1$ to the terminal object is a left fibration if and only if it is a right fibration if and only if $C$ is an internal $\infty$-groupoid. Thus the points of $\Ucov$ are internal $\infty$-groupoids.

Note that internal $\infty$-groupoids are internal $\infty$-categories, satisfying both the Segal and Rezk conditions \cite[7.3, 10.10]{RS}.
\end{rmk}

As a first example, following \cite{KV} and others, we use higher directed univalence to prove that $\Ucov$ is an internal $\infty$-category, which we refer to as the internal $\infty$-category of internal $\infty$-groupoids.

\begin{cor}\label{cor:Ucov-category}
  The base $\Ucov$ of the universal left fibration is an internal $\infty$-category.
\end{cor}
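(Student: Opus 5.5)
We need to show that $\Ucov$ is fibrant and satisfies the Segal and Rezk conditions. Fibrancy is part of \cref{prop:left-universe}. For the other two conditions, the plan is to transport them along the directed univalence equivalences of \cref{thm:dua-types}. On the $\Fun_\bullet$ side they hold essentially by construction, because $\Fun_\bullet$ is the nerve of the internal category of \cref{lem:map-internal-category}.

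\textbf{Segal condition.} We compare $\Ucov^{\Delta^2} \to \Ucov^{\Lambda^2_1}$ with the Segal map $\Fun_2 \to \Fun_1 \times_{\Fun_0} \Fun_1$.

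1. Write $\Ucov^{\Lambda^2_1} \cong \Ucov^{\Delta^1} \times_{\Ucov} \Ucov^{\Delta^1}$. This is a strict pullback of fibrations: $\ev_0,\ev_1 \colon \Ucov^{\Delta^1} \to \Ucov$ are fibrations since $\Ucov$ is fibrant and the boundary inclusions are cofibrations. So it is a homotopy pullback.

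2. By \cref{lem:map-internal-category}, $\Fun_\bullet$ is the nerve of a strict internal category. Hence $\Fun_2 \to \Fun_1 \times_{\Fun_0} \Fun_1$ is an isomorphism, and the target is again a pullback of fibrations, since the source and target maps of $\hom_{\Ucov\times\Ucov}$ are fibrations.

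3. By \cref{thm:dua-types} and \cref{lem:funtoarr-htpy-naturality}, the maps $\funtoarr_n$ for $n\le 2$ are equivalences and are homotopy natural for $\delta^0,\delta^2 \colon [1]\to[2]$ and the face maps $[0]\to[1]$. The homotopies are coherent with respect to composites, so that both composites $[0]\to[2]$ through $[1]$ agree. This coherence is exactly what is needed to assemble a homotopy-commutative map of cospans $(\Fun_1 \to \Fun_0 \leftarrow \Fun_1) \to (\Ucov^{\Delta^1}\to\Ucov\leftarrow\Ucov^{\Delta^1})$, together with a compatible map from $\Fun_2$ to $\Ucov^{\Delta^2}$.

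4. A pointwise equivalence of cospans induces an equivalence of homotopy pullbacks. By 2-out-of-3, $\Ucov^{\Delta^2}\to\Ucov^{\Lambda^2_1}$ is then an equivalence.

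Step 3 is the main obstacle. We must ensure that the homotopy-coherent naturality of \cref{lem:funtoarr-htpy-naturality} really yields a map of homotopy pullback diagrams, which requires the 2-cell coherence for composable pairs. To handle it, I would work at the $\infty$-categorical level in $\iE$, where the data $(\phi_\alpha, \phi_\beta, \phi_{\beta\alpha})$ together with the homotopy between $\phi_{\beta\alpha}$ and the pasted composite define the required map of diagrams indexed by the relevant truncation of $\DDelta_{\le 2}$.

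\textbf{Rezk condition.} Here $I$ is built from $\Delta^3$ and $\Delta^0$ by a pushout, so $\Ucov^I$ is the pullback of $\Ucov^{\Delta^3} \to (\Ucov^{\Delta^1})^2 \leftarrow \Ucov^2$. Since $\Ucov$ is already Segal and $I' \hookrightarrow I$ is inner anodyne, it is enough to show that $\Ucov \to \Ucov^{I'}$ is an equivalence; equivalently, that degeneracy $\Ucov \to \Ucov^{\Delta^1}$ is an equivalence onto the subobject of bi-invertible arrows. Transport this through \cref{thm:dua-types} for $n\le 3$ with the same coherence as above. On the $\Fun$ side, the bi-invertible arrows in $\Fun_1$ are the maps $f\colon A\to B$ equipped with left and right inverses. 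Comparing with the degeneracy $\Fun_0\to\Fun_1$ is the statement that the object of equivalences $\Eq(\EUcov)$ receives an equivalence from $\Ucov$, and this is exactly univalence of $\picov$ (\cref{prop:left-universe}). Matching the explicit description of equivalences by bi-invertibility data with the object $\Eq$ is standard HoTT reasoning in the internal language of $\smE$.
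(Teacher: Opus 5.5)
Your proposal is correct and matches the paper's proof. Both transport the Segal and Rezk conditions along the coherently homotopy-natural equivalences of \cref{thm:dua-types} and \cref{lem:funtoarr-htpy-naturality}. The Segal condition then holds because $\Fun_\bullet$ is a strict internal category, and the Rezk condition comes down to bi-invertible maps between left fibrations plus univalence of $\picov$. The one difference is that you first pass from $I$ to $I'$, using the Segal condition and orthogonality to inner anodyne maps (a fact the paper only notes in a footnote). The paper instead works directly with the pushout of $\Delta^3$ and the corresponding limit over $\Fun_3$, which it identifies with $\mathsf{Binv}(\EUcov)$.
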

\begin{proof}
We must verify the Segal and Rezk conditions. For the Segal condition, we must show that the following square is a pullback in the $\infty$-categorical sense:
  \[ \begin{tikzcd} \Ucov^{\Delta^2} \arrow[r, "-\circ \delta^2"] \arrow[d, "-\circ \delta^0"'] & \Ucov^{\Delta^1} \arrow[d, "-\circ \delta^0"] \\ \Ucov^{\Delta^1} \arrow[r, "-\circ \delta^1"'] & \Ucov^{\Delta^0} \rlap{.} \end{tikzcd} \]
By \cref{thm:dua-types,lem:funtoarr-htpy-naturality}, this is equivalent to the square
  \[ \begin{tikzcd} \Fun_2 \arrow[r, "{(\delta^2)^*}"] \arrow[dr, phantom, "\lrcorner" very near start] \arrow[d, "{(\delta^0)^*}"'] & \Fun_1 \arrow[d, "{(\delta^0)^*}"] \\ \Fun_1 \arrow[r, "{(\delta^1)^*}"'] & \Fun_0 \rlap{,} \end{tikzcd} \] which is a strict pullback as $\Fun_\bullet$ is constructed as an internal category object.

For the Rezk condition, we must show that the following are pullback diagrams in the $\infty$-categorical sense, where \cref{thm:dua-types,lem:funtoarr-htpy-naturality} give an equivalence between the left-hand and right-hand diagrams:
\[ \begin{tikzcd}[column sep=large] \Ucov \arrow[d, "{\langle \id,\id\rangle}"', dashed] \arrow[r, dashed]  \arrow[dr, phantom, "\lrcorner" very near start] & \Ucov^{\Delta^3}\arrow[d, "{\langle -\circ \delta^{0,2}, -\circ \delta^{1,3}\rangle}"] & \Fun_0\arrow[dr, phantom, "\lrcorner" very near start] \arrow[d, "{\langle \id,\id\rangle}"', dashed] \arrow[r, dashed] & \Fun_3 \arrow[d, "{\langle (\delta^{0,2})^*, (\delta^{1,3})^*\rangle}"] \\ \Ucov \times \Ucov \arrow[r, "{-\circ \sigma^0\times -\circ\sigma^0}"'] & \Ucov^{\Delta^1} \times \Ucov^{\Delta^1} \rlap{,} & \Fun_0 \times \Fun_0 \arrow[r, "{(\sigma^0)^* \times (\sigma^0)^*}"'] & \Fun_1 \times \Fun_1 \rlap{.} \end{tikzcd}\]
The limit in the right-hand diagram defines the universal object $\mathsf{Binv}(\EUcov)$ of bi-invertible maps between left fibrations classified by $\Ucov$, which is equivalent to the universal equivalence $\Eq(\EUcov)$, while the map from $\Ucov$ picks out the identity biequivalence. By univalence of $\Ucov$, this is an equivalence, as desired.
\end{proof}

A useful consequence of univalence is the structure identity principle, characterizing path
spaces of various classifying spaces built from the universe $\Ufib$. Similarly, directed univalence implies a directed
analog \cite[\S7.3.2]{weaver:24}~\cite[\S7.3]{gratzer-weinberger-buchholtz-univalence}, the \textbf{structure homomorphism principle}, identifying the arrows in various classifying categories built from the universe $\Ucov$ as homomorphisms of the appropriate structure.

\begin{cor}\label{cor:EUcov-category}
The object $\EUcov$ is an internal $\infty$-category of pointed internal $\infty$-groupoids whose arrows are pointed functions.
\end{cor}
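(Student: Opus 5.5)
The plan is to realize $\EUcov$ as the total space of $\picov$ and use \cref{lem:left-implies-segal-rezk} for the categorical structure. Then I would identify its arrows by combining directed univalence with the covariant transport description of \cref{rmk:transport-as-evaluation}.

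First, $\EUcov$ is fibrant, since $\picov$ is a left fibration (in particular a Reedy fibration) and $\Ucov$ is fibrant by \cref{prop:left-universe}. \Cref{cor:Ucov-category} shows that $\Ucov$ is an internal $\infty$-category. Since $\picov$ is a left fibration, \cref{lem:left-implies-segal-rezk} says it is relatively Segal and relatively Rezk, so by pullback cancellation $\EUcov$ satisfies the Segal and Rezk conditions. Its objects are pairs $(A,a)$ with $A$ a code in $\Ucov$ and $a$ a section of the decoded left fibration. Each such $A$ is an internal $\infty$-groupoid by the remark preceding \cref{cor:Ucov-category}, so these objects are pointed internal $\infty$-groupoids.

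Second, I identify $\EUcov^{\Delta^1}$. Since $\picov$ is a left map, \cref{lem:covariant-fib} gives an equivalence $\EUcov^{\Delta^1} \simeq \EUcov \times_{\Ucov} \Ucov^{\Delta^1}$, with the pullback taken along $\ev_0$. By \cref{thm:dua-types} together with the naturality of \cref{lem:funtoarr-htpy-naturality} for the face maps, $\Ucov^{\Delta^1}$ is equivalent over $\Ucov \times \Ucov$ to $\Fun_1$ with its source and target maps. This gives
\[ \EUcov^{\Delta^1} \simeq \EUcov \times_{\Ucov} \Fun_1 . \]
Concretely, a generalized element consists of a triple $(A,a,f)$ with $f \colon A \to B$ a map over $\Gamma$. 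By \cref{rmk:transport-as-evaluation}, under this equivalence the target map $\ev_1 \colon \EUcov^{\Delta^1} \to \EUcov$ becomes the evaluation $(A,a,f) \mapsto (B, fa)$.

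Finally, I repackage the result. Fix endpoints $(A,a)$ and $(B,b)$. The fiber of $\EUcov^{\Delta^1}$ over them is the object of pairs consisting of a function $f \colon A \to B$ together with an identification $fa = b$ in $B$, which is exactly the type of pointed functions. The main obstacle is this last step: the endpoint map to $\EUcov \times \EUcov$ must be matched with (source, evaluation), and not merely the single map $\ev_1$. This requires checking that the homotopies supplied by \cref{thm:dua-types} and \cref{rmk:transport-as-evaluation} are compatible over $\Ucov\times\Ucov$. I would get this compatibility from univalence of $\picov$ (\cref{rmk:ucov-univalence-application}), by comparing the left fibrations classified over $\Fun_1$.
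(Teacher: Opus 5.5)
Your proposal is correct and follows the paper's proof essentially step for step. You get the internal $\infty$-category structure from \cref{lem:left-implies-segal-rezk,cor:Ucov-category} by pullback cancellation, and you read off the arrows from the pullback $\EUcov^{\Delta^1}\simeq \EUcov\times_{\Ucov}\Ucov^{\Delta^1}$, directed univalence and \cref{rmk:transport-as-evaluation}, concluding that an arrow from $(A,a)$ to $(B,b)$ is a function $f$ with $fa=b$. The compatibility you flag is handled only implicitly in the paper, inside \cref{rmk:transport-as-evaluation}; since $\EUcov\times\EUcov$ is a product, it reduces to matching $\ev_0$ and $\ev_1$ separately, so it is not a real obstacle.
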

\begin{proof}
By \cref{lem:left-implies-segal-rezk,cor:Ucov-category}, $\EUcov$ is an internal $\infty$-category.

  Recall that we denote elements of $\EUcov$ by $(A,a)$, where $A$ is a code for a left fibration classified by $\Ucov$ and $a$ is an section of the decoded map
  \[ \begin{tikzcd} A \arrow[d, two heads] \arrow[r] \arrow[dr, phantom, "\lrcorner" very near start] & \EUcov \arrow[d, two heads, "\pi"] \\ \Gamma \arrow[r, "A"'] \arrow[ur, dashed, "{(A,a)}" description] \arrow[u, bend left, dashed, "a"] & \Ucov \rlap{.} \end{tikzcd}\]
  In the case of global elements $A$ is an internal $\infty$-groupoid and $a$ is a global element of $A$.

We next characterize the arrows in $\EUcov$. The left fibration provides a pullback displayed below-left:
\[
\begin{tikzcd}[column sep=large]
  \EUcov^{\Delta^1} \arrow[dd, bend right=60, "\ev_0"'] \arrow[d, "{(\ev_0, \pi \ev_1)}" description]\arrow[r, "\pi^{\Delta^1}"] \arrow[dr, phantom, "\lrcorner" very near start] &
  \Ucov^{\Delta^1} \arrow[d, "{(\ev_0,\ev_1)}" description]\arrow[dd, bend left=60, "\ev_0"] & & \Gamma \arrow[r, dotted] \arrow[dr, bend right=10, dashed, "{\langle(A,a), (B,b)\rangle}"' pos=.9] \arrow[ddr, bend right=40, dashed, "{\langle(A,a),B\rangle}"'] \arrow[rr, bend left=20, dashed, "f"]  & \EUcov^{\Delta^1} \arrow[d, "{(\ev_0, \ev_1)}"' pos=.4] \arrow[r] \arrow[dr, phantom, "\lrcorner" very near start] & \Fun_1 \arrow[dd, "{(\ev_0,\ev_1)}"] \\
\EUcov \times \Ucov \arrow[r] \arrow[d, "\pi_{\mathup{left}}"'] \arrow[dr, phantom, "\lrcorner" very near start] & \Ucov \times \Ucov \arrow[d, "\pi_{\mathup{left}}"] & \rightsquigarrow & & \EUcov \times \EUcov  \arrow[d, "1 \times \pi"'] & ~ \\ \EUcov \arrow[r, "\pi"'] & \Ucov &~ & &  \EUcov \times \Ucov \arrow[r, "\pi \times 1"'] & \Ucov \times \Ucov \rlap{.} \end{tikzcd}
\]
By directed univalence, arrows $\Gamma \to \EUcov^{\Delta^1}$ from $(A,a)$ to $(B,b)$ are given by the data of a function $f \colon A \to B$ between left fibrations over $\Gamma$ and a section $a \colon \Gamma \to A$. By \cref{rmk:transport-as-evaluation}, commutativity of the triangle involving the map $(\ev_0,\ev_1) \colon \EUcov^{\Delta^1} \to \EUcov \times \EUcov$ implies that $fa = b$.
\end{proof}

Another instance of the structure homomorphism principle is illustrated by the internal $\infty$-category of \textbf{magmas}: internal $\infty$-groupoids equipped with a specified binary operation. This result is illustrative of a family of similar examples whose complete description we leave for future work.

\begin{cor}\label{cor:}
  There is an internal $\infty$-category of magmas whose arrows are magma homomorphisms.
\end{cor}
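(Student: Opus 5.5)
The plan is to construct the internal $\infty$-category of magmas as the total space of a fibration over $\Ucov$ and then to identify its arrows using \cref{thm:dua-types}, following the pattern of \cref{cor:EUcov-category}.

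\textbf{Construction.} Let $\pi \colon \EUcov \twoheadrightarrow \Ucov$ be the universal left fibration. For any map $p \colon E \to B$ in the locally cartesian closed category $\smE$, form the internal hom over $B$ from $E \times_B E$ to $E$. Applied to $\pi$ this gives a fibration
\[ \Magma \coloneqq \sum_{A : \Ucov} (A \times A \to A) \longrightarrow \Ucov, \]
namely the pushforward along $\pi$ of $\pi^*\EUcov \times_{\EUcov}\pi^*\EUcov$, internalized as an exponential in $\slice{\smE}{\Ucov}$. Its generalized elements $\Gamma \to \Magma$ are codes $A \colon \Gamma \to \Ucov$ together with a map $m \colon A \times_\Gamma A \to A$ over $\Gamma$. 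The key claim is that the projection $\Magma \to \Ucov$ is a left fibration; \cref{lem:left-implies-segal-rezk} together with \cref{cor:Ucov-category} would then show that $\Magma$ is an internal $\infty$-category.

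\textbf{Obstacle.} The left-fibration claim does \emph{not} hold: the binary operation is not covariantly transportable along arbitrary functions. So I would instead compute $\Magma^{\Delta^n}$ directly and check the Segal and Rezk conditions by hand. This is the main obstacle.

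The approach is to show that $\Magma^{\Delta^\bullet}$ is equivalent to the internal nerve of the internal category $\mathsf{MagHom}_\bullet$, whose $n$-simplices are sequences of $n$ composable magma homomorphisms. For $\Gamma \to \Magma^{\Delta^n}$ I would use the following chain of identifications:
\begin{itemize}
\item such a map is a left fibration $E \twoheadrightarrow \Delta^n \times \Gamma$ together with a fiberwise map $E \times_{\Delta^n \times \Gamma} E \to E$ over $\Delta^n \times \Gamma$;
\item by \cref{thm:dua-model} the functor $\arrtofun^\Gamma_n$ is a right Quillen equivalence and preserves fiber products, since it is a right adjoint;
\item the multiplication therefore transports to a compatible family $m_k \colon A^k \times_\Gamma A^k \to A^k$, with strictly commuting squares $f^k \circ m_{k-1} = m_k \circ (f^k \times f^k)$, up to the homotopies supplied by the derived equivalence.
\end{itemize}
Concretely, I would define $\mathsf{MagHom}_n$ as the strict pullback of $\Fun_n$ along the objects recording each $m_k$ and the homomorphism equations. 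Each equation is an internal identity type, so this is a homotopy pullback since everything is fibrant. I would then produce an equivalence $\Magma^{\Delta^n} \simeq \mathsf{MagHom}_n$ by taking the homotopy pullback of the equivalence of \cref{thm:dua-types} along these structure maps. This uses that $\Magma^{\Delta^n}$ is the object of sections of a structure-type family over $\Ucov^{\Delta^n}$, and that the passage between $\Ucov^{\Delta^n}$ and $\Fun_n$ carries $(-)\times(-) \to (-)$ structure to homomorphism structure. Naturality in $[n]$ follows as in \cref{lem:funtoarr-htpy-naturality}.

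\textbf{Segal and Rezk.} The Segal condition then follows because $\mathsf{MagHom}_\bullet$ is an internal category, so its Segal square is a strict pullback of fibrations. For the Rezk condition, the invertible homomorphisms form a subobject of the equivalences $\Eq(\EUcov)$ restricted to magmas, and univalence of $\Ucov$ identifies this with the path object of $\Magma$, exactly as in the proof of \cref{cor:Ucov-category}.
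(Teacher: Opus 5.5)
Your $\Magma$ is the right object, and you are correct that $\Magma\to\Ucov$ is not a left fibration. But the route you take around this has a gap at its core.

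The step ``the passage between $\Ucov^{\Delta^n}$ and $\Fun_n$ carries $(-)\times(-)\to(-)$ structure to homomorphism structure'' is asserted, not derived. It is essentially the whole content of $\Magma^{\Delta^n}\simeq\mathsf{MagHom}_n$. \cref{thm:dua-types} is an equivalence of the base objects only. It says nothing about how the structure family over $\Ucov^{\Delta^n}$ (fiber at $E$: maps $E\times E\to E$ over $\Delta^n\times\Gamma$) matches a family over $\Fun_n$.

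Applying $\arrtofun^\Gamma_n$, which preserves finite limits, does give a comparison map $\Magma^{\Delta^n}\to\mathsf{MagHom}_n$. Showing it is an equivalence would require comparing internal, pullback-stable derived mapping objects over $\Delta^n\times\Gamma$ with homotopy-coherent mapping objects of $[n]$-diagrams over $\Gamma$. Neither the paper nor your sketch supplies this.

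There is a second problem. Once the homomorphism equations are imposed by identity types, $\mathsf{MagHom}_\bullet$ is no longer a strict simplicial object: the composing face maps concatenate paths, so the simplicial identities hold only up to higher paths. Hence ``its Segal square is a strict pullback'' does not apply as stated. The coherence you would need also goes beyond \cref{lem:funtoarr-htpy-naturality}, which concerns $\funtoarr_\bullet$ on the plain $\Fun_\bullet$. Your Rezk argument inherits both problems, since it needs $\Magma^{\Delta^3}$ identified compatibly with the maps defining $I$.

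The missing idea is that you never need $\Magma\to\Ucov$ to be a left fibration. You only need it to be relatively Segal and relatively Rezk, and that is automatic.

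\begin{itemize}
\item Your $\Magma$ is the pullback of $(\ev_0,\ev_1)\colon\Fun_1\to\Ucov\times\Ucov$ along $\langle\times\circ\Delta,\id\rangle$. By \cref{thm:dua-types} it is equally a pullback of $\Ucov^{\Delta^1}\to\Ucov\times\Ucov$.
\item That map is the Leibniz exponential of $\partial\Delta^1\hookrightarrow\Delta^1$ with $\Ucov\to 1$. The latter is internally orthogonal to $\Lambda^2_1\hookrightarrow\Delta^2$ and to $I\to\Delta^0$ by \cref{cor:Ucov-category}.
\item Internal orthogonality is stable under Leibniz exponentials and pullback. So $\Magma\to\Ucov$ is relatively Segal and Rezk, and $\Magma$ is an internal $\infty$-category with no levelwise computation.
\item For arrows, exponentiate the defining pullback by $\Delta^1$. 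Then $\Magma^{\Delta^1}$ is a pullback of $\Ucov^{\Delta^1\times\Delta^1}\simeq\Fun_2\times_{\Fun_1}\Fun_2$ over $\Fun_1\times\Fun_1$, which reads off the square $f\circ\alpha=\beta\circ(f\times f)$.
\end{itemize}

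This uses only simplices of dimension at most $2$ and the Segal property of $\Ucov$.
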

\begin{proof}
The left fibration $\pi \times \pi \colon \EUcov \times \EUcov \twoheadrightarrow \Ucov \times \Ucov$ is classified by a map $\times \colon \Ucov \times \Ucov \to \Ucov$. The universe of magmas $\Magma \coloneqq \Sigma_{A : \Ucov} A \times A \to A$ is defined by the pullback
\[ \begin{tikzcd} \Magma \arrow[d, two heads] \arrow[r] \arrow[dr, phantom, "\lrcorner" very near start] & \Fun_1 \arrow[d, "{\langle \ev_0,\ev_1\rangle}", two heads] \arrow[r, phantom, "\simeq"] &  [-1em] \Ucov^{\Delta^1} \arrow[d, "{\langle \ev_0,\ev_1\rangle}", two heads] \\ \Ucov \arrow[r, "{\langle\times \circ \Delta, \id\rangle}"'] & \Ucov \times \Ucov \arrow[r, equals] & \Ucov \times \Ucov\end{tikzcd}\]
with the equivalent pullback by directed univalence. The right-hand vertical map is a Leibniz exponential of $\partial\Delta^1 \to \Delta^1$ with a map $! \colon \Ucov \to 1$ that is relatively Segal and relatively Rezk, and hence this map is relatively Segal and relatively Rezk. Thus the pullback $\Magma \to \Ucov$ is relatively Segal and Rezk. Since $\Ucov$ is an internal $\infty$-category, so is $\Magma$.

We denote an element $(A,\alpha) \colon \Gamma \to \Magma$ as a pair, where $A$ is a code for a left fibration $A \twoheadrightarrow \Gamma$ and $\alpha$ is a binary function $\alpha \colon A \times A \to A$ over $\Gamma$.

We characterize arrows in $\Magma$ by exponentiating the defining pullback by $\Delta^1$ to yield:
\[ \begin{tikzcd} \Magma^{\Delta^1} \arrow[d, two heads] \arrow[r] \arrow[dr, phantom, "\lrcorner" very near start] & \Ucov^{\Delta^1 \times \Delta^1}\arrow[d, "{\langle \ev_{0-},\ev_{1-}\rangle}", two heads] \arrow[r, phantom, "\simeq"] & \Fun_2 \times_{\Fun_1} \Fun_2 \arrow[d, "{\langle \ev_{0-},\ev_{1-}\rangle}", two heads] \\ \Ucov^{\Delta^1} \arrow[r, "{\langle(\times \circ \Delta)^{\Delta^1}, \id\rangle}"'] & \Ucov^{\Delta^1} \times \Ucov^{\Delta^1} \arrow[r, phantom, "\simeq"] & \Fun_1 \times \Fun_1 \rlap{.} \end{tikzcd}\]
The bottom horizontal composite is equivalently the map $\Fun_1 \to \Fun_1 \times \Fun_1$ that sends an arrow $f \colon A \to B$ to the pair of arrows $f \times f \colon A \times A \to B \times B$ and $f \colon A \to B$. Thus, the top horizontal composite sends an arrow in $\Magma$ from $(A,\alpha)$ to $(B,\beta)$ to the commutative square of functions
\[ \begin{tikzcd} A \times A \arrow[d, "\alpha"'] \arrow[r, "f \times f"] & B \times B \arrow[d, "\beta"] \\ A \arrow[r, "f"'] & B \rlap{.} \end{tikzcd} \]
Thus, the pullback characterizes arrows in $\Magma$ as magma homomorphisms.
\end{proof}

\printbibliography

\end{document}